\def\cB{{\mathcal B}}
\newcommand{\dif}{{\mathrm{d}}}
\numberwithin{equation}{section}
\newcommand{\bla}{\bm{\lambda}}
\newcommand{\bL}{\bm{L}}
\newcommand{\bRR}{\bm{R}}
\newcommand{\btheta}{\bm{\theta}}
\newcommand{\bmu}{\bm{u}}
\newcommand{\bmw}{\bm{w}}
\newcommand{\bmr}{\bm{r}}
\newcommand{\bms}{\bm{s}}
\newcommand{\rd}{{\rm d}}
\newcommand{\bR}{{\mathbb R}}
\newcommand{\D}{\mathbb{D}}
\newcommand{\bZ}{{\mathbb Z}}
\newcommand{\bx}{{\bf{x}}}
\newcommand{\by}{{\bf{y}}}
\newcommand{\bv}{{\bf{v}}}
\newcommand{\bw}{{\bf{w}}}
\newcommand{\al}{\alpha}
\newcommand{\bH}{{\mathbb H}}
\newcommand{\be}{\begin{equation}}
\newcommand{\ee}{\end{equation}}
\newcommand{\e}{{\varepsilon}}
\newcommand{\td}{\tilde}
\newcommand{\cL}{{\cal L}}
\newcommand{\cS}{{\mathcal S}}
\newcommand{\br}{{\bf{r}}}
\newcommand{\cU}{{\mathcal U}}
\newcommand{\cD}{{\mathcal D}}
\newcommand{\ri}{{\rm i}}
\renewcommand{\Im}{{\rm Im}}
\renewcommand{\Re}{{\rm Re}}
\newcommand{\sfT}{{\mathsf T}}
\newcommand{\ft}{{\mathfrak t}}
\newcommand{\dist}{{\rm dist}}
\newcommand{\del}{\partial}
\renewcommand{\cal}{\mathcal}
\newcommand{\wh}{\widehat}
\newcommand{\wt}{\widetilde}
\newcommand{\ii}{\mathrm{i}} %\newcommand{\mi}{\mathrm{i}}
\newcommand{\deq}{\mathrel{\mathop:}=}
\renewcommand{\epsilon}{\varepsilon}
\renewcommand{\leq}{\leqslant}
\renewcommand{\geq}{\geqslant}
\renewcommand{\le}{\leq}
\renewcommand{\ge}{\geq}
\renewcommand{\P}{\mathbb{P}}
\newcommand{\E}{\mathbb{E}}
\newcommand{\R}{\mathbb{R}}
\newcommand{\C}{\mathbb{C}}
\newcommand{\N}{\mathbb{N}}
\newcommand{\Z}{\mathbb{Z}}
\newcommand{\abs}[1]{\lvert #1 \rvert}
\def\XXint#1#2#3{{\setbox0=\hbox{$#1{#2#3}{\int}$ }
\vcenter{\hbox{$#2#3$ }}\kern-.6\wd0}}
\DeclareMathOperator{\supp}{supp}
\DeclareMathOperator{\im}{Im}
\DeclareMathOperator{\dom}{\mathcal{D}}
\DeclareMathOperator{\sgn}{sgn}
\DeclareMathOperator{\OO}{O}
\DeclareMathOperator{\oo}{o}
\theoremstyle{plain} %plain, definition, remark
\newtheorem{theorem}{Theorem}[section]
\newtheorem*{theorem*}{Theorem}
\newtheorem{lemma}[theorem]{Lemma}
\newtheorem*{lemma*}{Lemma}
\newtheorem{corollary}[theorem]{Corollary}
\newtheorem*{corollary*}{Corollary}
\newtheorem{proposition}[theorem]{Proposition}
\newtheorem*{proposition*}{Proposition}
\newtheorem{assumption}[theorem]{Assumption}
\newtheorem*{definition*}{Definition}
\newtheorem*{example*}{Example}
\newtheorem{remark}[theorem]{Remark}
\newtheorem*{remark*}{Remark}
\newtheorem*{remarks*}{Remarks}
\renewcommand{\subsection}{\@startsection
{subsection}%                   % the name
{2}%                         % the level
{0mm}%                       % the indent
{-\baselineskip}%            % the before skip
{0 \baselineskip}%          % the after skip
{\normalfont\bf\itshape}} % the style
\renewcommand{\subsubsection}{\@startsection{subsubsection}{3}{\z@}%
  {3.25ex \@plus 1ex \@minus .2ex}{-1em}{\normalfont\normalsize\itshape}}
\def\bR{{\mathbb R}}
\def\bZ{{\mathbb Z}}
\def\@empty{}
\def\author#1{\par
    {\centering{\authorfont#1}\par\vspace*{0.05in}}
}
\def\titlefont{\fontsize{13}{15}\bfseries\boldmath\selectfont\centering{}}
\def\authorfont{\fontsize{13}{15}}
\def\abstractfont{\fontsize{8}{10}}
\let\affiliationfont\rhfont
\def\address#1{\par
    {\centering{\affiliationfont#1\par}}\par\vspace*{11pt}
}
\def\body{
\setcounter{footnote}{0}
\def\thefootnote{\alph{footnote}}
\def\@makefnmark{{$^{\rm \@thefnmark}$}}
}
\def\title#1{
    \thispagestyle{plain}
    \vspace*{-14pt}
    \vskip 79pt
    {\centering{\titlefont #1\par}}%
    \vskip 1em
}
\renewenvironment{abstract}{\par%
    \vspace*{6pt}\noindent %{\bf Abstract}
    \abstractfont
    \noindent\leftskip10pt\rightskip10pt
}{%
  \par}
\renewcommand{\section}{\@startsection
{section}%                   % the name
{1}%                         % the level
{0mm}%                       % the indent
{-2\baselineskip}%            % the before skip
{1\baselineskip}%          % the after skip
{\normalfont\large\scshape\centering}} % the style
\begin{document}

~\vspace{-1cm}

~\hspace{-0.3cm}

\title{
Fluctuations for non-Hermitian dynamics}

\vspace{1cm}

\noindent\begin{minipage}[b]{0.33\textwidth}

 \author{Paul Bourgade}

\address{Courant Institute,\\ New York University\\
   bourgade@cims.nyu.edu}

 \end{minipage}
\noindent\begin{minipage}[b]{0.33\textwidth}
 \author{Giorgio Cipolloni}

\address{Mathematics department,\\  University of Rome Tor Vergata \\
  cipolloni@axp.mat.uniroma2.it}

 \end{minipage}
\noindent\begin{minipage}[b]{0.33\textwidth}

 \author{Jiaoyang Huang}

\address{Statistics department, Wharton,\\ University of Pennsylvania\\
   huangjy@wharton.upenn.edu}

 \end{minipage}

\begin{abstract}
We prove that under the Brownian evolution on large non-Hermitian matrices
the log-determinant converges in distribution to a 2+1 dimensional Gaussian field in the Edwards-Wilkinson regularity class,
namely it is logarithmically correlated  for the parabolic distance.
This dynamically extends a seminal result by Rider and Vir{\' a}g 
about convergence to the Gaussian free field.  The convergence holds out of equilibrium for centered, i.i.d.  matrix entries as an initial condition.

A remarkable aspect of the limiting field is its non-Markovianity,  due to  long range correlations of the eigenvector overlaps, for which we identify 
the exact space-time polynomial decay.

In the proof, we obtain a quantitative  relaxation at the hard edge,  for an extension of the Dyson Brownian motion, with a driving noise arbitrarily correlated in space.
\noindent 
\end{abstract}

\vspace{-0.5cm}

\tableofcontents

\vspace{0.2cm}
\noindent

\section{Introduction}
 
\subsection{Random matrices and logarithmically correlated fields, dimensions one and two.}\ The eigenvalues  of large random matrices exhibit universal,
anomalous small fluctuations. These are log-correlated and originate from 
equal superposition of randomness on all length scales.  
This phenomenon first appeared in dimension one, the prominent example being 
Haar-distributed unitary random matrices of size $N$, with eigenangles $\btheta$: If $f,g$ are smooth with mean zero on the torus,  then $\sum f(\theta_k)$ and $\sum g(\theta_k)$ converge 
with no normalization to Gaussian random variables $\ell_f,\ell_g$ as $N\to\infty$,  and
\begin{equation}\label{eqn:1d}
{\rm Cov}(\ell_f,\ell_g)=-\frac{2}{\pi^2}\iint f'(\theta)g'(\varphi)\log|e^{\ii\theta}-e^{\ii\varphi}|\rd \theta\rd \varphi.
\end{equation}
This limiting covariance was derived first by Dyson and Mehta \cite{DysMeh1963}.
It has then appeared for multiple matrix models in dimension one,  L-functions and high genus hyperbolic surfaces, 
often with techniques from integrable systems,  representation theory,  
mathematical physics (loop equations) and probability theory (moments method). We refer to
the end of this introduction for a partial review of this immense literature.

In dimension two,  logarithmic correlations appeared much later,  first in the context of time-dependent 1d spectra in Spohn \cite{Spo1986}.   Consider, for example, $B$ a Brownian motion on $N\times N$ Hermitian matrices,  or more precisely its 
Ornstein-Uhlenbeck version $H$ which has the Gaussian unitary ensemble as equilibrium,  and the Dyson Brownian motion dynamics  induced on its  spectrum $\bla$:
\begin{equation}\label{eqn:HerBM}
\rd H_t=\frac{\rd B_t}{\sqrt{N}}-\frac{1}{2}H_t\rd t,\ \ \ \ \ \ \ \ \ \  \rd \lambda_k(t)=\frac{\rd b_{k}(t)}{\sqrt{N}}+\Big(\frac{1}{N}\sum_{\ell\neq k}\frac{1}{\lambda_k(t)-\lambda_\ell(t)}-\frac{1}{2}\lambda_k(t)\Big)\rd t,
\end{equation}
where $b_1,\dots,b_n$ are independent standard Brownian motions.  Log-correlations for these dynamics hold in the following sense: For any centered,  smooth $f$
supported in the bulk of the spectrum,  the linear statistics ${\rm Tr}f(H_s)$  converge in distribution in the limit of large dimension,  jointly in $f$ and $s$, to 
Gaussian random variables $\ell_{f,s}$ with covariance

\begin{equation}\label{eqn:dyn2d}
{\rm Cov}(\ell_{f,s},\ell_{g,t})= \frac{2}{\pi^2}\iint f'(x)g'(y)\, k((x,s),(y,t))\rd x\rd y\ \ \ {\rm where}\ \ k(u,v)  \underset{u\to v}{\sim}-\log |u-v|.
\end{equation}
Here $k$ is an explicit kernel which is smooth off the diagonal,   and $|\cdot|$ is the Euclidean distance in $\mathbb{R}^2$.

The derivation of this limiting 2d log-correlated field relies on the self-adjointness of $H$: It makes the eigenvalues dynamics in (\ref{eqn:HerBM}) autonomous, in the sense that it does not depend on the eigenvectors.  Actually the Dyson Brownian motion also coincides with Brownian motions conditioned not to intersect,  so from the Karlin-McGregor formula the spectrum $(\bla_t)_{t\geq 0}$ forms a determinantal point process,  which is fully integrable.

Another natural  context for bidimensional random matrix theory is non-Hermitian models.  The paradigmatic example is the Ginibre ensemble,  an $N\times N$  random matrix $G$ with independent complex Gaussian entries with covariance ${{\bf 1}_2}/(2N)$\footnote{Here ${{\bf 1}_2}$ denotes the $2\times 2$ identity matrix.}.  The eigenvalues for this model also form a determinantal point process,  
which implies the following analogue of  (\ref{eqn:dyn2d}),  by Rider and Vir{\'a}g \cite{RidVir2007}: 
For smooth enough $f,g$ supported  in the unit disk, 
${\rm Tr} f(G)$ and ${\rm Tr} g(G)$ converges to Gaussian random variables $L_f,L_g$ with covariance
\begin{equation}\label{eqn:stat2d}
{\rm Cov}(L_f,L_g)=\int \nabla f(z)\cdot\nabla g(z)\frac{\rd z}{4\pi}
=-\frac{1}{8\pi^2}\iint \Delta f(z) \Delta g(w)\log|z-w|\, \dif z\dif w,
\end{equation}
where $\rd z$ is the Lebesgue measure on $\mathbb{C}$.
For $f=\log|v-\cdot|$ we have $\Delta f=2\pi\delta_{v}$ in distribution, so the above central limit theorem means distributional convergence of $(\log|\det(G-z)|)_{|z|<1}$ to a $2d$ log-correlated field.

\subsection{Result.}\ The Ginibre ensemble is the stationary distribution for the non-Hermitian Ornstein-Uhlenbeck process.  At the matrix and eigenvalues level these dynamics are
\begin{equation}\label{eq:OU}
\rd X_t=\frac{\rd B_t}{\sqrt{N}}-\frac{1}{2}X_t\rd t,\ \ \ \ \ \ \ \ \ \ \rd\sigma_i(t)=\frac{\rd M_i(t)}{\sqrt{N}}-\frac{1}{2}\sigma_i(t)\rd t,
\end{equation}
where the entries of $B$ are independent complex standard Brownian motions, i.e. of type $\frac{1}{\sqrt{2}}(B_1+\ii B_2)$ with $B_1,B_2$ independent standard real Brownian motions.  Here the martingales $M_i$ are correlated, with joint bracket
$
\rd\langle M_i,M_j\rangle_t=\mathscr{O}_{ij}(t)\rd t$,    $\mathscr{O}_{ij}= (R_j^* R_i)(L_j^* L_i),
$
where the left and right eigenvectors $(L_i,R_i)_{i=1}^N$ of $X_t$ are a biorthogonal basis sets, normalized by
$
L_i^tR_j=\delta_{ij}.
$
Therefore,  contrary to the Hermitian setting (\ref{eqn:HerBM}),  
the dynamics for non-Hermitian spectra fundamentally differs from the Langevin dynamics for the log-gas.  In particular the evolution is now non-autonomous,  and the eigenvalues distribution is determinantal only at equilibrium, for one fixed time.  Nevertheless,  our result below gives $3d$-logarithmic correlations for these non-Hermitian dynamics,  now with respect to the parabolic distance
\begin{equation}
\label{eq:defpardist}
d((z,s),(w,t))=(|z-w|^2+|t-s|)^{1/2}.
\end{equation}

\begin{theorem*} Consider (\ref{eq:OU}) at equilibrium and let $f,g$ be smooth functions supported in the open unit disk $\mathbb{D}$. Then 
for any fixed $s,t$,  as $N\to\infty$  the linear statistics
${\rm Tr} f(G_s)-N\int \frac{f}{\pi}$ and ${\rm Tr} g(G_t)-N\int \frac{g}{\pi}$
converge jointly  to Gaussian random variables $L_{f,s},L_{g,t}$,  which are centered  with covariance
\begin{align}\label{eqn:cov3}
\mathbb{E}[L_{f,s}L_{g,t}]&=\frac{1}{16\pi^2}\iint \Delta f(z) \Delta g(w)K(z,w,|t-s|)
\dif z\dif w,\\
K(z,w,\tau)&:=-\log\big((1-e^{-\tau})(1-|z|^2)+ |z-e^{-\tau/2}w|^2\big).\label{eqn:kernel}
\end{align}
\end{theorem*}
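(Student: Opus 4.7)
My plan is to reduce the theorem to a two-time covariance identity for log-determinants via Girko's Hermitization, and then to compute that identity through a coupled Dyson Brownian motion on Hermitizations. For $f$ smooth and compactly supported in $\mathbb{D}$, the identity $f(z) = \tfrac{1}{2\pi}\int \log|z-w|\,\Delta f(w)\,\dif w$ yields
\begin{equation*}
{\rm Tr}\, f(X_t) - \mathbb{E}\,{\rm Tr}\, f(X_t) \,=\, \frac{1}{2\pi}\int_{\mathbb{D}} \Delta f(z)\,\widetilde{F}_t(z)\,\dif z, \quad \widetilde{F}_t(z) := \log|\det(X_t - z)| - \mathbb{E}\log|\det(X_t - z)|,
\end{equation*}
so (\ref{eqn:cov3}) reduces to proving $\mathbb{E}[\widetilde{F}_s(z)\widetilde{F}_t(w)] \to \tfrac{1}{4} K(z,w,|t-s|)$ jointly in $(z,w)$, with enough integrability to exchange limits and the double integral. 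By stationarity I set $s=0$; the OU representation $X_\tau = e^{-\tau/2} X_0 + \sqrt{1-e^{-\tau}}\,\widetilde{X}$ with $\widetilde{X}$ an independent Ginibre matrix gives
\begin{equation*}
X_\tau - w \,=\, e^{-\tau/2}\bigl(X_0 - e^{\tau/2} w\bigr) + \sqrt{1-e^{-\tau}}\,\widetilde{X},
\end{equation*}
coupling $\widetilde{F}_0(z)$ and $\widetilde{F}_\tau(w)$ explicitly through the shared $X_0$ plus an independent Ginibre increment.

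Next, I Hermitize: the $2N \times 2N$ Hermitian matrix $\mathbf{H}_t^a := \begin{pmatrix} 0 & X_t - a \\ (X_t - a)^* & 0 \end{pmatrix}$ has spectrum $\{\pm s_i(X_t - a)\}_{i=1}^N$, so $\widetilde{F}_t(a) = \sum_i \log s_i(X_t - a) - \mathbb{E}[\cdot]$. The coupled pair $(\mathbf{H}_0^z, \mathbf{H}_\tau^w)$ is driven by the same matrix $X_0$ plus an explicit independent Ginibre increment for the second component, so its joint eigenvalue dynamics constitute a system of SDEs extending the Dyson Brownian motion to noise correlated across two blocks. I would then compute the two-time covariance by interpolating along this coupled flow and running a characteristics / flow-equation analysis on the associated two-resolvent (or Stieltjes-transform) observable, as has become standard for CLT proofs under DBM. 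The explicit kernel should emerge from the characteristic ODEs, with the $(1-e^{-\tau})$ factor coming from the variance of the Ginibre increment, the $(1-|z|^2)$ factor from the square-root vanishing of the self-consistent singular-value density at the hard edge $|z|=1$, and the $|z - e^{-\tau/2} w|^2$ term from the OU contraction of the underlying $X_0$.

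The main obstacle is the hard edge. The log-determinant $\widetilde{F}_t(z)$ is dominated by the smallest singular value $s_N(X_t - z)$, which can almost vanish when $z$ lies in the bulk of the limiting spectrum. Controlling this uniformly along the coupled flow requires a quantitative local law and an optimal relaxation estimate for the extended DBM all the way down to the hard-edge scale $N^{-1+\epsilon}$. The difficulty is that the driving noise inherits genuine spatial correlations from the eigenvector overlap structure of $X_t$, so the classical DBM stochastic-advection arguments must be adapted to accommodate nontrivial space-time covariance of the noise---precisely the ``broad extension of Dyson Brownian motion with driving noise arbitrarily correlated in space'' alluded to in the abstract. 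Once this hard-edge relaxation is secured, integrating the covariance identity against $\Delta f\,\Delta g$ and upgrading pointwise $(z,w)$-convergence to joint integrability yields (\ref{eqn:cov3}).
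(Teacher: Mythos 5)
Your plan follows essentially the same route as the paper: Girko/Hermitization of the log-determinant, splitting the randomness of $X_t$ at time $s$ into $X_s$ plus an independent Ginibre increment, a characteristics/flow analysis along the coupled Hermitized dynamics that converts time separation into a shift of spectral parameters, and a quantitative hard-edge relaxation for an extended Dyson Brownian motion whose driving noise is correlated through the eigenvector overlaps, in order to control the small singular values. This is exactly the architecture the paper implements (the three-regime $\eta$-splitting with the submicroscopic tail bound, the decorrelation of small singular values at distant $z$'s via Theorem \ref{theo:mainthmdbm}, and the stochastic-advection decomposition of Proposition \ref{pro:decomp} feeding the Wick computation), so your outline matches its proof.
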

\noindent One can easily check that the above kernel $K$ is symmetric in $z,w$.  Moreover it satisfies
$$
K(z,w,t-s)\asymp-\log d((z,s),(w,t))
$$
as $(z,s)\to(w,t)$, uniformly in compact subsets of $\mathbb{D}\times \mathbb{R}$.

The above theorem follows from a general decomposition theorem which allows to treat out of equilibrium dynamics with i.i.d. initial condition,  test functions overlapping the edge of the spectrum  and mesoscopic scales.  We refer to Section \ref{sec:Main} for these results. There,  we will also give consequences on the space-time correlations between overlaps  of non-Hermitian random matrices, and the main ideas of the proof,   circumventing the lack of autonomous spectral evolution and determinantal representations.\\

\subsection{Related literature.}\ We conclude this introduction with more context on logarithmically correlated fields, in random matrix theory and other settings.\\

\noindent In dimension one,  log-correlations 
are now considered a core result in random matrix theory,  as manifested by the books \cite[Chapter 16]{Mehta} and  \cite[Chapter 14]{For2010}.
Formulas similar to (\ref{eqn:1d}) hold for a multitude of  matrix models,   including Wigner matrices \cite{LytPas2009} and their deformations \cite{LiSchXu2021},  covariance matrices \cite{BaiSil2004},  products of random matrices \cite{GorSun2022},  band matrices \cite{AndZei2006},  random graphs \cite{DumJohPalPaq2013}, free sums of random matrices \cite{BaoSchXu2022}.  
This covariance also appears in the limit of large dimension for general log-gases \cite{Joh1988},  determinantal point processes \cite{BreDui2017},  and particle systems emerging from special functions \cite{BorGorGui2017}.    The most common techniques towards such results are the method of moments \cite[Chapter 2]{AndGuiZei2010},  Schur-Weyl duality \cite{DiaSha},   loop equations \cite{Joh1998}, 
transport maps \cite{Shc2014,BekFigGui2015,BekLebSer2018} and the analysis of Toeplitz determinants
\cite{HugKeaOCo}.
Moreover,  in a few cases,  the distributional convergence to a logarithmically correlated field has been upgraded to pointwise convergence \cite{Bou2010,BouModPai2022}.
In completely different contexts,  the covariance (\ref{eqn:1d}) was also exhibited for linear statistics related to zeros of L-functions \cite{BouKua2014,Rod2014} and eigenvalues of the Laplacian on random hyperbolic surfaces \cite{Rud2023}.  It is impossible to mention all such contributions and we refer to \cite{For2023} for a recent review.\\

\noindent In dimension two,  Spohn's original space-time fluctuations (\ref{eqn:dyn2d}) were first proved for non-intersecting Brownian motions at equilibrium \cite{Spo1986},  and then extended to arbitrary temperature on the circle,  still at equilibrium \cite{Spo1998}. 
The space-time fluctuations of Dyson's Brownian motion are now understood out of equilibrium,  on any mesoscopic scale \cite{DuiJoh2018,HuaLan2019,AdhHua2018}.
The $2d$ Gaussian free field also arises in discrete setting, often in connection with non-intersecting paths; important examples include 
domino tilings \cite{Ken2001},  
dimer models \cite{Ken2008,Dub2015},
and many more discrete particle systems, see e.g.  \cite{GorHua2022}.

Still in dimension two, but in the non-Hermitian setting, the Rider-Vir{\'a}g central limit theorem (\ref{eqn:stat2d}) is actually more general as it allows functions overlapping the edge of the spectrum. 
It fully verified a prediction from \cite{For1999}, after partial results in \cite{Rid2004,Rid2004bis} for radial or angular functions.
It has been generalized in three major directions,  
first to arbitrary normal random matrices, i.e.   to non-quadratic confining external potential \cite{AmeHedMak2011,AmeHedMak2015}, then to $2d$ Coulomb gases at arbitrary temperature
\cite{LebSer2018,BauBouNikYau2019}, and finally to
non-Hermitian random matrices with arbitrary independent entries \cite{CipErdSch2023,CipErdSch2021,CipErdSch2022bis}.\\

\noindent In dimension three,  before (\ref{eqn:cov3}) the only example connected to random matrices came from
minors of Hermitian matrices
along the matrix dynamics (\ref{eqn:HerBM}),  thanks to the method of moments   \cite{Bor2014}.  The covariance kernel of the limiting $3d$ Gaussian field depends on the spectral variable, the size of the submatrix, and time.  It is log-correlated for the Euclidean distance in $\mathbb{R}^3$.  A similar $3d$ limiting field was then proved in connection to representations of $U(\infty)$ \cite{BorBuf2014}, which is also  log-correlated for the ${\rm L^2}$ norm.

By contrast, non-Hermitian dynamics present the same singular behavior as the so-called Edwards-Wilkinson fluctuations in dimension 2+1, i.e.  logarithmic correlations for the parabolic distance.  The canonical model for these fluctuations is the $2d$
additive stochastic heat equation \cite{Hai2009}.  Despite the conjectured breadth of the  Edwards-Wilkinson universality class,  only a few models have been proved to be part of it.

In the continuous setting,  the following stochastic partial differential equations have been shown to exhibit Edwards-Wilkinson fluctuations in the weak coupling regime:  the multiplicative stochastic heat equation (or the directed polymer model in random environment) \cite{CarSunZyg2017, DunGra2024},
and the KPZ equation   driven by space-time white noise,  may it be isotropic subcritical \cite{CarSunZyg2020} or anisotropic \cite{CanErhTon2023}.
Langevin dynamics for Liouville quantum gravity measures also present log-correlated fluctuations for the parabolic distance \cite{Gar2020}.

In the discrete setting,   fluctuations for the Ginzburg-Landau interface model were shown to converge to a $2d$
additive stochastic heat equation,  at equilibrium \cite{GiaOllSpo2001}. 
Other stochastic growth models in 2 + 1 dimensions, that belong to the anisotropic KPZ class,  fluctuate like the additive stochastic heat equation,  for specific initial conditions.
This was first proved through convergence to the free field at fixed time \cite{BorFer2014},  and then extended to space-time correlations \cite{BorCorFer2018,BorCorTon2017}. \\

\noindent As described above,  Edwards-Wilkinson fluctuations have been proved mostly at equilibrium or for a restricted range of initial conditions.  Theorem \ref{theo:mainres} below works out of equilibrium,  for arbitrary initial random matrix with centered i.i.d.  entries.  In fact,
the methods we rely on should be robust enough to cover broad classes of deterministic initial data, including the possibility of time-dependent  hydrodynamic profiles. 
The assumptions of centered independent entries is for convenience and it gives a particularly simple formula for correlations of the limiting field (\ref{eqn:kernel}).  In the bulk of the spectrum,  the local singularity of the limiting kernel should be independent of the  initial condition, only its long range behavior may change.

The obtained fluctuations for non-Hermitian dynamics raise other questions, for which some techniques developed in this paper may apply.  
In view of the singularity of the covariance  (\ref{eqn:kernel}),  it would be interesting to develop branching methods for logarithmically-correlated fields for the parabolic distance,
to understand extreme statistics.  In particular the Fyodorov-Hiary-Keating analogies \cite{FyoHiaKea12,FyoKea2014} ($d=1$) and the universality proved in \cite{DinRoyZei} (any dimension) 
give broad classes of models with the same extreme values,  in the case of log-correlations for the Euclidean distance. 
Similarly,  the  recent convergences of random matrix statistics to Gaussian multiplicative chaos hold with respect to the ${\rm L}^2$ distance,  in dimension 1 \cite{Web2015, NikSakWeb20,ChaFahWebWon} and 2 \cite{BouFal2022}.

Finally,  we note that there is no first principle explanation for logarithmic correlations for so many random matrix models,  despite the enormous literature on this topic in dimensions 1 and 2, and the new occurrence in dimension 3 proved in this paper.

\subsection{Acknowledgments.}\ P.\ B.\ was supported by the NSF standard grant DMS-2054851.  J.\ H.\ was supported by  NSF standard grant DMS-2331096 and career grant DMS-2337795, and the Sloan research award. G. C. is partially supported by the MUR Excellence Department Project MatMod@TOV awarded to the Department of Mathematics, University of Rome Tor Vergata, CUP E83C18000100006. We also thank the referees for their excellent review work.

\section{Main Results.}\label{sec:Main}

\subsection{Multi-time central limit theorems for eigenvalues.}\  Our initial condition consists in  $N\times N$ non-Hermitian matrices $X$ with i.i.d. complex entries, i.e. $x_{ab}\stackrel{d}{=}N^{-1/2}\chi$, with $\chi$ satisfying the following assumptions.
\begin{assumption}
\label{ass:1}
The random variable $\chi$ is centered and has unit variance, i.e. $\E\chi=0$, $\E |\chi|^2=1$, and we also assume that $\E \chi^2=0$. Additionally,  high moments of $\chi$ are finite,  i.e.  there exist a constants $C_p>0$, for any $p\in\N$, such that
\begin{equation*}
\E |\chi|^p\le C_p.
\end{equation*}
\end{assumption}

We now consider the matrix dynamics (\ref{eq:OU}), where we remind that
$B_t$ is a matrix whose entries are i.i.d.  standard complex Brownian motions. Note that
\[
X_t\stackrel{(\dif)}{=} e^{-t/2}X+\sqrt{1-e^{-t}}\widetilde{X},
\]
with $\widetilde{X}$ being a complex Ginibre matrix independent of $X$. The first two moments of $X_t$ are preserved along the flow,  and the Ginibre ensemble is the unique equilibrium for these dynamics.  Denoting the eigenvalues of $X_t$ by $\{\sigma_i(t)\}_{1\leq i\leq N}$,  our main interest is to study the space--time correlation of the linear statistics
\begin{equation}
\label{eq:linstattime}
L_N(f,t)=\sum_{i=1}^N f(\sigma_i(t))-\E\sum_{i=1}^N f(\sigma_i(t)).
\end{equation}
Here $f$ is a test function supported on $\Omega$, a fixed disk with center $0$ and arbitrary radius greater than $1$.  Without loss of generality, by polarization, it is enough to consider real test functions $f$, which will be assumed to be in the Sobolev space ${\rm H}_0^{2+\varepsilon}(\Omega)$, for a fixed small $\varepsilon>0$, which is defined as the completion of the smooth compactly supported functions $C_c^\infty(\Omega)$ under the norm
\[
\lVert f\rVert_{{\rm H}^{2+\varepsilon}(\Omega)}=\lVert (1+|\xi|)^{2+\varepsilon}\widehat{f}(\xi)\rVert_{{\rm L}^2(\Omega)},
\]
where $\widehat{f}$ denotes the Fourier transform of $f$. Furthermore, for $h$ defined on the boundary of the unit disk $\partial\mathbb{D}$,  our convention for its Fourier transform is
\[
\widehat{h}_k=\frac{1}{2\pi}\int_0^{2\pi} h(e^{\ii\theta}) e^{-\ii\theta k}\, \dif \theta, \qquad k\in\mathbb{Z},
\]
and we denote
\begin{equation*}
\langle g,f\rangle_{{\rm H}^{1/2}(\partial\mathbb{D})}=\sum_{k\in\Z}|k| \widehat{f}_k\overline{\widehat{g}_k}, \qquad \|f\|_{{\rm H}^{1/2}(\partial\mathbb{D})}^2=\langle f,f\rangle_{{\rm H}^{1/2}(\partial\Omega)}.
\end{equation*}

We now first state the CLT for linear statistics \eqref{eq:linstattime} with macroscopic test functions in Theorem~\ref{theo:mainres}, and then we present the statement of the mesoscopic case separately in Theorem~\ref{theo:mainresmeso}. Before stating these results we introduce some useful short--hand notations. We define the averages
\begin{equation*}
\langle f\rangle_\D:=\frac{1}{\pi}\int_\D f(z)\, \dif z,\qquad\quad \langle f\rangle_{\partial\D}:=\frac{1}{2\pi}\int_0^{2\pi} f(e^{\ii\theta})\, \dif\theta.
\end{equation*}
With the notation (\ref{eqn:kernel}) for the limiting covariance kernel $K$ at equilibrium in the bulk of the spectrum,  and defining $P_t f(z)=\sum_{\mathbb{Z}} e^{-|k|\frac{t}{2}}\,\widehat{f}_k z^k$ for  the Poisson kernel, we denote
\begin{align}
\Gamma(f,g,\tau,\kappa)=\frac{1}{\pi^2}&\int_{\D^2} \partial_{\overline z} f(z)\partial_{w} g(w)\partial_{z}\partial_{\overline w}K(z,w,\tau)\, \dif z\dif w
+\frac{1}{2}\langle f,P_\tau g \rangle_{H^{1/2}(\partial\mathbb{D})}\notag \\
&+ \kappa e^{-\tau}\big(\langle f\rangle_\D-\langle f\rangle_{\partial\D}\big)\big(\langle g\rangle_\D-\langle g\rangle_{\partial\D}\big).\label{eq:defcov}
\end{align}
In the following main result,  $\kappa_{4,t}$ is the fourth cumulant of the matrix entries after time $t$:
\[
\kappa_{4,t}=\E|\chi_t|^4-2\ \ \ {\rm where}\ \ \ \chi_t\stackrel{\mathrm{(d)}}{=}e^{-t/2}\chi+\sqrt{1-e^{-t}}g,
\]
with $g$ a standard complex Gaussian random variable independent of $\chi$, and  $\chi$ satisfying Assumption~\ref{ass:1}.

\begin{theorem}[Macroscopic CLT]
\label{theo:mainres}
Let $X_t$ be the solution of \eqref{eq:OU}, with $X$ satisfying Assumption~\ref{ass:1}.   Consider real valued $f,g\in H_0^{2+\varepsilon}(\Omega)$.  Then for any fixed $s\leq t$, 
$L_N(f,t),L_N(g,s)$ converge jointly in distribution to centered Gaussian random variables $(L(f,t)$,  $L(g,s))$ with covariance
\begin{equation}
\label{eq:secmom}
\E |L(f,t)|^2=\Gamma(f,f,0,\kappa_{4,t}), \qquad \E L(f,t) L(g,s)=\Gamma(f,g,t-s,\kappa_{4,s}), \qquad \E |L(g,s)|^2=\Gamma(g,g,0,\kappa_{4,s}).
\end{equation}
Additionally,  we have the decomposition
\begin{equation}
\label{eq:dec1}
L_N(f,t)=L_{N,1}(f,s,t)+L_{N,2}(f,s,t),
\end{equation}
with $L_{N,1}(f,s,t)$ depending only on $X_s$, and $L_{N,1}(f,s,t)$, $L_{N,2}(f,s,t)$ converging to independent Gaussian  random variables $\mathcal{L}_1(f,s,t), \mathcal{L}_2(f,s,t)$ with
\begin{equation}
\label{eq:dec2}
\E|\mathcal{L}_1(f,s,t)|^2= \Gamma(f,f,2(t-s),\kappa_{4,s}), \qquad \E |\mathcal{L}_2(f,s,t)|^2=\Gamma(f,f,0,0)- \Gamma(f,f,2(t-s),0).
\end{equation}
\end{theorem}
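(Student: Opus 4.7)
The plan is to reduce the multi-time CLT to log-determinant fluctuations via Girko's Hermitization and then split the problem according to the filtration at time $s$. I would write
\[
L_N(f,t) = -\frac{1}{4\pi}\int_\Omega \Delta f(z)\bigl[\log\absb{\det(X_t-z)}^2 - \E\log\absb{\det(X_t-z)}^2\bigr]\dif z,
\]
and work throughout with the Hermitization $Y_z(t)$ with off-diagonal blocks $X_t-z$ and $(X_t-z)^*$, whose spectrum consists of the signed singular values of $X_t-z$. After regularizing the log-determinant at a scale $\eta$ slightly below $N^{-1}$, with the truncation error controlled by least singular value bounds uniform in $z\in\Omega$ and $t$, the problem becomes one about smoothed traces of $\log(Y_z(t)^2+\eta^2)$, which is analyzable by Dyson-type stochastic calculus on the singular values.

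\textbf{Decomposition and the CLT for $L_{N,1}$.} For the splitting \eqref{eq:dec1} I would take
\[
L_{N,1}(f,s,t) \deq \E[L_N(f,t)\mid \mathcal{F}_s], \qquad L_{N,2}(f,s,t) \deq L_N(f,t) - L_{N,1}(f,s,t),
\]
with $\mathcal{F}_s = \sigma(X_u:u\le s)$. The matrix identity $X_t = e^{-(t-s)/2}X_s + \sqrt{1-e^{-(t-s)}}\,\widetilde X$, with $\widetilde X$ Ginibre independent of $X_s$, makes $L_{N,1}$ a deterministic functional of $X_s$ and forces $\E[L_{N,1}L_{N,2}]=0$. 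For the CLT of $L_{N,1}$, I would identify $\E[L_N(f,t)\mid X_s]$, to leading order, as a single-time linear statistic of $X_s$'s spectrum with an effective test function obtained via free convolution with a circular element of variance $1-e^{-(t-s)}$ (equivalently, via the Brown measure of $e^{-(t-s)/2}X_s + \sqrt{1-e^{-(t-s)}}\,\mathrm{Gin}$). At the level of the limiting bilinear form this smoothing intertwines $K(\cdot,\cdot,0)$ with $K(\cdot,\cdot,2(t-s))$ and turns the boundary piece into $\tfrac12 \langle f, P_{2(t-s)}f\rangle_{H^{1/2}(\partial\D)}$, giving exactly $\Gamma(f,f,2(t-s),\kappa_{4,s})$; the equilibrium non-Hermitian single-time CLT (Rider--Vir{\'a}g / Cipolloni--Erd{\H o}s--Schr{\"o}der type) applied to this effective test function on $X_s$ then yields the Gaussian limit $\mathcal{L}_1$.

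\textbf{The increment CLT via DBM relaxation.} For $L_{N,2}$, the Girko reduction turns the question into fluctuations of the singular values of $X_u-z$ over $u\in[s,t]$. These obey a Dyson-type Brownian motion driven by the block-structured noise inherited from the chiral Hermitization $Y_z$, which is precisely the setting of the paper's central technical input: a quantitative, hard-edge optimal relaxation estimate for DBM with driving noise arbitrarily correlated in space. Coupling the true dynamics to a synthetic one in which $X_s$ is replaced by an independent Ginibre matrix and invoking this relaxation, the conditional law of $L_{N,2}$ given $X_s$ converges to a centered Gaussian with the universal variance $\Gamma(f,f,0,0)-\Gamma(f,f,2(t-s),0)$, independent of $X_s$; the $\kappa_4$ correction is absent here because the driving Brownian increment is purely Gaussian, which is why the second piece of \eqref{eq:dec2} carries $\kappa=0$.

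\textbf{Joint CLT and the main obstacle.} Conditional Gaussianity of $\mathcal{L}_2$ with variance independent of $X_s$, combined with the joint single-time CLT for the $\sigma(X_s)$-measurable pair $(L_{N,1}(f,s,t),L_N(g,s))$, gives joint Gaussianity of $(L_{N,1},L_{N,2},L_N(g,s))$ in the limit with $\mathcal{L}_2$ independent of the other two; summing and computing $\E L(f,t)L(g,s) = \E \mathcal{L}_1(f,s,t)L(g,s)$ through the single-time covariance bilinear form reproduces $\Gamma(f,g,t-s,\kappa_{4,s})$, and setting $g=f$, $s=t$ recovers $\Gamma(f,f,0,\kappa_{4,t})$. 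The hardest step is the hard-edge DBM relaxation underpinning the $L_{N,2}$ CLT: the spectral density of $Y_z$ vanishes linearly near zero for $|z|<1$, so the smallest singular values of $X_t-z$ must be controlled with optimal precision uniformly in $z\in\Omega$, in the intermediate time $s$, and in the random (arbitrary) initial condition $X_s$; moreover the driving noise on $Y_z$ is spatially correlated rather than i.i.d., which falls outside standard DBM relaxation theory and requires the new, broad extension of Dyson's Brownian motion proved in the paper.
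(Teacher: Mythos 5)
Your skeleton (Girko Hermitization, regularization below $N^{-1}$, splitting the randomness at time $s$, and the conditional-expectation decomposition, which is morally the paper's $I_1+I_2$ split) is sound, but the mechanism you propose for the central limit theorem of the increment $L_{N,2}$ does not work, and it is precisely where the real content lies. You invoke the hard-edge DBM relaxation with correlated driving noise as the engine "underpinning the $L_{N,2}$ CLT", via a coupling in which $X_s$ is replaced by an independent Ginibre matrix. In the paper that relaxation result (Theorem \ref{theo:mainthmdbm}, Propositions \ref{p:independence}--\ref{p:universality}) serves a different and much more modest purpose: it shows that the few smallest singular values of $X_t-z_1$ and $X_t-z_2$ decorrelate for $|z_1-z_2|\gg N^{-1/2}$, so that the \emph{microscopic} window $\eta\in[N^{-1-\delta_0},N^{-1+\delta_1}]$ of the Girko integral is negligible in second moment (Propositions \ref{prop:indep}--\ref{prop:resindep}). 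It controls only $\OO(N^{\omega})$ eigenvalues near the hard edge to precision $N^{-1-\omega}$, after times of order $N^{-1+\omega_t}$; it cannot produce the order-one Gaussian fluctuation of the increment, whose contribution comes from the mesoscopic regime $\eta\in[N^{-1+\delta_1},T]$, nor does it yield the explicit variance $\Gamma(f,f,0,0)-\Gamma(f,f,2(t-s),0)$. Even granting a universality transfer to Ginibre initial data, you would still need a two-time computation for the increment in that case, which is exactly the statement being proved, so the argument is circular at this point. The paper's actual mechanism, which your proposal lacks, is the stochastic advection equation along the characteristics \eqref{eq:char1}: Proposition \ref{pro:decomp} rewrites $I_{\eta_c}^T(f,t)$ as a resolvent functional of $X_s$ at shifted spectral parameters plus an explicit martingale $I_2$ whose integrand concentrates by multi-resolvent local laws; Gaussianity then follows from the martingale representation theorem and the variance from the deterministic approximation of $\langle G^{z_1}E_iG^{z_2}E_j\rangle$ integrated in time (Section \ref{sec:mart}). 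Without this (or an equivalent device) the increment CLT in \eqref{eq:dec2} is unproven.

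Two further gaps. First, your treatment of $L_{N,1}$ asserts that $\E[L_N(f,t)\mid X_s]$ is "a single-time linear statistic of $X_s$'s spectrum with an effective test function"; it is not — after Girko it is a functional of the resolvents of the Hermitizations $(X_s-z)$ jointly in $z$ and $\eta$, shifted along the characteristic flow, so one cannot directly quote the Rider--Vir\'ag type CLT for linear eigenvalue statistics; one needs the CLT for traces of such resolvents (the paper imports \cite[Proposition 3.3]{CipErdSch2023}), and the identification of the kernel $K(\cdot,\cdot,2(t-s))$ and of the boundary term $\tfrac12\langle f,P_{2(t-s)}f\rangle_{H^{1/2}(\partial\D)}$ requires the explicit computation \eqref{eq:usefform}--\eqref{eq:poissonkernel}, which you assert rather than derive. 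Second, after truncating at $\eta_0\approx N^{-1-\delta_0}$ you never dispose of the window $[\eta_0,N^{-1+\delta_1}]$: its cross-covariances between distinct spectral points are exactly what requires the independence-of-small-singular-values input, so the hard-edge relaxation is needed there, not where you placed it.
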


\noindent The above theorem and Equation (\ref{eq:secmom}) naturally extend to an arbitrary fixed number of test functions and times.  In particular (\ref{eq:dec2}) 
identifies the limiting distribution of the field through its increments,  and indeed the proof relies on the decomposition (\ref{eq:dec1}).

Moreover,   for $\kappa_{4,t}=0$ and functions supported in the bulk of the spectrum one recovers the special case  (\ref{eqn:cov3}) stated in the introduction.  Theorem \ref{theo:mainres} also
generalizes the static result from \cite{CipErdSch2023},  as \eqref{eq:defcov} gives
\[
\Gamma(f,f,0,\kappa_4)=\frac{1}{4\pi}\int_\D \big|\nabla f\big|^2\, \dif z+\frac{1}{2}\lVert f\rVert_{{\rm H}^{1/2}(\partial\D)}^2+\kappa_4\left|\frac{1}{\pi}\int_\D f(z)\, \dif z-\frac{1}{2\pi}\int_0^{2\pi} f(e^{\ii\theta})\rd\theta\right|^2
\]
which agrees with \cite[Equation (2.6)]{CipErdSch2023}.  Note also that precise asymptotics are known for the centering term $\E\sum_{i=1}^N f(\sigma_i(t))$ in (\ref{eq:linstattime}), see \cite[Equation (2.8)]{CipErdSch2023}.

We now state the mesoscopic version of Thereom~\ref{theo:mainres}. For this purpose given a function $f$ we define its rescaled version around a point $v\in \C$ as
\begin{equation}
\label{eq:resfunc}
f_{v,a}(z)=f(N^a(z-v)), \qquad a\in (0,1/2).
\end{equation}
The following result shows that after a proper time rescaling,  a universal Gaussian limiting field emerges,  with a simpler covariance structure denoted by
\begin{equation}
\label{eq:defcovmeso}
\Gamma_v(f,g,\tau)=-\frac{1}{\pi^2}\int_{\C^2}\partial_{\overline z} f(z)\partial_{w} g(w)\partial_{z}\partial_{\overline w}\log\big(\tau(1-|v|^2)+ |z-w|^2\big)\, \dif z\dif w.
\end{equation}

\begin{theorem}[Mesoscopic CLT]
\label{theo:mainresmeso}
Fix $T\ge 0$, a small $c>0$, $|v|\le 1-c$, $a\in (0,1/2)$, $\Omega\subset\C$ an open set, and let $X_t$ be as above. Consider real valued $f,g\in H_0^{2+\varepsilon}(\Omega)$ and let  $s_a=T+ n^{-2a}s$, $t_a=T+ n^{-2a}t$ with $0\le s\le t \lesssim 1$  fixed.  Then $(L_N(f_{v,a},t_a), L_N(g_{v,a},s_a))$ converge jointly in distribution to centered Gaussian random variables  $L(f,t),L(g,s)$ with covariance 
\[ \E |L(f,t)|^2=\Gamma_{v}(f,f,0), \ \ \E L(f,t) L(g,s)=\Gamma_{v}(f,g,t-s), \ \ \ \E |L(g,s)|^2=\Gamma_{v}(g,g,0).\]
Additionally,  for $L_N(f_{v,a},t_a)$ we have decomposition and limiting increments similar to \eqref{eq:dec1},  \eqref{eq:dec2}.
\end{theorem}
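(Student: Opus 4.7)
The plan is to reduce Theorem \ref{theo:mainresmeso} to the technology developed for Theorem \ref{theo:mainres}, applied at the mesoscopic scale. By Girko's Hermitization, $L_N(f_{v,a},t_a)$ can be written up to centering as
\[
\frac{1}{4\pi}\int_{\C}\Delta f_{v,a}(z)\,\log|\det(X_{t_a}-z)|^2\,\dif z,
\]
and after rescaling $z=v+N^{-a}\widetilde z$ this becomes an integral of $\Delta f(\widetilde z)$ against a rescaled log-determinant in $\widetilde z$. The log-determinant is expressed via the Hermitization $Y_t(z)=\begin{pmatrix}0 & X_t-z\\ (X_t-z)^* & 0\end{pmatrix}$ as an integral in a spectral parameter of the Stieltjes transform of the singular values of $X_t-z$. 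These singular values evolve under a generalized Dyson Brownian motion whose driving noise is correlated across distinct values of $z$; the quantitative, optimal relaxation at the hard edge announced in the abstract provides the convergence and local laws needed uniformly in $z$ at the scale $N^{-a}$.

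For the joint law of $(L_N(f_{v,a},t_a),L_N(g_{v,a},s_a))$ and the increment decomposition, I would use the Brownian decomposition $X_{t_a}\stackrel{d}{=}e^{-(t_a-s_a)/2}X_{s_a}+\sqrt{1-e^{-(t_a-s_a)}}\,\widetilde X$ with $\widetilde X$ an independent Ginibre matrix, exactly as in \eqref{eq:dec1}--\eqref{eq:dec2}. The induced splitting $L_N(f_{v,a},t_a)=L_{N,1}+L_{N,2}$ with $L_{N,1}$ a function of $X_{s_a}$ alone identifies the joint Gaussian structure, with $L_{N,2}$ converging to a centered Gaussian independent of the limit of any observable of $X_{s_a}$, and limiting variances $\Gamma_v(f,f,2(t-s))$ and $\Gamma_v(f,f,0)-\Gamma_v(f,f,2(t-s))$ respectively. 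The parabolic matching $t_a-s_a=N^{-2a}(t-s)$ is precisely what resolves the logarithmic singularity of the kernel $K$ from \eqref{eqn:kernel} at the mesoscopic scale: for $z=v+N^{-a}\widetilde z$, $w=v+N^{-a}\widetilde w$, $\tau=N^{-2a}(t-s)$, a direct Taylor expansion gives
\[
(1-e^{-\tau})(1-|z|^2)+|z-e^{-\tau/2}w|^2 = N^{-2a}\bigl((t-s)(1-|v|^2)+|\widetilde z-\widetilde w|^2\bigr)+O(N^{-3a}),
\]
so up to an additive constant, which drops out against the Laplacian of the compactly supported $f$, the kernel in \eqref{eqn:kernel} converges to the one appearing in \eqref{eq:defcovmeso}. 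The boundary Sobolev term $\frac12\langle f,P_\tau g\rangle_{H^{1/2}(\partial\D)}$ and the fourth-cumulant term in \eqref{eq:defcov} both vanish in the limit because $f_{v,a}$ has support deep in the bulk for $N$ large, so that $\langle f_{v,a}\rangle_\D=\langle f_{v,a}\rangle_{\partial\D}=0$ and $f_{v,a}\equiv 0$ on $\partial\D$; this is why the cleaner $\Gamma_v$ emerges.

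The main obstacle is promoting the quantitative inputs underlying Theorem \ref{theo:mainres} from macroscopic to mesoscopic scales, uniformly in the spectral parameter $z$ at scale $N^{-a}$ and in time increments as short as $N^{-2a}$. Concretely, one needs hard-edge local laws and relaxation for the generalized DBM with noise correlated across $z$, together with optimal control of the overlaps driving the covariance structure, at these scales. Once this uniform control is available, Gaussianity follows by the same resolvent and characteristic-function arguments as in the macroscopic case, and the identification of $\Gamma_v$ reduces to the scaling computation above, with each global estimate replaced by its mesoscopic counterpart.
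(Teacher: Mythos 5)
Your overall architecture (Girko Hermitization, the generalized DBM for the Hermitized singular values, hard-edge relaxation, a splitting of $L_N(f_{v,a},t_a)$ into an $X_{s_a}$-measurable part plus an independent Gaussian increment, and the parabolic rescaling of the kernel) is aligned with the paper's, and your Taylor expansion showing that the macroscopic kernel \eqref{eqn:kernel} rescales to the kernel in \eqref{eq:defcovmeso}, with the boundary $H^{1/2}$ and fourth-cumulant terms disappearing, is correct as a consistency check. But as a proof there is a genuine gap: everything that is actually hard is deferred under ``promoting the quantitative inputs to mesoscopic scales.'' The mesoscopic theorem is not a rescaled corollary of Theorem~\ref{theo:mainres} --- the test functions $f_{v,a}$ and the time increments $N^{-2a}(t-s)$ are $N$-dependent, so the limit theorem must be re-established at scale $N^{-a}$, and the kernel expansion cannot substitute for that derivation. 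In the paper this is done by running the full three-regime splitting \eqref{eq:girkosplit} at mesoscopic scale: the microscopic regime requires the asymptotic independence of the smallest singular values of $X_t-z_1$, $X_t-z_2$ for $|z_1-z_2|\gg N^{-1/2}$ (Theorem~\ref{theo:mainthmdbm}, Proposition~\ref{prop:resindep}), and the mesoscopic regime requires Proposition~\ref{prop:multitCLTres}, whose proof uses the multi-resolvent local laws of the mesoscopic companion paper; none of these steps is supplied, or even sketched beyond naming them, in your proposal.

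Second, your increment decomposition via the distributional identity $X_{t_a}\stackrel{d}{=}e^{-(t_a-s_a)/2}X_{s_a}+\sqrt{1-e^{-(t_a-s_a)}}\,\widetilde X$ is not the mechanism of \eqref{eq:dec1}--\eqref{eq:dec2} in the paper, and as stated it gives no handle on why $L_{N,2}$ is asymptotically Gaussian and independent of $X_{s_a}$, nor on the variance $\Gamma_v(f,f,0)-\Gamma_v(f,f,2(t-s))$: you would still need a CLT for linear statistics of a deformed Ginibre matrix conditionally on $X_{s_a}$, uniformly at mesoscopic scales, which is the original difficulty in different clothing. The paper instead builds the decomposition pathwise along the Brownian flow: Proposition~\ref{pro:decomp} integrates the resolvent along the characteristics \eqref{eq:char1}, so that $I_1$ is an explicit resolvent integral of $X_s$ at shifted spectral parameters and $I_2$ is an explicit stochastic integral in the fresh noise, whose Gaussianity follows from the martingale representation theorem and whose variance is computed from the multi-resolvent local law (Section~\ref{sec:mart}). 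This conversion of correlations at two times into correlations at one time and two spectral parameters is the key idea of the proof, and it is the piece missing from your plan.
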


Note that the covariance of the limiting field on mesoscopic scales in \eqref{eq:defcovmeso} does not depend on the fourth cumulant of the entries of $X_t$,  contrary to the macroscopic case \eqref{eq:defcov}.

\begin{remark}[Mixing and distance to the edge] From the prefactor $1-|v|^2$ in (\ref{eq:defcovmeso}),  relaxation of the dynamics takes longer close to the edge of the spectrum. This is
reminiscent of \cite[(1.16)]{BouDub2020} which states that at equilibrium individual eigenvalues exhibit diffusive scaling, with quadratic variation proportional to the distance to the edge.
\end{remark}

\begin{remark}[Averaging by stereographic projection]\label{rem:stereo} The covariance (\ref{eq:defcovmeso}) can be made explicit:
\begin{equation*}
\Gamma_v(f,g,\tau)=\frac{1}{\pi}\int_{\C^2}\partial_{\overline z} f(z)\partial_{w} g(w) q_r(z-w) \dif z\dif w, \ \ \ q_r(z)=\frac{r}{\pi(r+|z|^2)^2},\ \ \ r=\tau(1-|v|^2).
\end{equation*}
We have  $q_r(z)\rd z\to\delta_0$ as $r\to 0$, so one recovers (\ref{eqn:stat2d}).  Moreover $q_r(z)\rd z$ is the pushforward,  by stereographic projection,  of the uniform probability measure on $\mathscr{S}_{r}$  (the 2-sphere with center $0$ and radius $r$).  Denoting $(Q_r f)(z)=\int f(z-w)q_r(w)\rd w$ the averaging by this stereographic kernel,  
 Theorem \ref{theo:mainresmeso} means
\[
{\rm Cov}(L(f,s), L(g,t))=\frac{1}{\pi^2}\int_{\C}\partial_{\overline z} f(z)\partial_{z} (Q_r g)(z) \dif z={\rm Cov}(L_f,L_{Q_{r} g}),\ \ \ r=|t-s|(1-|v|^2),
\]
relating the covariance of  the time-dependent limiting field in  Theorem \ref{theo:mainresmeso}, to the static field $(L_f)$ from (\ref{eqn:stat2d}).

However,  $(Q_r)_{r\geq 0}$ is not a semigroup ($Q_{r_1+r_2}\neq Q_{r_1}Q_{r_2}$), so that the limiting field $(L(f,s))_{f,s}$ is not Markovian with respect to time,  see Section \ref{eqn:overlaps}.
\end{remark}

\begin{remark}[Existence of Gaussian fields]\label{rem:existence}
The mesoscopic covariance $\Gamma_v(f,g,\tau)$ from Theorem \ref{theo:mainresmeso} can be directly proved to be positive definite, i.e.  
$
\sum_{1\leq i,\,j\leq m}\Gamma_v (f_i,f_j,|t_i-t_j|)\geq 0
$
for any $m\geq 1$,  functions $f_i$ and times $t_i$.  For $m=2$ this follows easily from
Schwarz's and then Young's inequality:
\[
\|f\|^2+\|g\|^2+2{\Re}\langle f,Q_r g\rangle\geq \|f\|^2+\|g\|^2-2\|f\|\cdot \|Q_r g\|\geq \|f\|^2+\|g\|^2-2\|f\|\cdot \|g\|\geq 0
\]
for any $f,g$.  For general $m$ a Fourier-based proof will be given in Section \ref{sec:Fourier}.  For (\ref{eq:defcov}),  positive definiteness follows from Theorem \ref{theo:mainres} but a direct proof is unclear.
\end{remark}

\begin{remark}[Restriction to the cylinder]  In Theorem \ref{theo:mainres},  dynamical fluctuations at the edge of the spectrum are characterized  by a Gaussian field with covariance $\langle f,P_{|t-s|} g \rangle_{H^{1/2}(\partial\mathbb{D})}$.  Remarkably,  this exact limiting field on the cylinder $\partial\mathbb{D}\times \mathbb{R}$ also describes fluctuations of the unitary Brownian motion \cite{Spo1986,Spo1998,BouFal2022}: at equilibrium when $N\to\infty$,
$$
{\rm Cov}({\rm Tr}f(U_N(s)),{\rm Tr}g(U_N(t))\to \langle f,P_{|t-s|} g \rangle_{H^{1/2}(\partial\mathbb{D})}.
$$
This dynamically extends the  equality between macroscopic fluctuations of eigenvalues for unitary matrices (the right-hand side of (\ref{eqn:1d}) is also $\langle f,g\rangle_{{\rm H}^{1/2}(\partial\mathbb{D})}$) and for  the edge of the Ginibre ensemble  (when the supports of $f,g$ overlap the boundary,  
the additional term 
$\frac{1}{2}\langle f,g \rangle_{H^{1/2}(\partial\mathbb{D})}$ needs to be added to the right-hand side of  (\ref{eqn:stat2d}), see \cite{RidVir2007}).
\end{remark}

\begin{remark}[Uniformity in the parameters]
We stated Theorem~\ref{theo:mainresmeso} only in the bulk regime $|v|<1$, however a similar statement holds when $|v|=1$ if $a\le \epsilon$, for some very small $N$--independent $\epsilon>0$. 
Additionally, for the sake of clarity we stated Theorems~\ref{theo:mainres}--\ref{theo:mainresmeso} only for two $N$--independent times $s,t$, but the proof also gives an analogous result for multiple,  $N$-dependent, times.
\end{remark}

\begin{remark}[Coupling at equilibrium]
By inspecting the proof of the main result,  in the special equilibrium case,  it is possible to upgrade the weak convergence in the following sense.
There exists a coupling between the fields $(L_N)_{f,t}$ and $(L)_{f,t}$,  with $L_N(f,t)$ and $L(f,t)$ being both measurable function of $(B_s)_{-\infty< s\leq t}$, 
such that $(L)_{f,t}$ is a Gaussian field with the covariance (\ref{eqn:cov3}) and $L_N(f,t)-L(f,t)$ converging to 0 in probability as $N\to\infty$, for any fixed $f,t$.
\end{remark}

\subsection{Eigenvector overlaps.}\label{eqn:overlaps}\ 
The limiting field from Theorem \ref{theo:mainresmeso}  is non--Markovian,  as stated below,  a fact due to the influence of the eigenbasis on the dynamics (\ref{eq:OU}),  as
the martingales $M_i$ have joint bracket
$
\rd\langle M_i,M_j\rangle_t=\mathscr{O}_{ij}(t)\rd t$,  where we remind that  $\mathscr{O}_{ij}= (R_j^* R_i)(L_j^* L_i),
$
with the left and right eigenvectors $\bL,\bRR$ normalized with
$
L_i^tR_j=\delta_{ij}.
$

\begin{proposition}\label{prop:Markov} The random field $(L(f,s))$ from Theorem  \ref{theo:mainresmeso} is not Markovian with respect to the time variable.  More precisely,  
denoting $\Sigma_{s-}=\sigma(\{L(g,u),u\leq s,g\in\mathscr{C}^\infty_c\})$ and $\Sigma_{s}=\sigma(\{L(g,s),g\in\mathscr{C}^\infty_c\})$,  for any $s<t$ there exists
$f\in \mathscr{C}^\infty_c$ such that
$$
\mathbb{P}\left(\E\left[L(f,t)\mid \Sigma_{s-}\right]=\E\left[L(f,t)\mid \Sigma_{s}\right] \right)<1.
$$
\end{proposition}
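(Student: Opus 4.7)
The plan is to exploit the Gaussian structure of $(L(f,s))_{f,s}$ and reduce non-Markovianity to the failure of $(Q_r)_{r\geq 0}$ from Remark \ref{rem:stereo} to form a convolution semigroup. Since the field is centered Gaussian, Markovianity at time $s$ amounts to ${\rm L}^2$-orthogonality of $L(f,t)$ to $\Sigma_{s-}\ominus\Sigma_s$; failure therefore reduces to producing $u<s<t$ and $f,g\in\mathscr{C}^\infty_c$ with
\[
\Cov\bigl(L(f,t),\,L(g,u)-P_{\Sigma_s}L(g,u)\bigr)\neq 0,
\]
which forces $\E[L(f,t)\mid\Sigma_{s-}]-\E[L(f,t)\mid\Sigma_s]$ to be a non-degenerate centered Gaussian.

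The first step is to compute the orthogonal projection $P_{\Sigma_s}L(g,u)$ explicitly. Using Remark \ref{rem:stereo}, the equal-time covariance operator is $\mathcal{K}=-\Delta/(4\pi)$, independent of $s$, and injective on $\mathscr{C}_c^\infty(\C)$ by the maximum principle. Since $Q_r$ is a translation-invariant convolution and hence commutes with $\mathcal{K}$, the defining identity $\Cov(P_{\Sigma_s}L(g,u),L(\phi,s))=\Cov(L(g,u),L(\phi,s))$ of the projection yields $P_{\Sigma_s}L(g,u)=L(Q_{r(s-u)}g,\,s)$, with $r(\tau):=\tau(1-|v|^2)$. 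Substituting, Markovianity becomes $\langle \mathcal{K}(Q_{r(t-u)}-Q_{r(t-s)}Q_{r(s-u)})f,\,g\rangle=0$ for all $f,g$, which by injectivity of $\mathcal{K}$ is the operator semigroup identity
\[
Q_{r(t-u)}=Q_{r(t-s)}\,Q_{r(s-u)}.
\]

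The final step rules out this semigroup property for any $u<s<t$. Since $Q_r$ is convolution by the radial kernel $q_r(z)=r/(\pi(r+|z|^2)^2)$, a direct computation gives $\widehat{q_r}(\xi)=\sqrt{r}\,|\xi|\,K_1(\sqrt{r}\,|\xi|)$. As $|\xi|\to\infty$, the modified Bessel asymptotics $K_1(z)\sim\sqrt{\pi/(2z)}\,e^{-z}$ imply that $\widehat{q_{a+b}}(\xi)$ decays like $e^{-\sqrt{a+b}|\xi|}$ while $\widehat{q_a}(\xi)\,\widehat{q_b}(\xi)$ decays like $e^{-(\sqrt a+\sqrt b)|\xi|}$, with $\sqrt{a+b}<\sqrt a+\sqrt b$ whenever $a,b>0$. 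Hence the semigroup identity fails for every $a=r(t-s),b=r(s-u)>0$: one may pick $f\in\mathscr{C}^\infty_c$ with Fourier transform concentrated at high frequency, and then $g$ a compactly supported smoothing of $(Q_{a+b}-Q_aQ_b)f$, to make the covariance in the first display non-zero. The only subtlety I anticipate is the functional-analytic handling of the unbounded operator $\mathcal{K}$ when computing the projection; this is circumvented by working with bilinear covariance forms on $\mathscr{C}_c^\infty$ throughout, using only injectivity of $\mathcal{K}$ there and that $Q_r$ preserves smoothness.
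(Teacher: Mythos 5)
Your proposal is correct and follows essentially the paper's own route: identify the conditional expectation given time-$s$ data through the stereographic-kernel covariance of Remark \ref{rem:stereo} (projection onto the time-$s$ Gaussian space is $L(Q_\cdot\,\cdot\,,s)$), reduce non-Markovianity to the failure of the semigroup identity $Q_{a+b}=Q_aQ_b$, and refute that identity via the Fourier transform $\widehat{q}_r(\xi)=\sqrt{r}\,|\xi|K_1(\sqrt{r}\,|\xi|)$. The only differences are cosmetic: you organize the reduction through Gaussian ${\rm L}^2$-orthogonality rather than the paper's tower-property contradiction, and you disprove the resulting Bessel identity by large-$|\xi|$ exponential decay rates (strict subadditivity of the square root) instead of the paper's small-$r_1$ Taylor comparison, both of which are valid.
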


This non-Markovianity means that the limiting field keeps memory of some statistics on eigenvectors. This is for example manifested through the following consequence of Theorem  \ref{theo:mainresmeso}, on space-time correlations between overlaps,  at equilibrium:  
While eigenvalues of Ginibre matrices show exponential decay of correlations in space,  the overlaps exhibit polynomial (quartic) decay,  even in space-time.  

\begin{corollary}\label{cor:overlaps} Consider the dynamics (\ref{eq:OU}) at equilibrium, $|v|<1$ and some $a\in(0,1/2)$.
Then for  $|t-s|^{1/2},|\sigma_i(s)-v|$ and $|\sigma_j(t)-v|$ of order $N^{-a}$,  we have (denoting $c_v=1-|v|^2$)
\begin{equation}\label{eqn:conclusionCorr1}
{\rm Cov}\Big(\mathscr{O}_{ii}(s),\mathscr{O}_{jj}(t)\Big)\sim\frac{c_v^2}{(c_v|t-s|+|\sigma_i(s)-\sigma_j(t)|^2)^2}
\end{equation}
in the following distributional sense:
For any $\tilde s_1<\tilde s_2<\tilde t_1<\tilde t_2$ fixed,  $s_i=N^{-2a}\tilde s_i$,   $t_i=N^{-2a}\tilde t_i$  and $f_{v,a},g_{v,a}$ as in (\ref{eq:resfunc}),  for some fixed $\e>0$ we have
\begin{align}\begin{split}
&\phantom{{}={}}\int_{[s_1,s_2]\times[t_1,t_2]}\E\Big[\sum_{i,j} f_{v,a}(\sigma_i(s))g_{v,a}(\sigma_j(t))\Big(\frac{\mathscr{O}_{ii}(s)}{N}-c_v\Big)\Big(\frac{\mathscr{O}_{jj}(t)}{N}-c_v\Big)\Big]\rd s\rd t\\
&=
\int_{[s_1,s_2]\times[t_1,t_2]}
\int_{\mathbb{C}^2}  f_{v,a}(z)g_{v,a}(w) \frac{ c_v^2}{(c_v|t-s|+|z-w|^2)^2}\frac{\rd z\rd w}{\pi^2}
\rd s\rd t+\OO(N^{-4a-\e}).\label{eqn:DynOve}
\end{split}\end{align}
\end{corollary}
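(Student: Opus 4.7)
My plan is to extract the overlap correlation in (\ref{eqn:DynOve}) from Theorem~\ref{theo:mainresmeso} by inverting an Ito decomposition. The starting point is that for the dynamics (\ref{eq:OU}), one has $\dif\sigma_i\,\dif\bar\sigma_i=\mathscr{O}_{ii}(t)\,\dif t/N$, a direct consequence of the joint bracket $\dif\langle M_i,\bar M_j\rangle_t=\mathscr{O}_{ij}(t)\,\dif t$. Applied to any smooth test function $h$, Ito's formula therefore yields
\[
L_N(h,s_2)-L_N(h,s_1)=\mathcal{M}_h(s_1,s_2)+\mathcal{S}_h(s_1,s_2)+\mathcal{I}_h(s_1,s_2),
\]
with martingale increment $\mathcal{M}_h$, scaling drift $\mathcal{S}_h=-\tfrac12\int_{s_1}^{s_2}L_N(zh_z+\bar z h_{\bar z},u)\,\dif u$ coming from the Ornstein--Uhlenbeck term, and the key Ito correction
\[
\mathcal{I}_h(s_1,s_2)=\int_{s_1}^{s_2}\frac{1}{N}\sum_i(\partial_z\partial_{\bar z}h)(\sigma_i(u))\,\mathscr{O}_{ii}(u)\,\dif u-\E[\cdots],
\]
which is precisely a time-integrated weighted sum of diagonal overlaps.

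Next, I would choose $\tilde f,\tilde g$ with $\partial_z\partial_{\bar z}\tilde f=f_{v,a}$ and $\partial_z\partial_{\bar z}\tilde g=g_{v,a}$, for instance the logarithmic potential $\tilde f(z)=\frac{1}{\pi}\int f_{v,a}(w)\log|z-w|\,\dif w$ smoothly truncated outside a slightly larger mesoscopic neighbourhood of $v$ (so that $\tilde f$ meets the admissibility hypotheses of Theorem~\ref{theo:mainresmeso}). With this choice, the LHS of (\ref{eqn:DynOve}) equals, up to the $c_v$--hydrostatic subtraction corresponding to the mean-field $\E[\mathscr{O}_{ii}/N\mid\sigma_i=v]\to c_v$, the covariance
\[
\Cov\!\pB{\,\mathcal{I}_{\tilde f}(s_1,s_2)-c_v\!\int_{s_1}^{s_2}\!L_N(f_{v,a},u)\,\dif u,\ \mathcal{I}_{\tilde g}(t_1,t_2)-c_v\!\int_{t_1}^{t_2}\!L_N(g_{v,a},u)\,\dif u\,}.
\]

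The third step is to compute this covariance at leading order by substituting $\mathcal{I}_{\tilde h}=(L_N(\tilde h,b)-L_N(\tilde h,a))-\mathcal{M}_{\tilde h}-\mathcal{S}_{\tilde h}$ and expanding. For the disjoint intervals $s_1<s_2<t_1<t_2$ several structural cancellations occur: $\Cov(\mathcal{M}_{\tilde f},\mathcal{M}_{\tilde g})=0$ by orthogonality of martingale increments; $\Cov(\mathcal{S}_{\tilde f}+\mathcal{I}_{\tilde f},\mathcal{M}_{\tilde g})=0$ by conditioning on $\mathscr{F}_{t_1}$ (the drift being $\mathscr{F}_{t_1}$-measurable and $\mathcal{M}_{\tilde g}$ having vanishing conditional mean); and $\mathcal{S}_{\tilde h}$ is subleading on mesoscopic scales, contributing only $o(N^{-4a})$ through Sobolev estimates on $z\tilde h_z+\bar z \tilde h_{\bar z}$. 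The surviving pieces, together with the time-integrals of $L_N(f_{v,a},u)$, reduce to covariances of linear statistics at various time pairs, which Theorem~\ref{theo:mainresmeso} identifies at leading order with evaluations of $\Gamma_v$ from (\ref{eq:defcovmeso}). Integration by parts transferring $\partial_z\partial_{\bar z}$ onto the logarithmic factor in $\Gamma_v$, together with the identity
\[
\partial_s\partial_t\log\!\pb{c_v|t-s|+|z-w|^2}=\frac{c_v^2}{\pb{c_v|t-s|+|z-w|^2}^2},
\]
then collapses the result to the RHS of (\ref{eqn:DynOve}).

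The main obstacle will be the delicate bookkeeping of the remaining cross term $\Cov(\mathcal{M}_{\tilde f}(s_1,s_2),\mathcal{I}_{\tilde g}(t_1,t_2))$, which does not vanish by conditioning on $\mathscr{F}_{s_2}$ alone and must instead be evaluated through $\E[\mathcal{I}_{\tilde g}(t_1,t_2)\mid\mathscr{F}_{s_2}]$, i.e.\ through the short-time propagation of the drift forward from the state at $s_2$. I plan to handle this by exploiting the splitting $L_N(\tilde g,t)=L_{N,1}(\tilde g,s_2,t)+L_{N,2}(\tilde g,s_2,t)$ provided by Theorem~\ref{theo:mainres}: the $L_{N,2}$ part is asymptotically independent of $\mathscr{F}_{s_2}$ and thus contributes zero, while the $\mathscr{F}_{s_2}$-measurable piece $L_{N,1}$ feeds into the cross term via its explicit Gaussian covariance with $\mathcal{M}_{\tilde f}$, which is itself controlled by the bracket identity $\dif\langle M_i,\bar M_j\rangle_t = \mathscr{O}_{ij}(t)\,\dif t$. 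Managing the logarithmic tails of the anti-Laplacian $\tilde f$, and verifying that the mean of the bracketed quantity above is $o(N^{-2a})$ so that the LHS of (\ref{eqn:DynOve}) indeed reduces to the stated covariance, add technical layers but no essential new difficulty.
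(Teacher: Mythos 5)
Your starting point coincides with the paper's: apply It\^o's formula to the logarithmic potential of the test function so that the second--order term $\partial_z\partial_{\bar z}\tilde f\cdot\mathscr{O}_{ii}/N$ exposes the diagonal overlaps (this is exactly \eqref{eqn:Ito}), feed in the covariance form of Theorem~\ref{theo:mainresmeso} for log--potential test functions (this is \eqref{eqn:covarStart}), kill the Ornstein--Uhlenbeck drift by rigidity, and use $\partial_s\partial_t\log(c_v|t-s|+|z-w|^2)=c_v^2/(c_v|t-s|+|z-w|^2)^2$ to produce the quartic kernel. The divergence, and the gap, is in how you propose to obtain \emph{two} overlap factors at \emph{two different} times. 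Conditioning only kills cross terms in one time direction: every term involving $\mathcal{M}_{\tilde g}(t_1,t_2)$ vanishes, but $\Cov\bigl(\mathcal{M}_{\tilde f}(s_1,s_2),\mathcal{I}_{\tilde g}(t_1,t_2)\bigr)$ does not, and it is generically of the same order as the main term (the fresh noise in $[s_1,s_2]$ is precisely what generates the time decay of the field, so the early martingale is order-one correlated with later functionals). Your attempt to close the computation by expanding $\mathcal{I}=\Delta L-\mathcal{M}-\mathcal{S}$ and invoking the multi-time CLT is circular: the CLT only gives covariances of the full increments $\Delta L$, i.e.\ of the \emph{sums} $\mathcal{M}+\mathcal{S}+\mathcal{I}$, so isolating $\Cov(\mathcal{M}_{\tilde f},\cdot)$ requires exactly the unknown quantity $\Cov(\mathcal{I}_{\tilde f},\cdot)$ you are trying to compute. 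The fix you suggest — the splitting $L_N=L_{N,1}+L_{N,2}$ of Theorem~\ref{theo:mainres} — does not supply what is needed: that theorem gives limiting variances and asymptotic independence of the two pieces, not the joint covariance of the microscopic martingale $\mathcal{M}_{\tilde f}$ (which involves the off-diagonal overlaps through $\nabla\tilde f(\sigma_i)\,\dif M_i$) with $L_{N,1}(\tilde g,s_2,\cdot)$, nor a quantitative control of $\E[\mathcal{I}_{\tilde g}(t_1,t_2)\mid\mathcal{F}_{s_2}]$, which would amount to an expectation of overlap statistics along the flow — an object of essentially the same difficulty as the corollary itself.

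The paper avoids this obstruction by a different mechanism: time reversibility at equilibrium (Subsection~\ref{subsec:revers}). The forward argument only yields $\E\bigl[L_N(F,s_2)\sum_k g(\sigma_k(t))\mathscr{O}_{kk}(t)/(4N)\bigr]$ for $t>s_2$, i.e.\ the overlap at the \emph{later} time paired with a past-measurable linear statistic (so the martingale drops by conditioning and the drift by \eqref{eqn:rigid}); reversibility is then used to flip which time carries the overlap, after which a second application of \eqref{eqn:Ito} to $L_N(F,t_2)-L_N(F,t_1)$ produces the second overlap factor. This is exactly why the paper stresses that the quartic space-time decay is accessible only at equilibrium. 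Without reversibility (or an independent resolvent-level computation of the martingale-versus-future covariance, in the spirit of Proposition~\ref{pro:decomp}), your expansion cannot be closed, so as written the proposal has a genuine gap at its central step. Secondary points — the truncation of the logarithmic tail of $\tilde f$ (the paper instead modifies Girko's formula), and the verification that the hydrostatic $c_v$-subtraction and the means contribute only $\OO(N^{-4a-\e})$ — would also need more care than "no essential new difficulty", but they are not the decisive issue.
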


For the relevant application of the above result,   $f,g$ are nonnegative with disjoint supports, so that the  integral on the right-hand side is $\gtrsim N^{-4a}$: The above corollary identifies the  asymptotics of correlations in distributional sense on any mesoscopic scale.

\begin{remark}[Coherence with static correlations]
For the Ginibre ensemble,  pointwise correlations of eigenvector overlaps are known: From  \cite[Equation (1.12)]{BouDub2020}, when $z_1,z_2$ are in the bulk of the spectrum at distance of order $N^{-a}$ ($a\in(0,1/2)$), we have
$$
\E\left(\mathscr{O}_{11}\mathscr{O}_{22}\mid \sigma_1=z_1,\sigma_2=z_2\right)= N^2c_{z_1}c_{z_2}\left(1+\frac{1}{N^2|z_1-z_2|^4}\right)\left(1+\OO(N^{-2a+\e})\right),
$$
and by mimicking the proof of \cite[Equation (1.9)]{BouDub2020} we also have
$
\E\left(\mathscr{O}_{11}\mid \sigma_1=z_1,\sigma_2=z_2\right)= Nc_{z_1}\left(1+\OO(N^{-1+\e})\right)
$.
Both equations together give
$$
\E\left((\mathscr{O}_{11}-Nc_{z_1})(\mathscr{O}_{22}-Nc_{z_1})\mid \sigma_1=z_1,\sigma_2=z_2\right)= \frac{c_{z_1}c_{z_2}}{|z_1-z_2|^4}+\OO(N^{2-2a}),
$$
which agrees with (\ref{eqn:conclusionCorr1}) when $s=t$ (at least for $a>1/3$, which implies $|z_1-z_2|^{-4}\gg N^{2-2a}$).
\end{remark}

\begin{remark}[Microscopic separation of eigenvalues]
In the Ginibre case,  \cite{BouDub2020} also provides correlation asymptotics when $\sigma_1-\sigma_2\asymp N^{-1/2}$.  It remains an interesting problem to understand the joint distribution of overlaps
in the more general dynamical situation, when $d((\sigma_1,s),(\sigma_2,t))\asymp N^{-1/2}$.
\end{remark}

\subsection{Proof ideas.}\  
The starting point of the analysis of our non-Hermitian ensembles is Girko's Hermitization method,  in a time-dependent setting,  which allows to decompose the linear statistics $L_N(f,t)$ as a sum
from submicroscopic, microscopic, and mesoscopic scales (see Equation (\ref{eq:girkosplit})) for a family of Hermitian spectra.  In the static case,  the submicroscopic contributions have been known to be negligible since seminal lower bounds on smallest singular values (see e.g. \cite{TaoVu2010}),   and these estimates apply equally to our dynamical setting.
In the static case,  only recently the fluctuations from the  microscopic and mesoscopic scales have been evaluated in \cite{CipErdSch2023}, which is an important inspiration for our work.

Compared to \cite{CipErdSch2023},  to treat dynamics the novelties of our proofs are first technical on the microscopic scale,  with a very general statement on independence of small singular values (Theorem \ref{theo:mainthmdbm}), and then conceptual on the mesoscopic scale,  with direct  emergence of the limiting Gaussian field $L(f,t)$ from the noise in the Dyson Brownian motion (Section \ref{sec:resolvents}).\\

\noindent To handle microscopic scales, \cite{CipErdSch2023} proved that the smallest  singular values of Hermitized matrices corresponding to distant spectral points are independent,  by technically difficult variants of the dynamical method \cite{LanSosYau2019}.  
This independence result from \cite{CipErdSch2023} would be sufficient for that part of the proof of our main theorems,  but we present a self-contained general Theorem~\ref{theo:mainthmdbm}, which  gives this independence  in much greater generality,  under the assumptions of some weak local law
instead of rigidity of the particles,  and natural bounds on eigenvector overlaps, which are model-dependent and follow in our case from important estimates in \cite{CipErdSch2023}.   This theorem could be applied to a wide range of non-Hermitian models with non-trivial mean and a variance profile,  or directed graphs.   Furthermore, this method has potential to apply to many problems involving eigenvalues statistics close to a hard edge.

Like \cite{CipErdSch2023}, our proof of this independence of singular values proceeds by dynamics, but it is considerably
shorter and more direct. 
In particular,   \cite{CipErdSch2023} built on relaxation of the Dyson Brownian motion at the hard edge proved in \cite{chelopatto2019}. 
With a different method,  our main relaxation result (Proposition \ref{p:universality})
improves on this local ergodicity from \cite{chelopatto2019} in two directions: 
\begin{enumerate}[(1)]
\item It initiates the study of a natural extension of the Dyson Brownian motion, as it covers 
relaxation  for arbitrary,  correlated martingales driving individual particles,  see (\ref{e:sprocess}) and the minimal assumption (\ref{eqn:bracketBound}). Although,  currently,  this new correlated setting 
does not  clearly connect to random matrices,
dynamics with general colored noise is of general interest in stochastic (partial) differential equations, see for example Section 7.2.4 in the classical book \cite{DaPZab} for general theory, and 
\cite{DaPGat} for the stochastic Burgers equation.
Proposition \ref{p:universality}  provides an example of universal local convergence (i.e.  independent on the initial condition),  for singular SDE coefficients and essentially arbitrary noise structure in space (but white in time).   

\item We obtain the following submicroscopic error estimates (up to subpolynomial terms in $N$) for the relaxation under the sole assumptions of an initial local law  and arbitrary noise. In particular
($s_1$ and $s_1'$ are the particles closest to the hard edge)
\[
|s_1(t)-s'_1(t)|\leq \frac{1}{N\cdot\sqrt{Nt}},
\]
for any $\bms(0), \bms'(0)$  satisfying a weak local law with respect to the same deterministic density, and $\bms(t),\bms'(t)$ following the extended (correlated) Dyson Brownian Motion dynamics.
We refer to (\ref{eqn:Opt}) for an explanation of this error term in this very general setting.
When the underlying Brownian motions are independent and the initial conditions satisfy a stronger initial rigidity assumption,   relaxation holds with a smaller error term as explained in Remark \ref{rem:stronger}.
\end{enumerate} 

A key tool for the above relaxation estimate is an observable introduced in \cite{Bou2022}, defined in (\ref{eqn:ft}).  
However the proof of (2) is considerably simpler than the optimal relaxation of the eigenvalue gaps in the bulk of the spectrum  \cite{Bou2022},  as it bypasses a decomposition into short and long range interactions that has been customary in the dynamics approach \cite{ErdYau2015,BouYau2017,LanSosYau2019}, and does not require any maximum principle. Our proof of the hard edge relaxation is therefore closer in spirit to the proof of soft edge relaxation given in \cite{Bou2022}: It demonstrates that the sole analysis of a key averaged observable gives microscopic statistics at the hard edge,  by fully exploiting the symmetry of the particles.  After \cite{Bou2022} the possibility of such a proof at the hard edge was unclear,  because the interparticle distance $N^{-1}$ at the hard edge and in the bulk coincide. 

Moreover, in the considered setting of arbitrary correlations for the noise, the observable from \cite{Bou2022} is still the key to quantitative relaxation, although meaningful changes are needed compared to the case of independent noises: in \cite{Bou2022}, rigidity (a local law with error $(N\eta)^{-1}$) is preserved through the dynamics, which eventually yields the small error term $\frac{1}{N^2 t}$ for the relaxation of gaps in the bulk of the spectrum, while in our setting only a weaker local law (with error $(N\eta)^{-1/2}$, see Proposition~\ref{prop:LocalLaw}) is preserved, which is responsible for the weaker bound $\frac{1}{N\cdot\sqrt{N t}}$ at the hard edge. This deteriorated bound would also be present if arbitrary martingales were considered in \cite{Bou2022}, due, for example, to a larger bracket term in the proof of \cite[Lemma 3.10]{Bou2022}. \\

\noindent Regarding mesoscopic scales,  our method is also fully dynamical by generating space-time correlations of linear statistics from a family of coupled dynamics on resolvents, while \cite{CipErdSch2023} proceeded by cumulant expansions.  In our proof, Gaussianity of the limiting fluctuations easily follows from an explicit writing of the fluctuations as stochastic integrals,  with highly concentrated integrands, see Proposition~\ref{pro:decomp}.  In particular,  we unveil the (space--time) logarithmic correlation of the linear statistics as the cumulative effect of the Brownian dynamics \eqref{eq:OU}. For any $s<t$, this is achieved by splitting the randomness of $X_t$ into two parts: i) the randomness of $X_s$, and ii) the fresh randomness introduced by $\dif B$ to go from $X_s$ to $X_t$. More precisely,  for any $\eta_r>0$ such that $\eta_t=N^{-1}$ we decompose
\begin{equation*}
\begin{split}
\log\big|\mathrm{det}(X_t-w)\big|&\approx \frac{1}{2}  \log \det \left[ |X_t -w|^2 + N^{-1}\right] \\
&=\frac{1}{2}  \log \det \left[ |X_s -w|^2 + \eta_s^2\right] + \frac{1}{2} \int_s^t \dif \left(  \log \det \left[ |X_r -w|^2 + \eta_r^2\right] \right).
\end{split}
\end{equation*}
For a specific choice  $\eta_s\approx N^{-1}+(t-s)$, the above integral becomes purely stochastic (see  Proposition~\ref{pro:decomp} for more details), so that we reduced the correlations at different times to correlations at the same time but at different spectral parameters:
\begin{align*}
\mathrm{Cov}\big[\log\big|\mathrm{det}(X_t-w)\big|, \, \log\big|\mathrm{det}(X_s-z)|\big]
&\approx\mathrm{Cov}\left[\frac{1}{2}  \log \det \left[ |X_s -w|^2 + \eta_s^2\right], \frac{1}{2}  \log \det \left[ |X_s -z|^2 + N^{-1}\right]  \right] \\
&\approx - \log d\big((z,s), (w,t)\big),
\end{align*}
with $d\big((z,s), (w,t)\big)$ being defined in \eqref{eq:defpardist}; see \eqref{eq:usefform}--\eqref{eq:finanswer} for detailed calculations giving rise to this parabolic distance.

The method of characteristics is a powerful technique to characterize fluctuations of the spectrum of random matrices.    
We refer for example to \cite[Section 4]{HuaLan2019} for its application to $\beta$-ensembles and to \cite[Section 7]{LanSos2022} for  Wigner matrices.
In this work,  we extend its scope to fluctuations in the context of  time-dependent and non-Hermitian ensembles,  identifying correlation at different times as correlation for the same matrix but at different spectral parameters.\\

\noindent A surprising aspect of Theorems~\ref{theo:mainres}--\ref{theo:mainresmeso} is the remarkably simple limiting space-time logarithmic correlation,  despite its emergence from the superposition of the
complicated covariances from a family of resolvents (from the Hermitization).  
This simplicity can be interpreted, in Corollary
\ref{cor:overlaps},  as a statement on space-time correlations between eigenvector overlaps.  Indeed, Equation (\ref{eqn:DynOve}) follows from the combination of Theorem \ref{theo:mainresmeso} and the non-Hermitian dynamics (\ref{eq:OU}), at equilibrium.  Out of equilibrium,  such a quartic decay of correlations seems out of reach: To exhibit  overlaps at two distinct times, we need to consider not only forward but also backward dynamics, which are tractable only at equilibrium (see Subsection \ref{subsec:revers}). \\

\noindent Finally,  we note that in a recent breakthrough \cite{MalOsm2023},  universality for non-Hermitian matrices on the finer local scale was obtained through partial Schur transform and supersymmetry. It is unclear whether these methods may apply to multi-time fluctuations as treated in our work.

\subsection{Outline.}

\, The rest of the paper is organized as follows: In Section~\ref{sec:mainres} we prove our main results Theorems~\ref{theo:mainres}--\ref{theo:mainresmeso}. In Section~\ref{sec:Indep} we present a simple, self--contained, decorrelation argument for the small singular values of $X-z_1$, $X-z_2$ when $z_1,z_2$ are sufficiently far away from each other (i.e. $|z_1-z_2|\gg N^{-1/2}$). Finally, in Section~\ref{sec:resolvents} we analyze the resolvent in Girko's formula  along the stochastic advection equation, which enables us to identify the Gaussian randomness of the linear statistics as the result of time increments.\\

\subsection{Notations.}

\, For two quantities $X$ and $Y$ depending on $N$, 
we write that $X = \OO(Y )$ or $X\lesssim Y$ if there exists some universal constant $C>0$ such
that $|X| \leq C Y$ . We write $X = \oo(Y )$, or $X \ll Y$ if the ratio $|X|/Y\rightarrow \infty$ as $N$ goes to infinity. We write
$X\asymp Y$ if there exists a universal constant $C>0$ such that $ Y/C \leq |X| \leq  C Y$. We denote $\llbracket a,b\rrbracket = [a,b]\cap\bZ$ and $\llbracket n\rrbracket = \llbracket 1, n\rrbracket$. Furthermore, for a matrix $\C^{d\times d}$ we denote its normalized trace by $\langle A\rangle:= d^{-1}\mathrm{Tr}[A]$.  
Because of the Hermitization,  in this paper $d=2N$.

Finally,  we will write that a sequence of events $(A_N)_{N\geq 1}$ holds with very high probability if for any fixed $D>0$ there is a $C>0$ such that $\mathbb{P}(A_N)\geq 1-CN^{-D}$ for all $N\geq 1$.

\section{Time correlations: Proof of Theorems~\ref{theo:mainres}--\ref{theo:mainresmeso}}
\label{sec:mainres}

To make our presentation simpler, from now on we only consider random matrices $X$ whose entries $\chi$ have a probability density $g$ satisfying
\begin{equation}
\label{eq:entryass}
g\in L^{1+\alpha}(\C), \qquad\quad \lVert g\rVert_{L^{1+\alpha}(\C)}\le N^\beta,
\end{equation}
for some $\alpha,\beta>0$. If $X$ does not satisfy this assumption, then, relying on \cite[Theorem 23]{TaoVu2015}, we show that the distribution of the linear statistics of $X$ is close to the one of $X+N^{-\gamma}X_{\mathrm{Gin}}$, for any large $\gamma>0$ and $X_{\mathrm{Gin}}$ being a complex Ginibre matrix independent of $X$. Then it is easy to see that the entries of this new matrices satisfy \eqref{eq:entryass}.

To analyze the linear statistics \eqref{eq:linstattime} we rely on Girko's formula (cf. \cite{Gir1985, TaoVu2015}):
\begin{equation}
\label{eq:girko1}
\sum_{\sigma\in\mathrm{Spec}(X)}f(\sigma)=\frac{\ii}{4\pi}\int_\C \Delta f(z)\int_0^\infty  \mathrm{Tr} \big[G^z(\ii\eta)\big]\, \dif\eta\dif z.
\end{equation}
Here $G^z$ denotes the resolvent $G^z(\ii\eta):=(W-Z-\ii\eta)^{-1}$, with $W$ being the \emph{Hermitization} of $X$:
\begin{equation}
\label{eq:herm}
W:=\left(\begin{matrix}
0 & X \\
X^* & 0
\end{matrix}\right), \qquad\quad Z:=\left(\begin{matrix}
0 & z \\
\overline{z} & 0
\end{matrix}\right).
\end{equation}
We will use Girko's formula for the eigenvalues of $X_t$, with $X_t$ being the solution of the flow \eqref{eq:OU} with initial condition $X$. By $W_t$ we denote the Hermitization of $X_t$ defined as in \eqref{eq:herm} with $X$ replaced by $X_t$, and by $G_t^z(\ii\eta)$ we denote its resolvent. Note that the spectrum of $W_t-Z$ is symmetric with respect to zero as a consequence of its $2\times 2$ block structure (\emph{chiral symmetry}). We denote the eigenvalues of $W_t-Z$ by $\{\lambda_{\pm i}^z(t)\}_{i\in [n]}$, with $\lambda_{-i}^z(t)=-\lambda_i^z(t)$, and denote the corresponding eigenvectors by $\bm{w}_{\pm i}^z(t)$. As a consequence of the chiral symmetry, the eigenvectors of $W_t-Z$ are of the form
\begin{equation}
\label{eq:defevectors}
\bm{w}_{\pm i}^z(t)=\left(\begin{matrix}
\bm{u}_i^z(t) \\
\pm\bm{v}_i^z(t)
\end{matrix}\right),
\end{equation}
with $\bm{u}_i^z(t)$, $\bm{v}_i^z(t)$ being the left and right singular vectors of $X_t-z$, respectively.

The remainder of this section is divided into several subsections: in Section~\ref{sec:girkosplit} we divide the $\eta$--integral in \eqref{eq:girko1} into three main regimes, since each one of these regimes will be analyzed using very different techniques, then in Sections~\ref{sec:submic}--\ref{sec:meso} we deal with these regimes one by one. Finally, in Section~\ref{sec:end} we put all these together and conclude the proof of Theorems~\ref{theo:mainres}--\ref{theo:mainresmeso}.

\subsection{Decomposition in three terms.}
\label{sec:girkosplit}

\, We split the analysis of Girko's formula into several regimes (cf. \cite[Equation (3.10)]{CipErdSch2023})
\begin{align}
\sum_{i=1}^N f_{v,a}(\sigma_i(t))-\E \sum_{i=1}^N f_{v,a}(\sigma_i(t))&=\frac{1}{4\pi}\int_\C \Delta f(z)\big[|\log\mathrm{det}(H_t^z-\ii T)|-\E|\log\mathrm{det}(H_t^z-\ii T)|\big]\,\dif z \notag\\
&\quad -\frac{1}{4\pi \ii}\int_\C \Delta f_{v,a}(z)\left(\int_0^{\eta_0}+\int_{\eta_0}^{\eta_c}+\int_{\eta_c}^T \right) \mathrm{Tr} \big[G_t^z(\ii\eta)-\E G_t^z(\ii\eta)\big]\, \dif\eta\dif z\notag\\
&=: J_T(f_{v,a},t)+I_0^{\eta_0}(f_{v,a},t)+I_{\eta_0}^{\eta_c}(f_{v,a},t)+I_{\eta_c}^T(f_{v,a},t),\label{eq:girkosplit}
\end{align}
with
$
\eta_0=N^{-1-\delta_0}$, $\eta_c=N^{-1+\delta_1},
$
for some $\delta_0, \delta_1>0$, and $T=N^{100}$. In order to keep the notation for the proof of the macroscopic and mesoscopic CLT unified, with a slight abuse of notation, here we use the convention that $f_{v,a}=f$ when $a=0$.

The integral $J_T$ can be easily seen to be negligible (see e.g. the proof of \cite[Lemma 3]{CipErdSch2021bis} and \cite[Theorem 2.3]{AEK21}). Next, we show that the very small $\eta$--regime $I_0^{\eta_0}$ is negligible using smoothing inequalities for the smallest singular value of $X-z$ (see Section~\ref{sec:submic}). The regime $I_{\eta_0}^{\eta_c}$ will also be negligible (see Section~\ref{sec:micro}), for this term we will rely on the asymptotical independence of the small singular values of $X_t-z_1$, $X_t-z_2$ for $z_1,z_2$ sufficiently away from each other.  Finally, we will compute high moments of $I_{\eta_c}^T$ showing that this is the regime where the order one contribution to the lhs. of \eqref{eq:girkosplit} comes from (see Section~\ref{sec:meso}).

\subsection{Submicroscopic scale.}
\label{sec:submic}

\, By the lower tail estimate for the smallest singular value of $X-z$ from \cite[Theorem 3.2]{TaoVu2010} (see also \cite[Equation (4a)]{CipErdSch2020}), we readily conclude that the contribution of this regime is negligible. In particular, we have (cf. \cite[Lemma 4.4]{CipErdSch2023})
\begin{equation}
\label{eq:smallbneed}
\E |I_0^{\eta_0}(f_{v,a},t)|\le N^{-c},
\end{equation}
uniformly in $t\lesssim 1$, for some small fixed constant $c>0$.

\subsection{Microscopic scale.}
\label{sec:micro}

\, In this section we show that the contribution of the regime $I_{\eta_0}^{\eta_c}$ to the lhs. of \eqref{eq:girkosplit} is also negligible in a second moment sense:

\begin{proposition}
\label{prop:indep}
There exists a small constant $\e>0$ such that for $\delta_0$ and $\delta_1$ small enough,  and  any $t\lesssim 1$,
we have
\begin{equation}
\label{eq:indepresres}
\E |I_{\eta_0}^{\eta_c}(f_{v,a},t)|^2\le N^{-\e}.
\end{equation}
\end{proposition}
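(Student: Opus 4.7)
The plan is to bound the second moment of $I_{\eta_0}^{\eta_c}(f_{v,a}, t)$ by expanding it into a double integral over spectral parameters and exploiting decorrelation of the hard-edge singular values of $X_t - z_1$ and $X_t - z_2$ when the two points are sufficiently separated. Using the chiral block structure of $W_t - Z$ and integrating in $\eta$,
\[
\int_{\eta_0}^{\eta_c} \mathrm{Tr}\, G_t^{z}(\ii\eta) \,\dif\eta \;=\; -\ii \sum_{i=1}^N \qB{\log\pb{(\lambda_i^z(t))^2 + \eta_c^2} - \log\pb{(\lambda_i^z(t))^2 + \eta_0^2}} \;=:\; -\ii\, \Phi_t^z.
\]
Eigenvalues $\lambda_i^z(t)$ of order one contribute $\OO(\eta_c^2)$ each to $\Phi_t^z$, so the bulk contribution sums to $\OO(N\eta_c^2) = \oo(1)$; hence the centered statistic $\Phi_t^z - \E\Phi_t^z$ effectively depends only on eigenvalues at the hard edge, i.e.\ of magnitude $\lesssim \eta_c$. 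Writing out the second moment,
\[
\E|I_{\eta_0}^{\eta_c}(f_{v,a}, t)|^2 \;\lesssim\; \int_{\C^2} |\Delta f_{v,a}(z_1)| \, |\Delta f_{v,a}(z_2)| \, \absb{\mathrm{Cov}\pb{\Phi_t^{z_1}, \Phi_t^{z_2}}} \, \dif z_1\dif z_2 \;+\; N^{-c},
\]
so it suffices to bound the covariance of the two hard--edge statistics.

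I would split the integration domain into a well--separated region $\mathcal{F} = \{|z_1 - z_2| > N^{-1/2+\tau}\}$ and its near--diagonal complement $\mathcal{N}$, for a small $\tau > 0$ to be chosen. On $\mathcal{F}$, Theorem~\ref{theo:mainthmdbm} provides asymptotic decorrelation of the small singular values of $X_t - z_1$ and $X_t - z_2$, yielding $\absb{\mathrm{Cov}\pb{\Phi_t^{z_1}, \Phi_t^{z_2}}} \le N^{-\e'}$ uniformly in the pair; integrating against $|\Delta f_{v,a}|$ contributes at most $N^{-\e'}$. On $\mathcal{N}$, I would use a deterministic upper bound on $\Phi_t^z$: the logarithmic differences are at worst $\OO(\log N)$ per summand, and only $\OO(N^{\delta_1})$ eigenvalues lie in a neighbourhood of the hard edge of size $\eta_c$, so the lower tail bound of \cite{TaoVu2010} on the smallest singular value (combined with Cauchy--Schwarz) gives $\mathrm{Var}(\Phi_t^z) \le N^{2\delta_1 + o(1)}$. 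The intersection of $\mathcal{N}$ with $\mathrm{supp}(\Delta f_{v,a}) \times \mathrm{supp}(\Delta f_{v,a})$ has two--dimensional Lebesgue measure $\OO(N^{-2a - 1 + 2\tau})$, and together with $\|\Delta f_{v,a}\|_{L^\infty} = \OO(N^{2a})$ this yields a contribution of size $\OO(N^{2a - 1 + 2\tau + 2\delta_1 + o(1)})$, which is negligible for any $a < 1/2$ provided $\tau$ and $\delta_1$ are small.

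The main obstacle is the uniform application of Theorem~\ref{theo:mainthmdbm} over pairs $(z_1, z_2) \in \mathcal{F}$, with a quantitative polynomial gain in $N$: one must verify the local law and eigenvector overlap hypotheses along the entire flow, and extract an explicit decorrelation rate $\e' > 0$ independent of the exact location of the two points. In the mesoscopic setting further care is needed so that the rate $\e'$ dominates any loss from the $N^{2a}$--rescaling of $\Delta f_{v,a}$; since the two integration regions $\mathcal{F}$ and $\mathcal{N}$ contribute additively and the parameters $\tau, \delta_0, \delta_1$ can be tuned without interaction, this yields \eqref{eq:indepresres}.
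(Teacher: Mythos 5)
Your strategy is the one the paper itself uses: expand the square, split the $(z_1,z_2)$-integration at $|z_1-z_2|\sim N^{-1/2+\tau}$, use decorrelation of the hard-edge spectra on the well-separated region, and beat the near-diagonal region by its small measure together with an a priori bound on the statistic. Note, however, that the far-region input you try to extract from Theorem~\ref{theo:mainthmdbm} is precisely Proposition~\ref{prop:resindep} (i.e.\ \eqref{eq:indepres}), which the paper states for exactly this purpose and proves from Theorem~\ref{theo:mainthmdbm} via a GFT step, the eigenvector-overlap bounds and rigidity; since its proof is logically independent of the present proposition, you may simply invoke it rather than leave it as an ``open obstacle''. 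With it, the paper's proof is a two-line matter: expand the square, apply \eqref{eq:indepres} where $|z_1-z_2|\ge N^{-1/2+\omega_p}$, and use $\int|\Delta f_{v,a}|=\int|\Delta f|\lesssim 1$, deferring the near-diagonal/a priori bounds to the cited proof of Lemma 4.4 in \cite{CipErdSch2023}.

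Two steps of your sketch are not correct as written, though both are repairable. First, the jump from ``order-one eigenvalues contribute $\OO(N\eta_c^2)$'' to ``only eigenvalues of magnitude $\lesssim\eta_c$ matter'' skips the intermediate scales $\eta_c\ll\lambda_i^z\ll 1$: by the local law there are $\sim Nx$ eigenvalues below $x$, so these scales contribute in total $\sim N\eta_c=N^{\delta_1}$ to $\Phi_t^z$, comparable to the hard-edge part, and their fluctuations must be controlled by rigidity. This is unavoidable also because the coupling in \eqref{eq:evaluesclosind} only covers indices $|i|\le N^{\omega}$, so one needs $\delta_0+\delta_1\ll\omega$ and a rigidity estimate for $i>N^{\omega}$ (this is where rigidity enters the proof of Proposition~\ref{prop:resindep}); relatedly, the cap $\Var(\Phi_t^z)\le N^{2\delta_1+o(1)}$ comes from the $\eta_0$-regularization plus the local-law bound on the number of eigenvalues below $\eta_c$, not from the Tao--Vu smallest-singular-value bound, which is only needed for the sub-$\eta_0$ term $I_0^{\eta_0}$. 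Second, $\|\Delta f_{v,a}\|_{L^\infty}=\OO(N^{2a})$ is not available for $f\in \mathrm{H}_0^{2+\e}$ ($\Delta f$ need not be bounded); on the near-diagonal set use instead Cauchy--Schwarz in $z_2$, namely $\int_{|z_2-z_1|\le N^{-1/2+\tau}}|\Delta f_{v,a}(z_2)|\,\dif z_2\lesssim N^{a-1/2+\tau}\|\Delta f\|_{L^2}$, together with $\int|\Delta f_{v,a}|\lesssim 1$, which still yields a contribution $\OO(N^{a-1/2+\tau+2\delta_1+\xi})=\OO(N^{-\e})$ for $a<1/2$. Finally, in the macroscopic case $a=0$ the support of $f$ reaches $|z|\ge 1-c$, where neither Theorem~\ref{theo:mainthmdbm} nor Proposition~\ref{prop:resindep} applies; there one argues instead that with very high probability no singular value of $X_t-z$ lies below $\eta_c$, so that regime is negligible, as in the cited argument of \cite{CipErdSch2023}. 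With these repairs your argument coincides with the paper's proof of \eqref{eq:indepresres}.
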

Note that \eqref{eq:indepresres} also implies $\E I_{\eta_0}^{\eta_c}(f_{v,a},t) I_{\eta_0}^{\eta_c}(f_{v,a},s)\le N^{-\e}$, for any $s\le t$, by a simple Schwarz inequality. The main input to prove Proposition~\ref{prop:indep} is the following independence of resolvents.
A similar statement was obtained in \cite[Proposition 3.5]{CipErdSch2023}.  In  Section~\ref{sec:Indep} we will give an independent proof,
which covers broad classes of initial conditions,  and proceeds through a new, simple and quantitative proof of hard edge universality in random matrix theory.

\begin{proposition}
\label{prop:resindep}
Fix any small $c,\omega_p>0$,
let $\delta_0,\delta_1$ be small enough constants.
Then uniformly in 
 $\abs{z_l}\le 1-N^{-c}$, $\abs{z_1-z_2}\ge N^{-1/2+\omega_p}$, and  $\eta_1,\eta_2\in [N^{-1-\delta_0}, N^{-1+\delta_1}]$, we have\footnote{We state this result for only two different $z$'s for simplicity, but a similar result holds for any finite product of resolvents such that $\min_{i\ne j}|z_i-z_j|\gg N^{-1/2}$.}
\begin{equation}
\label{eq:indepres}
        \E \langle G_t^{z_1}(\ii\eta_1)\rangle \langle G_t^{z_2}(\ii\eta_2)\rangle=   \E \langle G_t^{z_1}(\ii\eta_1)\rangle \E \langle G_t^{z_2}(\ii\eta_2)\rangle+\OO\left(N^{-\e}\right),
\end{equation}
for some $\e>0$.
\end{proposition}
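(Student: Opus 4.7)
The plan is to reduce Proposition~\ref{prop:resindep} to an approximate independence statement for the smallest singular values of $X_t-z_1$ and $X_t-z_2$, and then to establish the latter via a coupling of two extended Dyson Brownian motions whose relaxation is controlled by Proposition~\ref{p:independence} and Theorem~\ref{theo:mainthmdbm}.

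First, using the chiral symmetry \eqref{eq:defevectors}, with $\{\pm\lambda_i^z(t)\}_{i=1}^N$ the eigenvalues of $W_t-Z$, one has
$$\langle G_t^{z}(\ii\eta)\rangle = \frac{\ii\eta}{N}\sum_{i=1}^N\frac{1}{(\lambda_i^z(t))^2+\eta^2}.$$
In the window $\eta \in [N^{-1-\delta_0},N^{-1+\delta_1}]$, the contribution to this sum from singular values above scale $N^{-1+2\delta_1}$ is sharply concentrated around its deterministic limit by available local laws for the Hermitization, so the fluctuating part of $\langle G_t^z(\ii\eta)\rangle$ is captured by only the $N^{\OO(\delta_1)}$ singular values closest to the hard edge. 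Consequently, \eqref{eq:indepres} will follow once the joint distribution of those smallest singular values of $X_t-z_1$ and $X_t-z_2$ is shown to asymptotically factorize.

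Next I would set up a coupling argument. Applying Itô's formula to $W_t - Z_l$ along the OU dynamics \eqref{eq:OU} yields a system of SDEs of the form \eqref{e:sprocess} for the joint singular-value process $\bms(t) = (\lambda_i^{z_1}(t), \lambda_i^{z_2}(t))_{i,l}$: within each family the particles interact through the usual Coulomb repulsion, and they are driven by martingales whose brackets involve overlaps between the singular vectors of $W_t-Z_1$ and $W_t-Z_2$. When $|z_1-z_2|\geq N^{-1/2+\omega_p}$, the cross-$z$ overlaps at the hard edge are negligible by the a priori estimates developed in \cite{CipErdSch2023}, so the bracket condition \eqref{eqn:bracketBound} holds and the two subsystems are weakly correlated at the level of noise. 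I would then introduce a comparison process $\bms'(t)$ driven by the same per-system noise but with all cross-$z$ correlations set to zero, producing two genuinely independent families with the same marginal law as each $(\lambda_i^{z_l}(\cdot))_i$. Both $\bms(t)$ and $\bms'(t)$ are started from the same configuration at time $t-\tau$, where $\tau = N^{-1+\delta}$ with $\delta_0,\delta_1 \ll \delta$; this initial configuration satisfies the weak local law and edge-overlap hypotheses of Theorem~\ref{theo:mainthmdbm}. Applying the optimal relaxation bound of Proposition~\ref{p:independence} to each small-index pair then yields
$$|\lambda_i^{z_l}(t) - (\lambda')_i^{z_l}(t)| \leq \frac{1}{N\sqrt{N\tau}} = N^{-1-\delta/2} \ll \eta_l$$
with very high probability, for $i=1,\dots,N^{\OO(\delta_1)}$. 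Substituting into the reduction above gives $\langle G_t^{z_l}(\ii\eta_l)\rangle = \langle G_{t,\mathrm{ind}}^{z_l}(\ii\eta_l)\rangle + \OO(N^{-\e})$ in $L^2$, with the right-hand side computed from the independent copies, and taking expectations and products proves \eqref{eq:indepres}.

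The main obstacle, and the step where all the difficulty of the paper is concentrated, is the verification of the bracket condition \eqref{eqn:bracketBound} for the coupled process: one needs to propagate edge-overlap bounds for the Hermitizations at two distinct spectral parameters along the entire DBM time interval, while simultaneously showing that the weak local laws used as input to Theorem~\ref{theo:mainthmdbm} are preserved. This is precisely what the observable introduced in \cite{Bou2022} is designed to control, without recourse to long/short-range decompositions or any maximum principle. Once those overlap and local-law inputs are in place, the reduction to Theorem~\ref{theo:mainthmdbm} and the coupling bookkeeping above are routine.
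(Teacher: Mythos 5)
Your overall strategy---reducing \eqref{eq:indepres} to asymptotic independence of the few smallest singular values via $\langle G_t^z(\ii\eta)\rangle=\frac{\ii\eta}{N}\sum_i((\lambda_i^z(t))^2+\eta^2)^{-1}$ plus rigidity for the bulk contribution, and then invoking the extended-DBM machinery---is the paper's strategy. But the coupling you actually describe has a genuine gap. You start the comparison process at time $t-\tau$ from the \emph{same} configuration as the true process, replace only the driving noise by one with vanishing cross-$z$ correlations, and then assert that this produces ``two genuinely independent families''. It does not: the initial configurations of the two comparison families are the singular values of $X_{t-\tau}-z_1$ and $X_{t-\tau}-z_2$, deterministic functions of one and the same matrix, and over a window of length $\tau=N^{-1+\delta}$ decorrelating the noise does nothing to remove that dependence. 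Claiming independence at time $t-\tau$ is circular---it is precisely the statement under proof. What kills the initial-data dependence is the relaxation step: each family must also be compared, with the \emph{same} driving noise, to a process started from independent Ginibre data, which after time $\tau\ge N^{-1+\omega_t}$ agrees with it to precision $N^{-1-\omega}$ at the hard edge. That is Proposition~\ref{p:universality}, and Theorem~\ref{theo:mainthmdbm} is exactly the combination ``invariance (Proposition~\ref{p:independence}) $+$ relaxation (Proposition~\ref{p:universality})'' producing the coupling \eqref{eq:evaluesclosind} to fully independent Ginibre singular values. Your write-up never invokes Proposition~\ref{p:universality}; symptomatically, you attribute the relaxation bound $\frac{1}{N\sqrt{N\tau}}$ to Proposition~\ref{p:independence}, which is the invariance statement (same initial data, different noise, error $N^{-1-\omega}$).

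Two further points. First, your argument needs a free time window of length $N^{-1+\delta}$ before time $t$, so it cannot cover small $t$ (in particular $t=0$, the bare i.i.d.\ matrix), whereas the proposition is used uniformly for all $t\lesssim1$; the paper first performs a Green's function comparison to reduce to matrices carrying a Gaussian component of size $\sqrt{T}$ with $T=N^{-1+\omega_t}$, and this step is missing from your proposal. Second, the difficulty you locate is somewhat misplaced: \eqref{eqn:bracketBound} is immediate (within-family brackets are $\delta_{ij}$, cross brackets are bounded by Cauchy--Schwarz), the overlap hypothesis \eqref{eq:evass} is imported from \cite{CipErdSch2022bis} and made uniform in time by a grid/H\"older argument rather than controlled by the observable of \cite{Bou2022}, which instead lives inside the proofs of Propositions~\ref{p:independence}--\ref{p:universality} that you are free to cite as black boxes.
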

 
 \begin{proof}[Proof of Proposition~\ref{prop:indep}]
Using that the regime $|z_1-z_2|< N^{-1/2+\omega_p}$ is negligible (see e.g. \cite[Eq. (4.12)]{CipErdSch2023} and \cite[Eq. (4.8)]{CipErdSch2022bis}), this is straightforward by expanding the square in $\E |I_{\eta_0}^{\eta_c}(f_{v,a},t)|^2$, using  \eqref{eq:indepres} and $\int |\Delta f|\lesssim 1$ (see the proof of \cite[Lemma 4.4]{CipErdSch2023}). We point out that in the proof of \cite[Lemma 4.4]{CipErdSch2023} it is shown that the regime $|z_l|\ge 1-N^{-c}$ is negligible (see the paragraph below \cite[Eq. (4.11)]{CipErdSch2023}) and so that to conclude \eqref{eq:indepresres} it is indeed enough to use \eqref{eq:indepres} for $|z_l|\le 1-N^{-c}$.
 \end{proof}

\subsection{Mesoscopic scales.}
\label{sec:meso}

\, In this section we show that the regime $I_{\eta_c}^T$ is the one that gives the order one leading contribution to \eqref{eq:girkosplit}. 

\begin{proposition}
\label{prop:multitCLTres}
Fix $T\ge 0$, and denote by $\Pi_p$ the set of pairings\footnote{Note that $\Pi_p=\emptyset$ if $p$ is odd.}  on \([p]\). Then, there exists $c=c(p)>0$ such that, for $t_{i,a}:=T+N^{-2a}t_i$, we have
\begin{equation}
\label{eq:CLTtimecor}
\E \prod_{i\in [p]} I_{\eta_c}^T(f_{v,a}^{(i)},t_{i,a})=  \sum_{P\in \Pi_p}\prod_{\{i,j\}\in P} \E I_{\eta_c}^T(f_{v,a}^{(i)},t_{i,a})I_{\eta_c}^T(f_{v,a}^{(j)},t_{j,a})+\OO(N^{-c}).
\end{equation}
Furthermore, we have
\begin{equation}
\label{eq:correxplis}
\E I_{\eta_c}^T(f_{v,a}^{(i)},t_{i,a})I_{\eta_c}^T(f_{v,a}^{(j)},t_{j,a})= \Gamma(f^{(i)},f^{(j)}, |t_i-t_j|,\kappa_{4,t_i\wedge t_j})+\OO(N^{-c}),
\end{equation}
with $\Gamma(f,g,\tau,\kappa_{4,t})$ from \eqref{eq:defcov} for $a=0$; if $a\in (0,1/2)$ we have the same result with $\Gamma(f,g,\tau,\kappa_{4,t})$ replaced by $\Gamma_v(f,g,\tau)$ from \eqref{eq:defcovmeso}. Additionally, for any $t_a>0$ and any $0\le s_a<t_a$, we have the decomposition
\begin{equation}
\label{eq:defint12}
I_{\eta_c}^T(f_{v,a},t_a)=:I_1(f_{v,a},s_a,t_a)+I_2(f_{v,a},s_a,t_a),
\end{equation}
with $I_1$ depending only on $X_s$, $ \E I_1(f,s,t)I_2(f,s,t)=\OO(N^{-c})$,  and
\begin{equation}
\label{eq:covsplit}
\E|I_1|^2= \Gamma(f,f,2(t-s),\kappa_{4,t})+\OO(N^{-c}), \qquad\quad \E |I_2|^2=\Gamma(f,f,0,0)-\Gamma(f,f,2(t-s),\kappa_{4,s})+\OO(N^{-c}), 
\end{equation}
for some small fixed $c>0$. If $a\in (0,1/2)$ we have the same result with $\Gamma(f,g,\tau,\kappa_{4,t})$ replaced by $\Gamma(f,g,\tau)$ from \eqref{eq:defcovmeso}.
\end{proposition}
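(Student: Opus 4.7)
The plan is to start from Girko's formula applied to $X_t$. By the chiral symmetry of the Hermitization $W_t-Z$, $\Tr G_t^z(\ii\eta) = -2\ii\eta\sum_{i=1}^N((\lambda_i^z(t))^2+\eta^2)^{-1}$, so
$$\int_{\eta_c}^T \Tr G_t^z(\ii\eta)\,\dif\eta = -\ii\log\det\big[(X_t-z)^*(X_t-z)+\eta_c^2\big] + D_T(z) + \OO(N^{-c}),$$
with $D_T(z)$ deterministic, so that after centering and absorption in the $J_T$ term, $I_{\eta_c}^T(f_{v,a},t)$ is the centered version of $\tfrac{1}{8\pi}\int_\C \Delta f_{v,a}(z)\log\det[|X_t-z|^2+\eta_c^2]\,\dif z$. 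The key step is a stochastic decomposition along the OU flow \eqref{eq:OU}: introduce a time-dependent regularization $r\mapsto\eta_r$ on $[s,t]$ with $\eta_t=\eta_c$ and $\eta_s\asymp\eta_c+(t-s)$, whose rate $\dot\eta_r$ is chosen, at the level of the deterministic resolvent profile, so that the drift produced by applying It\^o to $\log\det[|X_r-z|^2+\eta_r^2]$ is matched to leading order by the drift of the centering. The centered difference then becomes a martingale modulo $\OO(N^{-c})$:
$$\log\det[|X_t-z|^2+\eta_t^2]-\E[\cdot] = \log\det[|X_s-z|^2+\eta_s^2]-\E[\cdot] + \mathcal{M}_{s,t}(z) + \OO(N^{-c}),$$
with $\mathcal{M}_{s,t}(z)$ an explicit stochastic integral over $(s,t]$ driven by $B$. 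Integrating against $\Delta f_{v,a}/(8\pi)\,\dif z$ produces the decomposition $I_{\eta_c}^T=I_1+I_2$ of \eqref{eq:defint12}: $I_1$ is the $s$-endpoint term (measurable with respect to $X_s$, with effective edge regularization $\eta_s\asymp t-s$) and $I_2$ is the martingale part.

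\textbf{Gaussianity and covariance.} Gaussianity and the Wick rule \eqref{eq:CLTtimecor} follow from concentration of the quadratic variation of $I_2$: by the mesoscopic local law for $\langle G_r^z(\ii\eta_r)\rangle$ on the scales $\eta_r\ge\eta_c$, together with a two-resolvent law at distinct spectral parameters $(z,\ii\eta_r),(w,\ii\eta'_r)$, the bracket $\langle I_2^{(i)},I_2^{(j)}\rangle_t$ concentrates on an explicit deterministic limit while higher iterated brackets are negligible. A martingale CLT (or, equivalently, a direct It\^o-product expansion of $\prod_i I_2^{(i)}$) then yields the Wick rule for the $I_2$-contribution, and the $\sigma(X_s)$-measurability of $I_1$ together with $\E I_1 I_2=\OO(N^{-c})$ extends it to $\prod_i I_{\eta_c}^T(f_{v,a}^{(i)},t_{i,a})$. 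The bracket of $I_2$ over $[s,t]$ directly produces the increment $\Gamma(f,f,0,0)-\Gamma(f,f,2(t-s),\kappa_{4,s})$ in \eqref{eq:covsplit}, and the covariance of $I_1$ reduces to a \emph{static} two-point log-determinant computation at time $s$ with mismatched regularizations $\eta_s\asymp t-s$ and $\eta_c$: applying the static CLT of \cite{CipErdSch2023} (with the appropriate $\kappa_4$), the deterministic resolvent equation near the spectral edge converts the static kernel at these two scales into the parabolic form of \eqref{eqn:kernel} (respectively \eqref{eq:defcovmeso}), producing exactly $\Gamma(f,f,2(t-s),\kappa_{4,s})$ in the macroscopic case and the analogous $\Gamma_v$ in the mesoscopic one.

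\textbf{Main obstacle.} The hard part will be the It\^o cancellation in the first step, uniformly in $z\in\supp(\Delta f_{v,a})$, down to the hard-edge scale $\eta_r=\eta_c$. The instantaneous drift involves $\E\langle G_r^z(\ii\eta_r)\rangle$ and its $z,\eta$ derivatives, which near $\eta_c$ are controlled not by the bulk local law but by the refined hard-edge input underlying Proposition~\ref{prop:resindep}. A closely related subtlety is identifying the cross-bracket of $\mathcal{M}_{s,t}$ at two spatial points $z,w$ with potentially very different effective regularizations, which calls for a two-resolvent local law propagated along the characteristic flow; this is where the factor $1-|v|^2$, and hence the parabolic distance, ultimately emerges in the limit.
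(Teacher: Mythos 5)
Your plan is essentially the paper's own proof: the paper implements exactly this strategy in Proposition~\ref{pro:decomp} by applying It\^o's formula to the resolvent along characteristics and changing variables in the $\eta$-integral of Girko's formula, so that $I_{\eta_c}^T(f,t)$ splits into a time-$s$ resolvent term plus a purely stochastic integral, with Gaussianity of the martingale part coming from concentration of its bracket (via the multi-resolvent local law of \cite{CipErdSch2022bis}, whose deterministic limit is a total time derivative yielding $\Gamma(f,f,0,0)-\Gamma(f,f,2(t-s),\cdot)$), and the $I_1$-covariance from the static CLT of \cite{CipErdSch2023} evaluated at shifted spectral parameters, exactly as you describe. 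The one point you gloss over is that keeping $z$ fixed and tuning only $\eta_r$ cannot absorb the Ornstein--Uhlenbeck dilation drift (the $\tfrac12\langle WG^2\rangle$, i.e.\ $\tfrac12\langle (Z+\ii\eta)G^2\rangle$, term in \eqref{eq:nochar1}); the paper removes it with the additional characteristic $\partial_t z_t=-z_t/2$ in \eqref{eq:char1}, so the spectral parameter $z$ must be flowed along with $\eta$.
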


Finally, in the next section we combine \eqref{eq:smallbneed} with Propositions~\ref{prop:indep} and \ref{prop:multitCLTres}, to conclude Theorems~\ref{theo:mainres}--\ref{theo:mainresmeso}.

\subsection{Proof of Theorems~\ref{theo:mainres}--\ref{theo:mainresmeso}.}
\label{sec:end}

\, Using that $J_T$ from \eqref{eq:girkosplit} is negligible (see e.g. the proof of \cite[Lemma 3]{CipErdSch2021bis} and \cite[Theorem 2.3]{AEK21}) and the a priori bounds
\[
\big|I_0^{\eta_0}(f_{v,a}^{(i)},t_{i,a})\big|+\big|I_{\eta_0}^{\eta_c}(f_{v,a}^{(i)},t_{i,a})\big|+\big|I_{\eta_c}^T(f_{v,a}^{(i)},t_{i,a})\big|\le N^\xi
\]
with very high probability for any small $\xi>0$ (see \cite[Equation (4.4)]{CipErdSch2023} and \cite[Equation (4.5)]{CipErdSch2022bis}, for the macroscopic and mesoscopic case, respectively), we find that
\begin{equation}
\label{eq:step11}
\E \prod_{i\in [p]} L_N(f_{v,a}^{(i)},t_{i,a})=\E \prod_{i\in [p]} \bigg[I_0^{\eta_0}(f_{v,a}^{(i)},t_{i,a})+I_{\eta_0}^{\eta_c}(f_{v,a}^{(i)},t_{i,a})+I_{\eta_c}^T(f_{v,a}^{(i)},t_{i,a})\bigg]+\OO(N^{-c}).
\end{equation}
Then by \eqref{eq:smallbneed} and \eqref{eq:indepresres} we find out that the only order one contribution to the lhs. of \eqref{eq:step11} comes from the large $\eta$ regime, i.e. we have
\begin{equation}
\label{eq:step22}
\E \prod_{i\in [p]}  L_N(f_{v,a}^{(i)},t_{i,a})= \E \prod_{i\in [p]}I_{\eta_c}^T(f_{v,a}^{(i)},t_{i,a})+\OO(N^{-c}).
\end{equation}
Finally, Theorem~\ref{theo:mainres} readily follows by Proposition~\ref{prop:multitCLTres} together with \eqref{eq:step22}. 
\qed

\section{Independence of singular values}\label{sec:Indep}

In this section we will make use of the notation
$\omega_*\ll \omega^*$ to denote that $\omega_*\le \omega^*/10$, for any 
two constants $\omega_*,\omega^*$.
Our main error parameter will be polynomial in $N$ and denoted by
\begin{equation}\label{eqn:phi}
\varphi=N^\nu,
\end{equation}
for some fixed (small) constant $\nu>0$.  All other small parameters appearing in this section will satisfy
\[
\nu, \omega\ll \omega_t\ll \omega_K \ll\wt\omega.
\]
Below is their meaning and where they are introduced:
\begin{itemize}

\vspace{-0.15cm}
\item[--]$\nu$. For $\varphi$ from \eqref{eqn:phi}, the local law holds on scale $\varphi/N$ (see Equation (\ref{eq:weakllaw})).

\vspace{-0.2cm}
\item[--]$\omega$.  The submicroscopic error in independence of singular values  is $N^{-1-\omega}$ (Equation (\ref{eq:evaluesclosind})).

\vspace{-0.2cm}
\item[--]$\omega_t$. This approximate independence occurs after time at least $N^{-1+\omega_t}$  (Equation (\ref{eq:evaluesclosind})).

\vspace{-0.2cm}
\item[--]$\omega_K$. The eigenvector products are small for spectral indices $k\leq N^{\omega_K}$ (Equation (\ref{eq:evass})).

\vspace{-0.2cm}
\item[--]$\wt\omega$. These eigenvector products are of size at most $N^{-\wt\omega}$ (Equation (\ref{eq:evass})).
\end{itemize}

\subsection{Main statement.} \, The main result of this section is the following general statement about asymptotic independence of small singular values.
It will imply Proposition \ref{prop:resindep}.

\begin{theorem}
\label{theo:mainthmdbm}
Let $X$ be a $N\times N$ matrix with complex entries, and consider the singular values $\lambda_i^z$ of $X-z$, for $z\in I:=\{z_1,\dots,z_q\}$,  $q$ fixed,   $z_1,\dots,z_q\in\mathbb{C}$ possibly $N$-dependent.

Assume the following local law on scale $\varphi/N$ holds.  There is a $c>0$ and $N_0$ such that for any  $N\geq N_0$ and
 $z\in I$ there exists $m=m^{z}$, the Stieltjes transform of a deterministic probability measure $\zeta=\zeta^z=\rho^z_0(x)\rd x$, such that $c\le |\Im\, m(w)|\le c^{-1}$, $|m(w)|+|\del_w m(w)|\leq c^{-1}$ for all $|w|\le c$,
and 
such that for any $\Im w\geq \varphi/N$,  $|w|\leq c$ we have
\begin{equation}
\label{eq:weakllaw}
\left|\frac{1}{N}\sum_{i=1}^N\frac{1}{\lambda_i^z-w}-m^{z}(w)\right|\le \frac{1}{\sqrt{N\Im w}} .
\end{equation}

Let $X_t:=X+N^{-1/2} B_t$, with $(B_t)_{ij}$ being i.i.d. standard complex Brownian motions,
%be the solution of \eqref{eq:OU},
and for $z\in I$ denote the singular values of $X_t-z$ by $\lambda_i^{z}(t)$. Let ${\bm u}_i^{z}(t), {\bm v}_i^{z}(t)$ be the left and right singular vectors of $X_t-z$, respectively, and assume that there exists $\omega_K\ll\wt\omega$ such that for any $z,w$ distinct in $I$ and $D>0$ we have 
\begin{equation}
\label{eq:evass}
\P\Big(\big|\langle {\bm u}_i^{z}(t),  {\bm u}_j^{w}(t)\rangle\big|+\big|\langle {\bm v}_i^{z}(t),  {\bm v}_j^{w}(t)\rangle\big|\le N^{-\wt\omega}, \quad \forall\,1\le i,j\le N^{\omega_K}\,\,\mathrm{and}\,\, t\in [0,\mathcal{T}]\Big)\ge 1-N^{-D},
\end{equation}
for a fixed $\mathcal{T}>0$. Let $(X^{(p)})_{1\leq p\leq q}$ be independent Ginibre matrices, and denote the singular values of $X^{(p)}$ by $\mu_i^{(p)}$. Then for any time $t\in[N^{-1+\omega_t},\mathcal{T}]$ there exists a coupling and constants $\omega\ll \omega_t\ll \omega_K$ so that for any $z_p\in I$ we have 
\begin{equation}
\label{eq:evaluesclosind}
\mathbb{P}\left(\big|\rho_t(0)\lambda_i^{z_p}(t)-\rho_{\mathrm{sc}}(0)\mu_i^{(p)}\big|\le N^{-1-\omega},\ \forall1\le i\le N^\omega\right)\ge 1-N^{-D}.
\end{equation}
Here $\rho_t$ is the density of  $ \zeta^{z_p} \boxplus \mu_{\rm sc}^{(t)}$ (see the discussion in \Cref{s:locallaw} for these notations for measures and their free convolutions), and $\rho_{\rm sc}$ is the density of the semi-circle law.

%Let $(X^{(p)})_{1\leq p\leq q}$ be independent Ginibre matrices evolving along \eqref{eq:OU}, and denote the singular values of $X^{(p)}$ by $\mu_i^{(p)}(t)$. Then there exists a coupling and constants $\omega\ll \omega_t\ll \omega_K$ so that for any $z_p\in I$ we have 
%\begin{equation}
%\label{eq:evaluesclosind}
%\mathbb{P}\left(\big|\rho_t(0)\lambda_i^{z_p}(t)-\rho_{\mathrm{sc}}^{(t)}(0)\mu_i^{(p)}(t)\big|\le N^{-1-\omega},\ \forall1\le i\le N^\omega\ {\rm and }\ t\in[N^{-1+\omega_t},\mathcal{T}]\right)\ge 1-N^{-D}.
%\end{equation}
%Here $\rho_t$ (resp.  $\rho_{\mathrm{sc}}^{(t)}$) is the density of  $  \zeta^{z_p} \boxplus \mu_{\rm sc}^{(t)}$ 
%(resp.  $\mu_{\mathrm{sc}}^{(1)}\boxplus \mu_{\rm sc}^{(t)}$) (see the discussion in \Cref{s:locallaw} for these notations for measures and their free convolutions).
\end{theorem}

As a consequence of our general Theorem~\ref{theo:mainthmdbm} we readily conclude the proof of Proposition~\ref{prop:resindep}.

\begin{proof}[Proof of Proposition~\ref{prop:resindep}]

By a standard application of a Green's function comparison (GFT) argument, it is enough to prove \eqref{eq:indepres} for matrices with an additional Gaussian component of size $\sqrt{t}$, with $t\le N^{-1/2-\epsilon}$, for a small fixed $\epsilon>0$ (see e.g. \cite[Lemma 7.5]{CipErdSch2023}). We will thus prove \eqref{eq:indepres} for matrices having a Gaussian component of size $\sqrt{\mathcal{T}}$, with $\mathcal{T}=N^{-1+\omega_t}$, for some fixed small $\omega_t>0$. We now show that for matrices with a Gaussian component of size $\sqrt{\mathcal{T}}$ the desired result follows from Theorem~\ref{theo:mainthmdbm}. We point out that Theorem~\ref{theo:mainthmdbm} is stated under the assumption $c\le |\Im\, m(w)|\le c^{-1}$, $|m(w)|+|\del_w m(w)|\leq c^{-1}$, for an $N$-independent $c>0$. When $1-|z|\sim N^{-o(1)}$, this assumption is not satisfied as $|\Im\, m(w)|\sim N^{-o(1)}$ and $|\del_w m(w)|\leq N^{o(1)}$ (see e.g. \cite[Eq. (3.10)]{CipErdXu2025}). However, the conclusion in \eqref{eq:evaluesclosind} holds also under this slightly weaker assumptions as stated in Remark~\ref{rem:closeedge} below.

For $|z_1-z_2|\ge N^{-1/2+\omega_p}$,   by \cite[Theorem 3.1]{CipErdSch2022bis} and \cite[Lemma 7.9]{CipErdSch2023} (see also \cite[Corollary 3.6]{CipErdXu2025}), there exist $\wt\omega,\omega_K$ such that \eqref{eq:evass} is satisfied for a fixed time. To show the bound \eqref{eq:evass} uniformly in time it is enough to use a standard grid argument as a consequence of the fact that the bound on eigenvectors is inherited by a bound on product of resolvents (see e.g. the proof of \cite[Lemma 7.9]{CipErdSch2023}) which is H\"older in time. Additionally, the single resolvent local law \cite[Theorem 3.1]{CipErdSch2021} implies \eqref{eq:weakllaw}. This shows that the assumptions of Theorem~\ref{theo:mainthmdbm} are satisfied, and  \eqref{eq:evaluesclosind} holds. Given \eqref{eq:evaluesclosind} as an input (cf. \cite[Lemma 7.6--7.7]{CipErdSch2023}), using rigidity (cf. \cite[Proposition 7.3]{CipErdSch2023}), the proof of \eqref{eq:indepres} is completely analogous to the proof of \cite[Proposition 7.2]{CipErdSch2023} and so omitted.
\end{proof}

\subsection{Reduction to invariance and relaxation.}\ 
We consider coupled processes $s_i(t), r_i(t)$ defined by
\begin{align}\label{e:sprocess}
\rd s_i(t)&=\frac{1}{\sqrt{2N}}\rd b_i^s (t)+\frac{1}{2N}\sum_{j\neq i}\frac{1}{s_i(t)-s_j(t)}\rd t, \\
\rd r_i(t)&=\frac{1}{\sqrt{2N}}\rd b_i^r (t)+\frac{1}{2N}\sum_{j\neq i}\frac{1}{r_i(t)-r_j(t)}\rd t,\label{e:rprocess}
\end{align}
where $1\leq |i|\leq N$ and $\bms(0)=(s_i(0))_{1\leq |i|\leq N},\br(0)=(r_i(0))_{1\leq |i|\leq N}$  are such that $s_{-i}(0)=-s_i(0)$,  $r_{-i}(0)=r_i(0)$ for $1\leq i\leq N$. 

In this section, the  continuous martingales $b_i^s(t)$ and $b_i^r(t)$
are realized on a common probability space with a common filtration $\mathcal{F}_t$.
They do not need to be Brownian motions. They always satisfy the symmetry
$b^s_{-i}(t)=-b^s_i(t)$,\ $b^r_{-i}(t)=-b^r_i(t)$, and the quadratic variation bound
\begin{equation}\label{eqn:bracketBound}
\rd \langle b_i\rangle_t/\rd t\leq 1\ {\rm a.e.}
\end{equation}
(as a non-decreasing function $\langle b_i\rangle_t$ is differentiable almost everywhere)
where $b_i=b_i^s$ or $b_i^r$.
Under these sole assumptions,  existence and strong uniqueness hold for (\ref{e:sprocess}) and (\ref{e:rprocess}), see Proposition \ref{prop:WellPosed}.

We will consider the following additional hypothesis for these martingales.

\begin{assumption}[Vanishing correlations close to the origin]\label{e:assump}
For fixed parameters $0< \omega_K\ll\wt\omega$,  $K=N^{\omega_K}$ we have
\begin{align*}
\left|\frac{\rd}{\rd t} \langle b_i^s-b_i^r, b_j^s-b_j^r\rangle_t\right|\leq N^{-\wt\omega}\mathds{1}_{1\leq |i|,|j|\leq K}+4(1-\mathds{1}_{1\leq |i|,|j|\leq K}).
\end{align*}
\end{assumption}

We denote the empirical particle density of the initial data $\bms$ as
$
\mu_0=\frac{1}{2N}\sum_{1\leq |i|\leq N}\delta_{s_i(0)},
$
and it Stieltjes transform by $m_0(z)$\footnote{Within this section we use $m(z)$ to denote the Stieljes transforms of a measure $\mu$. Instead, we point out that in Theorem~\ref{theo:mainthmdbm} we used $m^{z}$ to denote the Stieltjes transform of limiting distribution of the singular values of $X-z$.}. We assume that $\mu_0$ is regular in the following sense.

\begin{assumption}[$(\nu,R)$-regularity]\label{d:regular} Fix $\nu\in(0,1],R>0$, we we recall $\varphi=N^\nu$ from \eqref{eqn:phi}. Let $\widetilde \mu$ be a probability measure with Stieltjes transform $\widetilde m$ satisfying,
for some fixed $C>0$,
\begin{align}\label{eqn:tildeC}
|\widetilde m(z)|, |\del_z \widetilde m(z)|\leq C, \quad C^{-1}\leq \Im[\widetilde m(z)]\leq C\ \text{ for all }\ |z|\leq R.
\end{align}
We say that $\mu$ is $(\nu,R)$-regular if,  for such a $\tilde\mu$,  its Stieltjes transform $m$ satisfies 
\begin{align}\label{e:m0tm0diff}
|m(z)-\widetilde m(z)|\leq  \frac{\varphi}{\sqrt{N\Im[z]}}\ \ \mbox{for any $|z|\leq R$ with $\Im[z]\geq \frac{\varphi}{N}$}.
\end{align}
\end{assumption}

\begin{remark}
\label{rem:closeedge}
For our later use,  we may deal with the case that $|\widetilde m_0(z)|, |\del_z\widetilde m_0(z)|$ may grow with $N$ or $\Im[\widetilde m_0(z)]$ may vanish slowly with $N$, namely
\begin{align*}
|\widetilde m_0(z)|, |\del_z \widetilde m_0(z)|\leq N^{\oo(1)}, \Im[\widetilde m_0(z)]\geq N^{-\oo(1)}.
\end{align*}
It is not too hard to see that our argument also extends to this case, as long as we take time $t$ longer enough, to compensate the effect of the error $N^{\oo(1)}$.  We omit the details to keep the presentation simpler.
\end{remark}

 Theorem~\ref{theo:mainthmdbm} will readily follow from the following invariance and relaxation statements.

\begin{proposition}[Invariance]\label{p:independence} 
Let the processes $\bms(t), \bmr(t)$ be solutions of the stochastic differential equations \eqref{e:sprocess} and \eqref{e:rprocess}.  Assume that the driving continuous martingales satisfy Assumption \ref{e:assump}. Additionally assume that the initial data $\bms(0)=\bmr(0)$ satisfies $(\nu, R)$-regularity in the sense of  Assumption \ref{d:regular}. 

Then, for any choice of small constants $\nu\ll \omega_t\ll \omega_K\ll \wt\omega$ there exists $\omega>0$ such that for any $0\le t\le N^{-1+\omega_t}$, with very high probability
\begin{align*}
|s_i(t)-r_i(t)|\leq N^{-1-\omega}, \ \mbox{for all $1\leq |i|\leq N^{ \omega}$}.
\end{align*}
\end{proposition}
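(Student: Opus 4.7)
Since $\bms(0)=\bmr(0)$ and the driving martingales agree closely near the origin by Assumption~\ref{e:assump}, my plan is to track the signed difference $d_i(t)\deq s_i(t)-r_i(t)$ through a Stieltjes-type observable whose evolution is almost characteristic-preserving. Subtracting \eqref{e:sprocess} from \eqref{e:rprocess} gives, with $\tilde b_i\deq b_i^s-b_i^r$,
\begin{equation*}
\rd d_i(t)=\frac{\rd \tilde b_i(t)}{\sqrt{2N}}-\frac{1}{2N}\sum_{j\neq i}\frac{d_i(t)-d_j(t)}{(s_i(t)-s_j(t))(r_i(t)-r_j(t))}\,\rd t,
\end{equation*}
together with $d_i(0)=0$, $\rd\langle\tilde b_i\rangle_t\le N^{-\tilde\omega}\rd t$ for $|i|\le K\deq N^{\omega_K}$, and $\rd\langle\tilde b_i\rangle_t\le 4\,\rd t$ in general. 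The task is to prevent this small internal noise from being amplified, and to prevent the large external noise from propagating into the window $|i|\le N^\omega$ during the time horizon $t_f=N^{-1+\omega_t}$.

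Following \cite{Bou2022}, I would work with the observable
\begin{equation*}
F_t(z)\deq\sum_{1\le |i|\le N}\frac{d_i(t)}{(s_i(t)-z)(r_i(t)-z)}=2N\bigl(m_t^r(z)-m_t^s(z)\bigr),\qquad\Im z>0,
\end{equation*}
for which $F_0\equiv 0$. It\^o's formula, combined with the algebraic identity that produces the complex Burgers equation along Dyson Brownian motion, yields a stochastic transport equation of the form
\begin{equation*}
\rd F_t(z)=\tfrac{1}{2}\partial_z\bigl[(m_t^s(z)+m_t^r(z))F_t(z)\bigr]\,\rd t+\mathcal{E}_t(z)\,\rd t+\rd M_t(z),
\end{equation*}
where $\mathcal{E}_t(z)$ collects the It\^o corrections and $M_t(z)$ is a continuous martingale whose bracket splits naturally into an inner sum over $|i|\le K$ (with prefactor $N^{-\tilde\omega}$ coming from Assumption~\ref{e:assump}) and an outer sum over $|i|>K$ smoothed by the factor $|s_i(t)-z|^{-1}|r_i(t)-z|^{-1}$ when $z$ is near the origin.

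I would then propagate $F_t(z)$ along the complex characteristic $\dot z_t=-\tfrac{1}{2}(m_t^s(z_t)+m_t^r(z_t))$. The $(\nu,G)$-regularity of the initial data (Assumption~\ref{d:regular}) and the local law \eqref{eq:weakllaw} (preserved along the dynamics by standard arguments) give uniform control on $m_t^s,m_t^r$ and their $z$-derivatives in a fixed neighborhood of the origin, so this characteristic stays in the domain of regularity, has bounded expansion rate, and maps a contour at height $\Im z\asymp t_f$ at time $0$ to one at height $\asymp N^{-1}$ at time $t_f$. Along the characteristic, $F_t(z_t)$ satisfies a linear equation driven by $\mathcal{E}$ and $M$ with vanishing initial condition, so its size at $t_f$ is controlled by $\sup_{s\le t_f}|M_s|$ times a bounded factor. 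A Burkholder-type inequality, together with the bracket splitting above, then gives $|F_{t_f}(z)|\le N^{-\omega'}$ with very high probability, uniformly for $|\Re z|\le \varphi/N$ and $\Im z\asymp N^{-1}$. Individual differences are recovered by the standard Poisson-kernel approximation: $\Im F_{t_f}(s_i(t_f)+\ii N^{-1})$ samples $d_i(t_f)$ once $\bms(t_f)$ is rigid (which follows from \eqref{eq:weakllaw}), yielding $|d_i(t_f)|\le N^{-1-\omega}$ for every $|i|\le N^\omega$.

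The main obstacle is the bracket estimate for the outer part of $M_t(z)$: each large-index particle contributes order one to $\rd\langle\tilde b_i\rangle_t$, so a naive bound at $\Im z\asymp N^{-1}$ would be catastrophic. The key is that such particles typically lie at macroscopic distance from the origin, so the resolvent weights $|(s_i-z)(r_i-z)|^{-2}$ are $\OO(1)$ uniformly, and the sum over $|i|>K$ remains benign. Equally delicate is checking that transport along the Burgers characteristic does not revive this outer contribution near the origin at later times; here the boundedness of $m^s+m^r$ from Assumption~\ref{d:regular} and the fact that the characteristic speed is integrated only over an interval of length $t_f\ll 1$ are essential. This is precisely what replaces the short/long-range decomposition of \cite{BouYau2017,LanSosYau2019,Bou2022}, and why the argument requires no maximum principle and is substantially shorter.
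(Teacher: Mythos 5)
Your strategy breaks down at the final step, and the intermediate field bound it relies on is not even consistent with the statement you are proving. The differences $d_i=s_i-r_i$ carry no sign structure (they start at zero and are generated by noise), so the observable $F_t(z)=\sum_i d_i(t)/\bigl((s_i(t)-z)(r_i(t)-z)\bigr)$ at $\Im z\asymp N^{-1}$ only records a signed local average of the $d_j$ over a window of width $\eta\asymp N^{-1}$: cancellations between neighbouring particles can keep $F$ small while individual $|d_i|$ are far larger than $N^{-1-\omega}$, so no ``Poisson-kernel sampling'' can extract particle-wise bounds at precision below the averaging scale. Worse, even under the conclusion of the proposition one has $|F_{t_f}(s_i+\ii N^{-1})|\lesssim N^{-1-\omega}\sum_j|s_j-z|^{-2}\asymp N^{1-\omega}$, so the claimed estimate $|F_{t_f}(z)|\le N^{-\omega'}$ at $\eta\asymp N^{-1}$ is quantitatively off by a factor of order $N$ and cannot hold. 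The characteristics/observable method of \cite{Bou2022} yields particle information only when the transported weights are nonnegative; that is precisely how the paper uses it in the relaxation step (Proposition \ref{p:universality}), where comparison vectors $v^{(1)},v^{(2)}$ with positive entries are built and positivity is preserved (Lemma \ref{lem:qualitative}), so a bound on a local average bounds every term. For the invariance statement no such positivity is available, and your argument has no substitute for it. A secondary but real error is the treatment of the outer noise: particles with $K<|i|\ll N$ sit at mesoscopic distance $\asymp |i|/N$ from the origin, not macroscopic, so at $\eta\asymp N^{-1}$ the bracket of the full-strength part of your field martingale is of order $N^{-1}\sum_{|i|>K}(N/|i|)^4\asymp N^{3-3\omega_K}$ per unit time, polynomially large rather than $\OO(1)$.

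For contrast, the paper proves this proposition at the particle level rather than the field level: it interpolates the two drivers in $\alpha$, differentiates to get the tangential equation for $u_i=\partial_\alpha x_i$, splits the kernel into short- and long-range parts (so, contrary to your closing remark, this proof does use such a decomposition; the ``no short/long-range, no maximum principle'' comment in the introduction refers to the relaxation estimate), bounds the short-range flow \eqref{e:tvequation} in $\ell^\infty$ by an $L^q$/Gr\"onwall argument (Lemma \ref{l:wbound}), removes the long-range part by Duhamel at cost $N\sqrt{Nt}(\log N)^2/\ell$, and then runs a stopped, exponentially weighted $L^2$ energy estimate $X_s=\sum_k\phi_k^2w_k^2$ in which Assumption \ref{e:assump} enters only through the brackets of $\xi_k$ for $|k|\le K$ while the exponential weight kills the indices beyond $K$. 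Because $X_s$ is a sum of squares, the bound $X_t\ll N^{-2-2\omega}$ immediately gives the individual estimates $|w_i(t)|\le N^{-1-\omega}$ near the origin, and integration over $\alpha$ transfers this to $|s_i(t_f)-r_i(t_f)|$. That sum-of-squares mechanism is exactly what converts a global estimate into particle-wise control, and it is the ingredient your field-level proposal is missing.
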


\begin{proposition}[Relaxation]\label{p:universality}
Let the processes $\bms(t), \bms'(t)$ both satisfy the stochastic differential equation \eqref{e:sprocess}, with different initial data and the same driving processes $b_i$, which are 
arbitrary continuous martingales (satisfying (\ref{eqn:bracketBound})) and  $\bms(0), \bms'(0)$ are $(\nu, R)$-regular around $\rd \widetilde \mu_0(x)=\wt \rho_0(x)\rd x, \rd \widetilde \mu'_0(x)=\wt \rho_0'(x)\rd x$ in the sense of Assumption \ref{d:regular}, and the two densities are the same at $0$: $\widetilde \rho_0(0)= \widetilde \rho'_0(0)=A$. 
Then for any $\varepsilon>0$ and $t\geq N^{-1+\omega_t}$ with $\omega_t\gg \nu$,  for any $1\leq |i|\leq N$,
with very high probability one has
\begin{equation}\label{eqn:relax}
|s_i(t)- s'_i(t)|\leq N^{\varepsilon}\cdot\frac{|i|}{N}\cdot\left(\frac{1}{\sqrt{Nt}}+\max(\frac{|i|}{N},t)\right).
\end{equation}
\end{proposition}

%
%
%
%{\cob[Original version]
%\begin{proposition}[Relaxation]\label{p:universality}
%Let the process $\bms(t), \bms'(t)$ both satisfy the stochastic differential equation \eqref{e:sprocess}, with different initial data and the same driving processes $b_i$, which are 
%arbitrary continuous martingales (satisfying (\ref{eqn:bracketBound})) and  $\bms(0), \bms'(0)$ are $(\nu, R)$-regular around $\widetilde \mu_0, \widetilde \mu'_0$ in the sense of Assumption \ref{d:regular}, for some fixed $\e>0$.
%Then for any $\varepsilon>0$ and $t\geq N^{-1+\omega_t}$ with $\omega_t\gg \nu$,  for any $1\leq |i|\leq N$,
%with very high probability one has
%\[
%|\widetilde \rho_{t}(0)s_i(t)-\widetilde \rho'_{t}(0) s'_i(t)|\leq N^{\varepsilon}\cdot\frac{|i|}{N}\cdot\left(\frac{1}{\sqrt{Nt}}+\max(\frac{|i|}{N},t)\right) \quad |i|\leq N
%\]
%where $\widetilde \rho_{t}$ (resp.  $\widetilde \rho'_{t}$) is the density of
%$\widetilde \mu_0\boxplus \mu_{\rm sc}^{(t)}$ (resp.  $\widetilde \mu_0'\boxplus \mu_{\rm sc}^{(t)}$)
%(see the discussion in \Cref{s:locallaw}).
%\end{proposition}
%}
%

\begin{remark} When the $b_i$'s are independent, 
Proposition \ref{p:universality} follows by \cite{chelopatto2019}. 
We will give a different, shorter proof,  which also covers correlated,  general continuous martingales with $\rd \langle b_i\rangle_t/\rd t\leq 1$. 
\end{remark}

\begin{remark}
In Proposition~\ref{p:universality}, we assumed that the two densities are the same at $0$: $\widetilde \rho_0(0)= \widetilde \rho'_0(0)=A$. If $\widetilde \rho_0(0)\neq  \widetilde \rho'_0(0)=A$, let $a=\wt \rho_0'(0)/\wt \rho_0(0)$. We can first do a rescaling for the process $\widehat s_i(t)= s_i(a^2 t)/a$, the empirical density of $\{\wh s_i(0)\}_{1\leq |i|\leq N}$ is regular around the rescaled density $a\wt \rho_0(ax)$, and $a\wt \rho_0(0)=A= \wt \rho'_0(0)$. 
Moreover, the process $\{\wh s_i(t)\}_{1\leq |i|\leq N}$ satisfies the following system of SDEs:
\begin{align}
\rd \wh s_i(t)&=\frac{1}{\sqrt{2N}}\rd \wh b_i^s (t)+\frac{1}{2N}\sum_{j\neq i}\frac{1}{\wh s_i(t)-\wh s_j(t)}\rd t,\qquad  \wh b_i^s (t)=\frac{b_i^s(a^2t)}{a}.
\end{align}
From the construction,  $\langle \wh b^s_i\rangle_t=\langle b^s_i\rangle_{a^2t}/a^2$. Thus  if 
$\rd \langle b^s_i\rangle_t/\rd t\leq 1$ a.e. as in \eqref{eqn:bracketBound}, then we also have $\rd \langle \wh b^s_i\rangle_t/\rd t\leq 1$ a.e. If Assumption \ref{e:assump} holds for the pair $(\{\wh b_i^s\}_{1\leq |i|\leq N}, \{ b_i^r\}_{1\leq |i|\leq N})$, then Proposition \ref{p:universality} holds for the pair $(\{\wh s_i(t)\}_{1\leq |i|\leq N}, \{s'_i(t)\}_{1\leq |i|\leq N})$.
\end{remark}

\begin{remark}
The error term $\max(\frac{|i|}{N},t)$ is due to the possibility of $\tilde\mu_0\neq\tilde\mu_0'$, i.e. it is only due to the propagation of error due to the initial  large gaps  $s_j(0)-s_j'(0)$, for large $j$.
For application such as the Hermitianized matrices considered in this article,  $\tilde\mu_0=\tilde\mu_0'$ and the relaxation statement is simply
\[
|s_i(t)-s'_i(t)|\leq N^{\varepsilon}\cdot \frac{|i|}{N\sqrt{Nt}}.
\]
\end{remark}

\begin{proof}[Proof of Theorem~\ref{theo:mainthmdbm}]
We first prove that (\ref{eq:evaluesclosind}) holds for any $t\in[N^{-1+\omega_t},2N^{-1+\omega_t}]$ for some $\omega,\omega_t>0$, and then at the end of the proof we will consider the remaining regime $t\in (2N^{-1+\omega_t},\mathcal{T}]$.

Consider the matrix flow
\begin{equation}
\label{eq:matDBMmata}
\dif X_s=\frac{\dif B_s}{\sqrt{N}}, \qquad X_0=X.
\end{equation}
%with $Y=(1-t)^{1/2}X$ and $X$ being an i.i.d. matrix satisfying Assumption~\ref{ass:1}.
Here $B_s$ is a matrix valued Brownian motion with entries being i.i.d. standard complex Brownian motions. Fix $z_l$, for $l=1,2$, and denote the Hermitization of $X_s-z_l$ by $H_s^{z_l}$ (see the analogous definition \eqref{eq:herm}). Then \eqref{eq:matDBMmata} induces the following flow on the eigenvalues $x_i^{z_l}(s)$ of  $H_s^{z_l}$:
\begin{equation}
\label{eq:singvaldbma}
\dif x_i^{z_l}(s)=\frac{\dif b_i^{z_l}(t)}{\sqrt{2N}}+\frac{1}{2N}\sum_{j\ne i}\frac{1}{x_i^{z_l}(s)-x_j^{z_l}(s)}\dif s,
\end{equation}
with $b_{-i}^{z_l}(s)=-b_i^{z_l}(s)$. Let ${\bm w}_i^{z_l}(s)$ be the eigenvectors of $H_s^{z_l}$ (they have the form \eqref{eq:defevectors}), then, for $1\le i,j\le N$, we have
\begin{equation*}
\frac{\rd}{\rd s}\langle b_i^{z_l},b_j^{z_l}\rangle_s=\delta_{ij}, \qquad\quad  \frac{\rd}{\rd s}\langle b_i^{z_1},b_j^{z_2}\rangle_s=4\Re\big[\langle {\bm u}_i^{z_1}(s),{\bm u}_j^{z_2}(s)\rangle\langle {\bm v}_j^{z_2}(s),{\bm v}_i^{z_1}(s)\rangle\big].
\end{equation*}
Additionally, by \eqref{eq:evass}, it follows that
\begin{equation*}
\frac{\rd}{\rd s}\langle b_i^{z_1},b_j^{z_2}\rangle_s\le N^{-2\wt\omega}, \qquad\quad 1\le i,j\le N^{\omega_K},
\end{equation*}
with very high probability. 

%by \cite[Theorem 3.1]{CipErdSch2021}
By our assumption \eqref{eq:weakllaw} we readily see that the initial condition of \eqref{eq:matDBMmata} satisfies Assumption~\ref{d:regular}. It follows that the measure $\zeta$ has a density $\wt\rho_0$. We denote its value at origin by $A=\wt \rho_0(0)$ and we denote the semi-circle distribution by $\rho_{\rm sc}$. Let 
\begin{align}
\label{e:defa}
a=\rho_{\rm sc}(0)/A,
\end{align}
 then $\wt \rho_0(x)$ and $\rho_{\rm sc}(x/a)/a$ matches at origin, both equal to $A$.

Denote $\mathbf{b}:=(b_1^{z_1},\dots, b_{N^{\omega_K}}^{z_1}, b_1^{z_2},\dots, b_{N^{\omega_K}}^{z_2})\in \R^{2N^{\omega_K}}$, for simplicity of notation we assume that $N^{\omega_K}$ is an integer. Next, by the martingale representation theorem (see e.g. \cite[Theorem 18.12]{Kal02}) we have
\[
\dif \mathbf{b}(t)=\sqrt{C(t)}\dif {\bm \beta}(t),
\]
where $C(t)$ is the $(2N^{\omega_K})\times (2N^{\omega_K})$ covariance matrix of the vector $\mathbf{b}(t)$, and ${\bm \beta}(t)\in\R^{2N^{\omega_K}}$ is a vector of standard real i.i.d. Brownian motions.  We now consider two fully independent processes $y_i^{(l)}(t)$, for $l=1,2$, with $y_i^{(l)}(t)$ the eigenvalues of the Hermitization of a Ginibre matrix rescaled by $a$ (recall from \eqref{e:defa}),  satisfying \eqref{eq:singvaldbma} with $b_i^{z_l}(t)$ being replaced by $\beta_i^{(l)}$, where $\beta_i^{(1)}$, $\beta_i^{(2)}$ are a family of $2N$ i.i.d. standard real Brownian motions such that $\beta_i^{(1)}=({\bm \beta})_i$, for $1\leq i\leq N^{\omega_K}$, and $\beta_{i-N^{\omega_K}}^{(2)}=({\bm \beta})_i$, for  $N^{\omega_K}+1\leq i\leq 2N^{\omega_K}$. For these martingales, with very high probability, we thus have
\[
\left|\frac{\dif}{\dif t}\langle \mathbf{b}_i-{\bm \beta}_i, \mathbf{b}_j-{\bm \beta}_j\rangle_t\right|=\left[\left(\sqrt{C(t)}-I\right)^2\right]_{ij}\le \frac{N^{2\omega_K}}{N^{4\tilde{\omega}}}\le N^{-\tilde{\omega}},
\]
where in the last inequality we used that $\omega_K\le \tilde{\omega}/10$ (see the display below \eqref{eqn:phi}). This shows that $\{b_i^{z_l}\}_i$, $\{\beta_i^{(l)}\}_i$ satisfy Assumption~\ref{e:assump}. Additionally,
%by \cite[Theorem 3.1]{CipErdSch2021}
by our assumption \eqref{eq:weakllaw} we readily see that the initial condition of \eqref{eq:matDBMmata} satisfies Assumption~\ref{d:regular}. Moreover, the initial condition of $y_i^{(l)}(t)$ also satisfies Assumption~\ref{d:regular} with respect to the scaled semi-circle distribution $\rho_{\rm sc}(x/a)/a$. And by \eqref{e:defa} $\wt \rho(0)=A=\rho_{\rm sc}(0)/a$.

We can thus apply Propositions~\ref{p:independence}--\ref{p:universality} to see that
\begin{equation}
\label{eq:finest}
\big|x_i^{z_l}(t)-y_i^{(l)}(t)\big|\le N^{-1-\omega}, \qquad\quad 1\leq |i|\le N^{\omega},
\end{equation}
with very high probability. More precisely, to obtain \eqref{eq:finest}, we first used Proposition~\ref{p:independence} to show that $|x_i^{z_l}(t)-\widetilde{y}_i^{(l)}(t)|\le N^{-1-\omega}$, where $\widetilde{y}_i^{(l)}(t)$ is the solution of
\[
\dif \widetilde{y}_i^{(l)}(s)=\frac{\dif \beta_i^{(l)}(t)}{\sqrt{2N}}+\frac{1}{2N}\sum_{j\ne i}\frac{1}{\widetilde{y}_i^{(l)}(s)-\widetilde{y}_j^{(l)}(s)}\dif s
\]
with initial data $\widetilde{y}_i^{(l)}(0)=x_i^{z_l}(0)$, and for $\beta_i^{(l)}$ as above. This first step does not change the initial data but replaces the driving martingales in \eqref{eq:singvaldbma} with i.i.d. standard real Brownian motions. Then, in the second step, we used Proposition~\ref{p:universality} to show $|\widetilde{y}_i^{(l)}(t)-y_i^{(l)}(t)|\le N^{-1-\omega}$. These two steps together give~\eqref{eq:finest}.  We notice that $\{y_i^{(l)}(t)\}_{1\leq |i|\leq N}$ have the same law as the eigenvalues of the Hermitization of a Ginibre matrix rescaled by $\sqrt{a^2+t^2}=(1+\oo(1))\rho_{\rm sc}(0)/\wt \rho_0(0)$.  The transfer from \eqref{eq:finest} on the (signed) eigenvalues $(x_i,y_i)$ to the singular values $(\lambda_i,\mu_i)$ is elementary. This leads to the claim \eqref{eq:evaluesclosind} for $t\in[N^{-1+\omega_t},2N^{-1+\omega_t}]$.

Finally,  for $t$ up to $\mathcal{T}$, the same reasoning applies after changing the initial condition and time to 
$t_0=t-N^{-1+\omega_t}$ and running the dynamics on $[t_0,t]$  (the  local law (\ref{eq:weakllaw}) is satisfied at time $t_0$ with very high probability, see Proposition \ref{prop:LocalLaw}).
\end{proof}

\subsection{Proof of Proposition \ref{p:independence}.} \,
We interpolate the two processes in \eqref{e:sprocess} and \eqref{e:rprocess}, by letting $0\leq \al\leq 1$,
\begin{align}\label{e:coupleB}
\rd x_i(t,\alpha)=\frac{\al \rd b_i^s (t)+(1-\al) \rd b_i^r (t)}{\sqrt{2N}}+\frac{1}{2N}\sum_{j\neq i}\frac{1}{x_i(t,\alpha)-x_j(t,\alpha)}\rd t, \quad 1\leq |i|\leq N,
\end{align}
where 
$
x_i(0,\al)=s_i.
$
We remark that for $\al=1$ we have $x_i(t,\al)=s_i(t)$, and for $\al=0$ we have $x_i(t,\al)=r_i(t)$. 

The following lemma gives some estimates of $\{x_i(t,\al)\}$, which will be used repeatedly in the rest of this section. We defer its proof to Section \ref{s:locallaw}, after the statement of the local law,  Proposition \ref{prop:LocalLaw}.

\begin{lemma}\label{l:xixj}
Adopt the assumptions in Proposition \ref{p:independence}, and fix a time $\sfT=\varphi^{-2}$. With very high probability there exists a small constant\footnote{In this section the small constant $c_R$ may vary from line to line but only depends on $R$ from the $(\nu, R)$-regularity assumption.} $c_R>0$ such that  uniformly for $0\leq t\leq \sfT$,   we have
\begin{align}\label{e:gapb}
&c_R\,\frac{|i-j| }{N}\leq |x_i(t,\al)-x_j(t,\al)|\leq \frac{|i-j|}{c_R\,N},\quad |i-j|\geq \varphi^{10}, \quad |i|,|j|\leq c_R N,\\
&\label{e:moveest}
|x_i(t,\al)-x_i(0,\al)|\leq \varphi^{3}\left( t+ \frac{1}{N}\right),\quad |i|\leq c_R\, N.
\end{align}
\end{lemma}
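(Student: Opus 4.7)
The plan is to derive both estimates from rigidity for the interpolated particles $x_i(t,\alpha)$, which in turn follows from propagating the local law along the flow (\ref{e:coupleB}). First, I would invoke Proposition \ref{prop:LocalLaw} (the forthcoming local law in Section \ref{s:locallaw}) uniformly in $\alpha\in[0,1]$ and $t\in[0,\sfT]$. Crucially, its proof only uses the bracket bound $\rd\langle b_i\rangle_t/\rd t\le 1$ for the drivers, which is still satisfied by the convex combination $\al b_i^s+(1-\al)b_i^r$, so the characteristic-flow argument closes without any independence assumption. The conclusion is that the empirical Stieltjes transform of $\{x_i(t,\alpha)\}$ is, on the region $\{|z|\le c_G,\,\Im z\ge\varphi/N\}$, within $\varphi/\sqrt{N\Im z}$ of the Stieltjes transform $\widetilde m_t$ of the free convolution $\widetilde\mu_t:=\widetilde\mu_0\boxplus\mu_{\mathrm{sc}}^{(t)}$. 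Because $\sfT=\varphi^{-2}\ll 1$, the bounds (\ref{eqn:tildeC}) are preserved after free convolution: $\widetilde\mu_t$ has density bounded above and away from $0$ on $[-c_G,c_G]$, uniformly in $t\le\sfT$, after possibly shrinking $c_G$.

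From this local law, a standard argument (Helffer--Sj\"ostrand or direct contour integration against smooth cutoffs of indicator functions) yields rigidity: with very high probability,
\begin{equation*}
|x_i(t,\alpha)-\gamma_i(t)|\le \frac{\varphi^{2}}{N}, \qquad |i|\le c_G N,\ \ 0\le t\le\sfT,\ \ \alpha\in[0,1],
\end{equation*}
where $\gamma_i(t)$ denotes the $i$-th classical location with respect to $\widetilde\mu_t$. Since the density of $\widetilde\mu_t$ is bounded above and below on $[-c_G,c_G]$, the classical locations are regularly spaced: $|\gamma_i(t)-\gamma_j(t)|\asymp |i-j|/N$ uniformly in the relevant range of $i,j,t$. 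For $|i-j|\ge\varphi^{10}$, the rigidity error $\varphi^2/N$ is negligible compared to $|i-j|/N$, and the two-sided gap bound (\ref{e:gapb}) follows by the triangle inequality.

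For the movement estimate (\ref{e:moveest}), I would split
\begin{equation*}
|x_i(t,\alpha)-x_i(0,\alpha)|\le |x_i(t,\alpha)-\gamma_i(t)|+|\gamma_i(t)-\gamma_i(0)|+|\gamma_i(0)-x_i(0,\alpha)|.
\end{equation*}
The first and third terms are each at most $\varphi^{2}/N$ by rigidity at times $t$ and $0$. For the middle term, the subordination equation for $\widetilde\mu_0\boxplus\mu_{\mathrm{sc}}^{(t)}$ shows that $\widetilde m_t(z)$ is smooth in $t$ near the real axis with time-derivative of order one on the bulk region, so the classical locations are Lipschitz in $t$ with constant of order one; hence $|\gamma_i(t)-\gamma_i(0)|\lesssim t$, giving (\ref{e:moveest}).

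The main obstacle is establishing the local law at scale $\varphi/N$ for the interpolated dynamics with correlated, non-Brownian drivers, which must proceed using only the bracket bound $\rd\langle b_i\rangle_t/\rd t\le 1$. Once Proposition \ref{prop:LocalLaw} is available at this level of generality, the rigidity extraction, the spacing of classical locations, and their Lipschitz continuity in $t$ are all routine consequences of the regularity built into Assumption \ref{d:regular}.
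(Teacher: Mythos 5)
There is a genuine gap: your key intermediate claim, rigidity $|x_i(t,\alpha)-\gamma_i(t)|\le \varphi^2/N$ for all $|i|\le c_G N$, does not follow from the local law available here. Assumption \ref{d:regular} and Proposition \ref{prop:LocalLaw} only give the \emph{weak} bound $|m_t-\widetilde m_t|\lesssim \varphi/\sqrt{N\Im z}$, whose standard consequence is an eigenvalue-counting accuracy of order $\sqrt{N|I|}$ per interval (this is exactly (\ref{eqn:numberEigenvalues})), hence rigidity only at scale $\sim(\sqrt{|i|}+N^{\nu})/N$, as recorded in (\ref{e:initial_diff}); a Helffer--Sj\"ostrand argument cannot do better than the Stieltjes-transform error permits. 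With this weaker rigidity your proof of (\ref{e:moveest}) collapses: the triangle inequality only yields $|x_i(t,\alpha)-x_i(0,\alpha)|\lesssim \sqrt{|i|}/N+t$, which for $|i|\gg\varphi^{6}$ and small $t$ (e.g.\ $t\asymp N^{-1+\omega_t}$, the regime actually used) is far weaker than $\varphi^3(t+1/N)$. The same issue leaves the range $\varphi^{10}\le|i-j|\lesssim\sqrt{|i|}$ uncovered in your derivation of (\ref{e:gapb}), although that estimate can be salvaged without rigidity, since two-sided bounds on $\Im m_t$ at scale $\eta\asymp|i-j|/N$ (equivalently the counting estimate, whose error $\sqrt{|i-j|}\ll|i-j|$ is negligible once $|i-j|\ge\varphi^{10}$) give both directions of (\ref{e:gapb}); this is what the paper does, citing the density bounds from Proposition \ref{prop:LocalLaw}.

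For (\ref{e:moveest}) the paper takes a different route precisely to avoid deterministic rigidity: it couples the process to the \emph{same} dynamics driven by standard Brownian motions with the \emph{same} initial data, so that no $\sqrt{|i|}/N$ discrepancy with deterministic classical locations ever enters. Lemma \ref{l:wbound} bounds the coupling error by $\varphi\sqrt{t/N}$, optimal rigidity along standard Dyson Brownian motion (around the quantiles $\gamma_i(t)$ of the free convolution of the empirical initial measure) contributes $\varphi/N$, and those quantiles move by $O(t)$; combining gives $|x_i(t)-x_i(0)|\lesssim\varphi(1/N+\sqrt{t/N}+t)\lesssim\varphi^3(t+1/N)$. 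If you want to keep your framework, you would need to replace your rigidity step by such a comparison anchored at the actual initial configuration; as written, the proposal does not prove (\ref{e:moveest}).
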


If we take derivative with respect to $\alpha$ on both sides of \eqref{e:coupleB}, we obtain the tangential equation for $u_i(t,\al)=\partial_\al x_i(t,\al)$ (see \cite{LanSosYau2019}), which is given by 
\begin{align}\begin{split}\label{e:tangential}
\rd u_i(t,\alpha)&=\rd \xi_i(t)-\sum_{j\neq i}c_{i,j}(t)\,(u_i(t,\alpha)-u_j(t,\alpha))\rd t, \\
\xi_i(t):&=\frac{ \rd b_i^s (t)-\rd b_i^r (t)}{\sqrt{2N}},\\
c_{ij}(t):&=\frac{1}{2N(x_i(t,\al)-x_j(t,\al))^{2}},
\end{split}\end{align}
where $u_i(0,\al)=0$ for all $1\leq |i|\leq N$. The key for the proof of Proposition \ref{p:independence} is the following lemma.

\begin{lemma}\label{l:intpdiff}
Under the  assumptions from Proposition \ref{p:independence},  for any $|i|\leq N^\omega$ and $\alpha\in[0,1]$, with  very high probability we have
$
|u_i(t_f, \al)|\leq N^{-1-\omega}.
$
\end{lemma}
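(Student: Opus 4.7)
The tangential equation \eqref{e:tangential} is linear with zero initial data, so by Duhamel's principle,
\begin{equation*}
u_i(t_f,\alpha)=\sum_{j}\int_0^{t_f}\mathcal{U}_{ij}(s,t_f)\,\rd \xi_j(s),
\end{equation*}
where $\mathcal{U}(s,t)$ denotes the (random) propagator on $[s,t]$ of the time-dependent generator whose off-diagonal entries are $c_{ij}(t)$ from \eqref{e:tangential}. By construction $\mathcal{U}(s,t)$ acts substochastically on indices: $\mathcal{U}_{ij}(s,t)\ge 0$ and $\sum_j \mathcal{U}_{ij}(s,t)\le 1$.

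Combining It\^o's isometry with the Kunita--Watanabe inequality and the pointwise bound on $\rd\langle\xi_j\rangle/\rd s$ implied by Assumption~\ref{e:assump}, we obtain
\begin{equation*}
\E\!\left[u_i(t_f,\alpha)^2\right]\le \int_0^{t_f}\sum_{j,k}\mathcal{U}_{ij}(s,t_f)\,\mathcal{U}_{ik}(s,t_f)\cdot\frac{\sqrt{\sigma_j\sigma_k}}{2N}\,\rd s,
\end{equation*}
with $\sigma_j=N^{-\tilde\omega}\mathbf{1}_{|j|\le K}+4\,\mathbf{1}_{|j|>K}$. The plan is then to split the pair $(j,k)$ into the \emph{near} region $\{|j|,|k|\le K\}$ and its complement, the \emph{far} region. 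In the near region, substochasticity and $\sigma_j\le N^{-\tilde\omega}$ directly bound the contribution by $t_f N^{-\tilde\omega}/(2N)\le N^{-2+\omega_t-\tilde\omega}/2$, which is much smaller than $N^{-2-2\omega}$ thanks to the hierarchy $\omega,\omega_t\ll \tilde\omega$.

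The main obstacle lies in the far region, where $\sigma_j$ is only of order $1$: one must exploit finite-speed propagation for the discrete parabolic generator. Using the gap regularity \eqref{e:gapb}, one expects the effective support of $\mathcal{U}(s,t_f)$ starting from an index $i$ with $|i|\le N^\omega$ to spread over only $\sqrt{Nt_f}=N^{\omega_t/2}$ particles, far below $K=N^{\omega_K}$. The key technical input I would establish is that, with very high probability,
\begin{equation*}
\sum_{|j|>K}\mathcal{U}_{ij}(s,t_f)\le N^{-D}
\end{equation*}
for any fixed $D>0$, uniformly in $|i|\le N^\omega$ and $s\in[0,t_f]$. This heat-kernel decay is the heart of the argument and can be obtained via the observable method of \cite{Bou2022}: one tracks an analytic test function of the form $g_t(z)=\sum_j w_j(t)/(x_j(t,\alpha)-z)$ for a suitable test vector $w$ along the characteristics of a related advection equation in $z$, and closes a Gronwall-type estimate using $\Im \widetilde m(z)\asymp 1$ from Assumption~\ref{d:regular} and the gap regularity \eqref{e:gapb}. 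Crucially, this approach bypasses any short/long-range decomposition of the generator.

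Combining the two regions yields $\E[u_i(t_f,\alpha)^2]\le N^{-2-3\omega}$. Higher moments are then controlled by the BDG inequality applied to the stochastic integral, together with the same variance bounds on the $\xi_j$'s; this upgrades the $L^2$ estimate to an $L^p$ estimate for every $p$, hence to the stated very-high-probability bound. Uniformity over $|i|\le N^\omega$ and $\alpha\in[0,1]$ is finally obtained by a union bound over a polynomially fine grid in $(i,\alpha)$, using the H\"older continuity of $u_i(t_f,\cdot)$ inherited from \eqref{e:moveest} and \eqref{e:gapb}.
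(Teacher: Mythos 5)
There is a genuine gap, and it sits at the two load-bearing steps of your argument. First, your moment computation applies It\^o's isometry (and later BDG) to the representation $u_i(t_f,\alpha)=\sum_j\int_0^{t_f}\mathcal{U}_{ij}(s,t_f)\,\rd\xi_j(s)$, but the integrand $\mathcal{U}_{ij}(s,t_f)$ is \emph{not} adapted: the propagator of \eqref{e:tangential} on $[s,t_f]$ depends on the trajectories $x_k(\tau,\alpha)$ for $\tau\in[s,t_f]$, hence on the same driving martingales that generate $\rd\xi_j(s)$ on the future time interval. So the quantity you write is an anticipating integral, the cross terms do not vanish in expectation, and neither the isometry bound nor the BDG upgrade is justified. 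This is precisely the obstruction the paper's proof is built around: it never takes expectations through a Duhamel formula with the random propagator inside. Instead it introduces the auxiliary short-range process $\bmw$ of \eqref{e:tvequation}, controls it by applying It\^o directly to pathwise quantities (the $\ell^q$ bound of Lemma \ref{l:wbound}, and then the weighted quantity $X_s=\sum_k \phi_k^2 w_k^2$ with $\phi_k=e^{-\zeta\psi_k}$ and a stopping time), and uses Duhamel only for the difference $\bmu-\bmw$ in \eqref{e:uit}, where the integrand $\cL(s)\bmw(s)$ is bounded pathwise with high probability and only positivity and mass conservation of $\mathcal{U}$ are used --- no isometry is ever needed.

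Second, the ``key technical input'' you defer --- $\sum_{|j|>K}\mathcal{U}_{ij}(s,t_f)\le N^{-D}$ for the full propagator --- is false as stated. Because of the long-range $1/(x_i-x_j)$ interaction, the continuum limit of $\mathcal{U}$ is the Poisson kernel $p_{s,t}(x,y)=\frac{t-s}{(t-s)^2+(x-y)^2}$ (see the discussion before Section \ref{s:maximum_principle}), which has quadratic tails; the tail mass beyond index $K$ is of order $Nt_f/K=N^{\omega_t-\omega_K}$, polynomially small but nowhere near $N^{-D}$. Sub-exponential finite-speed estimates hold only for a short-range cutoff of the generator, which is exactly why the paper splits $\cB=\cS+\cL$ in \eqref{eqn:short}, absorbs the heavy tail of $\cL$ through the crude a priori bound on $\bmw$ (Lemma \ref{l:wbound} combined with \eqref{e:Lijbound}), and obtains the localization near the origin through the exponential weight $\phi_k$ rather than through a propagator tail bound. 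A polynomial tail bound of the above form could in principle be compatible with the required $N^{-2-2\omega}$ target, but you neither prove it nor can the $L^2$-to-very-high-probability upgrade go through as written; as it stands the heart of the lemma (the interplay between Assumption \ref{e:assump} on $|k|\le K$, the gap regularity \eqref{e:gapb}, and the localization of the dynamics) is asserted rather than established.
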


To prove Lemma \ref{l:intpdiff}, we define the operator $\cB=\cB(t)$ through
\begin{align*}
(\cB v)_{i}=-\sum_{j\neq i} c_{ij}(t)(v_i-v_j)
\end{align*}
and we first study the linear differential equation
\begin{align}\label{e:tangential_eq}
\frac{\rd}{\rd t} v_i(t)=(\cB v(t))_i, \quad 1\leq |i|\leq N.
\end{align}

We need to further introduce the  long range and short range operators. Take $c<c_R/2$. Denoting  $I_{\cL}=\{(i,j): |i-j|\geq \ell, \min\{|i|,|j|\}\leq cN\}$, and $I_{\cS}=\{(i,j): i\neq j\}\setminus I_{\cL}$, let 
\begin{equation}
(\cS v)_{i}=-\sum_{(i,j)\in I_{\cS}}c_{ij}(t)(v_i-v_j),\label{eqn:short}\ \ \ \ \ \ \ \ 
(\cL v)_{i}=-\sum_{(i,j)\in I_{\cL}}c_{ij}(t)(v_i-v_j).
\end{equation}
In this definition,  we choose the dynamics cutoff parameter
\begin{equation*}
\ell=N^{\omega_\ell},\ \ \ \omega_t\ll \omega_\ell<\omega_K.
\end{equation*}
In particular $\omega_\ell>10\nu$, so  thanks to Lemma \ref{l:xixj},  for any $(i,j)\in I_{\cL}$ we have
\begin{align}\label{e:Lijbound}
|c_{ij}(t)|\leq \frac{N}{c_R|i-j|^2}. 
\end{align}
With $\bm\xi$ defined in (\ref{e:tangential}),  we consider the following evolution using the short range operator $\cS$,
\begin{align}\label{e:tvequation}
\rd \bmw=\cS\bmw +\rd \bm\xi,\quad  \bmw(0)=0.
\end{align}

\begin{lemma}\label{l:wbound}
There exists a constant $c>0$ such that for any $0\leq t\leq 1$,  with probability  $1-e^{-c(\log N)^2}$ we have
\begin{align*}
\|\bmw(t)\|_\infty\leq (\log N)^2\sqrt{\frac{t}{N}}.
\end{align*}
Moreover, this estimate holds for any choice of $\ell$, in particular when $\cS$ is replaced by $\cB$ in (\ref{e:tvequation}).
\end{lemma}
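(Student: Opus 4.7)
The plan is to control the even moments $M_{2k}(t) := \E \sum_{1\le |i|\le N} |w_i(t)|^{2k}$ via Itô calculus exploiting the dissipative structure of $\cS$, and then to convert them to an $\ell^\infty$ estimate through Markov's inequality with $k$ of order $(\log N)^2$. The crucial structural observation is that $\cS(t)$ (and likewise $\cB(t)$) is a time-dependent generator of a continuous-time Markov chain on $\{i\st 1\le |i|\le N\}$: off-diagonal entries $c_{ij}(t)\ge 0$ are symmetric, and each row sums to zero. This is the only property of $\cS$ used below, which is why the same bound applies verbatim to $\cB$ or to any intermediate truncation, making the cutoff $\ell$ irrelevant for the estimate.

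First I would apply Itô's formula to $|w_i|^{2k}$ for even $2k$. Because the drift $\cS\bmw\,\rd t$ has bounded variation, $\rd\langle w_i\rangle_t=\rd\langle \xi_i\rangle_t$, and since $\xi_i=(b_i^s-b_i^r)/\sqrt{2N}$, Cauchy--Schwarz for brackets combined with the variation bound \eqref{eqn:bracketBound} gives $\rd\langle \xi_i\rangle_t\le (2/N)\rd t$. Summing the resulting identity over $i$ and symmetrizing the $\cS$--term in $(i,j)$ yields
\begin{equation*}
\sum_i |w_i|^{2k-2}w_i(\cS\bmw)_i=-\frac{1}{2}\sum_{(i,j)\in I_{\cS}}c_{ij}\big(|w_i|^{2k-2}w_i-|w_j|^{2k-2}w_j\big)(w_i-w_j)\le 0,
\end{equation*}
because $x\mapsto |x|^{2k-2}x$ is monotone increasing. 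Taking expectations then kills both the drift contribution and the $\rd\xi_i$ stochastic integral, and the remaining Itô correction gives the recursion $M_{2k}'(t)\le (2k(2k-1)/N)\,M_{2k-2}(t)$ with initial value $M_0\equiv 2N$. Iterating this produces $M_{2k}(t)\le 2N\,(Ckt/N)^k$ for some universal $C$.

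Finally, for any $\lambda>0$, a Markov/union bound gives
\begin{equation*}
\P\big(\|\bmw(t)\|_\infty\ge \lambda\big)\le \lambda^{-2k}\,M_{2k}(t)\le 2N\bigg(\frac{Ck\,t}{N\lambda^2}\bigg)^k,
\end{equation*}
and choosing $k=\lceil(\log N)^2\rceil$ together with $\lambda=(\log N)^2\sqrt{t/N}$ makes the right-hand side at most $e^{-c(\log N)^2}$, which is exactly the claim. Since the Q--matrix argument is insensitive to which pairs participate in the operator, the same bound holds with $\cS$ replaced by $\cB$.

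The main obstacle is that one cannot usefully bound the drift $\cS\bmw$ directly in $\ell^\infty$, since the coefficients $c_{ij}=1/[2N(x_i-x_j)^2]$ become arbitrarily large when consecutive particles are close, and hence $\|\cS\|_{\infty\to\infty}$ is uncontrolled on the relevant time scales. The device above is what turns dissipativity of $\cS$ (which naturally lives on $L^2$ through the Dirichlet form) into a genuine per-coordinate $\ell^\infty$ estimate, by raising the exponent until Gaussian-type concentration becomes available; it is also what simultaneously decouples the estimate from $\ell$ and from the correlation structure between the driving martingales, since only the diagonal quadratic variation $\rd\langle\xi_i\rangle_t$ enters the recursion.
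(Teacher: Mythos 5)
Your proof is correct and follows essentially the same route as the paper: Itô's formula applied to the sum of even powers, non-positivity of the short-range Dirichlet-form term by monotonicity of $x\mapsto x^{q-1}$, the bracket bound $\rd\langle\xi_i\rangle_t\lesssim N^{-1}\rd t$, and Markov's inequality with exponent of order $(\log N)^2$. The only (inessential) difference is bookkeeping: you iterate the moment hierarchy $M_{2k}'\le \frac{2k(2k-1)}{N}M_{2k-2}$ down to $M_0=2N$, while the paper closes the inequality at a single moment level via H\"older and Gr\"onwall; both yield the same bound $\E\|\bmw(t)\|_\infty^{2k}\le 2N(Ckt/N)^k$ and, as you note, neither uses anything about $\cS$ beyond symmetry and nonnegativity of the $c_{ij}$, so the claim for $\cB$ and arbitrary $\ell$ follows identically.
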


\begin{proof} 
Let $q=2p$ with $p\in\mathbb{N}_*$ and $f(t)=\E[\sum_{|k|\leq N}w_k(t)^q]$.
From It{\^o}'s lemma and (\ref{e:tvequation}) we have
\[
\del_t f(t)=-\frac{q}{2}\sum_{(i,j)\in I_\cS}\E[c_{i,j}(w_i-w_j)(w_i^{q-1}-w_j^{q-1})]+\sum_{1\leq |i|\leq N}\frac{q(q-1)}{2}\E\big[w_i(t)^{q-2}
\frac{\rd\langle\xi_i\rangle_t}{\rd t}\big].
\]
The first term on the right-hand side is non-positive, and the second one is bounded thanks to 
$\rd\langle\xi_i\rangle_t/\rd t\lesssim N^{-1}$ and repeated H{\"o}lder's inequalities:
\[
\E\left[\sum_{1\leq |i|\leq N}w_i(t)^{q-2}\right]\leq 2N \E\left[\Big(\frac{1}{2N}\sum_{1\leq |i|\leq N}w_i(t)^{q}\Big)^{\frac{q-2}{q}}\right]
\leq 
2N\E\left[\frac{1}{2N}\sum_{1\leq |i|\leq N}w_i(t)^{q}\right]^{\frac{q-2}{q}}=(2N)^{\frac{2}{q}}f(t)^{\frac{q-2}{q}}.
\]
We have obtained 
$
\del_t f(t)\leq \frac{p(2p-1)(2N)^{1/p}}{N}f(t)^{\frac{p-1}{p}}
$
so with Gr{\" o}nwall's inequality
\[
\E[\|\bw(t)\|_\infty^{2p}]\leq f(t)\leq2N\cdot (2p-1)^p\cdot \left(\frac{t}{N}\right)^p.
\]
The conclusion follows easily by Markov's inequality.
\end{proof}

Denote the propagator  for (\ref{e:tangential_eq}) $\cU=\cU(s,t,\al)$ ($s\leq t$), i.e.  
$
v_i(t)=\sum_{1\leq|j|\leq N}\cU_{ij}(s,t, \al)v_j(s).
$
The propagator preserves the sign ($\cU_{ij}\geq 0$) and 
the mass ($\sum_{1\leq |j|\leq N} \cU_{ij}=1$).
The difference of \eqref{e:tangential} and \eqref{e:tvequation} gives
$
\rd (\bmu- \bmw)=\cB(\bmu- \bmw) +\cL \bmw
$
so that,  with Duhamel's formula, 
\begin{align}\label{e:uit}
\bmu(t)=
 \bmw(t)
+\int_0^t \cU(s,t, \alpha)  \cL (s)  \bmw(s) \rd s.
\end{align}
For any $1\leq |i|\leq N$, thanks to Lemma \ref{l:wbound}, we conclude
\begin{align}\begin{split}\label{e:uij}
\sum_{1\leq |j|\leq N} \cU_{ij} (s,t)(\cL (s)  \bmw(s))_j
&\leq \sum_{1\leq |j|\leq N} \cU_{ij} (s,t)\sum_{|j-k|\geq \ell}\frac{CN}{|j-k|^2}|w_k(s)|
\leq \sum_{1\leq |j|\leq N} \cU_{ij} (s,t) \sum_{|j-k|\geq \ell}\frac{CN}{|j-k|^2}\|\bmw(s)\|_\infty\\
&\leq \sum_{1\leq |j|\leq N} \cU_{ij} (s,t)\frac{CN}{\ell}\|\bmw(s)\|_\infty
\leq \frac{CN}{\ell} \|\bmw(s)\|_\infty\leq \frac{CN(\log N)^2\sqrt{Ns}}{\ell},
\end{split}\end{align}
with probability $1-\OO(N^{-D})$ for any fixed $s$, where $C$ does not depend on $s$.
In the first inequality we used \eqref{e:Lijbound}; in the second line we bound $|w_k(s)|$ by $\|\bmw(s)\|_\infty$; in the last line we first sum over $k$, then sum over $j$, and also used Lemma \ref{l:wbound}.

Plugging \eqref{e:uij} into \eqref{e:uit} we obtain that with probability $1-\OO(N^{-D})$ we have
\begin{align}\label{e:uit2}
\bmu(t)=
 \bmw(t)
+\OO\left(\int_0^t \frac{C(\log N)^2\sqrt{Ns}}{\ell} \rd s\right)
= \bmw(t)
+\OO\left(\frac{(\log N)^2\sqrt{Nt^3}}{\ell}\right)
= \bmw(t)
+\OO\left(N^{-1-\omega}\right),
\end{align}
provided that $\ell\gg (\log N)^2 N^{3\omega_t/2+\omega}$. 
Note that the first inequality in (\ref{e:uit2}) holding with very high probability requires an intermediate step based on the Markov and H{\"o}lder inequalities,  because the good sets where $(\ref{e:uij})$ holds may not have an intersection with large probability as $s$ varies.  We refer to \cite[proof of Theorem 2.8]{Bou2022} for this elementary step.

Lemma~\ref{l:intpdiff} now follows from combining \eqref{e:uit2} and the following estimates on $\bmw(t)$.

\begin{lemma}
Let ${\bm w}$ be the solution of \eqref{e:tvequation}.  Let
$\ell=N^{\omega_\ell}$ and
$\alpha\in[0,1]$.  With  very high probability,  for any  $1\leq |i|\leq \ell$, 
$\nu, \omega\ll \omega_t\ll \omega_\ell<\omega_K \ll\wt\omega$ and $t=N^{-1+\omega_t}$,
we have
\[
|w_i(t)|\leq \frac{1}{N^{1+\omega}}.
\]
\end{lemma}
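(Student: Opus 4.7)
The plan is to exploit the variance splitting provided by Assumption~\ref{e:assump}: the noise $\bm\xi$ has super-small quadratic variation $\leq N^{-1-\tilde\omega}$ on indices $|i|\leq K$, a region much larger than $\ell$, whereas the noise at $|i|>K$ is of the generic size $\lesssim 1/N$ per unit time. To extract the refined bound $|w_i(t)|\leq N^{-1-\omega}$ for $|i|\leq\ell$, we decompose the driving martingale as $\bm\xi=\bm\xi^{(1)}+\bm\xi^{(2)}$, where $\xi_i^{(1)}(t)=\xi_i(t)\,\1_{|i|\leq K/2}$ carries the small-variance part and $\xi_i^{(2)}$ the rest, and by linearity $\bmw=\bmw^{(1)}+\bmw^{(2)}$. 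Both pieces will be bounded by $N^{-1-\omega}$ on $|i|\leq\ell$, but for entirely different reasons.

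For $\bmw^{(1)}$, we run the moment method of Lemma~\ref{l:wbound}, now keeping track of the sharper variance bound. Setting $f_1(t)=\E[\sum_i w_i^{(1),2p}(t)]$, the negativity of the $\cS$-Dirichlet form (after symmetrization) and the fact that the It\^o correction is supported on $|i|\leq K/2$ give, via H\"older's inequality on the restricted sum,
\begin{equation*}
f_1'(t) \;\leq\; p(2p-1)\, N^{-1-\tilde\omega}\sum_{|i|\leq K/2}\E[w_i^{(1),2p-2}] \;\leq\; p(2p-1)\, K^{1/p}\, N^{-1-\tilde\omega}\, f_1(t)^{(p-1)/p}.
\end{equation*}
Integrating and then taking $p=\log N$ (so that $K^{1/p}=\OO(1)$) yields $\|\bmw^{(1)}(t)\|_\infty \lesssim \sqrt{\log N}\,N^{(\omega_t-\tilde\omega)/2-1}$, which is dominated by $N^{-1-\omega}$ with very high probability thanks to the hierarchy $\omega_t,\omega\ll\tilde\omega$.

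For $\bmw^{(2)}$, whose injected noise sits at $|j|>K/2$, we use the Duhamel representation $\bmw^{(2)}(t)=\int_0^t \cU_\cS(s,t)\,\rd\bm\xi^{(2)}(s)$ and reduce to a super-polynomial decay estimate on the short-range propagator: $\cU_\cS(s,t)_{ij}\leq \exp(-c\,N^{\omega_K-\omega_\ell}\log N)$ uniformly in $|i|\leq\ell$, $|j|>K/2$, and $0\leq s\leq t\leq N^{-1+\omega_t}$. This is proved by a Davies exponential-weight argument: letting $v(r)=\cU_\cS(s,r+s)\delta_j$ and $q_\mu(r)=\sum_k e^{2\mu(k-j)}v_k(r)^2$, the short-range symmetrization together with the bound $c_{km}\leq CN/|k-m|^2$ from Lemma~\ref{l:xixj} yields $\rd q_\mu/\rd r\leq C\Lambda(\mu)\,q_\mu(r)$ with $\Lambda(\mu)\lesssim N\sum_{k=1}^{\ell}(e^{\mu k}-1-\mu k)/k^2$. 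From $q_\mu(0)=1$, Gr\"onwall, and the pointwise inequality $e^{2\mu|i-j|}v_i(r)^2\leq q_\mu(r)$, optimizing the weight via $\mu\ell\asymp \log\bigl(|i-j|\ell/(Nt)\bigr)$ produces the claimed super-polynomial bound. Feeding this into the It\^o isometry for $w_i^{(2)}(t)$---the off-diagonal brackets being controlled by Kunita--Watanabe as $|\rd\langle\xi_j^{(2)},\xi_k^{(2)}\rangle_r|\leq (2/N)\,\rd r$---makes $\E[|w_i^{(2)}(t)|^2]$ super-polynomially small; Markov's inequality and a union bound over $|i|\leq \ell$ then conclude.

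The main obstacle is calibrating the Davies weight in the regime where the jump range $\ell$ of $\cS$ exceeds the ballistic distance $Nt=N^{\omega_t}$ covered by the chain: one must work in the exponential-growth regime $\mu\ell\gg 1$ of $\Lambda(\mu)\lesssim Ne^{\mu\ell}/\ell^2$, where the standard quadratic expansion yields no useful estimate. The hierarchy $\omega_t\ll\omega_\ell\ll\omega_K$ is precisely what guarantees that the optimized exponent $-c(|i-j|/\ell)\log(|i-j|\ell/(Nt))\asymp -N^{\omega_K-\omega_\ell}\log N$ remains super-polynomial. A secondary technical point is that Lemma~\ref{l:xixj} controls $c_{ij}$ only for $|i-j|\geq\varphi^{10}$; contributions from the $\OO(\varphi^{20})$ very-local pairs produce an additive correction to $\Lambda(\mu)$ that, for $\mu\varphi^{10}\lesssim 1$, stays subdominant and is absorbed into the subpolynomial $N^\varepsilon$ slack of the argument.
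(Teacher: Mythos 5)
Your decomposition $\bm\xi=\bm\xi^{(1)}+\bm\xi^{(2)}$ and the moment bound for $\bmw^{(1)}$ are fine (the sharpened variance $\rd\langle\xi_i\rangle_t/\rd t\lesssim N^{-1-\tilde\omega}$ for $|i|\leq K$ does feed through the argument of Lemma \ref{l:wbound} exactly as you say). The gap is in the treatment of $\bmw^{(2)}$. Your Davies/finite-speed estimate for $\cU_\cS$ needs a bound on the jump rates $c_{km}(s)=\bigl(2N(x_k(s,\al)-x_m(s,\al))^2\bigr)^{-1}$ for \emph{all} pairs $(k,m)\in I_\cS$, because with an index-space weight $e^{2\mu(k-j)}$ the weight ratio between adjacent indices is $\asymp\mu$ no matter how close the particles are, so $\Lambda(\mu)$ contains terms $\mu^2 c_{k,k+1}(s)$. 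But for $|k-m|<\varphi^{10}$ there is no control whatsoever: \eqref{e:gapb} only applies for $|k-m|\geq\varphi^{10}$, and under the hypotheses of Proposition \ref{p:independence} (arbitrary correlated martingale drivers, only a weak local law on scale $\varphi/N$) there is no level-repulsion or minimal-gap estimate available, so $c_{k,k+1}$ — and even $\int_0^t c_{k,k+1}(s)\,\rd s$ — cannot be bounded by any fixed power of $N$, let alone be "absorbed into subpolynomial slack." This is not a secondary technicality; it is exactly the obstruction the paper's proof is built to avoid: there the exponential weight is $\phi_k=e^{-\zeta\psi(x_k)}$, a smooth function of the \emph{particle position}, so in the term \eqref{new2} the factor $\phi_k/\phi_j+\phi_j/\phi_k-2=\OO(\zeta^2|x_k-x_j|^2)$ cancels the $(x_k-x_j)^{-2}$ singularity of $c_{jk}$ identically, and no gap lower bound is ever needed. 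A secondary problem in your scheme: in the stochastic convolution $w^{(2)}_i(t)=\int_0^t\bigl(\cU_\cS(s,t)\,\rd\bm\xi^{(2)}(s)\bigr)_i$ the kernel $\cU_\cS(s,t)$ depends on the coefficients $c_{km}(r)$ for $r\in[s,t]$, hence on the future of the driving noise, so the integrand is not adapted and the It\^o isometry you invoke does not apply as stated; the paper's analogous Duhamel step \eqref{e:uit} avoids this because the $s$-integral there is a pathwise Lebesgue integral bounded through $\|\bmw(s)\|_\infty$ and mass conservation of $\cU$. (Also note $I_\cS$ contains all pairs with $\min\{|i|,|j|\}>cN$ at arbitrary index distance, so "jump range $\ell$" is not literally correct, though this is not the essential difficulty.)

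To salvage your approach you would either need a high-probability uniform-in-time lower bound on nearest gaps — which is not available under the stated assumptions and would defeat the purpose of the generality — or replace the index-space weight by a position-space weight, at which point you are reconstructing the paper's proof: a single weighted $\ell^2$ energy $X_s=\sum_k e^{-2\zeta\psi(x_k)}w_k^2$, with the noise split between $|k|\leq K'$ (small bracket) and $|k|\geq K'$ (suppressed by the weight $e^{-K'/\ell}$), and a stopping-time argument giving $X_t\ll N^{-2-2\omega}$, hence $|w_i(t)|\leq N^{-1-\omega}$ for $|i|\leq\ell$ where $\phi_i\asymp1$.
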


\begin{proof}
Denote  $\zeta=N/\ell$ and $\chi(x)$ be nonnegative,  smooth, compactly supported, and such that $ \int\chi=1$. 
In the rest of this proof, we denote, $b_k(t)=\al b_k^s(t) +(1-\al) b_k^r(t)$ (from \eqref{e:coupleB}).
Define $\psi(x)=\int \min\{|x-y|, \frac{c}{2c_R}\}\zeta\chi(\zeta y)\rd y$ (where $c_R$ is introduced in \eqref{e:gapb} and $c$ appears in the definition of $I_{\cL}$ before (\ref{eqn:short}))  and
\[
\psi_k(s)=\psi(x_k(s, \al)),\ \phi_k(s)=e^{-\zeta\psi_k(s)},\ v_k(s)=\phi_k(s)w_k(s).
\]
From the construction $\psi(x)=\frac{c}{2c_R}$ for $|x|\geq \frac{c}{c_R}$. 
The It\^o formula gives 
\begin{align*}
\rd v_k
=& \sum_{ (j,k)\in I_{\cS}} c_{jk} \left((v_j- v_k)+ \left( \frac {  \phi_k}  {  \phi_j} - 1 \right)  v_j \right) \rd s  +\left(\rd  \phi_k \right)  w_k
-\zeta \psi'(x_k)\phi_k(s)\frac{\rd\langle b_k, \xi_k \rangle_s}{\sqrt{2N}}+\phi_k\rd\xi_k,\\
\frac{\rd \phi_k}{\phi_k} 
=&- \zeta \psi'(x_{k})\frac{\rd b_{k}(s)}{\sqrt{2N}}-\zeta \frac{\psi'(x_{k})}{2N}\sum_{j\neq k}\frac{\rd s}{x_{k}-x_j}
+\frac{1}{2}\left(\frac{\zeta}{2N}\psi''(x_{k})+\frac{\zeta^2}{2N}\psi'(x_{k})^2\right)\rd\langle b_k\rangle_{s}.
\end{align*}
Thus if we define $X_s=\sum_{1\leq |k|\leq N}v_k(s)^2$, we obtain
\begin{align}
\rd X_s=&\sum_{1\leq |k|\leq N}-2\zeta\psi'(x_k) v_k^2 \frac{\rd b_k(s)}{\sqrt{2N}}
+2\sum_{1\leq |k|\leq N} v_k \phi_k \frac{\rd \xi_k(s)}{\sqrt{2N}}\label{stoc}\\
&-\sum_{(j,k)\in I_\cS}c_{jk}(v_j-v_k)^2\rd s\label{new1}\\
&+\sum_{(j,k)\in I_\cS}c_{jk}\left(\frac{\phi_k}{\phi_j}+\frac{\phi_j}{\phi_k}-2\right)v_jv_k\rd s
\label{new2}\\
&+\frac{\zeta}{2N}\sum_{1\leq |k|\leq N}  \psi''(x_k)v_k^2\rd\langle b_k\rangle_{s}\label{new3}\\
&+\frac{\zeta^2}{2N}\sum_{1\leq |k|\leq N}  \psi'(x_k)^2v_k^2\rd\langle b_k\rangle_{s}\label{new4}\\
&-\frac{\zeta}{N}\sum_{j<k}\frac{\psi'(x_j)v_j^2-\psi'(x_k)v_k^2}{x_j-x_k}\rd s\label{new5}\\
&-\sum_{1\leq |k|\leq N}   2 \zeta \psi'(x_k)\phi_kv_k\frac{\rd\langle b_k, \xi_k \rangle_s}{\sqrt{2N}}
+\sum_{1\leq |k|\leq N}  \rd \left\langle \phi_k \xi_k-\zeta \psi'(x_k) v_k\frac{b_k}{\sqrt{2N}}\right\rangle_s
\label{new6}\\
&+\sum_{1\leq |k|\leq N}  \phi_k^2 \rd \langle \xi_k,  \xi_k\rangle_s
\label{new7}.
\end{align}

Consider 
\begin{align}\label{e:tautime}
\tau=t\wedge\inf\left\{s: \eqref{e:gapb} \text{ fails}\right\}\wedge \inf\left\{s\geq 0: X_s>N^{-2-2\omega}\right\}.
\end{align}

For $(j,k)\in I_\cS$ there are two cases: either $\min\{|j|, |k|\}\leq cN, |j-k|\leq \ell$ or $\min\{|j|, |k|\}\geq cN$.
From $\|\psi'\|_\infty\leq 1$ and the assumption $\zeta=N/\ell$, in the first case we have $\zeta|\psi(x_k)-\psi(x_j)|\leq 
\zeta|x_k-x_j|\leq \tfrac{|k-j|}{c_R\ell}=\OO(1)$, where we have used that for $t\leq \tau$ from \eqref{e:gapb} we have $|x_k(t)-x_j(t)|\leq |k-j|/(c_RN)$ when $|k|,|j|\leq c_R N$, and we choose $c$ small enough compared to $c_R$ (say $c=c_R/2$). This implies that 
$\left|\frac{\phi_k}{\phi_j}+\frac{\phi_j}{\phi_k}-2\right|=\OO(1) \zeta^2|x_k-x_j|^2$. In the second case, $|x_k|, |x_j|\geq c/c_R$, and  $\zeta|\psi(x_k)-\psi(x_j)|=0$. One concludes easily that 
(\ref{new2}) is   $\OO(1)\zeta^2(\ell/N)X_s$. The terms (\ref{new3}) and (\ref{new4}) are of smaller  order by $\|\psi'\|_\infty\leq 1$,  $\|\psi''\|_\infty\leq \zeta$ and $\tfrac{\rd}{\rd t}\langle b_k\rangle\leq 1$.

For $s\leq \tau$, in (\ref{new5}) the sums of $\max\{|j|,|k|\}\geq cN$ (say $|k|\geq cN$) are bounded as
\begin{align*}
2\frac{\zeta}{N}\sum_{|j|\leq N^{1-2\nu}, |k|\geq N^{1-\nu}}\frac{\psi'(x_j)v_j^2}{x_j-x_k}\leq \zeta \sum_{1\leq |j|\leq N}v_j^2=\zeta X_s \leq \frac{\xi}{N^2}=\frac{1}{\ell N}.
\end{align*}
The sums of $|j|, |k|\leq cN$ is of order at most
\[
\frac{\zeta}{N}\sum_{(j,k)\in I_\cL}\frac{v_k^2}{|x_j-x_k|}
+
\frac{\zeta}{N}\sum_{(j,k)\in I_\cS}|\psi'(x_j)|\frac{|v_j^2-v_k^2|}{|x_j-x_k|}
+
\frac{\zeta}{N}\sum_{(j,k)\in I_\cS}\|\psi''\|_\infty v_k^2.
\]
For $s\leq \tau$, the first sum above has order $\zeta(\log N)X_s\leq \zeta(\log N)/N^2=\log N/(N\ell)$. The third sum is at most $(\xi^2\ell/N) X_s\leq 1/(N\ell)$.
Finally, the second sum is bounded using
\[
2\frac{|v_j^2-v_k^2|}{|x_j-x_k|}\leq M^{-1}(v_j+v_k)^2+M\frac{(v_j-v_k)^2}{(x_j-x_k)^2}.
\]
Choosing $M=\e\zeta^{-1}$ for $\e$ small enough, this proves that (\ref{new5}) can be absorbed into the dissipative term (\ref{new1}) plus 
an error of order $(\zeta^2\ell/N) X_s=1/(N\ell)$.

In (\ref{new6}) and (\ref{new7}), using $\|\psi'\|_\infty\leq 1$, we can bound them by (up to a constant)
\begin{align*}
\sum_{1\leq |k|\leq N}  \frac{\zeta \phi_k v_k}{2N}+ \frac{\zeta^2 v_k^2}{2N}+\phi_k^2 \frac{\rd \langle \xi_k, \xi_k\rangle_s}{\rd s}
=
\sum_{1\leq |k|\leq N} \left( \frac{\zeta \phi_k v_k}{2N}+\phi_k^2 \frac{\rd \langle \xi_k, \xi_k\rangle_s }{\rd s}\right)+\frac{\zeta^2 X_s}{2N}.
\end{align*}
Let $K'=\min\{K, \ell^{3/2}\}=N^{\min\{\omega_K, 3\omega_\ell/2\}}\leq K$. For $s\leq \tau$, if $|k|\geq K'$, then $\psi(x_k(s))\geq K'/N=(K'/\ell)\zeta^{-1}$,  so that $\phi_k(s)\leq e^{-K'/\ell}$ is negligible when $\omega_\ell<\omega_K$. For $|k|\leq K'$, we simply bound $|\phi_k|\leq 1$, then
\begin{align*}
\sum_{1\leq |k|\leq N} \left( \frac{\zeta \phi_k v_k}{2N}+\phi_k^2 \frac{\rd \langle \xi_k, \xi_k\rangle_s}{\rd s} \right)
&=\sum_{|k|\leq K'}\left( \frac{\zeta \phi_k v_k}{2N}+\phi_k^2 \frac{\rd \langle \xi_k, \xi_k\rangle_s}{\rd s} \right)+\sum_{|k|\geq K'}\left( \frac{\zeta \phi_k v_k}{2N}+\phi_k^2 \frac{\rd \langle \xi_k, \xi_k\rangle_s}{\rd s} \right)\\
&=\sum_{|k|\leq K'}\left( \frac{\zeta  v_k}{2N}+ \frac{\rd \langle \xi_k, \xi_k\rangle_s}{\rd s} \right)+e^{-K'/\ell}\sum_{1\leq |k|\leq N} \left( \frac{\zeta  v_k}{2N}+ \frac{\rd \langle \xi_k, \xi_k\rangle_s}{\rd s} \right)\\
&\lesssim 
\frac{\zeta K'}{N}\sqrt{\sum_{|k|\leq K'} \frac{v^2_k}{2K'}}+\sum_{|k|\leq K'}\frac{N^{-\wt \omega}}{2N}+e^{-K'/\ell}\left( \zeta \sqrt{\frac{\sum_{1\leq |k|\leq N}  v_k^2}{N}}  +\sum_{1\leq |k|\leq N}  \frac{1}{N} \right)\\
&\lesssim 
\frac{1}{N^{1+\omega_\ell/4}}+\frac{1}{N^{1-\omega_K+\wt\omega}},
\end{align*}
where in the third line we used that for $|k|\leq K$, $\rd \langle \xi_k, \xi_k\rangle_s/\rd s\lesssim N^{-\wt\omega}/N$; in the last line we used that for $s\leq \tau$, $\sum_{1\leq |k|\leq N}  v_k^2(s)=X_s\leq 1/N^2$, and $\sqrt{K'}/\ell\leq N^{-\omega_\ell/4}$.

We now  bound the relevant martingales in \eqref{stoc} in terms of the quadratic variation.  
From \cite[Appendix B.6,  Equation (18)]{ShoWel2009} with  $c=0$ allowed for continuous martingales,  for any continuous martingale $M$ and any $\lambda,\mu>0$, we have 
$
\mathbb{P}\Big(\sup_{0\leq u\leq t}|M_u|\geq \lambda,\ \langle M\rangle_{t}\leq \mu\Big)\leq 2 e^{-\frac{\lambda^2}{2\mu}}
$. This 
implies that there exists a $c$ such that for any $N,\e>0$ we have
\begin{equation}\label{eqn:boundBr}
\mathbb{P}\left(\sup_{0\leq u\leq t}|M_u|\geq \varphi^{\e}\langle M\rangle_t^{1/2}\right)\leq c^{-1}e^{-c\varphi^{2\e}}.
\end{equation}
The stochastic terms in \eqref{stoc} have quadratic variations bounded as follows.
First, for $s\leq \tau$
\begin{align*}
\frac{\rd}{\rd s}\left\langle\sum_{1\leq |k|\leq N} -\xi\psi'(x_k) v_k^2 \frac{ b_k(s)}{\sqrt{2N}}\right\rangle
\lesssim \frac{\xi^2}{N}\left(\sum_{1\leq |k|\leq N}  v_k^2\right)^2=\frac{\xi^2 X^2_s}{N}\lesssim \frac{1}{\ell^2 N^{3+4\omega}},
\end{align*}
Again, for $s\leq \tau$ and $k\geq K$, $\phi_k(s)\leq e^{-K/\ell}$ is negligible. This implies
\begin{align*}
\frac{\rd}{\rd s}\left\langle\sum_{1\leq |k|\leq N}  v_k \phi_k \frac{ \xi_k}{\sqrt{2N}}\right\rangle
&
\leq \sum_{|k|,|j|\leq K}v_kv_j \phi_k \phi_j \frac{N^{-\wt\omega}}{2N}
+\sum_{|k|\geq K \text{  or }|j|\geq K}v_kv_j \phi_k \phi_j \frac{C}{2N}\\
&\leq\frac{1}{N^{1+\wt\omega-\omega_K}} \sum_{1\leq |k|\leq N} v_k^2 + e^{-K/\ell}\sum_{1\leq |k|\leq N} v_k^2
\lesssim \frac{1}{N^{1+\wt\omega-\omega_K}} X_s\leq \frac{1}{N^{3+\wt\omega+2\omega-\omega_K}}.
\end{align*}
Therefore with very high probability, 
\begin{align*}
\sup_{t\leq \tau} \left|\int_0^s \sum_{1\leq |k|\leq N}  -\xi\psi_k'(u)v_k^2(u)\frac{\rd b_k(u)}{\sqrt{2N}}\right|
\leq \frac{\sqrt t}{\ell N^{3/2}},\quad
\sup_{t\leq \tau} \left|\int_0^s \sum_{1\leq |k|\leq N}  v_k \psi_k\frac{\rd \xi_k}{\sqrt{2N}} \right|
\leq \frac{\sqrt t}{N^{3/2} N^{(\wt\omega-\omega_K)/2}}.
\end{align*}

We  have thus  proved that with very high probability, for any $0\leq s\leq \tau$, 
\begin{align*}
X_s
&\leq C\left(\frac{\log N t}{N\ell }+\frac{\sqrt t}{\ell N^{3/2}}+  \frac{\sqrt t}{N^{3/2} N^{(\wt\omega-\omega_K)/2}}+\frac{t}{N^{1+\omega_\ell/4}}+\frac{t}{N^{1-\omega_K+\wt\omega}}\right)\\
&\leq 
\frac{C}{N^2}\left(\frac{1}{N^{\omega_\ell-\omega_t} }+\frac{1}{N^{\omega_\ell-\omega_t/2}}+  \frac{1}{ N^{(\wt\omega-\omega_K-\omega_t)/2}}+\frac{1}{N^{\omega_\ell/4-\omega_t}}+\frac{1}{N^{\wt\omega-\omega_K-\omega_t}}\right)
\ll \frac{1}{N^{2+2\omega}},
\end{align*}
where we used $\omega\ll \omega_t\ll\omega_\ell< \omega_K \ll\wt\omega$. We conclude that with very high probability $\tau=t$, and 
$X_t\ll N^{-2-2\omega}$. Thanks to the choice of the stopping time \eqref{e:tautime}, $|x_i(t,\alpha)-x_j(t,\alpha)|\leq |i-j|\varphi/N$. Thus for any $|i|\leq \ell$, $|x_i(t,\alpha)|\lesssim \ell /N$, and $\phi_i(t)\asymp 1$. We conclude that $N^{-2-2\omega}\geq X_t\gtrsim w_i(t)^2$,  so that $|w_i(t)|\leq N^{-1-\omega}$ with  very high probability.
\end{proof}

\begin{proof}[Proof of Proposition \ref{p:independence}]
Proposition \ref{p:independence} follows from Lemma \ref{l:intpdiff} by integrating from $\al=0$ to $\al=1$:
\begin{align}\label{eqn:integrate}
|s_i(t_f)-r_i(t_f)|=|x_i(t_f, 1)-x_i(t_f, 0)|=\left|\int_{0}^{1}\del_\alpha x_i(t_f, \al)\rd \alpha\right|
\leq \int_0^1 |u_i(t_f, \al)|\rd \alpha\leq N^{-1-\omega}. 
\end{align}
Note that the second inequality holds with  very high probability, and requires intermediate steps based on the Markov and H{\"o}lder inequalities, similarly to (\ref{e:uit2}).
\end{proof}

\subsection{Local Law.} \label{s:locallaw} \, In this section, we study a modified  version of Dyson Brownian motion where the driving martingales can be essentially arbitrary.
\begin{align}\label{generalDBM}
\rd x_i(t)=\frac{\rd b_i}{\sqrt{2N}}+\frac{1}{2N}\sum_{j\neq i}\frac{1}{x_i(t)-x_j(t)}\rd t, \quad 1\leq |i|\leq N.
\end{align}
\begin{assumption}\label{a:Bbound}
We assume the martingales satisfy
\begin{align*}
\tfrac{\rd}{\rd t} \langle b_i\rangle_t \leq 1.
 \end{align*}
 \end{assumption}
Under Assumption \ref{a:Bbound}, the existence and strong uniqueness hold for the above stochastic differential equation, see Proposition \ref{prop:WellPosed}. We denote the empirical eigenvalue density and its Stieltjes transform as,
\begin{align*}
\mu_t=\frac{1}{2N}\sum_{1\leq |i|\leq N}\delta_{x_i(t)},\qquad\quad m_t(z)= \frac{1}{2N} \sum_{1\leq |i|\leq N} \frac{1}{x_i(t)-z}.
\end{align*}
We assume that the initial data $\mu_0$ is $(\nu, R)$-regular around $\widetilde \mu_0$ as in Assumption \ref{d:regular}.

In the following, we first recall some notations and concepts about the free convolution with the semicircle distribution. The \emph{semicircle distribution} is a measure $\mu_{\rm sc} \in \mathscr{P} (\mathbb{R})$ whose density $\varrho_{\rm sc} : \mathbb{R} \rightarrow \mathbb{R}_{\ge 0}$ with respect to the Lebesgue measure is given by
	\begin{flalign*}
		\rho_{\rm sc} (x) = \displaystyle\frac{(4-x^2)^{1/2}}{2\pi} \cdot \textbf{1}_{x \in [-2, 2]}, \qquad \text{for all $x \in \mathbb{R}$}. 
	\end{flalign*}
	
	\noindent For any real number $t > 0$, we  denote the rescaled semicircle probability distribution 
	\begin{flalign*}
\mu_{\rm sc}^{(t)} =  t^{-1/2} \rho_{\rm sc} (t^{-1/2} x) \rd x.
	\end{flalign*}
Let $\widetilde \mu_t = \widetilde \mu_0 \boxplus \mu_{\rm sc}^{(t)}$ denote the free convolution between $\wt\mu_0$ with the rescaled semicircle distribution. By \cite[Corollary 2]{biane1997free}, $\widetilde \mu_t$ has a density $\widetilde \rho_t: \mathbb{R} \rightarrow \mathbb{R}_{\ge 0}$ with respect to Lebesgue measure for $t > 0$.	 Following \cite{biane1997free}, the  Stieltjes transform $\widetilde m_t$ of $\widetilde\mu_t$ is characterized in the following way.
For any $t > 0$, denote the function $\Phi = \Phi^{t}: \mathbb{H} \rightarrow \mathbb{C}$ and the set $\Lambda_t \subseteq \mathbb{H}$ by
	\begin{flalign}
		\label{mtlambdat} 
		\Phi(z) = z - t \wt m_0(z); \quad \Lambda_t = \Big\{ z \in \mathbb{H} : \Im \big( z - t\wt m_0 (z) \big) > 0 \Big\} = \Bigg\{ z \in \mathbb{H} : \displaystyle\int_{-\infty}^{\infty} \displaystyle\frac{\wt\mu_0(d x)}{|z-x|^2} < \displaystyle\frac{1}{t} \Bigg\}.
	\end{flalign}

\begin{wrapfigure}[10]{l}{0.4\textwidth}
\vspace{-1.3cm}
\includegraphics[width=\textwidth]{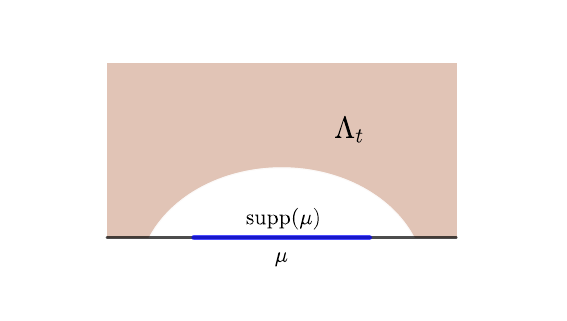}
\vspace{-1cm}
\caption{$\Lambda_t$ as defined in \eqref{mtlambdat} is an open subset of the upper half plane $\mathbb H$. $\Phi(z)$ is a holomorphic map from $\Lambda_t$ to $\mathbb H$. }
\label{f:lambdat}
\end{wrapfigure}

From \cite[Lemma 4]{biane1997free}, the function $\Phi$ is a homeomorphism from $\overline{\Lambda}_t$ to $\overline{\mathbb{H}}$. Moreover, it is a holomorphic map from $\Lambda_t$ to $\mathbb{H}$ and a bijection from $\partial \Lambda_t$ to $\mathbb{R}$ (see Figure \ref{f:lambdat}). For any real number $t \ge 0$,  $\widetilde m_t = : \mathbb{H} \rightarrow \mathbb{H}$ is characterized as
	\begin{flalign*}
		\widetilde m_t \big( u - t \widetilde m_0 (u) \big) = \widetilde m_0 (u), \qquad \text{for any $u\in \Lambda_t$}.
	\end{flalign*}
In the following we denote the characteristic flow 
\begin{align}
\label{e:flow}
z_t(u)=u-t\widetilde m_0(u), \quad \text{for any } u\in \Lambda_t,\quad \widetilde m_t(z_t(u))=\widetilde m_0(u).
\end{align}
If the context is clear, we will simply write $z_t(u)$ as $z_t$. 
Note that $z$ satisfies the characteristics equation
\begin{equation}\label{eqn:char}
\del_t z_t=-\widetilde m_t(z_t).
\end{equation}
Lemma \ref{l:xixj} is an easy consequence of the following proposition. 
\begin{proposition}\label{prop:LocalLaw}
Let $\sfT=\varphi^{-2}$ and recall the definition of $\varphi$ in (\ref{eqn:phi}).
Assume that $\mu_0$ is $(\nu, R)$ regular. Then, 
we have the following local law with high probability for any $t\in[0,\sfT]$
\begin{align*}
\left|m_t(w)-\widetilde m_t(w)\right| \leq \varphi \sqrt{\frac{\Im[\widetilde m_0(w)]}{N\Im[w]}},\quad w\in \{z\in \bH: |z|\leq R/2, \Im[z]\geq \varphi^4 N^{-1+\nu}\}.
\end{align*}
\end{proposition}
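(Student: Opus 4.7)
My plan is to prove the local law via the method of characteristics along the flow $z_t(u) = u - t\wt m_0(u)$ from \eqref{e:flow}, combined with an It\^o analysis of the empirical Stieltjes transform $m_t$. Since $\wt m_t(z_t(u)) = \wt m_0(u)$ by construction, it is natural to track
\[
h_t(u) := m_t(z_t(u)) - \wt m_0(u), \qquad u \in \Lambda_t,
\]
whose initial value $h_0(u) = m_0(u) - \wt m_0(u)$ is already controlled uniformly for $\Im u \ge \varphi/N$ by Assumption~\ref{d:regular}.

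Applying It\^o's formula to $(x_i(t) - z)^{-1}$ under the SDE \eqref{generalDBM} and summing in $i$, one obtains
\[
\dif m_t(z) = \bigl(m_t(z) m_t'(z) + \mathcal{E}_t(z)\bigr)\, \dif t + \dif M_t(z),
\]
where the standard symmetrization of $\sum_{i\neq j}(x_i-x_j)^{-1}(x_i-z)^{-2}$ produces the Burgers drift $m_t m_t'$, the It\^o correction
\[
\mathcal{E}_t(z) = \frac{1}{(2N)^2}\sum_i \frac{\dif\langle B_i\rangle_t/\dif t}{(x_i - z)^3} - \frac{m_t''(z)}{4N}
\]
is bounded in modulus by $C\,\Im m_t(z)/(N(\Im z)^2)$ thanks to Assumption~\ref{a:Bbound}, and the continuous martingale $\dif M_t(z) = -(2N)^{-3/2}\sum_i (x_i - z)^{-2} \dif B_i$ has quadratic covariation bounded by $\Im m_t(z)/((2N)^2 (\Im z)^3) \dif t$, again using only Assumption~\ref{a:Bbound}. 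Combining with $\partial_t z_t = -\wt m_0(u)$ yields the linear equation
\[
\dif h_t(u) = m_t'(z_t)\, h_t(u)\, \dif t + \mathcal{E}_t(z_t)\, \dif t + \dif M_t(z_t).
\]

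The linear drift is handled by an integrating factor: differentiating $\wt m_t(z_t) = \wt m_0(u)$ in $u$ gives $\wt m_t'(z_t) = \wt m_0'(u)/\partial_u z_t = -\partial_t \log \partial_u z_t$, so the factor equals $1/\partial_u z_t = 1 + O(\sfT\,\|\wt m_0'\|_\infty)$ on $t \in [0,\sfT]$, hence bounded. The martingale part is estimated by the Burkholder--Davis--Gundy inequality together with the explicit integration
\[
\int_0^t \frac{\Im \wt m_0(u)}{(\Im z_s)^3}\,\dif s = \frac{1}{2}\qa{\frac{1}{(\Im z_t)^2} - \frac{1}{(\Im u)^2}},
\]
which follows from $\Im z_s = \Im u - s\,\Im \wt m_0(u)$. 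This yields $\sup_{s\le t}|M_s(u)| \lesssim \varphi/(N \Im z_t)$, well below the target $\varphi\sqrt{\Im \wt m_0(u)/(N \Im z_t)}$ as soon as $N\Im z_t \gtrsim 1$, which is guaranteed by the assumption $\Im z_t \ge \varphi^4 N^{-1+\nu}$. A parallel computation controls $\int_0^t |\mathcal{E}_s(z_s)|\,\dif s$, while the initial term $h_0(u)$ transports into a bound of the target form.

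The main obstacle is closing this into a self-improving estimate, because both the martingale bound and the control of $\mathcal E_s$ use an a priori upper bound on $\Im m_s(z_s) \asymp \Im \wt m_0(u)$, which is itself the content we are trying to prove. I would handle this by a standard continuity argument on a polynomially dense grid of parameters $u$ in the relevant subset of $\Lambda_t$, combined with a stopping time at the first instant some $|h_s(u)|$ exits twice the target bound. On the complementary event, the It\^o analysis above strictly improves the bound, so the stopping time coincides with $\sfT$ with very high probability. Off-grid parameters are reached by the deterministic Lipschitz estimate $|m_t'(z)| \le N/(\Im z)^2$, which loses only a polynomial factor absorbed into the definition of $\varphi$. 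Finally, Lemma~\ref{l:xixj} follows by standard inversion: the local law controls the counting function at scale $\varphi/N$, giving the gap bound \eqref{e:gapb}, while \eqref{e:moveest} is obtained directly from \eqref{generalDBM} combined with BDG applied to $B_i$ under Assumption~\ref{a:Bbound} and the just-proved rigidity of the interaction term.
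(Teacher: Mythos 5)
Your overall strategy --- the It\^o equation for $m_t$, composition with the characteristics $z_t(u)=u-t\wt m_0(u)$, an integrating-factor/Gr\"onwall step for the linear drift, and a stopping-time plus grid bootstrap --- is the same as the paper's. But there is a genuine error in your martingale estimate. You bound the quadratic variation of $\dif M_t(z)=-(2N)^{-3/2}\sum_i(x_i-z)^{-2}\dif B_i$ by $\Im m_t(z)/((2N)^2(\Im z)^3)\,\dif t$ ``using only Assumption~\ref{a:Bbound}''. That is the diagonal (independent-noise) bound: Assumption~\ref{a:Bbound} controls only $\dif\langle B_i\rangle_t$, while the cross brackets $\dif\langle B_i,B_j\rangle_t$ can be as large as $1$ by Cauchy--Schwarz, and the whole point of this section is that the $B_i$ may be arbitrarily correlated. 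The correct bound is $\dif\langle M\rangle_t\lesssim N^{-3}\bigl(\sum_i|x_i-z|^{-2}\bigr)^2\dif t\asymp \Im m_t(z)^2/(N(\Im z)^2)\,\dif t$, exactly as in \eqref{e:quadraticv}. Along the characteristics this integrates (using $\partial_s\eta_s=-\Im\wt m_s(z_s)$ and $\Im\wt m_s(z_s)=\Im\wt m_0(u)$) to $\lesssim \Im\wt m_t(z_t)/(N\eta_t)$, so the martingale fluctuation is of the \emph{same} order as the target $\varphi\sqrt{\Im\wt m/(N\eta)}$ (with a smaller power of $\varphi$), not ``well below'' it; your claim $\sup_{s\le t}|M_s(u)|\lesssim\varphi/(N\Im z_t)$ and the identity $\int_0^t \Im\wt m_0(u)(\Im z_s)^{-3}\dif s=\tfrac12[(\Im z_t)^{-2}-(\Im u)^{-2}]$ rest on the invalid diagonal bound. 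The argument still closes with the corrected estimate (this is what the paper does, using the exponential martingale inequality \eqref{eqn:boundBr} and the dyadic stopping times \eqref{e:choosetik} to make the bound uniform in time), but as written the step is wrong, and wrong precisely about the feature this proposition is designed to accommodate.

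A second, smaller gap: your integrating factor is computed for the coefficient $\wt m_t'(z_t)$, giving $1/\partial_u z_t=1+\OO(\sfT\|\wt m_0'\|_\infty)$, whereas the linear drift in your equation carries the empirical $m_t'(z_t)$. Replacing one by the other requires an estimate on $m_t'-\wt m_t'$ available only through the bootstrap, and the resulting correction (the paper's Lemma~\ref{l:STproperty}, leading to \eqref{eqn:partialBound}) is \emph{not} pointwise small at small $\eta$; one must check that its time integral along the characteristics is $\OO(1)$ before the exponential factor can be declared bounded. You invoke the bootstrap for the a priori control of $\Im m_s$ but not for this derivative replacement, which is where the Gr\"onwall step actually needs work.
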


\begin{proof}[Proof of Lemma \ref{l:xixj}]
The first statement \eqref{e:moveest} in Lemma \ref{l:xixj} follows from the lower and upper bound of $m_t(w)$, which follows from  Proposition \ref{prop:LocalLaw}. We omit its proof and refer to \cite[Theorem 1.1]{erdHos2009local} for the proof.

To prove \eqref{e:moveest}, we interpolate \eqref{generalDBM} with the standard Dyson Brownian motion,
\begin{align}\label{e:generalDBM}
\rd \wt x_i(t)=\frac{\rd {\wt b_i(t)}}{\sqrt{2N}}+\frac{1}{2N}\sum_{j\neq i}\frac{1}{\wt x_i(t)-\wt x_j(t)}\rd t, \quad 1\leq |i|\leq N.
\end{align}
with initial data $\widetilde{x}_i(0)=x_i(0)$ for $1\leq |i|\leq N$, where $\wt b_1(t), \wt b_2(t), \cdots, b_N(t)$ are independent Brownian motions, and $\wt b_{-i}(t)=-b_i(t)$ for $1\leq i\leq N$. The evolution \eqref{e:generalDBM} is the standard BC-type Dyson Brownian motion (singular-value process of matrix-valued Brownian motions), and can be analyzed by the same way as the standard Dyson Brownian motion. In particular  the optimal rigidity holds (see \cite[Corollary 3.2]{HuaLan2019} and \cite[Theorem 4.5]{chelopatto2019}), i.e. with  very high probability the $i$-th particle is close to the classical locations with error $\varphi/N$:
\begin{align}
|\wt x_i(t)-\gamma_i(t)|\lesssim \frac{\varphi}{N}, \qquad\quad \int_{\gamma_i(t)}^\infty \rd \mu_t(x)=\frac{(N-i+1/2)}{2N}.
\end{align}
We can then interpolate \eqref{generalDBM} with \eqref{e:generalDBM} as in \eqref{e:coupleB}, and by Lemma  \ref{l:wbound}, the interpolation induces an error bounded by $\varphi \sqrt{t/N}$. We also notice that the classical locations $\gamma_i(t)$ move with speed $\OO(t)$.  It thus follows that 
\begin{align*}
|x_i(t)-x_i(0)|\leq |\wt x_i(t)-\gamma_i(t)|+|x_i(t)-\wt x_i(t)|+|\gamma_i(t)-\gamma_i(0)|\lesssim\varphi\left( \frac{1}{N}+\sqrt{\frac{t}{N}}+t\right)\lesssim \varphi\left(\frac{1}{N}+t\right),
\end{align*}
and \eqref{e:moveest} follows.
\end{proof}

Recall we denote $\varphi=N^\nu$. For any real number $t\geq 0$, we define the spectral domain $\cD_t=\cD_t(\widetilde \mu_t)$ by 
\begin{align}\label{e:defDt}
\cD_t\deq\left\{z\in \bH:   |z|\leq R-Ct, \Im[z]\,\Im[\widetilde m_t(z)]\geq \varphi^4/N \right\}
\end{align}
where the constant $C$ is defined in Equation (\ref{eqn:tildeC}).
The following lemma collects some properties of the domain $\cD_t$. 

\begin{lemma}\label{c:Dtproperty}
Recall the domain $\cD_t$ from \eqref{e:defDt}, and  $\sfT=\varphi^{-2}$. For  $N$ large enough and any time $0\leq t\leq \sfT$, the following holds.
\begin{enumerate}[(i)]
\item\label{i:Im}$\cD_t$ is non-empty. For any $z=E_t+\ri\eta\in \cD_t$, we have
\begin{align}\label{e:Immz1}
\Im[\widetilde m_t(z)]\geq \varphi^2 \sqrt{\frac{\Im[\widetilde  m_t(z)]}{N\eta}}.
\end{align}
\item\label{i:zs}If $z_t(u)=E_t+\ri\eta_t\in \cD_t$, then $z_s(u)\in \cD_s$ for any $0\leq s\leq t$.
\end{enumerate}
\end{lemma}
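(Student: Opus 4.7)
The plan is to derive both items directly from the characteristic flow structure \eqref{e:flow}, along which $\widetilde m_s(z_s(u))$ is \emph{constant} in $s$ by construction. This single observation will convert each defining property of $\cD_t$ into a monotonicity statement along the line $s \mapsto z_s(u) = u - s\widetilde m_0(u)$.

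First I would dispatch (i). The inequality \eqref{e:Immz1} is only an algebraic rewriting of the defining condition in \eqref{e:defDt}: squaring it and dividing by $\Im[\widetilde m_t(z)] > 0$ produces $N\eta \, \Im[\widetilde m_t(z)] \geq \varphi^4$, which is exactly what $z \in \cD_t$ requires. Non-emptiness is then an explicit construction: pick $u = \ii \eta_0$ for a well-chosen $\eta_0$ of order one (small enough that $|z_t(u)| \leq |u| + t|\widetilde m_0(u)| \leq \eta_0 + Ct \leq G - Ct$ using $\sfT = \varphi^{-2}$, and large enough that $\Im z_t(u) = \eta_0 - t\,\Im \widetilde m_0(u) > 0$). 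By Assumption~\ref{d:regular} one has $\Im \widetilde m_t(z_t(u)) = \Im \widetilde m_0(u) \geq C^{-1}$, so $\Im z_t(u)\cdot \Im \widetilde m_t(z_t(u)) \gtrsim 1 \gg \varphi^4/N$ for $N$ large.

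Next I would prove (ii). Fix $u \in \Lambda_t$ with $z_t(u) = E_t + \ii \eta_t \in \cD_t$. The nesting $\Lambda_s \supseteq \Lambda_t$ for $s \leq t$ is immediate from the characterisation \eqref{mtlambdat}, so the trajectory $z_s(u) = u - s \widetilde m_0(u)$ is well-defined on $[0,t]$. The identity \eqref{e:flow} gives $\widetilde m_s(z_s(u)) = \widetilde m_0(u)$ for every such $s$, hence in particular
\[
\Im \widetilde m_s(z_s(u)) = \Im \widetilde m_0(u) = \Im \widetilde m_t(z_t(u)),
\]
while $\Im z_s(u) = \Im u - s\, \Im \widetilde m_0(u)$ is monotonically decreasing in $s$ (note $\Im \widetilde m_0(u) > 0$ as $u \in \mathbb{H}$). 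Therefore $\Im z_s(u) \geq \Im z_t(u) = \eta_t$ and
\[
\Im z_s(u)\cdot \Im \widetilde m_s(z_s(u)) \;\geq\; \eta_t \cdot \Im \widetilde m_t(z_t(u)) \;\geq\; \varphi^4/N,
\]
the last inequality by the hypothesis $z_t(u) \in \cD_t$. For the radius condition, writing $z_s(u) = z_t(u) + (t-s)\, \widetilde m_0(u)$ and using $|\widetilde m_0(u)| \leq C$ from \eqref{eqn:tildeC} yields by the triangle inequality
\[
|z_s(u)| \;\leq\; |z_t(u)| + C(t-s) \;\leq\; (G - Ct) + C(t-s) \;=\; G - Cs.
\]
Both properties of $\cD_s$ are thus verified.

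There is no genuine obstacle here; the only ``thing to watch'' is that $u$ remain in the domain $\Lambda_s$ as $s$ decreases, which is automatic from $\Lambda_s \supseteq \Lambda_t$. The point of the lemma is precisely to package these two monotonicities of the characteristic flow for repeated use later in the local-law argument.
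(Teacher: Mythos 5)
Your proof is correct and follows essentially the same route as the paper: \eqref{e:Immz1} is read off directly from the definition \eqref{e:defDt}, non-emptiness comes from transporting the regularity bound $\Im[\widetilde m_0]\geq C^{-1}$ along the characteristics to an explicit point, and (ii) uses the constancy of $\widetilde m_s(z_s(u))$ along the flow together with the monotone decrease of $\Im[z_s(u)]$ in $s$, exactly as in the paper's argument $\eta_s\geq\eta_t$. You even spell out the radius condition $|z_s(u)|\leq G-Cs$ in (ii), which the paper leaves implicit in its surjectivity remark from part (i).
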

\begin{proof}
From our assumption for $|u|\leq R$, $|\widetilde  m_0(u)|\leq C$, and $|z_t(u)-u|=t|\widetilde  m_0(u)|$. Thus $z_t(u)$ maps $\{u: |u|\leq R\}$ surjectively to $\{z: |z|\leq R-Ct\}$. We also conclude that for $z=z_t(u)$ with $|z|\leq R-Ct$, it holds $\widetilde  m_t(z)=\widetilde  m_0(u)$, and \eqref{e:Immz1} holds from the definition of $\cD_t$. Moreover, for any $z\in \cD_t$, we have $\Im[\widetilde m_t(z)]\geq 1/C$. It is easy to see that $\cD_t$ is not empty, and in fact $\{z\in \bH: |z|\leq R/2, \Im[z]\geq C\varphi^4 N^{-1}\}\subset \cD_t$. The statement \eqref{e:Immz1} follows from the definition of $\cD_t$ from \eqref{e:defDt}.

For $(ii)$  since $\eta_s\geq \eta_t$ from \eqref{e:flow}, if $\eta_t\geq \varphi^4/(N\Im[\widetilde  m_t(z_t)])$, then $\eta_s\geq \eta_t\geq \varphi^4/(N\Im[\widetilde  m_t(z_t)])=\varphi^4/(N\Im[\widetilde  m_s(z_s)])$. Thus,  $z_s (u)$ satisfies the lower bounds required for $z_s (u)$ to be in $\mathcal{D}_s$. 
\end{proof}

\noindent By the second part of Lemma~\ref{c:Dtproperty}, if for any $u = v (u) \in \cD_0$ we define
\begin{align}\label{e:deftu}
\ft(u):=\sfT\wedge \sup\{t\geq 0: z_t(u)\in \cD_t\},
\end{align}

\noindent then $z_s(u)\in \cD_s$ for any $0\leq s\leq \ft(u)$. We also define the lattice on the domain $\cD_0$ given by
\begin{align}\label{def:L}
\mathcal L=\left\{E+\ri \eta\in \dom_0: E\in \bZ/ N^{6}, \eta\in \bZ/N^{6}\right\}.
\end{align}

\begin{lemma}\label{c:Lapproximate}
Adopt the assumptions in Proposition \ref{prop:LocalLaw}.
For any $t\in [0,\sfT]$ and $w\in \dom_t$, there exists some lattice point $u\in \mathcal L\cap z_t^{-1}(\dom_t)$, such that
$
|z_t(u)-w|\leq N^{-5},
$
provided $N$ is large enough. 
\end{lemma}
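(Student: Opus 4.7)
The plan is to transport the problem to time zero via the characteristic flow, exploiting that the lattice spacing $N^{-6}$ carries a safety factor of $N$ over the target precision $N^{-5}$. The map $z_t:\overline{\Lambda}_t\to\overline{\mathbb H}$ is a homeomorphism, and since $|\widetilde m_0'(u)|\leq C$ on $\{|u|\leq G\}$ by the $(\nu,G)$--regularity hypothesis \eqref{eqn:tildeC}, for $t\leq\sfT=\varphi^{-2}$ the map $z_t$ is bi--Lipschitz with constant at most $1+tC\leq 2$ on the disk $\{|u|\leq G\}\supset \cD_0$. This contraction will allow me to convert the $N^{-6}$--lattice on $\cD_0$ into an $N^{-5}$--net on $\cD_t$.

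Given $w\in \cD_t$, first I would pull back to $u^*\deq z_t^{-1}(w)\in\Lambda_t$, which is well defined by the homeomorphism property. Applying Lemma~\ref{c:Dtproperty}(ii) to $u=u^*$ at time $s=0$ yields $u^*=z_0(u^*)\in\cD_0$. When $u^*$ sits comfortably in the interior of $\cD_0$, the nearest lattice point $u\in\mathcal L$ obeys $|u-u^*|\leq \sqrt 2\, N^{-6}$, so the Lipschitz bound gives $|z_t(u)-w|\leq 2\sqrt 2\, N^{-6}\leq N^{-5}$, while $z_t(u)\in \cD_t$ follows automatically because the $O(N^{-6})$ perturbation is negligible compared with the margins $\varphi^4/N=N^{4\nu-1}$ and the $O(1)$ margin built into the definition \eqref{e:defDt} of $\cD_t$.

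The only case requiring care is when $w$ lies on $\partial\cD_t$, in which case $u^*$ may sit on $\partial\cD_0$ with its nearest lattice neighbour falling just outside. To handle this I would first perturb $w$ slightly inward: pick $\tilde w\in\cD_t$ with $|\tilde w-w|\leq 3N^{-6}$ and $B(\tilde w,3N^{-6})\subset\cD_t$. Such a $\tilde w$ exists because the two inequalities $|z|\leq G-Ct$ and $\Im z\cdot\Im\widetilde m_t(z)\geq \varphi^4/N$ cutting out $\cD_t$ have smooth, transverse boundaries with nondegenerate inward normals and margins far exceeding $N^{-6}$. Then $\tilde u\deq z_t^{-1}(\tilde w)$ lies at distance $\gtrsim N^{-6}$ inside $\cD_0$, so the nearest point $u\in\mathcal L$ to $\tilde u$ lies in $\mathcal L\cap z_t^{-1}(\cD_t)$ and
\[
|z_t(u)-w|\leq |z_t(u)-\tilde w|+|\tilde w-w|\leq 2\sqrt 2\, N^{-6}+3N^{-6}\leq N^{-5}
\]
for $N$ large enough.

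The main (mild) technical point will be the boundary adjustment $w\mapsto \tilde w$, which rests only on smoothness and nondegeneracy of the two defining inequalities of $\cD_t$ together with $\varphi^4/N\gg N^{-6}$; the remainder is a direct Lipschitz estimate on the explicit map $z_t(u)=u-t\widetilde m_0(u)$.
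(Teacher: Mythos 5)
Your argument is essentially the paper's: pull $w$ back to $u^*=z_t^{-1}(w)\in\cD_0$ via Lemma~\ref{c:Dtproperty}(ii), use that $|\partial_u z_t(u)|=|1-t\widetilde m_0'(u)|\leq 1+C\sfT$ makes $z_t$ Lipschitz with an $O(1)$ constant on $\cD_0$, and exploit the factor-$N$ gap between the lattice spacing $N^{-6}$ and the target accuracy $N^{-5}$. The only difference is your explicit inward perturbation for boundary points, a detail the paper's shorter proof silently absorbs into the same Lipschitz estimate, so this is harmless extra care rather than a different route.
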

\begin{proof}
It follows from Lemma \ref{c:Dtproperty} that if $z_t(u)\in \dom_t$, then $u\in \cD_0$, and  $z_t(u)=u-tm_0(u)$ with $u\in \dom_0$. In particular 
$|\del_u z_t(u)|=|1-t\del_u m_0(u)|\leq 1+C\sfT $. Thus
$z_t(u)$ is Lipschitz in $u$ with Lipschitz constant bounded by $(1+C \sfT )$. Thus for any $w\in \dom_t$, there exists some lattice point $u\in \mathcal L\cap z_t^{-1}(\dom_t)$, such that
$
|z_t(u)-w|\leq (1+C \sfT)/N^6\leq N^{-5},
$
provided $N$ is large enough. 
\end{proof}

In the rest of this section, we prove Proposition~\ref{prop:LocalLaw} by studying the stochastic differential equation satisfied by $m_s(z)$.

\begin{proof}[Proof of Proposition~\ref{prop:LocalLaw}]
From Lemma \ref{lem:key} in the next subsection applied to the special case $v_k\equiv \tfrac{1}{N}$, we have
\begin{equation}\label{eq:dm}
\rd m_s(z) = m_s(z)\partial_z m_s\rd s
+\frac{1}{4N^2}\sum_{1\leq |i|\leq N}\frac{\rd\langle b_i\rangle_s-\rd s}{(x_i(s)-z)^3}
-\frac{1}{2\sqrt{2}N^{3/2}}\sum_{1\leq |i|\leq N}\frac{\rd b_k(s)}{(z-x_i(s))^2}.
\end{equation}
By plugging the characteristic flow (\ref{eqn:char}) in \eqref{eq:dm}, we get 
\begin{align}\label{eq:dm2}
\rd m_s(z_s)= -&\frac{1}{2\sqrt{2} N^{3/2}}\sum_{1\leq |i|\leq N} \frac{{\rm d} b_i(s)}{(x_i(s)-z_s)^2}+(m_s(z_s)-\widetilde m_s(z_s))\partial_z m_s(z_s)\rd s +\frac{1}{N^2}\sum_{1\leq |i|\leq N}\frac{\OO(1)\rd s}{(x_i(s)-z_s)^3}.
\end{align}
To analyze \eqref{eq:dm2}, we introduce a stopping time
\begin{align*}
\sigma=\sfT\wedge \inf_{s\geq 0}\left\{s: \exists z\in \cD_s,   |m_s(z)-\widetilde m_s(z)|\geq \varphi\sqrt{\frac{\Im[\widetilde m_s(z_s)]}{N\eta_s}}\right\}.
\end{align*}
Then for $s\leq \sigma$, thanks to \eqref{e:Immz1}, we have that \begin{align}\label{e:mstms}
\Im[m_s(z_s)]\asymp \Im[\widetilde m_s(z_s)].
\end{align}

\begin{proposition}\label{p:thirdbound}
There exists an event $\Omega$, measurable with respect to the paths $\{b_1(s), b_2(s), \cdots, b_N(s)\}_{0\leq s\leq \sfT }$, such that $\mathbb{P} [\Omega] \ge 1-Ce^{-(\log N)^2}$ and the following holds. On $\Omega$, for any $u\in \cL$ (recall this lattice from \eqref{def:L}), denote $z_s(u)=E_s(u)+\ri \eta_s(u)$. Then, for any $0\leq s\leq \ft(u)$ (recall \eqref{e:deftu}), we have
\begin{align}\label{StochasticBnd}
&\int_{0}^{s \wedge \sigma}\frac{1}{ N^{3/2}} \sum_{1\leq |i|\leq N} \frac{\rd b_i(\tau)}{|z_\tau(u) - x_i(\tau)|^2}  \leq  \varphi^{1/20}\sqrt{\frac{\Im[\widetilde m_{s\wedge \sigma}(z_{s\wedge \sigma})]}{N\eta_{s\wedge \sigma}}},\\
\label{e:ssbound1}
& \int_{0}^{s \wedge \sigma}  \frac{1}{N^2} \sum_{1\leq |i|\leq N}\frac{1}{|x_{i}(\tau) - z_{\tau}(u)|^3}\rd \tau\lesssim  \frac{1}{N \eta_{s \wedge \sigma}(u)}.
 \end{align}
\end{proposition}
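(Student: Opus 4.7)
The plan is to handle the two bounds by a common strategy, with the martingale estimate \eqref{StochasticBnd} being the main difficulty. Both estimates rest on three ingredients: first, the Ward identity $\frac{1}{2N}\sum_i \frac{1}{|x_i(\tau)-z_\tau|^2} = \frac{\Im m_\tau(z_\tau)}{\eta_\tau}$ combined with the majorization $\sum_i|x_i(\tau)-z_\tau|^{-k} \leq \eta_\tau^{-(k-2)}\sum_i|x_i(\tau)-z_\tau|^{-2}$ for $k\geq 2$; second, the comparison $\Im m_\tau(z_\tau) \asymp \Im\widetilde m_\tau(z_\tau)$, valid on the event $\{\tau \leq \sigma\}$ by \eqref{e:mstms}; third, the characteristic equation \eqref{eqn:char}, which yields $-\partial_\tau \eta_\tau = \Im\widetilde m_\tau(z_\tau)$ along the flow.

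For the deterministic bound \eqref{e:ssbound1}, combining these ingredients gives, for $\tau\leq\sigma$,
\begin{equation*}
\frac{1}{N^2}\sum_i \frac{1}{|x_i(\tau)-z_\tau|^3} \,\leq\, \frac{\Im m_\tau(z_\tau)}{N\eta_\tau^2}\,\asymp\,\frac{-\partial_\tau \eta_\tau}{N\eta_\tau^2},
\end{equation*}
and telescoping along the flow produces $\int_0^{s\wedge\sigma}\cdots\rd\tau \lesssim (1/N)(1/\eta_{s\wedge\sigma}-1/\eta_0) \leq 1/(N\eta_{s\wedge\sigma})$. This bound is deterministic on $\{\tau\leq\sigma\}$ and holds uniformly in $u\in\cL$ and $s\in[0,\ft(u)]$ for free, so no probabilistic argument is needed.

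For the martingale bound, let $M^{(u)}_s\deq \int_0^{s\wedge\sigma}\frac{1}{N^{3/2}}\sum_i \frac{\rd B_i(\tau)}{(z_\tau(u)-x_i(\tau))^2}$ denote the complex-valued stopped martingale. Using Assumption~\ref{a:Bbound} and then the three ingredients above, its quadratic variation is controlled by
\begin{equation*}
\langle M^{(u)}\rangle_s \,\leq\, \int_0^{s\wedge\sigma} \frac{1}{N^3}\sum_i\frac{\rd\tau}{|x_i(\tau)-z_\tau|^4}\,\lesssim\, \int_0^{s\wedge\sigma}\frac{\Im m_\tau(z_\tau)}{N^2\eta_\tau^3}\rd\tau\,\lesssim\, \frac{1}{N^2\eta_{s\wedge\sigma}^2}.
\end{equation*}
The Bernstein-type inequality \eqref{eqn:boundBr}, applied to the real and imaginary parts of $M^{(u)}$, then delivers $|M^{(u)}_{s\wedge\sigma}|\leq \varphi^{\e}/(N\eta_{s\wedge\sigma})$ with probability at least $1 - C\exp(-c\varphi^{2\e})$ for any fixed small $\e>0$. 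The defining lower bound of $\cD_\tau$, namely $\Im[z]\,\Im[\widetilde m_\tau(z)]\geq \varphi^4/N$, then allows me to rewrite
\begin{equation*}
\frac{1}{N\eta_{s\wedge\sigma}}\,\leq\, \frac{1}{\varphi^2}\sqrt{\frac{\Im\widetilde m_{s\wedge\sigma}(z_{s\wedge\sigma})}{N\eta_{s\wedge\sigma}}},
\end{equation*}
so that any $\e\leq 2+1/20$ produces the advertised prefactor $\varphi^{1/20}$.

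The remaining step, and the only nontrivial technical subtlety, is to upgrade this pointwise-in-$(u,s)$ estimate to the simultaneous bound over all $u\in\cL$ and $s\in[0,\ft(u)]$. A union bound over $u\in\cL$ is harmless because $|\cL|=N^{\OO(1)}$ while the tail from \eqref{eqn:boundBr} is stretched-exponential in $\varphi^{\e}=N^{\e\nu}$. For $s$-uniformity, I would discretize $[0,\sfT]$ on a very fine mesh (say of size $N^{-10}$), apply the Bernstein bound at each grid point against the stopped martingale at time $s\wedge\sigma$, and interpolate between consecutive grid points using the a priori bound on the integrand: on $\{\tau\leq\sigma\}$, $|(z_\tau-x_i)^{-2}|\leq 1/\eta_\tau^2\leq N^{\OO(1)}$, so the increment of $M^{(u)}$ over a time interval of length $N^{-10}$ is negligible. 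This strategy is standard and completely parallel to the intermediate Markov--H\"older argument already invoked around \eqref{e:uit2}.
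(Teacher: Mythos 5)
There is a genuine gap at the key step of \eqref{StochasticBnd}: your quadratic variation bound keeps only the diagonal terms, $\langle M^{(u)}\rangle_s \le \int \frac{1}{N^3}\sum_i |x_i(\tau)-z_\tau|^{-4}\rd\tau$, which implicitly treats the $B_i$ as orthogonal martingales. Assumption~\ref{a:Bbound} only controls the diagonal brackets $\rd\langle B_i\rangle_\tau$; the cross brackets $\rd\langle B_i,B_j\rangle_\tau$ are not assumed to vanish, and indeed the whole point of this section is that they may be of order one (in the application they are eigenvector-overlap quantities, cf.\ the proof of Theorem~\ref{theo:mainthmdbm}, and in the interpolation \eqref{e:coupleB} the drivers are correlated mixtures). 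Bounding the cross terms by Cauchy--Schwarz, the correct estimate is
\begin{equation*}
\frac{\rd\langle M^{(u)}\rangle_\tau}{\rd\tau}\le \frac{1}{N^3}\Big(\sum_i\frac{1}{|x_i(\tau)-z_\tau|^2}\Big)^2
=\frac{4\,\Im[m_\tau(z_\tau)]^2}{N\eta_\tau^2},
\end{equation*}
which exceeds your diagonal-only bound by a factor $\asymp N\eta_\tau\Im[\widetilde m_\tau(z_\tau)]\ge \varphi^4$ on the domain $\cD_\tau$. Consequently your claim $\langle M^{(u)}\rangle_s\lesssim (N\eta_{s\wedge\sigma})^{-2}$ is false in the generality considered here. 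The repair is exactly the paper's computation: use the Ward identity plus $\Im m\asymp\Im\widetilde m$ for $\tau\le\sigma$, note that $\Im[\widetilde m_\tau(z_\tau(u))]$ is constant along the characteristic and $-\partial_\tau\eta_\tau=\Im[\widetilde m_\tau(z_\tau)]$, so the quadratic variation integrates to $\lesssim \Im[\widetilde m_{s\wedge\sigma}(z_{s\wedge\sigma})]/(N\eta_{s\wedge\sigma})$; then \eqref{eqn:boundBr} yields \eqref{StochasticBnd} directly, with no need for your conversion via $\eta\,\Im\widetilde m\ge\varphi^4/N$.

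Two further remarks. First, because the correct quadratic-variation bound deteriorates with $s$ (through $\eta_s$), the uniformity in $s$ is not quite as cheap as you suggest: the paper avoids a time grid altogether by exploiting that \eqref{eqn:boundBr} is a maximal inequality and by introducing the stopping times $t^{(k)}$ at which $\eta$ halves (at most $10\log N$ of them per $u$), which keeps the supremum bound comparable to the target at every $s\in[t^{(k-1)},t^{(k)}]$. Your grid-plus-interpolation alternative can be made to work, but the interpolation step cannot be justified by the deterministic bound $|(z_\tau-x_i)^{-2}|\le\eta_\tau^{-2}$ alone --- a martingale increment is controlled by the square root of its quadratic variation, not by the sup of the integrand times the interval length --- so you would need one more maximal/BDG-type estimate per grid interval together with a union bound. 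Second, your treatment of \eqref{e:ssbound1} is correct and coincides with the paper's argument.
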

\begin{proof}
For simplicity of notation, we write $\mathfrak{t} (u), E_{\tau}(u), \eta_{\tau}(u)$ as $\mathfrak{t}, z_{\tau}, E_{\tau}, \eta_{\tau}$, respectively.

To prove \eqref{StochasticBnd}, we notice by Lemma~\ref{c:Dtproperty} that $z_{\tau}(u)\in \cD_{\tau}$ for any $0\leq {\tau}\leq \ft$. We define a series of stopping times $0=t^{(0)}< t^{(1)}<t^{(2)}<\cdots<t^{(m)}=\ft$, 
as follows:
\begin{equation}\label{e:choosetik}
t^{(k)} = \ft \wedge \inf\{{\tau}> t^{(k-1)} : \eta_{{\tau}} <\eta_{t^{(k-1)}}/2\},\quad k=1,2,3,\cdots,m,
\end{equation}

\noindent where $m = m(u)$ might depend on $u$. From \eqref{e:Immz1},  $\eta_t$ cannot be smaller than $1/N^2$, and so any $u \in \mathcal{L}$ must satisfy $m = m(u) \leq10 \log N$.

Recall from Assumption \ref{a:Bbound} that $\langle \rd b_i, \rd b_j\rangle \leq 1$.   To bound the quadratic variation of the left side of \eqref{StochasticBnd}, for any  $s \leq t^{(k)}\wedge \sigma$ we have  
\begin{equation}\label{e:quadraticv}
\frac{1}{N^2}\int_0^{s}\sum_{1\leq |i|,|j|\leq N} \frac{\rd\tau}{|x_i(\tau)-z_\tau|^2|x_j(\tau)-z_\tau|^2}
=\int_0^{s}\frac{\Im[ m_\tau(z_\tau)]^2 }{\eta_\tau^2}\rd \tau
\leq\int_0^{s}\frac{\Im[\widetilde m_\tau(z_\tau)]^2 }{\eta_\tau^2}\rd \tau
\lesssim \frac{\Im[\widetilde m_{s}(z_{s})]}{\eta_{s}},
\end{equation}
where we successively  used \eqref{e:mstms} and $-\Im[\widetilde m_\tau(z_\tau)]=\del_\tau \eta_\tau$.
With (\ref{eqn:boundBr}) and (\ref{e:quadraticv}) have proved that for some $c_1>0$,  with  probability $1-c_1^{-1}e^{-c_1\varphi^{1/10}}$ we have 
\begin{equation}\label{localineq}
\sup_{0\leq {\tau} \leq t^{(k)} } \left|\int_{0}^{{\tau}\wedge \sigma}\frac{1}{ N^{3/2}} \sum_{1\leq |i|\leq N} \frac{\rd b_i({\tau})}{|z_{\tau} - x_i({\tau})|^2}  \right|\leq \varphi^{1/20}\sqrt{\frac{\Im[\widetilde m_{t^{(k)}\wedge \sigma}(z_{t^{(k)}\wedge \sigma})]}{N\eta_{t^{(k)}\wedge \sigma}}}.
\end{equation}
We define $\Omega$ to be the event on which \eqref{localineq} holds for all $0\leq k\leq m$ and all $u\in\cL$. Since $m|\cL|\leq |\mathcal{L}| \cdot 10 \log N \le N^{20}$, it follows from the discussion above that  $\Omega$ holds with probability $1-c_2^{-1}e^{-c_2\varphi^{1/10}}$ for some $c_2>0$.  Therefore, for any $s\in[t^{(k-1)},t^{(k)}]$, the bounds \eqref{localineq} and our choice of $t^{(k)}$ \eqref{e:choosetik} (with the fact that $\eta_s$  is non-increasing in $s$) yield on $\Omega$ that, for any $0\leq s\leq \ft(u)$, we have
\begin{align*}\begin{split}
\left|\int_{0}^{s\wedge \sigma}\frac{1}{N^{3/2}} \sum_{1\leq |i|\leq N} \frac{\rd b_i(\tau)}{|z_\tau - x_i(\tau)|^2} \right|
 \leq  \varphi^{1/20}\sqrt{\frac{\Im[\widetilde m_{t^{(k)}\wedge \sigma}(z_{t^{(k)}\wedge \sigma})]}{N\eta_{t^{(k)}\wedge \sigma}}}
\leq  \varphi^{1/20}\sqrt{\frac{\Im[\widetilde m_{s\wedge \sigma}(z_{s\wedge \sigma})]}{N\eta_{s\wedge \sigma}}}.
\end{split}
\end{align*}
This finishes the proof of  \eqref{StochasticBnd}.

The error terms in \eqref{e:ssbound1} can be bounded as
\begin{align*}
\int_0^{t\wedge \sigma}\frac{\OO(1)}{N^2}\sum_{1\leq |i|\leq N}\frac{\rd s}{|x_i(s)-z_s|^3}
\leq \int_0^{t\wedge \sigma}\frac{\OO(1)}{N}\frac{\Im[m_s(z_s)]\rd s}{\eta_s^2}
\leq \int_0^{t\wedge \sigma}\frac{\OO(1)}{N}\frac{\Im[\widetilde m_s(z_s)] \rd s}{\eta_s^2}
\lesssim \frac{1}{N\eta_{t\wedge \sigma}}.
\end{align*}
The first inequality  relies on $|x_i(s)-z_s|\geq \eta_s$,  and  last inequality uses $-\Im[\widetilde m_s(z_s)]=\del_s \eta_s$,  see (\ref{eqn:char}).
\end{proof}

From the previous proposition,  abbreviating $\Delta_s=m_s(z_s)-\widetilde m_s(z_s)$, for any $s\leq t\wedge\sigma$ we can rewrite Equation \eqref{eq:dm2} as 
\begin{equation}\label{e:spd}
\Delta_{s}
=\Delta_0+\int_0^{s}\Delta_v\partial_z m_v(z_v)\rd v
+ \OO\left( \varphi^{1/20}\sqrt{\frac{\Im[\widetilde m_{s}(z_{s})]}{N\eta_{s}}}\right).
\end{equation}

Thanks to Lemma \ref{l:STproperty} (by taking $\phi= \varphi \sqrt{\Im[\widetilde m_{v}(z_{v})/N\eta_{v}]}$), we have
\begin{equation}\label{e:aterm}
\left|\partial_z m_v(z_v(u))\right|
\leq \left|\del_z \widetilde m_v(z_v(u))\right|+\frac{1}{\eta_v} \sqrt{ \varphi\Im[\widetilde m_v(z_v)]\sqrt{\frac{\Im[\widetilde m_{v}(z_{v})]}{N\eta_{v}}}} 
=\del_z \widetilde m_v(z_v(u)) + \varphi^{\frac{1}{2}}\frac{\Im[\widetilde m_v(z_v)]^{3/4}}{\eta_v(N\eta_v)^{1/4}}.
\end{equation}
With \eqref{e:flow}, we bound the first term on the right-hand side above  with
$
|\del_z \widetilde m_v(z_v(u))|=\left|\frac{\del_u \widetilde m_v(z_v(u))}{\del_u z_v(u)}\right|
=\left|\frac{\del_u \widetilde m_0(u)}{1-s\del_u \widetilde m_0(u)}\right|\leq 2 C,
$
so that  \eqref{e:aterm} gives 
\begin{align}\label{eqn:partialBound}
\left|\partial_z m_v(z_v(u))\right|\leq \varphi^{\frac{1}{2}}\left(1+\frac{\Im[\widetilde m_v(z_v)]}{\eta_v(N\eta_v)^{1/4}}\right).
\end{align}
Denoting 
$
\beta_s\deq \varphi^{\frac{1}{2}}\left(1+\frac{\Im[\widetilde m_s(z_s)]}{\eta_s(N\eta_s)^{1/4}} \right),
$
\eqref{e:spd} and (\ref{eqn:partialBound}) imply, for any $s\leq t\wedge\sigma$
\begin{align*}
\left|\Delta_{s}\right|
\leq \int_0^{s}\beta_v\left|\Delta_{v}\right|\rd v+
 \OO\left(\varphi^{1/20}\sqrt{\frac{\Im[\widetilde m_{s}(z_{s})]}{N\eta_{t\wedge \sigma}}}\right).
\end{align*}
By Gr{\"o}nwall's inequality, this implies the estimate
\begin{equation}\label{e:midgronwall}
\left|\Delta_{t\wedge \sigma}\right|
\leq \OO\left(\varphi^{1/20}\sqrt{\frac{\Im[\widetilde m_{t\wedge \sigma}(z_{t\wedge \sigma})]}{N\eta_{t\wedge \sigma}}}\right)
+\varphi^{1/20} \int_0^{t\wedge\sigma}\beta_s\sqrt{\frac{\Im[\widetilde m_{s}(z_{s})]}{N\eta_{s}}}e^{\int_s^{t\wedge\sigma} \beta_\tau\rd \tau}\rd s.
\end{equation}
For the integral of $\beta_\tau$, we have
\begin{align*}
\int_s^{t\wedge\sigma} \beta_\tau\rd \tau
\leq  \varphi^{\frac{1}{2}}\left((t\wedge \sigma-s)+\frac{1}{(N\eta_{t\wedge \sigma})^{1/4}}\right)
\leq C.
\end{align*}
The last term in  \eqref{e:midgronwall} is therefore bounded with
\begin{align}\begin{split}\label{e:term2}
&\varphi^{\frac{1}{2}+\frac{1}{20}}\int_0^{t\wedge\sigma}\left( 1+\frac{\Im[\widetilde m_s(z_s)]}{\eta_s(N\eta_s)^{1/4}} \right)\sqrt{\frac{\Im[\widetilde m_{s}(z_{s})]}{N\eta_{s}}}\rd s
\lesssim \varphi^{\frac{1}{2}+\frac{1}{20}}\sqrt{\frac{\Im[\widetilde m_{t\wedge \sigma}(z_{t\wedge \sigma})]}{N\eta_{t\wedge \sigma}}}.
\end{split}\end{align}
It follows by combining \eqref{e:midgronwall} and \eqref{e:term2} that
\begin{align*}
\left|\td m_{t\wedge\sigma}(z_{t\wedge\sigma}(u))-m_{t\wedge\sigma}(z_{t\wedge\sigma}(u))\right|
\lesssim  \varphi^{\frac{1}{2}+\frac{1}{20}} \sqrt{\frac{\Im[\widetilde m_{t\wedge \sigma}(z_{t\wedge \sigma})]}{N\eta_{t\wedge \sigma}}}\ll \varphi\sqrt{\frac{\Im[\widetilde m_{t\wedge \sigma}(z_{t\wedge \sigma})]}{N\eta_{t\wedge \sigma}}}.
\end{align*}
We conclude that with very high probability $t\wedge \sigma=\sfT$, and thus Proposition \ref{prop:LocalLaw} follows. 
\end{proof}

\subsection{Hard edge universality.}\label{s:maximum_principle}\ 
Before proving Proposition \ref{p:universality}, we explain why we believe that the obtained result is optimal, up to subpolynomial (in $N$) errors,  in the setting of   dynamics driven by general correlated Brownian motions and particle locations given by a local law. 
We denote $\widetilde \rho_{t}$ (resp.  $\widetilde \rho'_{t}$) the density of
$\widetilde \mu_0\boxplus \mu_{\rm sc}^{(t)}$ (resp.  $\widetilde \mu_0'\boxplus \mu_{\rm sc}^{(t)}$)
(see the discussion in \Cref{s:locallaw}).
Without loss of generality we assume that 
$\widetilde \rho_{t}(0)=\widetilde \rho'_{t}(0)$, and 
apply the local law estimate (\ref{e:m0tm0diff}) to obtain the best possible bound on the initial condition
of Equation (\ref{e:tangential_eq}):
$
|v_i(0)|=|s_i(0)-s'_i(0)|\lesssim \frac{\sqrt{i}}{N}$ and $v_{-i}(0)=-v_i(0)$.
As the expected continuous limit of the propagator $\cU$ of  (\ref{e:tangential_eq}) is $p_{s,t}(x,y)=\frac{t-s}{(t-s)^2+(x-y)^2}$ (see \cite{BouErdYauYin2016}),  a perfect homogenization of  (\ref{e:tangential_eq}) would give
\begin{equation}\label{eqn:Opt}
|v_i(\alpha,t)|\lesssim \frac{1}{N} \int \frac{t}{t^2+(x-\frac{i}{N})^2}\cdot \sgn(x)(Nx)^{\frac{1}{2}}\rd x\lesssim \frac{i}{N}\cdot\frac{1}{(\max(i,Nt))^{\frac{1}{2}}}.
\end{equation}
Integration in $\alpha\in[0,1]$ then gives
$(\ref{eqn:relax})$ (the necessary additional term
$\max(|i|/N,t)$ is irrelevant for small $i$ and is
due to the 
 densities difference $\widetilde \rho_{t}(u)-\widetilde \rho'_{t}(u)$ for large $u$).

\begin{remark}[Improved estimates under stronger assumptions]\label{rem:stronger}
If we assume that the initial condition is rigid instead of the weak local law (\ref{e:m0tm0diff}), i.e.  if 
$
|v_i(0)|=|s_i(0)-s'_i(0)|\lesssim \frac{1}{N}$, then the above heuristic gives $|s_i(t)-s'_i(t)|\lesssim \frac{i}{N}\cdot\frac{1}{\max(i,Nt)}$. 

In fact, for rigid initial conditions and independent driving Brownian motions,  \cite[Theorem 6]{Wang2022} proves that this heuristic is correct by 
adapting the bulk universality proof from \cite{Bou2022}.
The assumptions of Proposition \ref{p:universality} are more general as they only assume a local law and general correlations for the noise.  The methods developed below likely applies to 
give a simpler proof that  $|s_i(t)-s'_i(t)|\lesssim \frac{i}{N}\cdot\frac{1}{\max(i,Nt)}$  under the rigidity and independence assumptions,  but we do not pursue this direction in this article. 
\end{remark}

Thanks to Assumption \ref{d:regular}, the two measures $\widetilde \mu_0, \widetilde \mu'_0$ have bounded densities $\widetilde \rho_0(x), \widetilde \rho'_0(x)$, and their densities have bounded derivatives for $|x|\leq G$. By our assumption $\wt \rho_0(0)=\wt \rho'_0(0)=A$. Since $t\ll1$, the free convolution density also satisfies $\widetilde \rho_{t}(0)= \widetilde \rho'_{t}(0)=A+\OO(t)$. Thus Proposition \ref{p:universality} follows if we can show that
\begin{align*}
|s_i(t)-s_i'(t)|\leq N^{\varepsilon}\cdot\frac{|i|}{N}\cdot\left(\frac{1}{\sqrt{Nt}}+\max\left(\frac{|i|}{N},t\right)\right), \quad 1\leq |i|\leq N.
\end{align*}

We now start the proof of Proposition \ref{p:universality}.
Assumption \ref{d:regular} implies that $\widetilde \rho_0(x),\widetilde \rho'_0(x)$ have bounded derivatives for $|x|\leq R$, and it follows
\begin{align}\label{e:density}
\widetilde \rho_0(x)=A+\OO(|x|), \quad \widetilde \rho'_0(x)=A+\OO(|x|). 
\end{align}
If we denote the classical locations of $\widetilde \rho_0(x)$ as $\gamma_i$
\begin{align*}
\frac{i}{2N}=\int_0^{\gamma_i}\widetilde \rho_0(x)\rd x, \quad 1\leq |i|\leq N.
\end{align*}
Then for $|i|\leq c_R N$, \eqref{e:density} implies that 
\begin{align*}
\gamma_i=\frac{i}{2AN}+\OO\left(\frac{i^2}{N^2}\right). 
\end{align*}
By a standard argument  (see \cite[Corollary 4.2]{Lazlo2009}),  $(\nu, R)$-regularity in the sense of Assumption \ref{d:regular} implies that 
\begin{align}\label{eqn:numberEigenvalues}
\mu_0(I)=\widetilde \mu_0(I)+\OO\left((N|I|)^{1/2}\right)
\end{align}
for any interval  $I\subset[-c_R,c_R]$ with  $|I|\geq N^{-1+\nu}$. Since $\mu_0$ and $\widetilde \mu_0$ are both symmetric around origin, we conclude that for any $|i|\leq cN$
\begin{align*}
|s_i(0)-\gamma_i(0)|\lesssim \frac{|i|^{1/2}+N^{\nu}}{N} +\frac{i^2}{N^2}. 
\end{align*}
By the same argument  we also have that 
\begin{align}\label{e:initial_diff}
|s'_i(0)-\gamma_i(0)|\lesssim \frac{|i|^{1/2}+N^{\nu}}{N} +\frac{i^2}{N^2}, \qquad\quad |s_i(0)-s'_i(0)|\lesssim \frac{|i|^{1/2}+N^{\nu}}{N} +\frac{i^2}{N^2}.
\end{align}

In the rest of this section, we analyze the coupling from \eqref{e:coupleB}:
\begin{align}\label{generalDBMII}
\rd x_i(t,\alpha)=\frac{\rd b_i}{\sqrt{2 N}}+\frac{1}{2N}\sum_{j\neq i}\frac{1}{x_i(t,\alpha)-x_j(t,\alpha)}\rd t, \quad 1\leq |i|\leq N,
\end{align}
where 
$
x_i(0,\al)=\alpha s_i+(1-\alpha)s'_i,\quad 0\leq \al\leq 1,
$
and the martingales satisfies $\rd \langle b_i\rangle/\rd t \leq 1$.
We consider the corresponding tangential equation (\ref{e:tangential_eq}) satisfied by $v_i(t)=v_i(\alpha,t)=\frac{\rd}{\rd\alpha}x_i(\alpha,t)$

We first state qualitative properties of  (\ref{e:tangential_eq}). The elementary proof is left to the reader.

\begin{lemma} \label{lem:qualitative} Consider (\ref{e:tangential_eq}) with initial condition $\bv$.
\begin{enumerate}[(i)]
\item If $\bv$ is antisymmetric($v_i=-v_{-i}$)  and  $v_{i}\geq 0$ for all $i\geq 1$, then $v_i(t)\geq 0$ for all $i\geq 1$, and $t\geq 0$.
\item If $v_{-N}\leq \dots\leq v_{-1}\leq v_1\leq \dots\leq v_N$, then for any $t\geq 0$ we have  
$v_{-N}(t)\leq \dots\leq v_{-1}(t)\leq v_1(t)\leq \dots\leq v_N(t)$.
\end{enumerate}
\end{lemma}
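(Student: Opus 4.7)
The plan is to derive both statements from the fact that the tangent equation \eqref{e:tangential_eq}, of the form $\dot v = \mathcal{B} v$ with non-negative off-diagonal coefficients $c_{ij} = (2N(x_i - x_j)^2)^{-1}$, enjoys a Kolmogorov-type positivity-preservation property. Both parts will then follow from first-violation stopping-time arguments, combined with the symmetry reductions appropriate to each case.

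For (i), I would first show that antisymmetry is preserved along the flow. Since $x_{-i}(t) = -x_i(t)$ under the chiral block structure, one has $c_{-i,-j}(t) = c_{ij}(t)$; setting $w_i = v_i + v_{-i}$ and substituting, $w$ solves the same linear ODE with $w(0) = 0$, hence $w \equiv 0$ by uniqueness. Using $v_{-j} = -v_j$, the restriction of \eqref{e:tangential_eq} to positive indices reads
\begin{equation*}
\dot u_i = -D_i\, u_i + \sum_{\substack{j \geq 1 \\ j \neq i}}(c_{ij} - c_{i,-j})\, u_j, \qquad u_i := v_i \text{ for } i \geq 1,
\end{equation*}
with $D_i \geq 0$. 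The crucial sign is $c_{ij} - c_{i,-j} \geq 0$ for $i, j \geq 1$, $i \neq j$, which is just $|x_i - x_j| \leq x_i + x_j = |x_i - x_{-j}|$ (the singular values $x_i$, $i \geq 1$, being non-negative). Hence the reduced system has non-negative off-diagonals, and positivity of $u(t)$ follows by a standard argument: replace $u(0)$ by $u(0) + \epsilon\, \bm{1}$ with $\epsilon > 0$, consider the first time any component vanishes, check that $\dot u_{i_\ast} \geq 0$ at that moment from the structure above, conclude by contradiction, and let $\epsilon \to 0$.

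For (ii), I would apply the same method to consecutive differences. Perturb the initial data as $v^\epsilon(0) = v(0) + \epsilon\, e$, where $e$ is strictly increasing along the linear index ordering; by linearity of \eqref{e:tangential_eq}, $v^\epsilon(t) \to v(t)$ uniformly on compact time intervals as $\epsilon \to 0$. Define $t_0 = \inf\{t \geq 0 : v^\epsilon_{j+1}(t) < v^\epsilon_j(t) \text{ for some consecutive pair}\}$. Assuming $t_0 < \infty$, pick any pair $(i_\ast, i_\ast + 1)$ with equality at $t_0$, let $\{j_1, \dots, j_m\}$ be the maximal block of consecutive indices sharing the common value $v_\ast = v^\epsilon_{i_\ast}(t_0)$, and compute
\begin{equation*}
\left.\frac{d}{dt}\bigl(v^\epsilon_{i_\ast + 1} - v^\epsilon_{i_\ast}\bigr)\right|_{t_0} \;=\; \sum_{k \neq i_\ast,\, i_\ast + 1}\,(c_{i_\ast, k} - c_{i_\ast + 1, k})\,(v_\ast - v^\epsilon_k(t_0)).
\end{equation*}
For $k$ inside the block this summand vanishes; for $k < j_1$, strict ordering at $t_0$ gives $v^\epsilon_k(t_0) < v_\ast$, and geometry gives $c_{i_\ast, k} > c_{i_\ast + 1, k}$ (since $x_k$ is closer to $x_{i_\ast}$ than to $x_{i_\ast + 1}$), so the term is strictly positive; symmetrically for $k > j_m$. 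Unless the block exhausts all indices (a trivial case in which all differences are zero), the derivative is strictly positive, contradicting the definition of $t_0$. Letting $\epsilon \to 0$ yields the claim for $v$.

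The main obstacle is precisely the degeneracy in (ii) where several consecutive pairs become equal simultaneously at $t_0$: the block observation above handles this cleanly, noting that while summands from inside the block cancel, those from outside retain the correct sign and contribute strictly, provided the block does not fill all indices.
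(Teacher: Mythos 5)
The paper leaves this proof to the reader, and your strategy -- order/positivity preservation for the linear system $\dot v=\cB v$ with non-negative off-diagonal couplings, via symmetry reduction in (i) and consecutive differences in (ii) -- is exactly the elementary argument it has in mind; part (ii), including the treatment of a block of simultaneously equal values, is correct as written (the only case you wave at, the block exhausting all indices, is excluded because constant vectors are fixed points of the flow, so the violation time $t_0$ could not exist).

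There is, however, one soft spot in (i): at the first vanishing time $t_0$ of the perturbed solution you only obtain $\dot u_{i_\ast}(t_0)\geq 0$, and this is \emph{not} a contradiction -- since $u_{i_\ast}>0$ on $[0,t_0)$ and $u_{i_\ast}(t_0)=0$, the left derivative is $\leq 0$, so $\dot u_{i_\ast}(t_0)=0$ is perfectly consistent and the first-violation argument stalls. Perturbing the initial data alone does not create the needed strictness. The fix is cheap and you already use the right device in (ii): note that for $i,j\geq 1$, $i\neq j$, the reduced off-diagonal coefficient
\[
c_{ij}-c_{i,-j}=\frac{1}{2N}\Bigl(\frac{1}{(x_i-x_j)^2}-\frac{1}{(x_i+x_j)^2}\Bigr)
\]
is \emph{strictly} positive almost surely, because the particles never collide and in particular $x_1(t)>0$ (Proposition \ref{prop:WellPosed}), so $x_ix_j>0$. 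Hence at $t_0$ either some $u_j^{\epsilon}(t_0)>0$, in which case $\dot u_{i_\ast}^{\epsilon}(t_0)>0$ strictly, contradicting the left derivative being $\leq 0$; or all components vanish at $t_0$, which contradicts $u^{\epsilon}(0)>0$ by (backward) uniqueness of the linear ODE. Alternatively, one can bypass the issue entirely by perturbing the vector field rather than the data (solve $\dot u^{\epsilon}=\cB u^{\epsilon}+\epsilon\bm{1}$ on the positive-index block, which makes the derivative at a first zero $\geq\epsilon>0$), or by a Gronwall bound on the negative part. With any of these patches your proof of (i) is complete, and the $\epsilon\to0$ limit goes through by continuous dependence as you say.
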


We now define the key observable from \cite{Bou2022},
\begin{equation}\label{eqn:ft}
f_t(z)=\sum_{1\leq |i|\leq N}\frac{v_i(t)}{x_i(t)-z},
\end{equation}
which satisfies the following stochastic advection equation.

\begin{lemma}\label{lem:key}
For any $\im z\neq 0$, we have
\[
\rd f_t=m_t(z)\partial_z f_t\rd t
+\frac{1}{2N}\sum_{1\leq |k|\leq N}\frac{v_k(t)(\rd\langle b_k\rangle_t-\rd t)}{(x_k(t)-z)^3}
-\frac{1}{\sqrt{2N}}\sum_{1\leq |k|\leq N}\frac{v_k(t)}{(z-x_k(t))^2}\rd b_k(t).
\]
\end{lemma}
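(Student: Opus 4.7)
The approach is a direct application of It\^o's formula to each summand $v_i(t)/(x_i(t)-z)$ and then a careful rearrangement of the resulting drift terms. Since $\bm v$ has finite variation (it solves the deterministic ODE \eqref{e:tangential_eq}), there is no It\^o correction coming from cross brackets between $v_i$ and $x_i$; the only quadratic correction comes from $\rd\langle x_i\rangle_t=\frac{1}{2N}\rd\langle B_i\rangle_t$. Concretely,
\[
\rd\!\left(\frac{v_i(t)}{x_i(t)-z}\right)=\frac{\rd v_i(t)}{x_i(t)-z}-\frac{v_i(t)\,\rd x_i(t)}{(x_i(t)-z)^2}+\frac{v_i(t)}{(x_i(t)-z)^3}\cdot\frac{\rd\langle B_i\rangle_t}{2N}.
\]
Summing over $i$ and substituting $\rd v_i=-\sum_{j\neq i}\frac{v_i-v_j}{2N(x_i-x_j)^2}\rd t$ together with the SDE \eqref{generalDBM} for $x_i$ immediately yields the martingale term $-(2N)^{-1/2}\sum_k v_k(x_k-z)^{-2}\rd B_k$ and separates the $\rd\langle B_k\rangle_t$-contribution as $\frac{1}{2N}\sum_k \frac{v_k\,\rd\langle B_k\rangle_t}{(x_k-z)^3}$.

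The remaining work is to show that the three deterministic pieces
\[
A_1=-\frac{1}{2N}\sum_{i\neq j}\frac{v_i-v_j}{(x_i-x_j)^2(x_i-z)},\quad A_2=-\frac{1}{2N}\sum_{i\neq j}\frac{v_i}{(x_i-z)^2(x_i-x_j)},\quad A_3=\frac{1}{2N}\sum_i\frac{v_i}{(x_i-z)^3}
\]
combine to $m_t(z)\partial_z f_t-\frac{1}{2N}\sum_k \frac{v_k}{(x_k-z)^3}$ (the last term is what matches the centering $\rd\langle B_k\rangle_t-\rd t$ in the target formula). Since
\[
m_t(z)\partial_z f_t=\frac{1}{2N}\sum_{i,k}\frac{v_i}{(x_k-z)(x_i-z)^2}=\frac{1}{2N}\sum_i\frac{v_i}{(x_i-z)^3}+\frac{1}{2N}\sum_{i\neq k}\frac{v_i}{(x_k-z)(x_i-z)^2},
\]
the target reduces to the algebraic identity $A_1+A_2=-\frac{1}{2N}\sum_{i\neq k}\frac{v_i}{(x_k-z)(x_i-z)^2}$.

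The core of the proof is this identity, which I would establish by the standard two-step trick. First, apply the partial fraction decomposition $\frac{1}{(x_i-z)(x_i-x_j)}=\frac{1}{x_j-z}\bigl[\frac{1}{x_i-x_j}-\frac{1}{x_i-z}\bigr]$ to $A_2$, which splits $A_2$ into a three-denominator piece plus precisely $\frac{1}{2N}\sum_{i\neq k}\frac{v_i}{(x_k-z)(x_i-z)^2}$. Second, symmetrize $A_1$ by the swap $i\leftrightarrow j$: this yields $A_1=-\frac{1}{4N}\sum_{i\neq j}\frac{v_i-v_j}{(x_i-x_j)(x_i-z)(x_j-z)}$, and the same symmetrization applied to the three-denominator piece coming out of $A_2$ shows it equals $-A_1$, so the two cancel. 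This is the only step with any real content; it is purely algebraic but one must be careful with signs under swaps.

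\textbf{Main obstacle.} There is no analytic difficulty (the driving martingales play no role beyond contributing $\rd\langle B_k\rangle_t$), so the only place to slip is bookkeeping in the symmetrization identity above. The cleanest presentation is to symmetrize both $A_1$ and the residual piece of $A_2$ simultaneously and note they are negatives of each other by inspection.
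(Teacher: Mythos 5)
Your It\^o setup and the overall route are the same as the paper's: expand each summand $v_k(t)/(x_k(t)-z)$, use that the coefficients $v_k(t)$ have finite variation so the only correction term is $\frac{1}{2N}\frac{v_k\,\rd\langle B_k\rangle_t}{(x_k-z)^3}$, and reduce the lemma to an algebraic identity proved by symmetrization; the paper symmetrizes the $\partial_t v_k$ sum first and then combines it with the drift of $x$, while you partial-fraction the drift term $A_2$ instead, which is the same algebra in a different order. However, your bookkeeping of the reduction contains two slips. First, listing $A_3=\frac{1}{2N}\sum_i \frac{v_i}{(x_i-z)^3}$ among the pieces that must combine to $m_t(z)\partial_z f_t-\frac{1}{2N}\sum_k \frac{v_k}{(x_k-z)^3}$ double-counts the diagonal: the correct requirement is $A_1+A_2=m_t(z)\partial_z f_t-\frac{1}{2N}\sum_k\frac{v_k}{(x_k-z)^3}$, equivalently $A_1+A_2+A_3=m_t(z)\partial_z f_t$. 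Second, the ``target identity'' you state, $A_1+A_2=-\frac{1}{2N}\sum_{i\ne k}\frac{v_i}{(x_k-z)(x_i-z)^2}$, has the wrong sign: the true identity is $A_1+A_2=+\frac{1}{2N}\sum_{i\ne k}\frac{v_i}{(x_k-z)(x_i-z)^2}$, which is exactly what the lemma needs and exactly what your own two-step mechanism produces, since the partial-fraction step gives $A_2=-\frac{1}{2N}\sum_{i\ne j}\frac{v_i}{(x_i-x_j)(x_i-z)(x_j-z)}+\frac{1}{2N}\sum_{i\ne j}\frac{v_i}{(x_i-z)^2(x_j-z)}$ while symmetrizing $A_1$ gives $A_1=+\frac{1}{2N}\sum_{i\ne j}\frac{v_i}{(x_i-x_j)(x_i-z)(x_j-z)}$, so the three-denominator pieces cancel. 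Once these two bookkeeping errors are fixed, the computation closes and coincides with the paper's proof.
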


\begin{proof} It is a simple application of It{\^ o}'s formula. We omit the time index. First,
\begin{equation}\label{eqn:Ito1}
\rd f = \sum_{1\leq |k|\leq N}\frac{\rd v_k}{x_k-z}+\sum_{1\leq |k|\leq N}v_k\rd\frac{1}{x_k-z}.
\end{equation}
Applying again the It{\^ o} formula $\rd (x_k-z)^{-1}=-(x_k-z)^{-2}\rd x_k+\frac{\rd\langle {b_k}\rangle}{2N}(x_k-z)^{-3}\rd t$,  with (\ref{generalDBMII}) we naturally decompose the second sum above as (I)+[(II)+(III)]$\rd t$ where
\begin{align*}
{\rm (I)}&=-\frac{1}{\sqrt{2N}}\sum_{1\leq |k|\leq N}\frac{v_k}{(z-x_k)^2}\rd b_k,\\
{\rm (II)}&=\frac{1}{2N}\sum_{\ell\neq k}\frac{v_k}{x_\ell-x_k}\frac{1}{(x_k-z)^2},
\\
{\rm (III)}&=\frac{1}{2N}\sum_{1\leq |k|\leq N} \frac{v_k\rd\langle b_k\rangle}{(x_k-z)^3}.
\end{align*}
Concerning the first sum in (\ref{eqn:Ito1}), by (\ref{e:tangential_eq}) we have
\begin{align*}
\sum_{1\leq |k|\leq N}\frac{\partial_t v_k}{x_k-z}
&=\sum_{\ell\neq k}\frac{v_\ell-v_k}{2N(x_\ell-x_k)^2(x_k-z)}
=\frac{1}{2}\sum_{\ell\neq k}\frac{v_\ell-v_k}{2N(x_\ell-x_k)^2}\left(\frac{1}{x_k-z}-\frac{1}{x_\ell-z}\right)
\\
&=\frac{1}{4N}\sum_{\ell\neq k}\frac{v_\ell-v_k}{x_\ell-x_k}\frac{1}{(x_k-z)(x_\ell-z)}
=-\frac{1}{2N}\sum_{\ell\neq k}\frac{v_k}{x_\ell-x_k}\frac{1}{(x_k-z)(x_\ell-z)}.
\end{align*}
Combining with (II), we obtain 
$$
{\rm (II)}+\sum_{1\leq |k|\leq N}\frac{\partial_t v_k}{x_k-z}=
\frac{1}{2N}\sum_{\ell\neq k}\frac{v_k}{x_\ell-x_k}\frac{1}{x_k-z}\left(\frac{1}{x_k-z}-\frac{1}{x_\ell-z}\right)
=
\frac{1}{2N}\sum_{\ell\neq k}\frac{v_k}{(x_k-z)^2}\frac{1}{x_\ell-z}.
$$
All singularities have disappeared. We obtained
$
{\rm (II)}+{\rm (III)}+\sum_{1\leq |k|\leq N}\frac{\partial_t v_k}{x_k-z}
=m(z)\partial_z f+\frac{1}{2N}\sum_{1\leq |k|\leq N}\frac{v_k(t)(\rd\langle b_k\rangle_t-\rd t)}{(x_k(t)-z)^3}.
$
Summation of the remaining term (I) concludes the proof.
\end{proof}

Instead of considering the stochastic advection equation directly for (\ref{eqn:ft})
we will exploit positivity of the $v_i$'s for $i\geq 1$ by defining
\[
f_s(z)=f^{(1)}_s(z)+f_s^{(2)}(z)
\]
where 
\[
f_s^{(1)}(z)=\sum_{1\leq |i|\leq N}\frac{v^{(1)}_i(s)}{x_i(s)-z},\qquad\quad v^{(1)}_i(0)=\sgn(i)\cdot \frac{i^{1/2}}{N},
\]
and
\[
f_s^{(2)}(z)=\sum_{1\leq |i|\leq N}\frac{v^{(2)}_i(s)}{x_i(s)-z},\qquad\quad  v^{(2)}_i(0)=\sgn(i)\cdot \frac{i^2}{N^2}.
\]

\begin{lemma}\label{lem:init}

For any $|z|\leq R/4$ with $E\geq 0$,  $\eta:=\im z\geq N^{-1+\nu}$,  we have
$$ 
|\im f^{(1)}_0(z)|+|\im f^{(2)}_0(z)|\leq  E\sqrt{\frac{N}{\max(E,\eta)}}+ N\log N\ E\max(\eta,E).
$$
\end{lemma}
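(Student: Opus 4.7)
Write $z=E+\ii\eta$ with $E\geq 0$. The chiral symmetry $s_{-i}(0)=-s_i(0)$ together with $v^{(k)}_{-i}(0)=-v^{(k)}_i(0)$ lets me collapse the sum onto positive indices:
\[
\im f^{(k)}_0(z)=\sum_{i\geq 1}v^{(k)}_i(0)\,h(s_i(0)),\qquad h(x):=\frac{4\eta xE}{\big((x-E)^2+\eta^2\big)\big((x+E)^2+\eta^2\big)},
\]
so $h\geq 0$ on $[0,\infty)$. A direct pointwise analysis of $h$ (peak of height $\sim\min(1/\eta,E/\eta^2)$ on $|x-E|\lesssim\eta$, decay $\sim\eta E/x^3$ for $x\gg E$) gives the two scalar estimates
\[
\int_0^{\infty}\sqrt{x}\,h(x)\,dx\,\lesssim\,\frac{E}{\sqrt{\max(E,\eta)}},\qquad \int_0^{G/2}x^2\,h(x)\,dx\,\lesssim\,E\max(E,\eta)\,\log N,
\]
where the logarithm in the second integral comes from $\int_E^{G/2}\eta E\,x^{-1}\,dx$ and we use $\eta\geq N^{-1+\nu}$.

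Next I insert the weight bounds. Lemma~\ref{l:xixj} at $t=0$ (which applies since the initial data is $(\nu,G)$-regular) yields $s_i(0)\asymp i/N$ for $1\leq i\leq c_G N$, up to trivial $1/N$ corrections. Hence
\[
v^{(1)}_i=\frac{\sqrt{i}}{N}\lesssim \sqrt{\frac{s_i(0)}{N}}+\frac{1}{N},\qquad v^{(2)}_i=\frac{i^2}{N^2}\lesssim s_i(0)^2+\frac{1}{N^2}.
\]
Substituting,
\begin{align*}
|\im f^{(1)}_0(z)|&\lesssim\frac{1}{\sqrt N}\sum_{i\geq 1}\sqrt{s_i(0)}\,h(s_i(0))+\frac{1}{N}\sum_{i\geq 1}h(s_i(0)),\\
|\im f^{(2)}_0(z)|&\lesssim\sum_{i\geq 1}s_i(0)^2\,h(s_i(0))+\frac{1}{N^2}\sum_{i\geq 1}h(s_i(0)).
\end{align*}
The bare sums $\sum_{i\geq 1}h(s_i(0))$ split, via the same antisymmetry, into $\im m_0$ evaluated at $\pm E+\ii\eta$, and the local law \eqref{e:m0tm0diff} combined with Assumption~\ref{d:regular} bounds them by $O(N)$; the prefactors $1/N$ and $1/N^2$ then render these contributions lower order than the right-hand side.

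The remaining weighted sums I would replace by deterministic integrals against $\widetilde\mu_0$: for $F(x)=\sqrt{x}\,h(x)$ and $F(x)=x^2 h(x)$ I use the local law on a dyadic decomposition of $x\in[0,c_G]$ around the peak $x=E$, controlling $|\mu_0(I)-\widetilde\mu_0(I)|\lesssim \sqrt{|I|/N}+\varphi/N$ on intervals of size $|I|\geq N^{-1+\nu}$. Combined with the two scalar integral estimates above, this yields respectively
\[
\frac{1}{\sqrt N}\cdot 2N\int F\,d\widetilde\mu_0\lesssim E\sqrt{\frac{N}{\max(E,\eta)}},\qquad 2N\int F\,d\widetilde\mu_0\lesssim N\log N\cdot E\max(E,\eta),
\]
matching the claim. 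The main technical obstacle is this local-law step on the peak scale: for $\eta$ only slightly larger than $N^{-1+\nu}$, the interval $|x-E|\leq\eta$ contains merely $\sim N^\nu$ particles with weight-times-$h$ of order $1/(N\eta)$, so one cannot afford a pointwise substitution $s_i\mapsto\gamma_i$ (the kernel derivative $|h'|\sim\eta^{-2}$ would destroy the $(\nu,G)$-rigidity). Instead, one works with the integrated density $R(x)=\#\{i\geq 1:s_i(0)\leq x\}$, applies Abel summation, and uses that $R$ itself obeys the local-law estimate $R(x)=N\widetilde\mu_0^+([0,x])+O(\sqrt{Nx}+N^\nu)$ uniformly; this trades pointwise pointwise errors for the integrable singularity of $[x^2 h(x)]'$, and the $\log N$ arises naturally in the Abel sum over dyadic shells.
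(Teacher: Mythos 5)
Your argument is correct and follows essentially the same route as the paper: symmetrize over $\pm i$ to reduce to the kernel $4\eta E x/(|x-z|^2|x+z|^2)$, bound the weights by powers of $x_k$ via rigidity, convert the sum to an integral against the limiting density using the counting estimate \eqref{eqn:numberEigenvalues} at scales $\ge N^{-1+\nu}$, and evaluate the resulting integrals in the two cases $\eta\gtrless E$, which is exactly where the $\log N$ appears. The only difference is presentational: the paper performs the sum-to-integral step tersely through \eqref{eqn:numberEigenvalues} (the kernel varies slowly on scale $\eta\ge N^{-1+\nu}$), while you spell it out via Abel summation over dyadic shells and carry explicit $1/N$-corrections for the smallest indices.
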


\begin{proof}
We start with the contribution from $f_0^{(2)}$.
By symmetry of the $x_k$ and $v_k$'s,  so we can write
\begin{equation}
\label{eq:estf02}
\begin{split}
|\im f^{(2)}_0(z)|&=\left|\im\sum_{1\leq k\leq N}\left(\frac{v^{(2)}_k(0)}{x_k(0)-z}-\frac{v^{(2)}_k(0)}{-x_k(0)-z}\right)\right|
\leq \eta\sum_{1\leq k\leq N} |v^{(2)}_k(0)|\left|\frac{1}{|x_k-E|^2+\eta^2}-\frac{1}{|x_k(0)+E|^2+\eta^2}\right|\\
&=4\eta E\sum_{1\leq k\leq N} |v^{(2)}_k(0)|\frac{x_k(0)}{|x_k(0)-z|^2|x_k(0)+z|^2}
\leq4 \eta E\sum_{1\leq k\leq N}\frac{k^2}{N^2}\frac{x_k(0)}{|x_k(0)-z|^2|x_k(0)+z|^2}.
\end{split}
\end{equation}
From (\ref{eqn:numberEigenvalues}) and the assumption $\Im z\geq N^{-1+\nu}$ for eigenvalues $x_k\leq R/2$,  and the assumption $x_k\leq C$ for the complementary regime,
we have 
\[
\mbox{right-hand side of}\ \eqref{eq:estf02}\lesssim 
 \eta E N\int_0^c\frac{x^3}{|z-x|^2|z+x|^2}\rd x+\eta EN.
\]
If $\eta>E$ we have $|z-x|\asymp|z+x|$, and we obtain
\[
\int_0^c\frac{x^3}{|z-x|^2|z+x|^2}\rd x\lesssim \int_0^c\frac{x^3}{|z-x|^4}\rd x\lesssim \log N.
\]
If $\eta<E$, we have $|z+x|\asymp \max(x,E)$ so that
\begin{multline*}
\int_0^c\frac{x^3}{|z-x|^2|z+x|^2}\rd x\lesssim \int_0^c\frac{x^3}{|z-x|^2\max(x,E)^2}\rd x\lesssim 
E^{-2}\int_0^E\frac{x^3}{|z-x|^2}\rd x+\int_E^c\frac{x}{|z-x|^2}\rd x\\
\lesssim \frac{E}{\eta}+
\int_0^{c-E}\frac{(E+x)\rd x}{\eta^2+x^2}\lesssim \frac{E}{\eta}+ \log\frac{c-E}{\eta}\lesssim \frac{E}{\eta}+\log N.
\end{multline*}
We have therefore obtained
$
|\im f^{(2)}_0(z)|\lesssim N\log N\, E\max(\eta,E).
$

One can treat $ f_0^{(1)}$ similarly, obtaining
\[
|\Im  f_0^{(1)}(z)|\leq
4\eta E\sum_{1\leq k\leq N}\frac{\sqrt{k}}{N}\frac{x_k(0)}{|x_k(0)-z|^2|x_k(0)+z|^2}
\lesssim
\sqrt{N}E\int_0^c\frac{x^{3/2}\eta}{|z-x|^2|z+x|^2}\rd x\lesssim  E\sqrt{\frac{N}{\max(E,\eta)}},
\]
concluding the proof.
\end{proof}

We define $y=\frac{\varphi^5}{N}$ and
\begin{equation*}
\mathscr{S}=
\left\{
z= E+\ii y:
y<E<\frac{R}{10}
\right\}.
\end{equation*}
%
%We also remind the definition of the characteristic flow (\ref{e:flow}). We point out that the convention in \eqref{e:flow} is the opposite compared to the one used in \eqref{eq:char1}. In fact, in \eqref{eq:char1} the characteristics move forward in time, while in \eqref{e:flow} they move backward in time. We follow these two different conventions to make clearer the references to the works \cite{Bou2022} and \cite{HuaLan2019}.

In the following lemma and its proof,  for a fixed $t>0$ we use the convention
\begin{equation}
\label{eqn:NewCharacteristics}
\del_s z_s=\widetilde m_{t-s}(z_s),\ 0\leq s\leq t
\end{equation}
for the characteristics (note the sign change compared to (\ref{e:flow}): the characteristics now move upwards),  for coherence with the  notation from \cite{Bou2022}.

\begin{lemma}\label{lem:AdvEq} Let $\sfT=N^{-2\nu}$.
With  very high probability,  for any $z=E+\ri y\in\mathscr{S}$ and $0<t<\sfT$ we have
$$
|\im f^{(1)}_0(z_{t})-\im f^{(1)}_t(z)|\leq  E\sqrt{\frac{N}{\max(E,t)}}+NE\max(E,t),
$$
and the same estimate holds for $\im f^{(2)}_0(z_{t})-\im f^{(2)}_t(z)$.
\end{lemma}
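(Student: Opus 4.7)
The strategy is the method of characteristics applied to the stochastic advection equation of Lemma \ref{lem:key}. Fix $z \in \mathscr{S}$ and $t \in (0, \sfT)$, and reparametrize the characteristic of the lemma as $\zeta_r := z_{t-r}$ for $0 \leq r \leq t$, which satisfies $\partial_r \zeta_r = -\widetilde m_r(\zeta_r)$, with $\zeta_0 = z_t$ and $\zeta_t = z$. Setting $H(r) = f_r^{(i)}(\zeta_r)$, so that $H(0) = f_0^{(i)}(z_t)$ and $H(t) = f_t^{(i)}(z)$, a direct application of It\^o's formula to Lemma \ref{lem:key} gives
$$\rd H(r) = \big(m_r - \widetilde m_r\big)(\zeta_r) \, \partial_z f_r^{(i)}(\zeta_r) \, \rd r + \mathrm{C}_r^{(i)} + \rd M_r^{(i)},$$
where the advective drift $m_r \partial_z f_r$ from Lemma \ref{lem:key} cancels with $\partial_z f_r \cdot \partial_r \zeta_r$ up to the local-law defect $m_r - \widetilde m_r$, the process $M_r^{(i)} = -\int_0^r \frac{1}{\sqrt{2N}} \sum_{k} \frac{v_k^{(i)}(u)}{(\zeta_u - x_k(u))^2} \rd B_k(u)$ is a genuine martingale (since $\zeta_u$ is deterministic), and $\mathrm{C}_r^{(i)}$ denotes the bracket-correction drift from the third term in Lemma \ref{lem:key}, evaluated at $\zeta_r$.

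Integrating and taking imaginary parts, it remains to bound three contributions. For the local-law error, I would invoke Proposition \ref{prop:LocalLaw} to control $|m_r - \widetilde m_r|(\zeta_r) \leq \varphi \sqrt{\Im \widetilde m_r(\zeta_r) / (N \Im \zeta_r)}$, combined with a Cauchy-type estimate at scale $\tfrac{1}{2}\Im \zeta_r$ (using the positivity and ordering of the $v_k^{(i)}(r)$ from Lemma \ref{lem:qualitative}) to bound $|\partial_z f_r^{(i)}(\zeta_r)|$ in terms of $\Im f_r^{(i)}$ slightly higher up, which is controlled self-consistently through a stopping-time argument in the spirit of the proof of Proposition \ref{prop:LocalLaw}. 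For the martingale $M_r^{(i)}$, I would apply the concentration inequality \eqref{eqn:boundBr} to the quadratic variation
$$\langle M^{(i)} \rangle_t \leq \frac{1}{2N} \int_0^t \sum_{k} \frac{v_k^{(i)}(r)^2}{|\zeta_r - x_k(r)|^4} \, \rd r,$$
where the bound $\rd\langle B_k\rangle_r \leq \rd r$ from Assumption \ref{a:Bbound}, the positivity/monotonicity of Lemma \ref{lem:qualitative}, and the rigidity estimate \eqref{eqn:numberEigenvalues} allow replacement of the empirical sum by its deterministic counterpart against $\widetilde\rho_r$. The resulting integral splits into the two regimes $E > \Im \zeta_r$ and $E < \Im \zeta_r$ exactly as in the proof of Lemma \ref{lem:init} and yields the advertised bounds $E\sqrt{N/\max(E,t)}$ for $i=1$ and $NE\max(E,t)$ for $i=2$, thanks to the fact that $\Im \zeta_r$ grows from $y \approx \varphi^5/N$ up to essentially $y + At$ along the characteristic. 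Finally, the drift correction $\mathrm{C}_r^{(i)}$ is rendered negligible by the cubic decay of $(x_k - \zeta_r)^{-3}$ together with Assumption \ref{a:Bbound}.

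Uniformity over $(z, t) \in \mathscr{S} \times (0, \sfT)$ follows by a standard grid and Lipschitz-in-parameters argument on a lattice of size $N^{-C}$, since $(r, z) \mapsto f_r^{(i)}(z)$ has polynomially bounded derivatives on scales above $y$. The main obstacle is the quadratic-variation computation matching the exact scaling in $\max(E, t)$; this relies crucially on the positivity of $v_k^{(i)}(r)$ ensured by Lemma \ref{lem:qualitative}, so that no spurious cancellation is lost when replacing the empirical sum by an integral, and on a careful division of the $r$-integration into the ``short-time'' phase $r \lesssim E/A$ (where $\Im \zeta_r \lesssim E$) and the ``long-time'' phase $r \gg E/A$ (where $\Im \zeta_r \asymp Ar$).
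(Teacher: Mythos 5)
Your overall architecture is the same as the paper's: compose the observable with the characteristics so that the advective term in Lemma \ref{lem:key} cancels up to the local-law defect $m_r-\widetilde m_r$, control that defect by Proposition \ref{prop:LocalLaw}, run a self-consistent stopping-time bootstrap on the statement itself, control the martingale via \eqref{eqn:boundBr}, and conclude uniformity by a grid. However, two steps in your martingale estimate do not survive scrutiny in the setting this lemma is actually used in. First, your quadratic-variation formula $\langle M^{(i)}\rangle_t\le \frac{1}{2N}\int_0^t\sum_k v_k^{(i)}(r)^2|\zeta_r-x_k(r)|^{-4}\,\rd r$ keeps only the diagonal brackets; but here the driving processes are the \emph{correlated} martingales of \eqref{e:coupleB} (and, in Proposition \ref{p:universality}, arbitrary continuous martingales with only $\rd\langle B_k\rangle_t\le \rd t$), so the cross terms $\rd\langle B_j,B_k\rangle$ are only bounded via Cauchy--Schwarz and the correct bound is the square of the full sum, $\bigl(\tfrac{1}{\sqrt{2N}}\sum_k v_k^{(i)}(r)|\zeta_r-x_k(r)|^{-2}\bigr)^2$, as in \eqref{eqn:error2}. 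This is not cosmetic: the subsequent size estimate has to be carried out for this $\ell^1$-type sum, which is substantially larger than your $\ell^2$ expression.

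Second, you propose to evaluate these sums by ``replacement of the empirical sum by its deterministic counterpart against $\widetilde\rho_r$'' using the rigidity \eqref{eqn:numberEigenvalues}. Rigidity controls the eigenvalue locations, but the weights $v_k^{(i)}(r)$ are random and evolve under \eqref{e:tangential_eq}; at time $r>0$ they have spread out from their initial profile, and their magnitudes are precisely what the lemma is trying to control. Positivity and monotonicity (Lemma \ref{lem:qualitative}) fix signs and ordering, not sizes, so no deterministic density can be substituted for $v_k^{(i)}(r)$. The mechanism that closes the argument is the self-consistent windowed bound \eqref{eqn:localAverage}: local masses $\sum_{k\in I_j}v_k(r)$ over windows of width $\varphi^5/N$ are bounded by $y\,\Im f_r$ at bottom-scale lattice points, which is in turn controlled by the stopping time (the statement with a factor $\tfrac12$) together with the events bounding the time increments of $f$ between lattice times; this bound must be fed into \emph{both} the $\partial_z f$ term (as you do implicitly) and the quadratic variation (as in \eqref{eqn:error2}), the latter being exactly where your argument is missing the key input. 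A minor further point: the bracket-correction drift is not negligible by ``cubic decay'' alone; in the paper it is dominated by the same kernel as \eqref{eqn:firstTerm} using $1/N\le|z|\le|x+z|$, and then absorbed into the same estimate.
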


\begin{proof}
In the following,  we abbreviate $f_s$ for either $f_s^{(1)}$ or $f_s^{(2)}$, as the proof is the same in both cases (and accordingly we denote $v_k=v_k^{(1)}$ or $v_k^{(2)}$).

For any $1\leq \ell,m\leq N^{12}\sfT$, we define $t_\ell=\ell N^{-12}$ and $z^{(m)}=E_m+\ii y$ where $E_m=m N^{-12}$.
We also define the stopping times (with respect to $\mathcal{F}_t=\sigma({b_k}(s),0\leq s\leq t,1\leq k\leq N)$)
\begin{align*}
\tau_{\ell,m}&=
\inf\left\{0\leq s\leq t_\ell: 
|\im  f_s(z^{(m)}_{t_\ell-s})-\im  f_0(z^{(m)}_{t_\ell})|
>
\frac{1}{2}\left(E_m\sqrt{\frac{N}{\max(E_m,t_\ell)}}+NE_m\max(E_m,t_\ell)\right)
\right\},\\
\tau_0&=
\inf\left\{0\leq t\leq 1:\mid\exists z\in[0, R/2]\times[\frac{\varphi^5}{N},1]\ {\rm s.t.}\ \left|m_t(w)-\widetilde m_t(w)\right| \geq\varphi \sqrt{\frac{\Im[\widetilde m_0(w)]}{N\Im[w]}}\right\},\\
\tau&=\min\{\tau_0,\tau_{\ell,m}:0\leq \ell,m\leq N^{12}\sfT, \varphi^5 N^{-1}<E_m< R/10\},
\end{align*}
with the convention $\inf\varnothing=\sfT$. 
We will prove that for any $D>0$ there exists $\wt N_0(D)$ such that for any $N\geq \wt N_0(D)$, we have 
\begin{equation}\label{eqn:inter1}
\mathbb{P}(\tau=\sfT)>1-N^{-D}.
\end{equation}
We first explain why the above equation implies the expected result by a grid argument in $t$ and $z$.

On the one hand, we have the sets inclusion
\begin{equation}\label{eqn:inter2}
\{\tau=\sfT\}\bigcap_{1\leq \ell\leq N^{12}\sfT,1\leq m\leq N^{12},1\leq k\leq N}A_{\ell,m,k}
\subset
\bigcap_{z\in \mathscr{S},0<t<\sfT}\left\{
|\im  f_s(z_{t-s})-\im  f_0(z_t)|
<
 E\sqrt{\frac{N}{\max\{E,t\}}}+NE\max(E,t)
\right\},
\end{equation}
where
$$
A_{\ell,m,k}=\left\{
\sup_{t_\ell\leq u\leq t_{\ell+1}}\left|\int_{t_\ell}^u
\frac{{ v}_k(s)\rd b_k(s)}{(z^{(m)}-x_k(s))^2}
\right|<N^{-4}
\right\}.
$$
Indeed, for any given $z$ and $t$, chose $t_\ell,z^{(m)}$ such that $t_\ell\leq t<t_{\ell+1}$ and $|z-z_m|<N^{-5}$. Then
$
| f_t(z)- f_t(z^{(m)})|<N^{-2},
$
say, as follows directly from the definition of $ f_t$ and the crude estimate $|{ v}_k(t)|<1$ (obtained by maximum principle). 
Moreover, we can bound the time increments using Lemma \ref{lem:key}:
\begin{equation}\label{eqn:dynamicsPDE}
\rd f_s=m_s(z)\partial_z f_s\rd s
+\frac{1}{2N}\sum_{1\leq |k|\leq N}\frac{v_k(s)(\rd\langle b_k\rangle_s-\rd s)}{(x_k(s)-z)^3}
-\frac{1}{\sqrt{2N}}\sum_{1\leq |k|\leq N}\frac{v_k(s)}{(z-x_k(s))^2}\rd b_k(s).
\end{equation}
Thanks to 
the trivial estimates 
$|m_t(E+\ii\eta)|\leq \eta^{-1}$, 
$|\partial_z f_t(E+\ii\eta)|
\leq N\|{v}(0)\|_\infty\eta^{-2}\leq N\eta^{-2}$ and  $|\partial_{zz} f_t(E+\ii\eta)|\leq N\eta^{-3}$, under the event $\cap_kA_{\ell,m,k}$ (to bound the martingale term)
we have
$
| f_t(z)- f_{t_\ell}(z^{(m)})|<N^{-2}.
$

For $M_u=\int_{t_\ell}^u
\frac{{ v}_k(s)\rd b_k(s)}{(z^{(m)}-x_k(s))^2}$, we have the deterministic estimate
$\langle M\rangle_{t_{\ell+1}}\leq N^{-12}(\varphi^2/N)^{-4}\|{v}(0)\|_\infty^2\leq \varphi^{-8}N^{-8}$, 
so that (\ref{eqn:boundBr}) with $\mu=\varphi^{-8}N^{-8}$ gives
$
\mathbb{P}(A_{\ell,m,k})\geq 1-e^{-c\varphi^{1/5}}
$
and therefore, for any $D>0$, for large enough $N$ we have
\begin{equation}\label{eqn:inter3}
\mathbb{P}\Big(
\bigcap_{1\leq \ell\leq N^{12}\sfT,1\leq m\leq N^{12},1\leq k\leq N}A_{\ell,m,k}
\Big)
\geq 
1-N^{-D}.
\end{equation}
Equations (\ref{eqn:inter1}), (\ref{eqn:inter2}), (\ref{eqn:inter3}) conclude the proof of the proposition.

We now prove (\ref{eqn:inter1}). We abbreviate $t=t_\ell$, $z=z^{(m)}$ for some $1\leq \ell,m\leq N^{12}\sfT$. Let $g_u(z)= f_{u}(z_{t-u})$. 
Composing (\ref{eqn:dynamicsPDE}) with the characteristics (\ref{eqn:NewCharacteristics})  we obtain \footnote{In this paper, we abbreviate $u\wedge t=\min(u,t)$ when $u$ and $t$ are time variables.}
\begin{align}\begin{split}\label{eqn:gev}
\rd g_{u\wedge \tau}(z)=(m_u(z_{t-u})-\widetilde m_u(z_{t-u}))\partial_z  f_u(z_{t-u})\rd({u\wedge \tau})
+\frac{1}{2N}\sum_{1\leq |i|\leq N} \frac{(\rd\langle b_i\rangle_u/\rd u-1)v_i(u)}{(x_i(u)-z_{t-u})^3}\rd(u\wedge\tau)
\\-\frac{\OO(1)}{\sqrt{N}}\sum_{1\leq |i|\leq N}\frac{{ v}_i({u})}{(z_{t-u}-x_i(u))^2}\rd b_i({u\wedge \tau}).
\end{split}\end{align}
We  first bound
\begin{equation*}
\int_0^{t}\left| (m_u(z_{t-u})-\widetilde m_u(z_{t-u}))\partial_z  f_{u}(z_{t-u})\right|\rd (u\wedge \tau)
\leq
\int_0^{t}\frac{\varphi}{\sqrt{N\eta_{t-u}}}\left|\partial_z  f_{u}(z_{t-u})\right|\rd (u\wedge \tau).
\end{equation*}
To bound $m_u-\wt m_u$ above, we have used $0\leq u\leq \tau$.   Moreover,
\[
 f_{s}(z)=\sum_{i=1}^N\frac{x_i(t) v_i(t)}{x_i(t)^2-z^2}
\]
so that (from Lemma \ref{lem:qualitative} we know that $v_i(u)\geq 0$ for all $u\geq 0, i\geq 1$)
\begin{equation}\label{eqn:firstTerm}
\left|\partial_z  f_{u}(z_{t-u})\right|\leq |z_{t-u}|\sum_{i=1}^N\frac{x_i(u) v_i(u)}{|x_i(u)-z_{t-u}|^2\cdot |x_i(u)+z_{t-u}|^2}.
\end{equation}
Let $I_j=I_j(u)=\{i\in\llbracket 1,N\rrbracket:x_i(u)\in\frac{\varphi^5}{N}[j,j+1]\}$, $0\leq j\leq c N/\varphi^5$.
Then
\begin{equation}\label{eqn:EstDer}
\left|\partial_z  f_{u}(z_{t-u})\right|\leq |z_{t-u}|
\sum_{0\leq j\leq cN/\varphi^5}\left(\max_{i\in I_j}\frac{x_i(u)}{|x_i(u)-z_{t-u}|^2\cdot |x_i(u)+z_{t-u}|^2}\right)\cdot \sum_{i\in I_j} v_i(u)+N|z_{t-u}|\int_{G/10}^C{|x-z_{t-u}|^{-2}}\rd x,
\end{equation}
where the above integral is due to points $x_i(u)\geq R/10$, and use the local law in the domain $R/10<x_i<R/2$, the upper bound $x_i<C$, and the maximum principle bound $|v_i|\leq 1$. The above integral contributes to
at most
\[
N|z_{t-u}|\int_{G/10}^C{|x-z_{t-u}|^{-2}}\rd x\lesssim \frac{N |z_{t-u}|}{|z_{t-u}-\frac{G}{10}|}.
\]
The above error term is $\lesssim N|z_{t-u}|\frac{\max(E,\eta_{t-u})}{\eta_{t-u}}$ (here $E=\Im z$),  which 
we will now show to be the contribution from the sum in (\ref{eqn:EstDer}): 
the integral term in (\ref{eqn:EstDer}) can be ignored from now.

For each $1\leq j\leq cN/\varphi^5$ (note that we omit the case $j=0$ which will be treated separately), pick a $n=n_j$ such that 
$|\Re z^{(n)}-\varphi^5\frac{j}{N}|<N^{-9}$ and denote $e_{j}=e_{j,N}={\rm Re}z^{(n)}$.  
First, as ${ v}_k(u)\geq 0$ for any $k\geq 1$ and $u$,  we have  
\begin{equation}\label{eqn:deterministic}
\im  f_u(z^{(n)})=ye_j \sum_{i=1}^N\frac{x_i(u) v_i(u)}{|x_i(u)-z^{(n)}|^2|x_i(u)+z^{(n)}|^2}
\gtrsim
y \sum_{i\in I_j}\frac{ v_i(u)}{|x_i(u)-z^{(n)}|^2}
\gtrsim
y^{-1} \sum_{i\in I_j} v_i(u),
\end{equation}
where in the first inequality we used that for $i\in I_j$ ($j\geq 1$) we have $x_i\asymp e_j\asymp|x_i+z^{(n)}|$. 
To estimate $\im  f_u(z^{(n)})$, introduce $\ell$ such that $t_\ell\leq u<t_{\ell+1}$.
On the event $\cap_kA_{\ell,m,k}$ and $u\leq \tau$, we have
$
| f_{u}(z^{(n)})- f_{t_\ell}(z^{(n)})|<N^{-2}
$
as seen easily  from (\ref{eqn:dynamicsPDE}).
We therefore proved 
\begin{equation}\label{eqn:localAverage}
\sum_{k\in I_j}{ v}_k(u)\leq y\im  f_{ t_\ell}(z^{(n)})+N^{-2}\leq  \frac{\varphi^{5}}{N} \left(e_j\sqrt{\frac{N}{\max(e_j,u)}}+Ne_j\max(e_j,u)\right).
\end{equation}
We used $t_\ell\leq u\leq \tau$ for the second inequality.  For $j=0$ we simply bound $\sum_{k\in I_0}{ v}_k(u)\leq \sum_{k\in I_1}{ v}_k(u)$ because the evolution preserves  monotonicity, as proved in Lemma \ref{lem:qualitative} (ii).

We  have therefore obtained (using the local law on scale $\varphi^5/N$,  i.e.  $\tau\leq\tau_0$)
\begin{equation*}
\left|\partial_z  f_{u}(z_{t-u})\right|\leq\frac{\varphi^{5}}{N} |z_{t-u}|
\sum_{0\leq j\leq N/\varphi^5}\frac{e_j}{|e_j-z_{t-u}|^2\cdot |e_j+z_{t-u}|^2}\cdot  \left(e_j\sqrt{\frac{N}{\max(e_j,u)}}+Ne_j\max(e_j,u)\right).
\end{equation*}
The contribution from the first term ($e_j\sqrt{\frac{N}{\max(e_j,u)}}$) is
\[
N^{1/2}|z_{t-u}|\int_0^1\frac{x^2}{|x-z_{t-u}|^2\cdot|x+z_{t-u}|^2}\frac{\rd x}{\sqrt{\max(x,u)}}.
\]
If $t-u\leq E=\Re z$, the above integral is 
\[
\lesssim\frac{1}{\eta_{t-u}} \int_0^1\frac{\eta_{t-u}}{|x-z_{t-u}|^2}\cdot\frac{x^2}{x^2+E^2}\frac{\rd x}{\sqrt{\max(x,u)}}\lesssim \frac{1}{\eta_{t-u}\sqrt{\max(E,u)}}.
\]
If $t-u\geq E$, it is
\[
\lesssim  \int_0^{t-u}\frac{x^2}{|t-u|^4}\frac{\rd x}{\sqrt{\max(x,u)}}+ \int_{t-u}^{1}\frac{x^2}{x^4}\frac{\rd x}{\sqrt{\max(x,u)}}\lesssim \frac{1}{\eta_{t-u}\sqrt{\max(u,\eta_{t-u})}}.
\]
Similarly, the  contribution from the second term ($Ne_j\max(e_j,u)$) is 
\[
N|z_{t-u}|\int_0^1\frac{x^2}{|x-z_{t-u}|^2\cdot|x+z_{t-u}|^2}\max(x,u)\rd x
\lesssim N |z_{t-u}|\frac{\max(E,u,\eta_{t-u})}{\eta_{t-u}}.
\]
All together, we have proved
\[
\left|\partial_z  f_{u}(z_{t-u})\right|
\lesssim  N^{1/2}\frac{|z_{t-u}|}{\eta_{t-u}\sqrt{\max(E,u,\eta_{t-u})}}+ N |z_{t-u}|\frac{\max(E,u,\eta_{t-u})}{\eta_{t-u}},
\]
so that
\begin{multline*}
\int_0^{t}\frac{\varphi}{\sqrt{N\eta_{t-u}}}\left|\partial_z  f_{u}(z_{t-u})\right|\rd (u\wedge \tau)\lesssim 
\varphi\int_0^t\frac{\max(E,\eta_{t-u})}{\eta_{t-u}^{3/2}\sqrt{\max(E,u,\eta_{t-u})}}\rd u
+\varphi\sqrt{N}\int_0^t\frac{\max(E,\eta_{t-u})\max(E,u,\eta_{t-u})}{\eta_{t-u}^{3/2}}\rd u\\
\lesssim\frac{\varphi}{\sqrt{y}}\frac{E}{\sqrt{\max(E,t)}}
+\frac{\varphi^2\sqrt{N}}{\sqrt{y}}E\max(E,t)
\lesssim\varphi^{-1/2}E\sqrt{\frac{N}{\max(E,t)}}+\varphi^{-1/2}NE\max(E,t).
\end{multline*}

With the same reasoning,  we now bound the contribution in  (\ref{eqn:gev}) from
\begin{equation}\label{eqn:secondTerm}
\frac{1}{N}\sum_{1\leq |i|\leq N} \frac{(\rd\langle b_i\rangle_u/\rd u-1)v_i(u)}{(x_i(u)-z_{t-u})^3}=
\partial_{zz}\frac{1}{2N}\sum_{1\leq |i|\leq N} \frac{(\rd\langle b_i\rangle_u/\rd u-1)v_i(u)}{x_i(u)-z_{t-u}}
=
\frac{1}{N}\sum_{1\leq |i|\leq N}
\partial_{zz} \frac{\OO(1)x_iv_i(u)}{x_i(u)^2-z_{t-u}^2}.
\end{equation}
Note that
\[
|\frac{1}{N}
\partial_{zz} \frac{xv}{x^2-z^2}|\lesssim \frac{xv}{N}(\frac{1}{|x-z|^2|x+z|^2}+\frac{|z|^2}{|x-z|^3|x+z|^3})
\lesssim \frac{xv|z|}{|x-z|^2|x+z|^2}
\]
where we used $1/N\leq |z|$,  $|z|\leq |x+z|$ and $1/N\leq  |x-z|$.
This proves that the contribution from (\ref{eqn:secondTerm}) in (\ref{eqn:gev}) can be bounded by the right-hand side in 
(\ref{eqn:firstTerm}) and therefore has a smaller contribution.

We finally want to bound $\sup_{0\leq s\leq t}|M_s|$, let $E=\Re[z_{t-u}]$, then
\begin{align*}
M_s
&:=\im\int_0^{s}\frac{1}{\sqrt{N}}\sum_{1\leq |k|\leq N}\frac{{ v}_k({u})}{(z_{t-u}-x_k(u))^2}\rd b_k(u\wedge\tau)\\
&=
\int_0^{s}\frac{1}{\sqrt{N}}\sum_{k=1}^N\im\left(\frac{1}{(z_{t-u}-x_k(u))^2}+\frac{1}{(z_{t-u}+x_k(u))^2}\right){ v}_k({u})\rd b_k(u\wedge\tau)\\
&=
\int_0^{s}\eta_{t-u}\frac{1}{\sqrt{N}}\sum_{k=1}^N\left(\frac{E-x_k(u)}{|z_{t-u}-x_k(u)|^4}+\frac{E+x_k(u)}{|z_{t-u}+x_k(u)|^4}\right){ v}_k({u})\rd b_k(u\wedge\tau).
\end{align*}
With (\ref{eqn:boundBr}) with  very high probability  we have
\begin{equation}\label{eqn:error2}
\sup_{0\leq s\leq t}|M_{s}|^2\leq \varphi^{1/10}
\int_0^{t}\frac{\eta_{t-u}^2}{N}\left(\sum_{k=1}^N
\left|\frac{E-x_k(u)}{|z_{t-u}-x_k(u)|^4}+\frac{E+x_k(u)}{|z_{t-u}+x_k(u)|^4}\right|{ v}_k({u})
\right)^2\rd (u\wedge\tau).
\end{equation}
With (\ref{eqn:localAverage}) and the local law ($\tau\leq \tau_0$) the above sum is bounded with (we omit the terms corresponding to $x_i> R/10$ as they are shown to be negligible thanks to the local law and the simple bound $|v_i|\leq 1$)
\begin{align*}
 &\phantom{{}={}}N^{1/2}\int_0^c\left|\frac{E-x}{|z_{t-u}-x|^4}+\frac{E+x}{|z_{t-u}+x|^4}\right|\frac{x}{\sqrt{\max(x,u)}}\rd x
 +\varphi N\int_0^c\left|\frac{E-x}{|z_{t-u}-x|^4}+\frac{E+x}{|z_{t-u}+x|^4}\right|x\max(x,u)\rd x
 \\
&\lesssim
 N^{1/2}\left(\frac{E}{\eta_{t-u}^2\sqrt{\max(E,u)}}\mathds{1}_{t-u\leq E}+\frac{1}{\eta_{t-u}\sqrt{\max(\eta_{t-u},u)}}\mathds{1}_{t-u\geq E}\right)\\
 &+N\left(\frac{E}{\eta_{t-u}^2}\max(E,u)\mathds{1}_{t-u\leq E}+\frac{\max(\eta_{t-u},u)}{\eta_{t-u}}\mathds{1}_{t-u\geq E}\right).
\end{align*}
Hence the bracket in (\ref{eqn:error2}) is upper bounded with the sum of these two terms:
\begin{align*}
&\varphi^{1/10}\int_0^t\left(\frac{E}{\eta_{t-u}\sqrt{\max(E,u)}}\mathds{1}_{t-u\leq E}+\frac{1}{\sqrt{\max(\eta_{t-u},u)}}\mathds{1}_{t-u\geq E}\right)^2\rd u\lesssim \frac{\varphi^{1/10}}{y}\frac{E^2}{\max(E,t)},\\
&\varphi^{1/10}N\int_0^t\left(\frac{E\max(E,u)}{\eta_{t-u}}\mathds{1}_{t-u\leq E}+\max(\eta_{t-u},u)\mathds{1}_{t-u\geq E}\right)^2\rd u\lesssim \frac{\varphi^{1/10}}{y}N(E\max(E,t))^2.
\end{align*}
This concludes the proof.
\end{proof}

\begin{corollary} \label{thm:Holder}
For any $t\in[\varphi^{100}/N,1]$ and $1\leq i<N$, uniformly in $\al$ we have, with overwhelming probability,
$$
|v_i(t, \al)|\leq \varphi^8\frac{i}{N}\left(\frac{1}{\sqrt{Nt}}+\max(\frac{i}{N},t)\right).
$$
\end{corollary}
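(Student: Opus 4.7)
The plan is to convert the estimates on $\im f_t(z)$ established in Lemmas~\ref{lem:init} and~\ref{lem:AdvEq} into a pointwise bound on $v_i(t,\alpha)$, by combining them with the positivity and monotonicity properties of the tangential dynamics recorded in Lemma~\ref{lem:qualitative}.

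The first step is a linear decomposition. By (\ref{e:initial_diff}), for $i\geq 1$ we have $|v_i(0,\alpha)|\lesssim i^{1/2}/N + N^\nu/N + i^2/N^2$. Since the semigroup generated by the operator $\mathcal B$ in (\ref{e:tangential_eq}) is Markovian (it preserves constants and positivity), we may dominate $|v_i(t,\alpha)|\leq v_i^{(1)}(t)+v_i^{(2)}(t)+v_i^{(3)}(t)$, where each $v^{(j)}$ starts from the antisymmetric initial condition corresponding to one of the three terms above. By Lemma~\ref{lem:qualitative}, each $v^{(j)}$ stays non-negative and non-decreasing in $|i|\geq 1$ for all positive times.

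The second step uses the monotone averaging bound $v^{(j)}_i(t)\leq \frac{1}{i}\sum_{k=i}^{2i} v^{(j)}_k(t)$, and then controls the average through $\im f^{(j)}_t$. By positivity of the $v^{(j)}_k$, for a bin $I_m$ of width $y=\varphi^5/N$ centered at $e_m\sim i/N$ one has $\sum_{k\in I_m} v^{(j)}_k(t)\lesssim y\,\im f^{(j)}_t(e_m+\mathrm{i} y)$. For $j\in\{1,2\}$, this is precisely the inequality (\ref{eqn:localAverage}) already derived inside the proof of Lemma~\ref{lem:AdvEq}, giving
\[
\sum_{k\in I_m} v^{(j)}_k(t) \lesssim \frac{\varphi^5}{N}\Big(e_m\sqrt{N/\max(e_m,t)}+N e_m\max(e_m,t)\Big).
\]
Summing over the $\sim i/\varphi^5$ bins that cover $[x_i(t),x_{2i}(t)]$ and dividing by $i$ then reproduces exactly the two terms $i/(N\sqrt{Nt})$ and $i\max(i/N,t)/N$ appearing in the claim.

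The only genuine obstacle is the third contribution $v^{(3)}$, for which Lemma~\ref{lem:init} does not directly apply since its initial datum is constant in $|i|$. I would repeat the argument of Lemma~\ref{lem:init} while exploiting the antisymmetric-constant structure: the identity $\frac{1}{x_k-z}-\frac{1}{-x_k-z}=\frac{2x_k}{x_k^2-z^2}$ combined with integration against the symmetric limiting density yields $|\im f^{(3)}_0(z)|\lesssim \varphi\min(1,E/\eta)$. Propagating this along the advection identity of Lemma~\ref{lem:AdvEq} gives $|\im f^{(3)}_t(E+\mathrm{i}y)|\lesssim \varphi\, E/(E+t)$ up to subpolynomial errors, and the same bin-averaging argument then yields $v^{(3)}_i(t)\lesssim \varphi\, i/(N^2 t)$ in the relevant regime $t\geq \varphi^{100}/N$. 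This contribution is comfortably dominated by the target, and the prefactor $\varphi^8$ absorbs every poly-$\varphi$ loss accumulated along the way.
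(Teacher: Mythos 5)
Your overall route is the paper's: decompose the initial datum (\ref{e:initial_diff}) into the profiles $i^{1/2}/N$ and $i^2/N^2$, use Lemma \ref{lem:qualitative} to dominate $|v_i(t,\al)|$ by the corresponding nonnegative antisymmetric solutions, and convert the bounds on $\im f^{(1)}_t,\im f^{(2)}_t$ from Lemmas \ref{lem:init} and \ref{lem:AdvEq} into a pointwise bound through the local average at scale $\varphi^5/N$, i.e. (\ref{eqn:deterministic})--(\ref{eqn:localAverage}). Two remarks on the parts that are correct: the bin-summing over $[x_i(t),x_{2i}(t)]$ is fine but not needed, since positivity already gives the single-site bound $v^{(j)}_i(t)\lesssim y\,\im f^{(j)}_t$ directly from (\ref{eqn:deterministic}); and the domination step should be justified as in the paper, by applying Lemma \ref{lem:qualitative} (i) to the antisymmetric vectors $w\pm v$ with $w$ the comparison profile --- applying propagator positivity to the symmetric vector $|v_j(0)|$ does not give the desired one-sided comparison with the antisymmetric profiles.

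The genuine gap is your third profile $v^{(3)}$ for the constant term $N^\nu/N$. Lemma \ref{lem:AdvEq} is not a black-box advection identity through which one can ``propagate'' an arbitrary initial bound: its proof is a self-consistent bootstrap in which the asserted bound itself enters, through the stopping times $\tau_{\ell,m}$ and the local-average inequality (\ref{eqn:localAverage}), to control $\partial_z f_u(z_{t-u})$, the bracket term, and the quadratic variation of the martingale part. To treat $f^{(3)}$ with the ansatz $\varphi\min(1,E/\eta)$ you would have to rerun that entire argument with the new ansatz and verify that the self-consistency closes; as written, this step is only asserted. The detour is moreover unnecessary: since $\varphi=N^\nu$ and $i\geq 1$, one has $N^\nu/N\leq \varphi\, i^{1/2}/N$, hence $|v_i(0,\al)|\leq C\varphi\,(v^{(1)}_i(0)+v^{(2)}_i(0))$, and Lemma \ref{lem:qualitative} (i) then gives $|v_i(t,\al)|\leq C\varphi\,(v^{(1)}_i(t)+v^{(2)}_i(t))$ for all $t\geq 0$. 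This is exactly how the paper absorbs the constant term (and it is the origin of the extra power of $\varphi$ in the final estimate); with this replacement for your $v^{(3)}$ argument, your proof coincides with the paper's and is complete.
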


\begin{proof}
We have $|v_i(0)|\leq \varphi (v_i^{(1)}(0)+v_i^{(2)}(0))$ from (\ref{e:initial_diff});
from Lemma \ref{lem:qualitative} (i),  this implies that for any $t\geq 0, 1\leq i\leq N$ we have $|v_i(t)|\leq \varphi(v_i^{(1)}(t)+v_i^{(2)}(t))$.
From Equation (\ref{eqn:deterministic}) and Lemmas \ref{lem:init} and  \ref{lem:AdvEq}, we have
\[
 v_i^{(1)}(t)+v_i^{(2)}(t)\lesssim\frac{\varphi^6}{N}\left(
  \gamma_i\sqrt{\frac{N}{\max(\gamma_i,t)}}+N\gamma_i\max(\gamma_i,t)\right)
 \lesssim
 \varphi^6\frac{i}{N}\left(\frac{1}{\sqrt{Nt}}+\max(\frac{i}{N},t)\right)
\]
with  very high probability. This concludes the proof.
\end{proof}

\begin{proof}[Proof of Proposition \ref{p:universality}]
Based on Corollary \ref{thm:Holder}, the proof of Proposition \ref{p:universality} proceeds similarly to 
(\ref{eqn:integrate}), details are left to the reader.
\end{proof}

\section{Resolvent fluctuations from the dynamics noise}
\label{sec:resolvents}

It is well known \cite{AEK18} that for any $t\ge 0$ the resolvents $G_t^z$ becomes approximately deterministic as $n\to \infty$. Its deterministic approximation is given by
\begin{equation*}
M^z=M^z(\ii\eta):=\left(\begin{matrix}
m^z & -zu^z \\
-\overline{z}u^z & m^z
\end{matrix}\right), \qquad u^z=u^z(\ii\eta):=\frac{m^z(\ii\eta)}{\ii\eta+m^z(\ii\eta)},
\end{equation*}
with $m^z=m^z(\ii\eta)$ being the unique solution of the cubic equation
\begin{equation*}
-\frac{1}{m^z}=\ii\eta+m^z-\frac{|z|^2}{\ii\eta+m^z}, \qquad \eta\Im[m^z]>0.
\end{equation*}
Note that on the imaginary axis $m^z$ is purely imaginary and that $u^z$ is real. Furthermore, note that the deterministic approximation of $G_t$ does not depend on time since the first two moments of $X_t$ are preserved along the flow \eqref{eq:OU} and $M^z$ is determined only by those moments.
\subsection{Stochastic advection equation and consequences.}

\, In this section we analyze the evolution of the resolvent $G_t^z(\ii\eta):=(W_t-Z-\ii\eta)^{-1}$ along the flow \eqref{eq:OU}. In particular, we will study the evolution of the resolvent along the characteristics\footnote{Our convention here is that the characteristics move upwards as in (\ref{e:flow}), contrary to (\ref{eqn:NewCharacteristics}).} (cf. \cite[Eq. (5.4)]{CipErdSch2022bis} and \cite[Eq. (5.4)]{CipErdXu2023bis}):
\begin{equation}
\label{eq:char1}
\partial_s\eta_s=-\Im m^{z_s}(\ii\eta_s)-\frac{\eta_s}{2}, \qquad\quad \partial_s z_s=-\frac{z_s}{2}.
\end{equation}
In the following, for $\eta_*>0$ and $z_*\in \C$, by $\eta_s(\eta_*,t), z_s(z_*)$, for $s\in [0,t]$, we denote the solutions of \eqref{eq:char1} such that at time $t$ satisfy $\eta_t(\eta_*,t)=\eta_*$ and $z_t(z_*)=z_*$.

Here we perform the analysis in the regime $|z|\le 1- N^{-\epsilon} $, for an arbitrary small $\epsilon>0$, since this is the regime of interest of Proposition~\ref{prop:multitCLTres}; the regime $|z|> 1-N^{-\epsilon}$ is negligible (see e.g.  above \cite[Eq. (4.25)]{CipErdSch2023}).
%However, since $\epsilon$ can be chosen arbitrary small, this difference would not cause any change in the proof. For the sake of brevity and simplicity we thus consider the simpler $|z|\le 1-c$.
The main result of this section is a decomposition theorem for $I_{\eta_c}^T(f,t)$ into the sum of two terms: one that depends only on the initial condition and one that is a martingale (see Proposition~\ref{pro:decomp}). Before stating this result we introduce the notation
 \begin{equation}
 \label{eq:e1e2}
 E_1:=\left(\begin{matrix}
 1 & 0 \\
 0 & 0 
 \end{matrix}\right), \qquad  E_2:=\left(\begin{matrix}
 0 & 0 \\
 0 & 1 
 \end{matrix}\right).
 \end{equation}
\begin{proposition}
\label{pro:decomp}
For any $s< t$, we have
\begin{equation}
\begin{split}
\label{eq:imprew2}
I_{\eta_c}^T(f,t)&=-\frac{1}{4\pi\ii}\int_\mathbb{C}\Delta f(z)\int_{\eta_s(\eta_c,t)}^{\eta_s(T,t)} \mathrm{Tr}\big[G_s^{z_s(z)}(\eta')-\E G_s^{z_s(z)}(\eta')\big]\, \dif\eta'\dif  z \\
&\quad+\frac{1}{4\pi \ii\sqrt{N}}\sum_{i\ne j}\int_\mathbb{C}\Delta f(z) \int_s^t \int_{\eta_\tau(\eta_c,t)}^{\eta_\tau(T,t)} \mathrm{Tr} \big[G_\tau^{z_\tau(z)}(\ii\eta')^2E_i\dif B_\tau E_j\big]\Bigg]\, \dif\eta\dif z +\OO\left(\frac{N^\xi}{N\eta_c}\right),
\end{split}
\end{equation}
with very high probability for any $\xi>0$. Here $\sum_{i\ne j}$ denotes a summation over the indices $(i,j)\in\{(1,2),(2,1)\}$.
\end{proposition}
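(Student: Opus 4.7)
The strategy is to apply the method of characteristics to the Hermitized resolvent along the non-Hermitian OU flow \eqref{eq:OU}. Concretely, I derive a stochastic PDE for $\mathrm{Tr}[G_\tau^z(\ii\eta)]$, choose backward characteristic curves $(z_\tau(z), \eta_\tau(\eta,t))$ (terminal at $\tau=t$) that absorb the transport drift, and integrate from $\tau=t$ back to $\tau=s$. Starting from $G_\tau^z(\ii\eta) = (W_\tau - Z - \ii\eta)^{-1}$, It\^o's formula applied to \eqref{eq:OU}, together with the block structure $W_\tau = E_1 X_\tau E_2 + E_2 X_\tau^* E_1$ with $E_1, E_2$ from \eqref{eq:e1e2}, produces
\begin{equation*}
\dif\,\mathrm{Tr}[G_\tau^z(\ii\eta)] = -\frac{1}{\sqrt N}\sum_{i\ne j}\mathrm{Tr}\big[G_\tau^z(\ii\eta)^2 E_i\,\dif B_\tau\, E_j\big] + D_\tau^z(\eta)\,\dif\tau,
\end{equation*}
where the off-diagonal sum on the right is exactly the stochastic integrand of \eqref{eq:imprew2}, while $D_\tau^z(\eta)$ gathers the It\^o quadratic-variation correction, the Ornstein--Uhlenbeck damping $-W_\tau/2$, and the diagonal $E_i\,\dif B_\tau\, E_i$ contributions. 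Using the identity $\partial_\eta\mathrm{Tr}[G^z(\ii\eta)] = \ii\,\mathrm{Tr}[G^z(\ii\eta)^2]$ and the single-resolvent local law $G_\tau^z(\ii\eta) = M^z(\ii\eta) + \OO(N^\xi(N\eta)^{-1/2})$ from \cite{AEK21, CipErdSch2021}, this drift rearranges, after centering by $\E$, as
\begin{equation*}
D_\tau^z(\eta) - \E D_\tau^z(\eta) = \ii\, m^z(\ii\eta)\,\partial_\eta\big(\mathrm{Tr}[G_\tau^z(\ii\eta)] - \E\mathrm{Tr}[G_\tau^z(\ii\eta)]\big) + \mathrm{Err}_\tau^z(\eta),
\end{equation*}
with $|\mathrm{Err}_\tau^z(\eta)|\lesssim N^\xi/(N\eta^2)$ with very high probability.

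Next, define the backward characteristic flow $\eta_\tau(\eta,t)$ (terminal $\eta_t(\eta,t)=\eta$) as the solution of $\partial_\tau \eta_\tau = -\ii\, m^{z_\tau(z)}(\ii\eta_\tau)$, coupled with the corresponding ODE for $z_\tau(z)$ dictated by the block structure of $W_\tau$. For $|z|\le 1-c$ one has $|m^z(\ii\eta)|\asymp 1$, so $\eta_\tau(\eta,t)$ is essentially affine in $t-\tau$ and stays inside the domain where the local law holds on all of $[s,t]$. Along these curves the chain rule eliminates the transport term $\ii m^z\partial_\eta$ from the centered SPDE, so that integrating from $\tau=s$ to $\tau=t$ yields
\begin{equation*}
(\mathrm{Tr} - \E\mathrm{Tr})[G_t^z(\ii\eta)] = (\mathrm{Tr} - \E\mathrm{Tr})[G_s^{z_s(z)}(\ii\eta_s(\eta,t))] + \frac{1}{\sqrt N}\sum_{i\ne j}\int_s^t \mathrm{Tr}\big[G_\tau^{z_\tau(z)}(\ii\eta_\tau(\eta,t))^2 E_i\,\dif B_\tau\, E_j\big] + \mathcal E(z,\eta),
\end{equation*}
with $\mathcal E(z,\eta) = \int_s^t \mathrm{Err}_\tau^{z_\tau(z)}(\eta_\tau(\eta,t))\,\dif\tau$. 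Substituting into $I_{\eta_c}^T(f,t) = -\frac{1}{4\pi\ii}\int_{\mathbb C}\Delta f(z)\int_{\eta_c}^T(\mathrm{Tr}-\E\mathrm{Tr})[G_t^z(\ii\eta)]\,\dif\eta\dif z$ and changing variables $\eta\mapsto \eta_s(\eta,t)$ for the initial term (respectively $\eta\mapsto\eta_\tau(\eta,t)$ inside the double stochastic integral) reproduces the two explicit lines of \eqref{eq:imprew2}: the limits $\eta_s(\eta_c,t),\eta_s(T,t)$ and $\eta_\tau(\eta_c,t),\eta_\tau(T,t)$ appear as the images of $\eta_c, T$ under the backward flow.

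It remains to show that the accumulated remainder $\int_{\mathbb C}\Delta f(z)\int_{\eta_c}^T\mathcal E(z,\eta)\,\dif\eta\dif z$ is of size $\OO(N^\xi/(N\eta_c))$ with very high probability. This follows from the uniform (in $\tau,z,\eta$ on the relevant compact set) single-resolvent local law of \cite{AEK21, CipErdSch2021, CipErdSch2023}, combined with the a priori bound $|\partial_\eta\langle G_\tau^z - M^z\rangle|\lesssim N^\xi/(N\eta^2)$ and the fact that $\int_s^t\dif\tau\int_{\eta_c}^T N^\xi(N\eta_\tau^2)^{-1}\dif\eta\lesssim N^\xi/(N\eta_c)$ once $\eta_\tau$ stays above $\eta_c/2$ along the flow. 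The main obstacle, which I expect to be the bulk of the technical work, is the algebraic rearrangement of $D_\tau^z(\eta)$ into the clean transport form $\ii m^z(\ii\eta)\partial_\eta\mathrm{Tr}[G_\tau^z]$ plus a local-law-controllable remainder: the It\^o quadratic variation, the OU damping $-W_\tau/2$, and the diagonal blocks $E_i\dif B_\tau E_i$ each contribute at leading order, and identifying their precise cancellation (using the block structure of $W_\tau$, the identity $u^z = m^z/(\ii\eta + m^z)$, and the self-consistent equation for $m^z$) is what produces the closed advection equation on which the characteristic method runs.
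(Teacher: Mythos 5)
Your overall strategy (It\^o formula for the Hermitized resolvent, characteristics absorbing the transport, local laws for the drift error, then a change of variables in the $\eta$--integral of Girko's formula) is the same as the paper's, but the central step as you state it is wrong, and the two mistakes you make happen to compensate each other, so the final formula comes out right without being proved. Concretely, after composing with the characteristics (which in the paper are $\partial_t\eta_t=-\Im m^{z_t}(\ii\eta_t)-\tfrac{\eta_t}{2}$, $\partial_t z_t=-\tfrac{z_t}{2}$ --- note your ODE for $\eta_\tau$ omits the Ornstein--Uhlenbeck damping $-\eta_\tau/2$, and the $z$--flow comes from the OU drift, not from the block structure), the centered drift is \emph{not} a pure transport term plus $\OO(N^\xi/(N\eta^2))$. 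Linearizing the quadratic term $2\sum_{i\ne j}\langle GE_i\rangle\langle G^2E_j\rangle$ in \eqref{eq:nochar1} and using $WG^2=G+(Z+\ii\eta)G^2$, one is left (as in \eqref{eq:charG}) with a genuine zeroth--order term $\bigl(\tfrac12+\partial_{\ii\eta}m^{z_\tau}\bigr)\bigl(\mathrm{Tr}\,G_\tau-\E\,\mathrm{Tr}\,G_\tau\bigr)$, whose size is of order $N^\xi/\eta$, far larger than your claimed error $N^\xi/(N\eta^2)$, and whose time integral contributes at order one to the linear statistics. Solving the resulting linear SDE by Duhamel therefore produces the propagation factor $\exp\bigl(\int_s^t[\tfrac12+\partial_{\eta_\tau}m^{z_\tau}(\ii\eta_\tau)]\,\dif\tau\bigr)$ in front of the time--$s$ fluctuation and of each martingale increment (this is \eqref{eq:maybeus}/\eqref{eq:imprew} in the paper); your pointwise identity with no such factor is false.

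The reason \eqref{eq:imprew2} nevertheless has the clean, weight--free form is the exact cancellation between this exponential factor and the Jacobian of the characteristic flow, $\partial_\eta\eta_s(\eta,t)=\exp\bigl(\int_s^t[\tfrac12+\partial_{\eta_\tau}m^{z_\tau}(\ii\eta_\tau)]\,\dif\tau\bigr)$, when one substitutes $\eta'=\eta_s(\eta,t)$ (respectively $\eta'=\eta_\tau(\eta,t)$) in the $\eta$--integral. In your write--up this Jacobian is silently dropped: you only transport the integration limits. Dropping both the zeroth--order drift term and the Jacobian gives the correct final expression only by accident; keeping either one without the other would not. To repair the argument you must keep the zeroth--order term, derive the Duhamel representation with the exponential weight, and then observe that the weight is exactly the Jacobian of the change of variables --- which is precisely the paper's proof. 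The remaining ingredients you invoke (the two--resolvent bound $|\langle G_t^2-M_t'\rangle|\lesssim N^\xi(N\eta_t^2)^{-1}$ and the accumulation of errors into $\OO(N^\xi/(N\eta_c))$) are fine and match the paper.
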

This immediately shows the decomposition in \eqref{eq:defint12} with the integrals $I_1, I_2$ being defined as
\begin{equation*}
\begin{split}
I_1=I_1(f,s,t):&=-\frac{1}{4\pi\ii}\int_\mathbb{C}\Delta f(z)\int_{\eta_s(\eta_c,t)}^{\eta_s(T,t)} \mathrm{Tr}\big[G_s^{z_s(z)}(\eta')-\E G_s^{z_s(z)}(\eta')\big]\, \dif\eta'\dif  z,\\
I_2=I_2(f,s,t):&=\frac{1}{4\pi \ii\sqrt{N}}\sum_{i\ne j}\int_\mathbb{C}\Delta f(z) \int_s^t \int_{\eta_\tau(\eta_c,t)}^{\eta_\tau(T,t)} \mathrm{Tr} \big[G_\tau^{z_\tau(z)}(\ii\eta')^2E_i\dif B_\tau E_j\big]\Bigg]\, \dif\eta\dif \tau \dif z.
\end{split}
\end{equation*}

\begin{proof}[Proof of Proposition~\ref{pro:decomp}]

By It\^{o}'s formula (using the short--hand notations $G=G_t^z(\ii\eta)$) we obtain
\begin{equation}
 \label{eq:nochar1}
 \begin{split}
 \dif \langle G(\ii\eta)\rangle&=-\frac{1}{\sqrt{N}}\sum_{i\ne j}\langle G^2E_i\dif B_t E_j\rangle+\frac{1}{2}\langle W G^2\rangle\dif t+2\sum_{i\ne j}\langle GE_i\rangle\langle G^2E_j\rangle \dif t \\
 &=-\frac{1}{\sqrt{N}}\sum_{i\ne j}\langle G^2E_i\dif B_t E_j\rangle+\frac{1}{2}\langle G\rangle\dif t+\frac{1}{2}\langle (Z+\ii\eta)G^2\rangle \dif t+2\sum_{i\ne j}\langle GE_i\rangle\langle G^2E_j\rangle \dif t.
 \end{split}
 \end{equation}

In order to control the evolution of the resolvent along the flow \eqref{eq:nochar1} we consider the characteristics \eqref{eq:char1}. Note that along the characteristics we have 
\begin{equation}
\label{eq:usinf}
\eta_0=e^{t/2}\eta_t+(e^{t/2}-e^{-t/2})\Im m^{z_t}(\ii \eta_t), \quad z_0=e^{t/2}z_t,\quad m^{z_0}(\ii\eta_0)=e^{-t/2} m^{z_t}(\ii\eta_t), \quad u^{z_0}(\ii\eta_0)=e^{-t}u^{z_t}(\ii\eta_t).
\end{equation}

Then, by plugging \eqref{eq:char1} into \eqref{eq:nochar1}, we get (using the notations $G_t:=G_t^{z_t}(\ii\eta_t)$ and $M_t:=M^{z_t}(\ii\eta_t)$)
  \begin{equation}
 \label{eq:charG}
\dif \langle G_t\rangle=-\frac{1}{\sqrt{N}}\sum_{i\ne j}\langle G_t^2E_i\dif B_t E_j\rangle+\frac{1}{2}\langle G_t\rangle\dif t+\langle G_t-M_t\rangle \langle G^2\rangle\dif t,
\end{equation}
 where we used that by the chiral symmetry of $W_t-Z$ we have $2\langle G_t E_i\rangle=\langle G_t\rangle$. Subtracting the expectation in \eqref{eq:charG}, we obtain
   \begin{equation*}
\dif \langle G_t-\E G_t\rangle=-\frac{1}{\sqrt{N}}\sum_{i\ne j}\langle G_t^2E_i\dif B_t E_j\rangle+\frac{1}{2}\langle G_t-\E G_t \rangle\dif t+\langle G_t-M_t\rangle \langle G_t^2\rangle\dif t-\E\langle G_t-M_t\rangle \langle G_t^2\rangle\dif t.
\end{equation*}
Then, writing $G_t^2=M_t'+(G_t^2-M_t')$ (here prime denotes the derivative $\partial_{\ii\eta}$), we get
\[
\dif \langle G_t-\E G_t\rangle=-\frac{1}{\sqrt{N}}\sum_{i\ne j}\langle G_t^2E_i\dif B_t E_j\rangle+\left(\frac{1}{2}+\langle M_t'\rangle\right)\langle G_t-\E G_t \rangle \dif t+\langle G_t-M_t\rangle \langle G_t^2-M_t'\rangle\dif t-\E\langle G_t-M_t\rangle \langle G_t^2-M_t'\rangle\dif t,
\]
so that 
\begin{equation}
\begin{split}
\label{eq:maybeus}
\langle G_t-\E G_t\rangle&=\exp\left(\int_0^t\left[\frac{1}{2}+\partial_{\eta_s}m^{z_s}(\ii\eta_s)\right]\,\dif s\right)\langle G_0-\E G_0\rangle \\
&\quad-\frac{1}{\sqrt{N}}\sum_{i\ne j}\int_0^t \exp\left(\int_0^s\left[\frac{1}{2}+\partial_{\eta_\tau}m^{z_\tau}(\ii\eta_\tau)\right]\,\dif \tau\right) \langle G_s^2E_i\dif B_s E_j\rangle\,\dif s+\OO\left(\frac{N^\xi}{(N\eta_t)^2}\right)\end{split}
\end{equation}
with very high probability, where we used that $|\langle G_t^2-M_t'\rangle|\lesssim N^\xi (N\eta_t^2)^{-1}$ for any arbitrary small $\xi>0$ by \cite[Eq. (3.26)]{CipErdXu2025}
%\cite[Theorem 3.3]{CipErdSch2022bis}
for $z_1=z_2$ and $\eta_1=\eta_2$, and to estimate the error term we used that for $s_1\le s_2\le t$ we have
\[
\exp\left(\int_{s_1}^{s_2}\left[\frac{1}{2}+\partial_{\eta_\tau}m^{z_\tau}(\ii\eta_\tau)\right]\,\dif \tau\right)\lesssim \frac{\eta_{s_1}}{\eta_{s_2}},
\]
as a consequence of $|\partial_{\eta_\tau}m^{z_\tau}(\ii\eta_\tau)|\le \rho_\tau/\eta_\tau$.

We now plug the expression \eqref{eq:maybeus} into Girko's formula \eqref{eq:girko1}; for simplicity of notation within this proof we choose $s=0$. Choose $\eta_0=\eta_0(\eta)=\eta_0(\eta,t)$ and $z_0=z_0(z)=z_0(z,t)$ such that $z_t=z$, $\eta_t=\eta$, with $\eta_t,z_t$ being the solutions of \eqref{eq:char1} with initial conditions $\eta_0$, $z_0$. Then plugging \eqref{eq:maybeus} into Girko's formula \eqref{eq:girko1} we find that
\begin{align}
I_{\eta_c}^T(f,t)&=-\frac{1}{4\pi\ii}\int_\mathbb{C}\Delta f(z)\int_{\eta_c}^T \exp\left(\int_0^t\left[\frac{1}{2}+\partial_{\eta_s}m^{z_s}(\ii\eta_s)\right]\,\dif s\right)\mathrm{Tr}\big[G_0^{z_0(z)}(\ii\eta_0(\eta))-\E G_0^{z_0(z)}(\ii\eta_0(\eta))\big] \notag\\
&\quad+\frac{1}{4\pi\ii \sqrt{N}}\sum_{i\ne j}\int_\mathbb{C}\Delta f(z)\int_{\eta_c}^T\int_0^t \exp\left(\int_0^s\left[\frac{1}{2}+\partial_{\eta_\tau}m^{z_\tau}(\ii\eta_\tau)\right]\,\dif \tau\right) \mathrm{Tr} \big[G_s^{z_s(z)}(\ii\eta_s(\eta))^2E_i\dif B_sE_j\big]\, \dif\eta\dif z^2 \notag\\
&\quad+\OO\left(\frac{N^\xi}{N\eta_c}\right),\label{eq:imprew}
\end{align}
with $z_s(z)=e^{(t-s)/2}z$. Performing the change of variables $\eta_0(\eta,t)\to \eta'$ and $\eta_s(\eta,t)\to \eta'$, in the first and second line of \eqref{eq:imprew} respectively, and using
\begin{equation*}
\partial_\eta\eta_0(\eta)=\partial_{\eta_t}\eta_0(\eta_t)=\exp\left(\int_0^t\left[\frac{1}{2}+\partial_{\eta_s}m^{z_s}(\ii\eta_s)\right]\,\dif s\right),
\end{equation*}
we obtain \eqref{eq:imprew2}.
\end{proof}

\subsection{Proof of Proposition~\ref{prop:multitCLTres}.}\ 
Proposition~\ref{prop:multitCLTres} consists of two main statements: i) the integrals $I_{\eta_c}^T(f_{v,a}^{(i)},t_i)$ obey a Wick theorem and we have an explicit formula for their covariance, ii) each one of those integrals can be decomposed as the sum of two terms (see \eqref{eq:covsplit}). The main input for both these results is the decomposition of $I_{\eta_c}^T(f,t)$ from Proposition~\ref{pro:decomp}.

The Wick theorem factorization in \eqref{eq:CLTtimecor} immediately follows from \eqref{eq:imprew2} together with \cite[Proposition 3.3]{CipErdSch2023}. We present its proof in Section~\ref{sec:wick}.

We now turn to the proof of ii). We recall that, by Proposition~\ref{pro:decomp}, we immediately obtain \eqref{eq:defint12} and that $I_1, I_2$ are (approximately) uncorrelated. Furthermore, the first relation in \eqref{eq:covsplit} immediately follows by \cite[Proposition 3.3]{CipErdSch2023} as it consists of the variance of the product of two resolvents evaluated at the same time; hence to conclude the proof of ii) we only need to compute the variance of the martingale term $I_2$.

In the following, to keep the presentation simple, we present the proof of Proposition~\ref{prop:multitCLTres} only in the macroscopic case $a=0$. The proof in the mesoscopic case is completely analogous after replacing any reference to \cite{CipErdSch2023} with the corresponding one in \cite{CipErdSch2022bis}. Additionally, for simplicity of notation we also assume that $s=0$; the general case can be achieved by a simple time--shift.

We divide this section into three subsections: in Section~\ref{sec:timecor} we compute  $\E I_{\eta_c}^T(f,t), I_{\eta_c}^T(g,0)$, in Section~\ref{sec:mart} we compute the variance of the martingale term in \eqref{eq:imprew2}, concluding the proof of ii), in Section~\ref{sec:wick} we prove a Wick theorem, concluding the proof of i).

Before presenting these proofs we comment on their relation with the proof of  \cite[Proposition 3.3]{CipErdSch2023}. In the reminder of this section the local law \cite[Theorem 5.2]{CipErdSch2022bis} is a fundamental input to compute the deterministic approximation of product of resolvents of the form $G_s^{z_1}G_s^{z_2}$, i.e. this local law is used for any fixed time $0\le s\le t$, but only for resolvents evaluated at the same time. On the other hand, the CLT for resolvents \cite[Proposition 3.3]{CipErdSch2023} is used only at time $s=0$ (see \eqref{eq:incondcorr} below) to compute the correlation of the initial condition.

\subsubsection{Correlation between $I_{\eta_c}^T(f,t), I_{\eta_c}^T(g,0)$.}
\label{sec:timecor}
From now on $c>0$ is a small fixed constant that may change from line to line. By \cite[Proposition 3.3]{CipErdSch2023}, for $\eta_i\ge n^{-1+\delta_1}$, it follows that\footnote{The term $|z_1-z_2|^{-4}$ in \cite[Eq. (3.13)]{CipErdSch2023} is just a consequence of the sub-optimal local law in \cite[Eq. (5.9)]{CipErdSch2023}, and it can readily be removed using the local law \cite[Eq. (3.26)]{CipErdXu2025} (see e.g. \cite[Proposition 3.4]{CipErdSch2022bis} for the regime $|z|\le 1-c$). Additionally, if  $z_0(z_1)> 1-N^{-C\epsilon}$, for a fixed $C>0$, then inspecting the proof of \cite[Proposition 3.3]{CipErdSch2023} it is easy to see that the term $1-|z_i|$ in \cite[Eq. (3.13)]{CipErdSch2023} can be replaced by $N^{C\epsilon}$. In the complementary regime we use \cite[Proposition 3.3]{CipErdSch2023} verbatim.}
%Let $z_0(z_1)$ be such that the solution $z_s$ of the second equation in \eqref{eq:char1} with initial condition $z_0(z_1)$ satisfies $z_t=z_1$. We now distinguish two cases. Recall $|z_1|\le 1-N^{-\epsilon}$, if $|z_0(z_1)|> 1- N^{-C\epsilon}$, for a large fixed $C>0$, then $\eta(\eta_c,t)\ge N^{-\epsilon}$ by the first equation in \eqref{eq:char1}.
\begin{equation}
\label{eq:incondcorr}
\begin{split}
&\mathbb{E}\mathrm{Tr}\big[G_0^{z_0(z_1)}(\ii \eta_1)-\E G_0^{z_0(z_1)}(\ii\eta_1)\big]\mathrm{Tr}\big[G_0^{z_2}(\ii\eta_2)-\E G_0^{z_2}(\ii\eta_2)\big] \\
&=\partial_{\eta_1}\partial_{\eta_2}\log\Big[1+e^{-t}(|z_0(z_1)z_2|^2u_1^2u_2^2-m_1^2m_2^2)-2e^{-t/2}u_1u_2\Re[z_0(z_1)\overline{z_2}]\Big]-\kappa_4\partial_{\eta_1}\partial_{\eta_2} m_1^2m_2^2+\OO\left(\frac{N^{-c}}{\eta_1\eta_2}\right),
\end{split}
\end{equation}
where we used the notations $m_1:=m^{z_0(z_1)}(\ii\eta_1)$, $u_1:=u^{z_0(z_1)}(\ii\eta_1)$, and similar notations for $m_2,u_2$.

Adding back the small $\eta_1$ and $\eta_2$ regimes (see the proof of \cite[Lemma 4.7]{CipErdSch2023}), and performing the $(\eta_1,\eta_2)$-integrals in Girko's formula we conclude that (note that the expectation of the martingale term in \eqref{eq:imprew} vanishes)
 \begin{equation}
 \label{eq:usefform}
 \begin{split}
 \E I_{\eta_c}^T(f,t) I_{\eta_c}^T(g,0)&=\frac{1}{8\pi^2}\int\int \Delta f(z_1) \Delta g(z_2)\, \dif z_1\dif z_2\\
 &\quad\times \bigg[-\frac{1}{2}\log \bigg[1+|z_2z_0(z_1)|^2(u^{z_0(z_1)}(\ii\eta_0(\eta_1)))^2(u^{z_2}(\ii\eta_2))^2 \\
 &\qquad\quad-(m^{z_0(z_1)}(\ii\eta_0(\eta_1)))^2(m^{z_2}(\ii\eta_2))^2-2u^{z_0(z_1)}(\ii\eta_0(\eta_1))u^{z_2}(\ii\eta_2)\Re[z_0(z_1)\overline{z_2}] \bigg]  \\
 &\qquad\quad +\frac{\kappa_4}{2} m^{z_0(z_1)}(\ii\eta_0(\eta_1))^2m^{z_2}(\ii\eta_2)^2\bigg]\bigg|_{\eta_1=0, \atop \eta_2=0}+\OO(N^{-c}).
 \end{split}
 \end{equation}

Then, using \eqref{eq:usinf},  for the $\log$--term in \eqref{eq:usefform}, we conclude
 \begin{equation}
 \label{eq:usident}
 \begin{split}
& \log \big[1+|z_2z_0(z_1)|^2(u^{z_0(z_1)}(\ii\eta_0(\eta_1)))^2(u^{z_2}(\ii\eta_2))^2-(m^{z_0(z_1)}(\ii\eta_0(\eta_1)))^2(m^{z_2}(\ii\eta_2))^2-2u^{z_0(z_1)}(\ii\eta_0(\eta_1))u^{z_2}(\ii\eta_2)\Re[z_0(z_1)\overline{z_2}] \big] \\
&\quad  = \log \big[1+e^{-t}(|z_2z_1|^2u^{z_1}(i\eta_1)^2(u^{z_2}(\ii\eta_2))^2- m^{z_1}(i\eta_1)^2
 (m^{z_2}(\ii\eta_2))^2)-2e^{-t/2}u^{z_1}(i\eta_1)u^{z_2}(\ii\eta_2)\Re[z_1\overline{z_2}] \big].
 \end{split}
\end{equation}
Evaluating \eqref{eq:usident} at $\eta_1=\eta_2=0$ we obtain
\begin{equation}
\label{eq:finanswer}
\mathrm{rhs.} \eqref{eq:usident}
=\Theta(z_1,z_2,t):=-\frac{1}{2}
\begin{cases}
\log\big[|(1-e^{-t})(1-|z_1|^2)+|z_1-e^{-t/2}z_2|^2 \big] & |z_1|,|z_2|\le 1 \\
\log|z_l-e^{-t/2}z_m|^2-\log|z_l|^2 &|z_m|\le 1,\,|z_l|>1 \\
\log|e^{-t/2}-z_1\overline{z_2}|^2-\log|z_1z_2|^2 & |z_1|,|z_2|>1.
\end{cases}
\end{equation}
Furthermore, for $\eta_1=\eta_2=0$ we also compute the term in the last line of  \eqref{eq:usefform}:
\begin{equation}
\label{eq:finanswerkappa4}
m^{z_0(z_1)}(\ii\eta_0(\eta_1))^2=e^{-t}m^{z_1}(\ii\eta_1)=e^{-t}(1-|z_1|^2), \qquad m^{z_2}(\ii\eta_2)^2=(1-|z_2|^2).
\end{equation}

Plugging \eqref{eq:finanswer}--\eqref{eq:finanswerkappa4} into \eqref{eq:usefform} we thus obtain
\begin{equation}
\label{eq:concleq}
 \E I_{\eta_c}^T(f,t) I_{\eta_c}^T(g,0)\approx \frac{1}{8\pi^2}\int\int \Delta f(z_1) \Delta g(z_2)\, \dif z_1\dif z_2 \Big[\Theta(z_1,z_2,t)+\frac{\kappa_4}{2} e^{-t} (1-|z_1|^2)(1-|z_2|^2)\Big].
\end{equation}

We now conclude the proof of \eqref{eq:CLTtimecor} performing integration by parts in $z_1$ and $\overline{z_2}$ in \eqref{eq:concleq}. Here we omit the details of the tedious explicit computations to compute the second line of \eqref{eq:CLTtimecor} given \eqref{eq:finanswer}, since they are analogous to \cite{CipErdSch2023}; we only point out the very minor differences. The computations of the term with the fourth cumulant $\kappa_4$ are exactly the same as in \cite[Lemma 4.10]{CipErdSch2023}, in fact the only difference compared to \cite{CipErdSch2023} is the scaling factor $e^{-t}$ in \eqref{eq:concleq}. For the term with $\Theta(z_1,z_2,t)$, using that
\[
\partial_{z_1}\partial_{\overline{z_2}}\log\big[|z_l-e^{-t/2}z_m|^2]=0,
\]
when $|z_m|\le 1$, $|z_l|>1$ and proceeding exactly as in \cite[Appendix A]{CipErdSch2023}, we conclude (cf. \cite[Lemma 4.9]{CipErdSch2023})
\begin{equation}
\label{eq:almostcomp}
\frac{1}{8\pi^2}\int_{\C^2}\Delta f(z_1) \Delta g(z_2)\Theta(z_1,z_2,t)\, \dif z_1\dif z_2=-\int_{\D^2}\partial_{\overline{z_1}}f\partial_{z_2} g\partial_{z_1}\partial_{\overline{z_2}}K(z_1,z_2,t)\, \dif z_1\dif z_2+\lim_{\epsilon\to 0}\mathcal{I}_\epsilon,
\end{equation}
with $K(z_1,z_2,t)$ from \eqref{eqn:kernel}, and
\begin{equation*}
\begin{split}
\mathcal{I}_\epsilon:&=\frac{1}{2\pi^2} \int_{\abs{z_1}\ge 1} \dif z_1 \int_{\substack{\abs{e^{-t/2}-z_1\overline{z}_2}\ge \epsilon,                                                                              \\ \abs{z_2}\ge 1}} \dif z_2 \, \partial_{z_1} f(z_1) \partial_{\overline{z_2}} g(z_2)  \frac{1}{(e^{-t/2}-\overline{z}_1z_2)^2} \\
             & \quad+ \frac{1}{2\pi^2} \int_{\abs{z_1}\ge 1} \dif z_1 \int_{\substack{\abs{e^{-t/2}-z_1\overline{z}_2}\ge \epsilon,                                                                                                   \\ \abs{z_2}\ge 1}} \dif z_2\,  \partial_{\overline{z_1}} f(z_1) \partial_{z_2} \overline{g(z_2)} \frac{1}{(e^{-t/2}-z_1\overline{z}_2)^2}.
\end{split}
\end{equation*}
To compute the second term in the rhs. of \eqref{eq:almostcomp} we proceed similarly to \cite[Eqs. (4.33)--(4.35)]{CipErdSch2023}. Using the change of variables $\overline{z_1}\to 1/\overline{z_1}$ and $z_2\to1/z_2$ the integrals in $\mathcal{I}_\epsilon$ are equal to the integral of $1/(e^{-t/2}\overline{z_1}z_2-1)^2$ over the domain $|z_i|\le 1$ and $|e^{-t/2}\overline{z_1}z_2-1|\ge \epsilon$. By a density argument it is enough to compute the second term in the rhs. of \eqref{eq:almostcomp} for polynomials
\[
f(z_1)=\sum_{k,l\ge 0} a_{kl} z_1^k\overline{z_1}^l, \qquad g(z_2)=\sum_{k,l\ge 0} b_{kl}z_2^k\overline{z_2}^l.
\]
Using that
\[
\lim_{\epsilon\to 0}\int_{|z_1|\le 1}\int_{|e^{-t/2}\overline{z_1}z_2-1|\ge\epsilon, \atop |z_2|\le 1} z_1^\alpha\overline{z_1}^\beta z_2^{\alpha'}\overline{z_2}^{\beta'}\,\dif z_1\dif z_2=\frac{\pi^2}{(\alpha+1)(\alpha'+1)}\delta_{\alpha\beta}\delta_{\alpha'\beta'}
\]
and proceeding exactly as in \cite[Eqs. (4.34)]{CipErdSch2023} we conclude
\begin{equation}
\label{eq:poissonkernel}
\lim_{\epsilon\to 0}\mathcal{I}_\epsilon=\frac{1}{2}\sum_{k,k',l,l'\ge0 \atop m\in\mathbf{Z}}|m|e^{-\frac{t}{2}|m|}a_{k,l}\overline{b_{k',l'}}\delta_{k,l+m}\delta_{k',l'+m}=\frac{1}{2}\langle f,P_t g\rangle_{H^{1/2}(\partial\D)},
\end{equation}
with $P_t$ being the Poisson kernel defined as $P_tf(z):=\sum_\Z e^{-|m|t/2}\widehat{f}_mz^m$. Combining \eqref{eq:concleq}--\eqref{eq:almostcomp} and \eqref{eq:poissonkernel} we conclude the proof of \eqref{eq:correxplis}, and so of part i).

\subsubsection{Second moment of the martingale term.}
\label{sec:mart}

We now compute the second moment of the martingale term in the second line of \eqref{eq:imprew2}. For fix $(i,j)\in\{(1,2),(2,1)\}$, we compute
\[
\E\int_0^t \mathrm{Tr}\big[ [G_s^{z_{1,s}}(\ii\eta_1)]^2E_i\dif B_s E_j\big]\int_0^t \mathrm{Tr}\big[ [G_s^{z_{2,s}}(\ii\eta_2)]^2E_i\dif B_s E_j\big] 
=\E\int_0^t \mathrm{Tr}\big[ G_s^{z_{1,s}}(\ii\eta_1)^2E_iG_s^{z_{2,s}}(\ii\eta_2)^2 E_j\big] \,\dif s.
\]
Furthermore, we note that
\[
\mathrm{Tr}\big[ G_s^{z_{1,s}}(\ii\eta_1)^2E_iG_s^{z_{2,s}}(\ii\eta_2)^2 E_j\big] =-\partial_{\eta_1}\partial_{\eta_2}\mathrm{Tr}\big[ G_s^{z_{1,s}}(\ii\eta_1)E_iG_s^{z_{2,s}}(\ii\eta_2) E_j\big].
\]
Plugging this into the computation of the second moment of the martingale term $I_2$ in \eqref{eq:imprew2}, we obtain
\begin{equation}
\label{eq:imprew2a}
\frac{1}{16 \pi^2 n}\iint_\mathbb{C}\Delta f(z_1)\Delta f(z_2) \int_0^t \iint_{\eta_s(\eta_c,t)}^{\eta_s(T,t)}\partial_{\eta_1}\partial_{\eta_2}\mathrm{Tr}\big[ G_s^{z_{1,s}}(\ii\eta_1)E_iG_s^{z_{2,s}}(\ii\eta_2) E_j\big]\, \dif\eta_1\dif\eta_2\dif z_1^2\dif z_2^2.
\end{equation}
We can thus perform the $(\eta_1,\eta_2)$--integrations in  \eqref{eq:imprew2a} and obtain
\begin{equation}
\label{eq:imprew2aa}
\frac{1}{16 \pi^2 n}\iint_\mathbb{C}\Delta f(z_1)\Delta f(z_2) \int_0^t \mathrm{Tr}\big[ G_s^{z_{1,s}}(\ii\eta_{1,s}(\eta_c))E_iG_s^{z_{2,s}}(\ii\eta_{2,s}(\eta_c)) E_j\big]\, \dif z_1^2\dif z_2^2.
\end{equation}
Here we omitted a negligible error smaller than $n^{-10}$ coming from the upper extreme of integration in the $(\eta_1,\eta_2)$--integrals. 

We now compute the leading deterministic term in \eqref{eq:imprew2aa}. For this purpose we recall the multi--resolvent local law from
%\cite[Theorem 3.3]{CipErdSch2022bis}.
\cite[Eq. (3.26)]{CipErdXu2025}.
Define $G_i:=(W-Z_i-\ii\eta_i)^{-1}$, with $Z_i$ as in \eqref{eq:herm}, and denote by $M_i$ its deterministic approximation. Then, for a deterministic matrix $A\in\C^{2N\times 2N}$, the deterministic approximation of $G_1A G_2$ is given by 
\begin{equation*}
M_A^{z_1,z_2}:=\big(1-M_1\mathcal{S}[\cdot]M_2\big)^{-1}[M_1AM_2].
\end{equation*}
Here $\mathcal{S}[\cdot]$ denotes the \emph{covariance operator}, which is defined by (recall the definition of $E_1, E_2$ from \eqref{eq:e1e2})
\[
\mathcal{S}[\cdot]:=2\langle \cdot E_1\rangle E_2+2\langle \cdot E_2\rangle E_1.
\]
By the local law from
%\cite[Theorem 3.3]{CipErdSch2022bis}, 
\cite[Eq. (3.26)]{CipErdXu2025}, we then have
\begin{equation}
\label{eq:G1G2llaw}
\left|\left\langle \big[G_s^{z_{1,s}}(\ii\eta_{1,s}(\eta_c))E_iG_s^{z_{2,s}}(\ii\eta_{2,s}(\eta_c))-M_{E_i}^{z_{1,s},z_{2,s}}(\ii \eta_{1,s}(\eta_c),\ii\eta_{2,s}(\eta_c))\big] E_j\right\rangle\right|\lesssim \frac{N^\xi}{N\eta_{*,s}^2},
\end{equation}
with very high probability for $\xi>0$ arbitrary small and $\eta_{*,s}:=\eta_{1,s}(\eta_c)\wedge \eta_{2,s}(\eta_c)$. We are thus left to compute
\begin{equation*}
\int_0^t \langle M_{E_i}^{z_{1,s},z_{2,s}}(\ii \eta_{1,s}(\eta_c),\ii\eta_{2,s}(\eta_c))E_j\rangle\, \dif s,
\end{equation*}
and then plug the answer into \eqref{eq:imprew2aa}. In fact the error term in \eqref{eq:G1G2llaw}, after the time integration, can by estimated by $N^{-c}$ for some small fixed $c>0$ and so shown to be negligible.

Using the notations $m_{i,s}:=m^{z_{i,s}}(\ii\eta_{i,s}(\eta_c))$, $u_{i,s}:=u^{z_{i,s}}(\ii\eta_{i,s}(\eta_c))$, we then compute
\begin{equation}
\begin{split}
\label{eq:Mformula}
\sum_{(i,j)\in \{(1,2),(2,1)\}}&\langle M_{E_i}^{z_{1,s},z_{2,s}}(\ii \eta_{1,s}(\eta_c),\ii\eta_{2,s}(\eta_c))E_j\rangle\\
&=\frac{u_{1,s}u_{2,s}\Re[z_{1,s}\overline{z_{2,s}}]-|z_{1,s}z_{2,s}|^2u_{1,s}^2u_{2,s}^2+m_{1,s}^2m_{2,s}^2}{1+|z_{1,s}z_{2,s}|^2u_{1,s}^2u_{2,s}^2-m_{1,s}^2m_{2,s}^2-2u_{1,s}u_{2,s}\Re[z_{1,s}\overline{z_{2,s}}]} \\
%&=-1+\frac{1-\Re[z_{1,s}\overline{z_{2,s}}]u_{1,s}u_{2,s}}{1+|z_{1,s}z_{2,s}|^2u_{1,s}^2u_{2,s}^2-m_{1,s}^2m_{2,s}^2-2u_{1,s}u_{2,s}\Re[z_{1,s}\overline{z_{2,s}}]} \\
%&= -1-\frac{1}{2}\partial_s\log \big[e^{2(t-s)}+|z_1z_2|^2u_1^2u_2^2-m_1^2m_2^2-2e^{(t-s)}u_1u_2\Re[z_{1,s}\overline{z_{2,s}}] \big] \\
&=-\frac{1}{2}\partial_s\log \big[1+e^{-2(t-s)}|z_1z_2|^2u_1^2u_2^2-e^{-2(t-s)}m_1^2m_2^2-2e^{-(t-s)}u_1u_2\Re[z_{1,s}\overline{z_{2,s}}] \big],
\end{split}
\end{equation}
where
%in the penultimate equality
we used that $z_{i,s}=e^{(t-s)/2}z_i$, $u_{i,s}=e^{-(t-s)}u_i$, and $m_{i,s}=e^{-(t-s)/2}m_i$. Plugging \eqref{eq:Mformula} evaluated at $\eta_1=\eta_2=0$ into \eqref{eq:imprew2aa}, we thus conclude (neglecting negligible errors of size $N^{-c}$)
\begin{equation}
\label{eq:finmart}
\begin{split}
&\E|I_2|^2=\E\left|\frac{1}{4\pi \ii\sqrt{N}}\int_\mathbb{C}\Delta f(z) \int_0^t \int_{\eta_s(\eta_c,t)}^{\eta_s(T,t)} \mathrm{Tr} \big[G_s^{z_s(z)}(\ii\eta')^2E_i\dif B_sE_j\big]\Bigg]\, \dif\eta'\dif z^2\right|^2 \\
&\quad=-\frac{1}{16\pi^2}\int_\C\int_\C \Delta f(z_1)\Delta f(z_2) \times \int_0^t \partial s \begin{cases}
\log\big[|(1-e^{-2(t-s)})(1-|z_1|^2)+|z_1-e^{-(t-s)}z_2|^2 \big] & |z_1|,|z_2|\le 1 \\
\log|z_l-e^{-(t-s)}z_m|^2-\log|z_l|^2 &|z_m|\le 1,\,|z_l|>1 \\
\log|e^{-(t-s)}-z_1\overline{z_2}|^2-\log|z_1z_2|^2 & |z_1|,|z_2|>1
\end{cases} \\
&\quad=-\frac{1}{16\pi^2}\int_\C\int_\C \Delta f(z_1)\Delta f(z_2) \times \begin{cases}
\log|z_1-z_2|^2-\log\big[|(1-e^{-2t)})(1-|z_1|^2)+|z_1-e^{-t}z_2|^2 \big] & |z_1|,|z_2|\le 1 \\
\log|z_1-z_2|^2-\log|z_l-e^{-t}z_m|^2 &|z_m|\le 1,\,|z_l|>1 \\
\log|1-z_1\overline{z_2}|^2-\log|e^{-t}-z_1\overline{z_2}|^2 & |z_1|,|z_2|>1 \\
\end{cases}
\end{split}
\end{equation}

Finally, performing integration by parts in \eqref{eq:finmart} as explained at the end of Section~\ref{sec:timecor}, but for $t=0$ (e.g. we are in the same setting of \cite{CipErdSch2023}), we conclude the second equality \eqref{eq:covsplit}, and so the proof of part ii) as well.

\subsubsection{Wick Theorem}
\label{sec:wick}
Consider a product of the form (recall that for notational simplicity we only consider the case $a=0$)
\begin{equation}
\label{eq:prodwick}
\E\prod_{i\in [p]} I_{\eta_c}^T(f^{(i)},t_i).
\end{equation}
By Proposition~\ref{pro:decomp}, each of these integrals can be decomposed as
\[
I_{\eta_c}^T(f^{(i)},t_i)=I_1(f^{(i)},0,t_i)+I_2(f^{(i)},0,t_i)+\mathcal{O}(N^{-c}).
\]
By the martingale representation theorem (see e.g. \cite[Theorem 18.12]{Kal02}), we can write (recall the definition of $I_2$ from below \eqref{eq:imprew2})
\begin{equation}
\label{eq:martrepthmnew}
I_2(f^{(i)},0,t_i)=\int_0^{t_i} \big(\widehat{\mathcal{C}_s}^{1/2} \dif \mathfrak{b}_s\big)_i, \qquad\quad i\in [p],
\end{equation}
where $\mathfrak{b}_s\in\R^p$ is a standard real $p$-dimensional Brownian motion, independent of the $I_1(f^{(i)},0,t_i)$'s, and $\widehat{\mathcal{C}}_s$ is the $p\times p$ covariance matrix of $\{I_2(f^{(i)},0,t_i), i\in [p]\}$, i.e. $\widehat{\mathcal{C}}_{ij}$ is defined analogously to the first line of \eqref{eq:finmart}. Note that $\widehat{\mathcal{C}}_s$ is random. We now show that we can approximate $I_2(f^{(i)},0,t_i)$ with a Gaussian process by replacing $\widehat{\mathcal{C}}_s$ with its deterministic approximation. Define
\[
(\mathcal{C}_s)_{ij}=\frac{V_s\big(f^{(i)}+f^{(j)}\big)-V_s\big(f^{(i)}-f^{(j)}\big)}{4}, \qquad\qquad\quad V_s(f):=\mathrm{integrand \,\, in\,\, the \,\, second\,\, line\,\, of\,\, rhs. \, \eqref{eq:finmart},}
\]
and the $p$-dimensional Gaussian process
\begin{equation}
\label{eq:gaussprocess}
\big(\textbf{Z}_s\big)_i:=\int_0^{t_i} \big(\mathcal{C}_s^{1/2}\, \dif \mathfrak{b}_s\big)_i, \qquad\quad i\in [p].
\end{equation}
By analogous computations to \eqref{eq:finmart} it follows that $\widehat{\mathcal{C}}_{ij}=\mathcal{C}_{ij}+\mathcal{O}(N^{-c})$, with very high probability. To estimate the difference between $(\textbf{Z}_s)_i$ and $I_2(f^{(i)},0,t_i)$ we thus compute the quadratic covariation
\begin{equation}
\begin{split}
\label{eq:approxnew}
\left|\langle (\textbf{Z}_s)_i-I_2(f^{(i)},0,t_i), (\textbf{Z}_s)_j-I_2(f^{(j)},0,t_j)\rangle_t\right|&=\left|\int_0^{t_i\wedge t_j} \big(\big[\widehat{\mathcal{C}_s}^{1/2}-\mathcal{C}_s^{1/2}\big]^2\big)_{ij}\, \dif s\right| \\
&\le \int_0^{t_i\wedge t_j}  \big(\big[\widehat{\mathcal{C}_s}^{1/2}-\mathcal{C}_s^{1/2}\big]^2\big)_{ii}^{1/2}\big(\big[\widehat{\mathcal{C}_s}^{1/2}-\mathcal{C}_s^{1/2}\big]^2\big)_{jj}^{1/2}\, \dif s \\
&\le \int_0^{t_i\wedge t_j}  \mathrm{Tr}\big|\widehat{\mathcal{C}_s}-\mathcal{C}_s\big|\, \dif s \\
&\le \int_0^{t_i\wedge t_j}  \sum_{i,j=1}^p \big(\widehat{\mathcal{C}_s}-\mathcal{C}_s\big)_{ij}\, \dif s \lesssim N^{-c},
\end{split}
\end{equation}
for a small $c>0$, where in the second inequality we used that $\mathrm{Tr}\big[(\widehat{\mathcal{C}_s}^{1/2}-\mathcal{C}_s^{1/2})^2\big]\le \mathrm{Tr}\big|\widehat{\mathcal{C}_s}-\mathcal{C}_s\big|$ (see e.g. \cite[Section 3]{Pos70}).
%where the last inequality follows by \eqref{eq:finmart}.
Then, from the BDG inequality it follows that $I_2(f^{(i)},0,t_i)=(\mathbf{Z}_s)_i+\mathcal{O}(N^{-c})$ with very high probability for all $i\in [p]$. In particular,
%\begin{equation}
%\label{eq:martrepthm}
%I_2(f^{(i)},0,t_i)=\int_0^t \mathcal{C}_s^{1/2} \dif \mathfrak{b}_s+\mathcal{O}(N^{-c}),
%\end{equation}
%for some small fixed $c>0$, where $\mathfrak{b}_s\in \R^p$ is a standard Brownian motion, independent of the $I_1(f^{(i)},0,t_i)$'s, and $\mathcal{C}_s$ is the (leading order approximation of) the $p\times p$ covariance matrix of the $\{I_2(f^{(i)},0,t_i), i\in [p]\}$. By the computation in \eqref{eq:finmart} it follows that
%\[
%(\mathcal{C}_s)_{ij}=\frac{V_s\big(f^{(i)}+f^{(j)}\big)-V_s\big(f^{(i)}-f^{(j)}\big)}{4}, \qquad\qquad\quad V_s(f):=\mathrm{integrand \,\, in\,\, the \,\, second\,\, line\,\, of\,\, rhs. \, \eqref{eq:finmart}.}
%\]
this shows that, modulo a negligible error $N^{-c}$, the $I_2(f^{(i)},0,t_i)$ are jointly a multivariate Gaussian random variable.
% with covariance $\int_0^t (\mathcal{C}_s)_{ij}\dif s$.

Now a Wick theorem as in \eqref{eq:CLTtimecor}  for the product \eqref{eq:prodwick} easily follows. In fact, \eqref{eq:gaussprocess}--\eqref{eq:approxnew} readily imply a Wick theorem for $I_2(f^{(i)},0,t_i)$; we are thus left only with products of the $I_1(f^{(i)},0,t_i)$'s . Then, by \cite[Proposition 3.3]{CipErdSch2023}, we also conclude that the $I_1(f^{(i)},0,t_i)$'s satisfy a Wick theorem as they consists of product of resolvents evaluated at the same time. This concludes the proof of of Proposition~\ref{prop:multitCLTres}.

\subsection{Proof of Corollary \ref{cor:overlaps}.}\  \label{subsec:revers}
From Theorem \ref{theo:mainresmeso}, for any compactly supported $F,G$ we expect
\begin{equation}\label{eqn:covAsymptotics}
\E\big[L_N(F_{v,a},s_a) L_N(G_{v,a},t_a)\big]=\Gamma_{v}(F,G,t-s)\cdot (1+{\rm o}(1))
\end{equation}
as $N\to\infty$.  Strictly speaking this convergence of the covariance does not follow from Theorem \ref{theo:mainresmeso}, which states convergence in distribution.
However,  an inspection of the proof (which proceeds by asymptotics of the moments) in the special equilibrium case immediately  shows that (\ref{eqn:covAsymptotics}) holds,  notably because (\ref{eq:smallbneed}) is correct at equilibrium, the matrix entries having a smooth density and Wegner estimates applying easily.

Second, we will need a slight generalization of (\ref{eqn:covAsymptotics}) to $F,G$ non-necessarily with compact support,  as it will be applied to $F=\tfrac{1}{2\pi}f_{v,a}*\log, G=\tfrac{1}{2\pi}g_{v,a}*\log$, with $f,g$ with compact support.  Again,  this generalization poses no problem:  The starting point of the proof,  Equation (\ref{eq:girko1}),  can be directly replaced with
\begin{equation*}
\sum_i F(\sigma_i)=\frac{\ii}{4\pi}\int_\C f_{v,a}(z)\int_0^\infty  \mathrm{Tr} \big[G^z(\ii\eta)\big]\, \dif\eta\dif z
\end{equation*}
and the remainder of the proof is then unchanged.

In sum, our starting point will be that for any $a>0$, $f,g\in H_0^2(\C)$, there exists a $c>0$ such that uniformly in $s,t$ in compact sets we have (remember $c_v=1-|v|^2$)
\begin{equation}\label{eqn:covarStart}
\E\big[L_N(F,s_a) L_N(G,t_a)\big]=-\frac{1}{16\pi^2}\int_{\C^2}f_{v,a}(z)g_{v,a}(w)\log\big(c_v|t_a-s_a|+ |z-w|^2\big)\, \dif z\dif w\cdot (1+{\rm O}(N^{-c})),
\end{equation}
where $F(z)=\tfrac{1}{2\pi}\int f_{v,a}(z-w)\log|w|\rd w$ and $G(z)$ is defined similarly.

On the other hand,  from  \cite[Proposition A.1]{BouDub2020} we have $\langle M_k,M_k\rangle=0$ so that $\langle{\rm Re} M_k,{\rm Im} M_k\rangle=0$ and  $\langle{\rm Re} M_k\rangle=\langle{\rm Im}M_k\rangle$.  The It{\^o} formula therefore gives
\begin{equation}\label{eqn:Ito}
\rd F(\sigma_k(t))=\nabla F\cdot \rd M_k-\frac{1}{2}\nabla F\cdot  \sigma_k\rd t+\frac{1}{4}\Delta F\,\rd\langle M_k\rangle=\nabla F\cdot \rd M_k-\frac{1}{2}\nabla F\cdot  \sigma_k\rd t+\frac{f}{4}\,\frac{\mathscr{O}_{kk}}{N}\rd t.
\end{equation}
Let $t'=t_1+s_2-s_1$. The difference $\E\big[L_N(F,s_2) L_N(G,t')\big]-\E\big[L_N(F,s_2) L_N(G,t_1)\big]$ can be written in two manners from (\ref{eqn:covarStart}) and (\ref{eqn:Ito}),  which gives 
\begin{align}\begin{split}
&\phantom{{}={}}\int_{t_1}^{t'}\E\left[L_N(F,s_2) \sum_{k}(-\frac{1}{2}\nabla G(\sigma_k(t))\cdot  \sigma_k(t)+\frac{g(\sigma_k(t))}{4}\,\frac{\mathscr{O}_{kk}(t)}{N})\right]\rd t\\
&=
-\int_{t_1}^{t'}
\frac{1}{16\pi^2}\int_{\C^2}f_{v,a}(z)g_{v,a}(w)\frac{c_v}{c_v|t-s_2|+ |z-w|^2}\, \dif z\dif w \rd t+\OO(N^{-4a-c+\e})\label{eqn:inter11}
\end{split}\end{align}
where we have used that $\int_{\C^2}f_{v,a}(z)g_{v,a}(w)\log\big(c_v|t_a-s_a|+ |z-w|^2\big)\, \dif z\dif w=\OO(N^{-4a+\e})$ for any $\e>0$, and that the expectation of the term containing $\rd M_k$ vanishes. 

We now wish to remove the contribution from the gradient term in (\ref{eqn:inter11}). 
For this we rely on the following rigidity estimates:
With  very high probability we have
\begin{equation}\label{eqn:rigid}
|L_N(F,s_2)|\leq N^{-2a+\e},\ \ \ \ \ \ \ \ \ \ 
\left|\sum_{k}\nabla G(\sigma_k)\cdot  \sigma_k-\E\sum_{k}\nabla G(\sigma_k)\cdot  \sigma_k\right|\leq N^{-a+\e}.
\end{equation}
Indeed, 
consider a partition of the the constant on $\mathbb{C}$: $1=\sum_{n\geq 1} \chi_n$, where $\chi_n$ is supported on $\{|z|\leq 2^i\}$ and  $\|\chi_n^{(k)}\|_{\infty}\leq C_k$ for all $n$.
We now define $F_n(z)=F(z)\chi_n(N^a(z-v))$.
On the support of $f$ we have  $F=\OO(N^{-2a+\e})$, and  more generally $\|F_n^{(k)}\|_{\infty}\leq {\widetilde C}_k N^{-2a+\e}N^{ka}$.  Together with \cite[Theorem 1.2]{BouYauYin2014},  this implies
$\mathbb{P}(|L_N(F_n,s_2)|\geq N^{-2a+\e})=\OO(N^{-D})$ for any fixed $\e,D>0$,  and $n=\OO(\log N)$.    With a union bound we conclude $\mathbb{P}(|L_N(F,s_2)|\geq N^{-2a+\e})=\OO(N^{-D})$, and finaly by taking large moments and H{\"o}lder's inequality this implies the bound on the left-hand side of (\ref{eqn:rigid}). The right-hand side of  (\ref{eqn:rigid}) follows from a similar dyadic decomposition,  as  we have 
$\nabla G=\OO(N^{-a+\e})$ on the support of $g$,  which is the most singular part of the test function.

In sum, by rigidity of the spectrum we have proved
\[
\int_{t_1}^{t'}\E\left[L_N(F,s_2) \sum_{k}\nabla G(\sigma_k(t))\cdot  \sigma_k(t)\right]\rd t=\OO(N^{-5a+\e}),
\]
which together with \ref{eqn:inter11} gives
\[
\int_{t_1}^{t'}\E\left[L_N(F,s_2) \sum_{k}\frac{g(\sigma_k(t))}{4}\,\frac{\mathscr{O}_{kk}(t)}{N}\right]\rd t\\
=
-\int_{t_1}^{t'}
\frac{1}{16\pi^2}\int_{\C^2}f_{v,a}(z)g_{v,a}(w)\frac{c_v}{c_v|t-s_2|+ |z-w|^2}\, \dif z\dif w \rd t+\OO(N^{-4a-c+\e}).
\]
By reversibility at equilibrium, we have
\[
\E\left[L_N(F,s_2) \sum_{k}\frac{g(\sigma_k(t))}{4}\,\frac{\mathscr{O}_{kk}(t)}{N}\right]=\E\left[L_N(F,t_1) \sum_{k}\frac{g(\sigma_k(s_2+t_1-t))}{4}\,\frac{\mathscr{O}_{kk}(s_2+t_1-t)}{N}\right],
\]
so that we have proved
\[
\int_{s_1}^{s_2}\E\left[L_N(F,t_1) \sum_{k}\frac{g(\sigma_k(s))}{4}\,\frac{\mathscr{O}_{kk}(s)}{N}\right]\rd s\\
=
-\int_{s_1}^{s_2}
\frac{1}{16\pi^2}\int_{\C^2}f_{v,a}(z)g_{v,a}(w)\frac{c_v}{c_v|t_1-s|+ |z-w|^2}\, \dif z\dif w \rd s+\OO(N^{-4a-c+\e}).
\]
By subtracting the analogous formula with $t_1$ replaced with $t_2$ we have
\begin{multline*}
\int_{s_1}^{s_2}\E\left[(L_N(F,t_2)-L_N(F,t_1) \sum_{k}\frac{g(\sigma_k(s))}{4}\,\frac{\mathscr{O}_{kk}(s)}{N}\right]\rd s\\
=
-\int_{s_1}^{s_2}
\frac{1}{16\pi^2}\int_{\C^2}f_{v,a}(z)g_{v,a}(w)\int_{t_1}^{t_2}\partial_t\frac{c_v}{c_v|t-s|+ |z-w|^2}\, \dif z\dif w \rd s\rd t+\OO(N^{-4a-c+\e}).
\end{multline*}
With (\ref{eqn:Ito}) to evaluate $L_N(F, t_2)-L_N(F, t_1)$ we obtain (\ref{eqn:DynOve}).
\qed

\subsection{Proof of Proposition \ref{prop:Markov}.}\ 
Note first that from Remark \ref{rem:stereo} we have (remember $s<t$ here)
$
\E[L(g,s)L(f,t)]=\E[L(g,s)L(Q_{t-s}f,s)]
$
for any $g\in\mathscr{C}^{\infty}$,
so that
\[
\E[L(f,t)\mid \Sigma_s]=L(Q_{t-s}f,s)
\]
almost surely.  In a proof by contradiction, 
assume that $\mathbb{P}\left(\E\left[L(f,t)\mid \Sigma_{s-}\right]=\E\left[L(f,t)\mid \Sigma_{s}\right] \right)=1$ for any $f\in\mathscr{C}^\infty$, and let $u<s$.  Together with $\Sigma_{u}\subset\Sigma_{s-}$ and the above equation thus gives
\begin{multline*}
L(Q_{t-u}f,u)=\E[L(f,t)\mid \Sigma_u]=\E[\E[L(f,t)\mid\Sigma_{s-}]\mid \Sigma_u]=\E[\E[L(f,t)\mid\Sigma_{s}]\mid \Sigma_u]\\
=\E[L(Q_{t-s}f,s)\mid \Sigma_u]=L(Q_{s-u}Q_{t-s}f,u)\  {\rm a.s.}
\end{multline*}
so that $L((Q_{t-u}-Q_{s-u}Q_{t-s})f,u)=0$ almost surely.  If $L(h,u)=0$ a.s. then its variance vanishes, so $\int|\nabla h|^2=0$ and $h$ is constant.  Hence $(Q_{t-u}-Q_{s-u}Q_{t-s})f=0$ ($Q_r$ preserves the mean).  As this holds for any $f\in\mathscr{C}^\infty$,   we have
\begin{equation}\label{eqn:convol}
q_{t-u}(z)=\int q_{s-u}(w)q_{t-s}(z-w)\rd w.
\end{equation}
A calculation gives, for any $\xi=(\xi_1,\xi_2)\in\mathbb{R}^2$ and defining $z\cdot\xi=\xi_1{\rm Re}z+\xi_2{\rm Im}z$,
\begin{align}\begin{split}\label{eqn:Bessel}
\hat q_r(\xi)&=\int q_r(z)e^{-\ii z\cdot\xi}\rd z=\int_0^\infty\int_{0}^{2\pi} e^{\ii x|\xi|\cos\theta}\frac{r}{\pi(r+x^2)^2}\rd\theta x\rd x\\
&= \int_0^\infty J_0(x|\xi|) \frac{2 r}{(r+x^2)^2}\rd\theta x\rd x=|\xi|\sqrt{r} K_1(|\xi|\sqrt{r}),
\end{split}\end{align}
where $J_\alpha$ (resp. $K_\alpha$) is the Bessel function of the first kind (resp. modified Bessel function of the second kind) with index $\alpha$.
Equation (\ref{eqn:convol}) therefore gives, for any $r_1,r_2,|\xi|>0$,
\[
|\xi|\sqrt{r_1+r_2}K_1(|\xi|\sqrt{r_1+r_2})=|\xi|^2\sqrt{r_1r_2}K_1(|\xi|\sqrt{r_1})K_1(|\xi|\sqrt{r_2}),
\]
which can be easily proved to be wrong, e.g. by choosing $r_2=|\xi|=1$ and comparing the Taylor series for small $r_1$.
\qed

\subsection{Existence of the limiting Gaussian field.} \label{sec:Fourier}\  
As explained in Remark \ref{rem:existence}, the existence of the limiting Gaussian field on mesoscopic scales is equivalent to $
\sum_{1\leq i,\,j,\leq m}\Gamma (f_i,f_j,|t_i-t_j|)\geq 0
$, i.e. 
\[
\sum_{1\leq i,\,j,\leq m} \int\partial_{\overline z} f_i(z)(q_{|t_i-t_j|}*\partial_{z}f_j)(z)\rd z\geq 0.
\]
In Fourier space we obtain the condition
\begin{equation*}
\sum_{1\leq i,\,j,\leq m} \int |\xi|^2 \hat f_i(\xi)\overline{\hat f_j}(\xi) \hat q_{|t_i-t_j|}(\xi)\rd\xi\geq 0.
\end{equation*}
For the above we clearly only need to prove $\sum_{1\leq i,\,j,\leq m}  v_i \overline{v_j} \hat q_{|t_i-t_j|}(\xi)\rd\xi\geq 0$ for any complex $\xi$,  $v_i$'s and real $t_i$'s.
By \eqref{eqn:Bessel} we have $\hat q_{|t_i-t_j|}(\xi)=|\xi|\cdot|t_i-t_j|^{1/2}K_1(|\xi|\cdot |t_i-t_j|^{1/2})$, so  it is sufficient to show
\begin{equation}\label{eqn:pos}
\sum_{1\leq i,\,j,\leq m} z_i\overline{z_j}|s_i-s_j|^{1/2}K_1(|s_i-s_j|^{1/2})\geq 0
\end{equation} 
for any complex $z_i$'s and real $s_i$'s. The function $g(t)=|t|^{1/2}K_1(|t|^{1/2})$ has non-negative Fourier transform:
\[
\hat g(u)=\int_{\mathbb{R}}|t|^{1/2}K_1(|t|^{1/2})e^{-\ii u t}\rd t=\frac{1}{4 u^2}\int_0^{\infty}\frac{v}{v^2+1}e^{-\frac{v}{4u}}\rd v\geq 0,
\]
which implies (\ref{eqn:pos}) and concludes the proof.
\qed

\appendix
\section{Stieltjes transform}
	In this section we recall various results concerning Stieltjes transforms. To that end, fix a probability measure $\mu$. We define the \emph{Stieltjes transform} of $\mu$ to be the function $m = m_{\mu} : \mathbb{H} \rightarrow \mathbb{H}$ for any complex number $z \in \mathbb{H}$ setting
	\begin{flalign*}
		m(z) = \displaystyle\int_{-\infty}^{\infty} \displaystyle\frac{\mu (\rd x)}{x-z}.
	\end{flalign*}
	Here $\mathbb{H}$ denotes the upper half complex plane.
We have the following estimates on the Stieltjes transform and its derivatives.
\begin{lemma}\label{l:STproperty}
Let $m(z)=m_\mu(z)$ be the Stieltjes transform of a probability measure $\mu$. For any integer $p\geq 1$, we denote its $p$-th derivative by $m^{(p)}(z)$. Then
\[
|m(z)|\leq \frac{1}{\dist(z,\supp(\mu))},\quad
\quad |\del_z m(z)|\leq \frac{\Im[m(z)]}{\Im[z]}, 
\quad |\del_z^p m(z)|\leq \frac{p!\Im[m(z)]}{\dist(z,\supp(\mu))^{p-1}\Im[z]}.
\]
\end{lemma}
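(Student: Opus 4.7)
The plan is to derive all three bounds directly from the integral representation $m(z) = \int (x-z)^{-1}\,\mu(\dif x)$ and from the identity $\Im[m(z)] = \Im[z] \int |x-z|^{-2}\,\mu(\dif x)$, which follows by taking imaginary parts under the integral sign.

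First, for the bound on $|m(z)|$, I would use $|x-z| \geq \dist(z,\supp(\mu))$ on $\supp(\mu)$, which together with $\mu$ being a probability measure gives
\[
|m(z)| \leq \int \frac{\mu(\dif x)}{|x-z|} \leq \frac{1}{\dist(z,\supp(\mu))}.
\]
Next, for the bound on $|m'(z)|$, differentiate under the integral to obtain $m'(z) = \int (x-z)^{-2}\,\mu(\dif x)$, and then combine with the identity $\int |x-z|^{-2}\,\mu(\dif x) = \Im[m(z)]/\Im[z]$ to conclude
\[
|m'(z)| \leq \int \frac{\mu(\dif x)}{|x-z|^{2}} = \frac{\Im[m(z)]}{\Im[z]}.
\]

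Finally, for the general derivative bound, differentiate $p$ times under the integral to obtain $m^{(p)}(z) = p! \int (x-z)^{-(p+1)}\,\mu(\dif x)$. The key step is to split one power of $|x-z|^{-2}$ and bound the remaining factor $|x-z|^{-(p-1)}$ uniformly by $\dist(z,\supp(\mu))^{-(p-1)}$:
\[
|m^{(p)}(z)| \leq p!\int \frac{\mu(\dif x)}{|x-z|^{p+1}} \leq \frac{p!}{\dist(z,\supp(\mu))^{p-1}}\int \frac{\mu(\dif x)}{|x-z|^{2}} = \frac{p!\,\Im[m(z)]}{\dist(z,\supp(\mu))^{p-1}\,\Im[z]}.
\]
There is no real obstacle here: all three estimates reduce to elementary manipulations, with the only mildly substantive idea being the use of $\Im[m(z)]/\Im[z]$ as an \emph{exact} expression for the second moment $\int |x-z|^{-2}\,\mu(\dif x)$, which is what makes the bounds sharp at small $\Im[z]$.
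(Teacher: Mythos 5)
Your proposal is correct and follows essentially the same route as the paper's proof: the same integral representation, the identity $\Im[m(z)]=\Im[z]\int|x-z|^{-2}\,\mu(\dif x)$ for the first and second bounds, and the same splitting of $|x-z|^{-(p+1)}$ into $|x-z|^{-2}\cdot|x-z|^{-(p-1)}$ with the latter factor bounded by $\dist(z,\supp(\mu))^{-(p-1)}$ for the general case. No differences worth noting.
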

\begin{proof}
The Stieltjes transform is given by 
\begin{align*}
|m(z)|=\left|\int_\bR \frac{\rd \mu(x)}{x-z}\right|\leq \int_\bR \frac{\rd \mu(x)}{|z-x|}\leq \frac{1}{\dist(z,\supp(\mu))}.
\end{align*}
For the second statement of \eqref{e:stbb}, we have
\begin{align*}
|\del_z m(z)|= \left|\int_\bR \frac{\rd \mu(x)}{(x-z)^{2}}\right|\leq \int_\bR \frac{\rd \mu(x)}{|z-x|^{2}}=\frac{1}{\Im[z]}\int_\bR \frac{\Im[z]\rd \mu(x)}{|z-x|^{2}}=\frac{\Im[m(z)]}{\Im[z]}.
\end{align*} 
For the last statement of \eqref{e:stbb}, we have
\begin{align*}
|\del_z^p m(z)|=\left|\int_\bR \frac{p!\rd \mu(x)}{(x-z)^{p+1}}\right|\leq \int_\bR \frac{p!\rd \mu(x)}{|z-x|^{p+1}}\leq \int_\bR \frac{p!\rd \mu(x)}{|z-x|^{2}\dist(z,\supp(\mu))^{p-1}}= \frac{p!\Im[m(z)]}{\dist(z,\supp(\mu))^{p-1}\Im[z]},
\end{align*}
concluding the proof.
\end{proof}

\begin{lemma}\label{l:STproperty}
Let $m(z)=m_\mu(z)$ be the Stieltjes transform of a probability measure $\mu$. For any small $\eta>0$, and two complex numbers $w, z$ with $\eta/C\leq \Im[w],\Im[z]\leq C\eta$, and $|w-z|\leq C\eta$, we have 
\begin{align}\label{e:Imwz}
\Im[m(w)]\asymp \Im[m(z)].
\end{align}
If we further assume that there exists a
control parameter $0<\phi\leq \Im[m(z)]$, such that 
\begin{align*}
|m(w)-\widetilde m(w)|, |m(z)-\widetilde m(z)|\leq \phi,
\end{align*}
and $|w-z|\asymp \eta\sqrt{\phi/\Im[m(z)]}$, then 
\begin{align}\label{e:stbb}
|\del_z m(z)|\lesssim \max|\del_z \widetilde m(u)|+\frac{\sqrt{\phi \Im[m(z)]}}{\eta}.
\end{align}
\end{lemma}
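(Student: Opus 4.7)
The plan has two independent pieces. First I would establish the comparability of imaginary parts, then leverage a short Taylor expansion argument for the derivative bound.

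For the first claim $\Im[m(w)] \asymp \Im[m(z)]$, the approach is pointwise. Writing
\[
\Im[m(z)] = \int_\R \frac{\Im[z]}{|x-z|^2}\,\rd\mu(x),
\]
the hypotheses $\Im[z], \Im[w] \asymp \eta$ together with $|z-w| \lesssim \eta$ and the trivial lower bound $|x-z| \geq \Im[z] \gtrsim \eta$ give $|x-w| \asymp |x-z|$ uniformly in $x \in \R$. Comparing integrands then yields the result.

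For the quantitative derivative bound, I would extract $m'(z)$ from a finite difference of $m$ at $z$ and $w$, and then substitute the approximation $\widetilde m$. Specifically, by the integral form of Taylor's remainder,
\[
m(w) - m(z) = m'(z)(w-z) + \int_0^1 (1-s)\,m''(z + s(w-z))\,(w-z)^2\,\rd s.
\]
The previous lemma gives $|m''(u)| \lesssim \Im[m(u)]/\Im[u]^2$ along the segment $[z,w]$, and by the first part of the present lemma (applied to every such $u$, since $\Im[u] \asymp \eta$ and $|u-z| \lesssim \eta$) this is comparable to $\Im[m(z)]/\eta^2$. Hence the remainder is of order $|w-z|^2 \Im[m(z)]/\eta^2$, and the prescribed scale $|w-z| \asymp \eta\sqrt{\phi/\Im[m(z)]}$ makes this exactly $O(\phi)$.

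On the other side, the hypothesis $|m-\widetilde m| \leq \phi$ at both $w$ and $z$ gives $m(w) - m(z) = \widetilde m(w) - \widetilde m(z) + O(\phi)$, and the mean value inequality yields $|\widetilde m(w) - \widetilde m(z)| \leq |w-z| \max_{u \in [z,w]} |\widetilde m'(u)|$. Equating the two expressions for $m(w)-m(z)$ and dividing by $|w-z|$ gives
\[
|m'(z)| \leq \max_{u} |\widetilde m'(u)| + \frac{O(\phi)}{|w-z|} \lesssim \max_u |\widetilde m'(u)| + \frac{\sqrt{\phi\,\Im[m(z)]}}{\eta},
\]
as claimed. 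There is no real obstacle: the scale $|w-z| \asymp \eta\sqrt{\phi/\Im[m(z)]}$ assumed in the hypothesis is precisely the one that balances the Taylor remainder $|w-z|^2 \Im[m(z)]/\eta^2$ against the perturbative error $\phi/|w-z|$, and both estimates are elementary once the first part of the lemma is in hand.
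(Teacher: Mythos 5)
Your proof is correct, and the second half coincides with the paper's own argument: a second-order Taylor expansion of $m$ at $z$ evaluated at $w$, the bound $|m''(u)|\lesssim \Im[m(u)]/\Im[u]^2\asymp \Im[m(z)]/\eta^2$ along the segment $[z,w]$ (justified by the first part of the lemma), replacement of $m(w)-m(z)$ by $\widetilde m(w)-\widetilde m(z)+\OO(\phi)$ using the hypothesis $|m-\widetilde m|\leq\phi$ at both points, and the observation that the prescribed scale $|w-z|\asymp\eta\sqrt{\phi/\Im[m(z)]}$ balances the Taylor remainder and the perturbative error at size $\sqrt{\phi\,\Im[m(z)]}/\eta$. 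The only place you take a different route is the comparability $\Im[m(w)]\asymp\Im[m(z)]$: you compare the integrands $\Im[z]/|x-z|^2$ and $\Im[w]/|x-w|^2$ pointwise, using $|x-z|\geq\Im[z]\gtrsim\eta$ and $|z-w|\lesssim\eta$ to get $|x-w|\asymp|x-z|$ uniformly in $x$, whereas the paper integrates the differential inequality $|\partial_u\log\Im[m(u)]|\leq 1/\Im[u]$ (a consequence of $|m'(u)|\leq\Im[m(u)]/\Im[u]$ from the preceding lemma) along the segment joining $z$ and $w$. Both arguments are elementary and yield the same constant-order conclusion; yours is slightly more self-contained, needing nothing beyond the definition of the Stieltjes transform, while the paper's recycles the derivative bound it has already established.
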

\begin{proof}
For the first statement \eqref{e:Imwz}, \eqref{e:stbb} gives that 
\begin{align*}
|\del_u \Im[m(u)]|\leq |\del_u m(u)|\leq |\frac{\Im[m(u)]}{\Im[u]},
\end{align*}
which gives that $|\del_u \log \Im[m(u)]|\leq 1/\Im[u]$. By integrating it from $z$ to $w$ and from $w$ to $z$, we conclude that 
\begin{align*}
\log \frac{\Im[m(w)]}{\Im[m(z)]}\lesssim 1,\quad \log \frac{\Im[m(z)]}{\Im[m(w)]}\lesssim 1.
\end{align*}
Then \eqref{e:Imwz} follows.

For the second statement \eqref{e:stbb}, by Taylor expansion 
\begin{align*}
|m(w)-m(z)-(w-z)\del_z m(z)|\leq \frac{1}{2}|w-z|^2\sup_u |\del_u^2 m(u)|
\end{align*}
where the supremum is on $[z,w]$.
Then we get 
\begin{align*}
|\del_z m(z)|
&\leq \frac{|m(w)-m(z)|}{|w-z|}+\frac{1}{2}|w-z|\sup_u |\del_u^2 m(u)|
\lesssim \max|\del_u \widetilde m(u)|+\frac{\phi}{|w-z|}+|w-z| \frac{\Im[m(z)]}{\Im[z]^2}\\
&\lesssim \max|\del_u \widetilde m(u)|+\frac{\sqrt{\phi \Im[m(z)]}}{\Im[z]},
\end{align*}
where we used that $|w-z|\asymp\Im[z]\sqrt{\phi/ \Im[m(z)]}$.
\end{proof}

\section{Well-posedness}\label{appendix:WellPosedness}

The purpose of this short appendix is to prove that the Equation (\ref{e:coupleB}) is well-posed. More generally, we consider the 
stochastic differential equation
\begin{align}\label{e:generalForm}
\rd x_i(t)=\frac{\rd b_i(t)}{\sqrt{2N}}+\frac{1}{2N}\sum_{j\neq i}\frac{1}{x_i(t)-x_j(t)}\rd t, \quad 1\leq |i|\leq N,
\end{align}
where $0<x_1(0)<\dots<x_N(0)$,  $x_{-i}(0)=-x_i(0)$,  $(i\geq 1)$,    $(b_i)_{1\leq i\leq N}$ is a collection of continuous  martingales,  and $b_{-i}(t)=-b_i(t)$,  $(i\geq 1)$.
Note that the drift in (\ref{e:generalForm}) includes a repulsive term between $x_i$ and $x_{-i}$,  equal to $\tfrac{1}{2N}\tfrac{1}{2 x_i}$.

\begin{proposition}\label{prop:WellPosed} If 
$
\frac{\rd \langle b_i\rangle_t}{\rd t}\leq 1
$
for any $i\geq 1,t\geq 0$, then  existence and strong uniqueness hold for the stochastic differential equation (\ref{e:generalForm}).
\end{proposition}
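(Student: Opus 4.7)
The strategy is the classical stopping-time truncation combined with a Lyapunov energy estimate. For $\epsilon,M>0$ define
\[\tau_{\epsilon,M}:=\inf\hB{t\geq 0:\min_{i\ne j}|x_i(t)-x_j(t)|\leq\epsilon\ \text{or}\ \max_i|x_i(t)|\geq M}.\]
On $[0,\tau_{\epsilon,M}]$ the drift of \eqref{e:generalForm} is smooth with Lipschitz constant depending only on $\epsilon,N,M$, while $(b_i)$ are continuous martingales with quadratic variation bounded by $t$. Standard existence and pathwise uniqueness for SDEs driven by continuous semimartingales with locally Lipschitz coefficients (e.g.\ Protter, Chapter~V, Theorem~38) then produces a unique strong solution on $[0,\tau_{\epsilon,M}]$. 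The antisymmetries $b_{-i}=-b_i$ and $x_{-i}(0)=-x_i(0)$ combined with this pathwise uniqueness force $x_{-i}(t)=-x_i(t)$ throughout: the process $y_i(t):=-x_{-i}(t)$ is readily checked to satisfy the same SDE as $x_i$ with the same initial condition and driving noise.

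\textbf{Energy estimate.} Introduce the Lyapunov function
\[\Phi(\bx):=\sum_{|i|\leq N}x_i^2+\sum_{\{i,j\}:\,|i|,|j|\leq N,\,i\ne j}\bigl(-\log|x_i-x_j|\bigr),\]
which diverges to $+\infty$ precisely on the boundary of the admissible configuration space (any collision or particle escape). Applying It\^o's formula pair by pair and summing, the long-range drift contributions cancel thanks to the three-term partial-fraction identity
\[\sum_{a,b,c\ \text{distinct}}\frac{1}{(x_a-x_b)(x_a-x_c)}=0,\]
leaving the drift of the log-sum equal to
\[\sum_{\{i,j\}}\frac{c_{ii}(t)+c_{jj}(t)-2c_{ij}(t)-4}{4N(x_i-x_j)^2},\qquad c_{ij}(t):=\frac{\rd\langle b_i,b_j\rangle_t}{\rd t},\]
which is nonpositive since $|c_{ij}|\leq\sqrt{c_{ii}c_{jj}}\leq 1$ forces $c_{ii}+c_{jj}-2c_{ij}\leq 4$. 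The drift of $\sum_i x_i^2$ is bounded by $2N$ using the elementary identity $\tfrac{x_i}{x_i-x_j}+\tfrac{x_j}{x_j-x_i}=1$ summed over the $N(2N-1)$ unordered pairs, together with $\sum_i c_{ii}/(2N)\leq 1$. The martingale part of $\Phi$ is a true martingale on $[0,\tau_{\epsilon,M}]$ (its integrands are bounded by a constant depending on $\epsilon,M,N$), so optional stopping yields
\[\E\bigl[\Phi(\bx(t\wedge\tau_{\epsilon,M}))\bigr]\leq\Phi(\bx(0))+2Nt\]
uniformly in $\epsilon,M$. By Markov's inequality and the coerciveness of $\Phi$ on the boundary, this forces $\tau_{\epsilon,M}\to\infty$ almost surely as $\epsilon\to 0$, $M\to\infty$, producing a global strong solution and pathwise uniqueness.

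\textbf{Main difficulty.} The key technical step is the drift identity for the log-sum: the exact cancellation of the long-range terms via the three-term identity, and the sharp Cauchy-Schwarz bound $c_{ii}+c_{jj}-2c_{ij}\leq 4$ controlling the short-range contribution. This inequality is saturated precisely on antisymmetric pairs $\{i,-i\}$ when $c_{ii}=1$ (since then $c_{i,-i}=-c_{ii}=-1$), so the pairwise Lyapunov repulsion vanishes there; however, since $-\log|x_i-x_{-i}|=-\log(2x_i)\to+\infty$ as $x_i\to 0$, this critical pair is still coerced away from collision by the global energy bound above. This reflects (and bypasses) the critical two-dimensional Bessel structure of $x_1(t)$ near the origin, where after rescaling $Y:=\sqrt{2N}\,x_1$ the effective equation $\rd Y\approx \rd b_1+\tfrac{1}{2Y}\rd t$ is precisely that of a Bessel process at the critical dimension $2$ which never hits zero from a positive start.
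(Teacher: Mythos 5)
Your proof is correct, and its core coincides with the paper's: the same stopping-time reduction, the same It\^o computation for the logarithmic energy in which the three-term partial-fraction identity cancels the cross terms, and the same Cauchy--Schwarz/Kunita--Watanabe bound on the brackets making the drift nonpositive; your pairwise formula $\frac{c_{ii}+c_{jj}-2c_{ij}-4}{4N(x_i-x_j)^2}$ is exactly what the paper's supermartingale computation for $\phi=-\sum\log|x_i-x_j|$ produces, and your bound $2N$ on the drift of $\sum_i x_i^2$ is also right. Where you genuinely differ is the treatment of non-explosion: the paper keeps the two degenerations separate, first showing $\mathbb{P}(\tau_\epsilon\le\mu_C,\,\tau_\epsilon\le T)\to 0$ with the log-energy alone (which is bounded below only on the confinement event $\{\max_i|x_i|\le C\}$, whence the restriction to $\tau_\epsilon\le\mu_C$), and then proving $\mu_C\to\infty$ by a second-moment estimate for $Z_t=\sum_i x_i^2$ combined with a Burkholder--Davis--Gundy bound; you instead fold both into the single Lyapunov function $\Phi=\sum_i x_i^2-\sum_{\{i,j\}}\log|x_i-x_j|$ and one optional-stopping/Markov step, which removes the BDG argument entirely. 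The price is two facts you assert but do not check: (i) $\Phi$ is bounded below on the whole configuration space and coercive at both degenerations --- true, via the per-pair bound $\frac{x_i^2+x_j^2}{2N-1}-\log|x_i-x_j|\ge\frac{(x_i-x_j)^2}{2(2N-1)}-\log|x_i-x_j|\ge -C_N$, which also gives $\Phi\gtrsim|\log\epsilon|$ at an $\epsilon$-collision and $\Phi\gtrsim M^2$ at a level-$M$ escape; and (ii) because these two coercivity rates differ, Markov's inequality forces $\mathbb{P}(\tau_{\epsilon,M}\le T)\to0$ only in the joint limit $\epsilon\to0$, $M\to\infty$ (escape before time $T$ at a fixed level $M$ has positive probability, so nothing vanishes for fixed $M$). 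Both points are routine, so there is no genuine gap, but they are precisely what substitutes for the paper's BDG step and should be spelled out. Your closing observation about the pair $\{i,-i\}$ --- where $c_{i,-i}=-c_{ii}$ saturates the bound $c_{ii}+c_{-i,-i}-2c_{i,-i}\le4$, so the pairwise repulsion in the drift vanishes and the collision at the origin is the critical two-dimensional Bessel situation, still excluded by the global energy bound --- is correct and makes explicit a feature the paper does not comment on.
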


\noindent Note that this proposition does not require indepedence of the $b_i$'s,  or bounds on the off-diagonal brackets.

\begin{proof}
For any $\e>0$, let $\tau_\e=\inf\{t\geq 0:|x_i(t)-x_j(t)|=\e\ {\mbox{for some }i\neq j}\}$ and $\mu_C=\inf\{t\geq 0:|x_i(t)|=C\ {\mbox{for some }i}\}$.  It is well-known that  existence and strong uniqueness hold if one proves $\tau_\e\to\infty$ a.s. as $\e\to 0$,  and $\mu_C\to\infty$ as $C\to\infty$, see e.g.  \cite{RogShi1993}.

Still following \cite{RogShi1993}, consider $\phi(\bx)=-\sum^*_{i,j}\log|x_i-x_j|$, where here and in the following $\sum_{i,j}^*$ (resp.  $\sum^*_i$) means summation over couples $-N\leq i\neq j\leq N$ such that $i,j\neq 0$ (resp.  $-N\leq i\leq N$ such that $i\neq 0$). Let $\by_t:=\bx_{t\wedge\tau_\e}$, then It{\^o}'s formula gives
\begin{align*}
\rd\phi(\by_t)&=\sum^*_i\partial_i\phi(\by_t)\rd y_i(t)+\frac{1}{2}\sum_{1\leq |i|,|j|\leq N}\partial_{i,j}\phi(\by_t)\rd\langle y_i,y_j\rangle_t\\
&=-\sum_i^*\left(\sum^*_{j:j\neq i}\frac{1}{y_i-y_j}\right)\cdot\left(\frac{\rd b_i}{\sqrt{2N}}+\frac{1}{2N}\sum^*_{j:j\neq i}\frac{\rd t}{y_i-y_j}\right)
-\frac{1}{2}\frac{1}{2N}\sum^*_{i,j}\frac{\rd\langle b_i,b_j\rangle}{(y_i-y_j)^2}
+\frac{1}{2}\frac{1}{2N}\sum^*_{i}\sum^*_{j:j\neq i}\frac{\rd\langle b_i\rangle}{(y_i-y_j)^2}\\
&=\rd M_t+\frac{1}{2N}\left(-\sum_i^*\left(\sum^*_{j:j\neq i}\frac{1}{y_i-y_j}\right)^2\rd t
-\frac{1}{2}\sum^*_{i,j}\frac{\rd\langle b_i,b_j\rangle}{(y_i-y_j)^2}
+\frac{1}{2}\sum^*_{i,j}\frac{\rd\langle b_i\rangle}{(y_i-y_j)^2}\right),
\end{align*}
where $M$ is a local martingale.  Note that by Cauchy-Schwarz we have
\[
|\rd\langle b_i,b_j\rangle/\rd t|\leq (\rd\langle b_i\rangle/\rd t)^{1/2} (\rd\langle b_j\rangle/\rd t)^{1/2}\leq 1,
\]
so that the above parenthesis is smaller than ($\sum^*_{i,j,k}$ below means summation over all $-N\leq i,j,k\leq N$ such that $i,j,k$ are all distinct and non-zero)
\[
-\sum_i^*\left(\sum^*_{j:j\neq i}\frac{1}{y_i-y_j}\right)^2
+\sum^*_{i,j}\frac{1}{(y_i-y_j)^2}=-\sum^*_{i,j,k}\frac{1}{(y_i-y_j)(y_i-y_k)}=0,
\]
where the last equality relies on $\tfrac{1}{(a-b)(a-c)}+\tfrac{1}{(b-a)(b-c)}+\tfrac{1}{(c-a)(c-b)}=0$. This shows that $(\phi(\by_t))_{t\geq 0}$ is a supermartingale. 

Let $A=\{\tau_\e\leq \mu_C,\tau_\e\leq T\}$ where $\e,C$ are chosen so that $\min_{i\neq j}|x_i(0)-x_j(0)|\geq \varepsilon$,  and $\max_{i}|x_i(0)|\leq C$. Then, using the supermartingale property for the first inequality below, we have
\begin{multline*}
\E[\phi(\by_0)]\geq\E[\phi(\by_{T\wedge\mu_C})]=\E[\phi(\by_{T\wedge\mu_C})\mathds{1}_{A}]+\E[\phi(\by_{T\wedge\mu_C})\mathds{1}_{A^{\rm c}}]\\
\geq (2(-\log \e)+(2N(2N-1)-2)(-\log (2C)))\mathbb{P}(A)+2N(2N-1)(-\log (2C))(1-\mathbb{P}(A)).
\end{multline*}
This implies $\mathbb{P}(A)\to 0$ as $\e\to0$, so $\mathbb{P}(\tau_0\leq \mu_C,\tau_0\leq T)=0$. The proof will therefore be complete if $\mu_C\to \infty$ a.s.  as $C\to\infty$.  Let $Z_t=\sum_i z_i(t)^2$ where $z_i(t)=x_i(t\wedge\mu_C)$, then It{\^o}'s formula gives
\[
\rd Z_t= 2\sum _iz_i \rd z_i+\sum_i \rd\langle z_i\rangle=2\sum_i z_i\frac{\rd b_i}{\sqrt{2N}}+N(2N-1)+\frac{1}{2N}\sum_i \rd\langle b_i\rangle,
\]
where we have used $\sum^*_{i,j}\frac{z_i}{z_i-z_j}=N(2N-1)$.  With our assumption on $\frac{\rd \langle b_i\rangle_t}{\rd t}$, this implies $\E(Z_t)\leq Z_0+a t$ for any $t$ and some constant $a$ depending on $N$. We now define the martingale $M_t=\int_0^t2\sum_i z_i\frac{\rd b_i}{\sqrt{2N}}$.  With the Burkholder-Davis-Gundy inequality, we have 
\[
\E[\sup_{0\leq u\leq t}|M_u|^2]\leq C_1\E[\langle M\rangle_t]\leq C_2\int_0^t\sum_{i,j}\E[|z_iz_j\rd\langle b_i,b_j\rangle|]\leq C_3 \int_0^t\sum_{i,j}\E[|z_i|^2+|z_j|^2]\rd t\leq C_4\int_0^t\E[Z_t]\rd t\leq C_5+C_6 t
\]
where $C_1$  is universal and the other constants $C_i$ above and below depend only on $n$.  Note that $|Z_u|\leq |Z_0|+|M_u|+C_7 u$, so from the above we obtain
$
\E[\sup_{0\leq u\leq t}|Z_u|^2]\leq C_8+C_9 t,
$
and in particular 
\[
\mathbb{P}(\mu_C\leq t)=\mathbb{P}(\max_{0\leq u\leq t} |x_i(u)|\geq C)=\mathbb{P}(\max_{0\leq u\leq t} |z_i(u)|\geq C)\leq \frac{C_8+C_9 t}{C^2}.
\]
Taking $C\to\infty$ we obtain $\mathbb{P}(\mu_{\infty}\leq t)=0$ for any fixed $t\geq 0$, so that $\mathbb{P}(\mu_{\infty}=\infty)=1$. This concludes the proof.
\end{proof}

\tableofcontents

\begin{bibdiv}
\begin{biblist}

\bib{AdhHua2018}{article}{
   author={Adhikari, A.},
   author={Huang, J.},
   title={Dyson Brownian motion for general $\beta$ and potential at the
   edge},
   journal={Probab. Theory Related Fields},
   volume={178},
   date={2020},
   number={3-4},
   pages={893--950}
}

\bib{AEK18}{article}{
author={Alt, J.},
author={Erd\H{o}s, L.},
author={Kruger, T.},
title={Local inhomogeneous circular law},
journal={Ann. Appl. Probab.},
volume={28},
date={2018},
number={1},
pages={148--203}
}

\bib{AEK21}{article}{
author={Alt, J.},
author={Erd\H{o}s, L.},
author={Kruger, T.},
title={Spectral radius of random matrices with independent entries},
journal={Prob. Math. Phys.},
volume={2},
date={2021},
number={2},
pages={221--280}
}

\bib{AmeHedMak2011}{article}{
   author={Ameur, Y.},
   author={Hedenmalm, H.},
   author={Makarov, N.},
   title={Fluctuations of eigenvalues of random normal matrices},
   journal={Duke Math. J.},
   volume={159},
   date={2011},
   number={1},
   pages={31--81}
}

\bib{AmeHedMak2015}{article}{
   author={Ameur, Y.},
   author={Hedenmalm, H.},
   author={Makarov, N.},
   title={Random normal matrices and Ward identities},
   journal={Ann. Probab.},
   volume={43},
   date={2015},
   number={3},
   pages={1157--1201}
}

\bib{AndGuiZei2010}{book}{
   author={Anderson, G.  W.},
   author={Guionnet, A.},
   author={Zeitouni, O.},
   title={An introduction to random matrices},
   series={Cambridge Studies in Advanced Mathematics},
   volume={118},
   publisher={Cambridge University Press, Cambridge},
   date={2010},
   pages={xiv+492}
}

\bib{AndZei2006}{article}{
   author={Anderson, G. W.},
   author={Zeitouni, O.},
   title={A CLT for a band matrix model},
   journal={Probab. Theory Related Fields},
   volume={134},
   date={2006},
   number={2},
   pages={283--338}
}

\bib{BaiSil2004}{article}{
   author={Bai, Z. D.},
   author={Silverstein, J. W.},
   title={CLT for linear spectral statistics of large-dimensional sample
   covariance matrices},
   journal={Ann. Probab.},
   volume={32},
   date={2004},
   number={1A},
   pages={553--605}
}

\bib{BaoSchXu2022}{article}{
   author={Bao, Z.},
   author={Schnelli, K.},
   author={Xu, Y.},
   title={Central limit theorem for mesoscopic eigenvalue statistics of the
   free sum of matrices},
   journal={Int. Math. Res. Not. IMRN},
   date={2022},
   number={7},
   pages={5320--5382}
}

\bib{BauBouNikYau2019}{article}{
   author={Bauerschmidt, R.},
   author={Bourgade, P.},
   author={Nikula, M.},
   author={Yau, H.-Tzer},
   title={The two-dimensional Coulomb plasma: quasi-free approximation and
   central limit theorem},
   journal={Adv. Theor. Math. Phys.},
   volume={23},
   date={2019},
   number={4},
   pages={841--1002}
}

\bib{BekFigGui2015}{article}{
   author={Bekerman, F.},
   author={Figalli, A.},
   author={Guionnet, A.},
   title={Transport maps for $\beta$-matrix models and universality},
   journal={Comm. Math. Phys.},
   volume={338},
   date={2015},
   number={2},
   pages={589--619}
}

\bib{BekLebSer2018}{article}{
   author={Bekerman, F.},
   author={Lebl\'{e}, T.},
   author={Serfaty, S.},
   title={CLT for fluctuations of $\beta$-ensembles with general potential},
   journal={Electron. J. Probab.},
   volume={23},
   date={2018}
}

\bib{biane1997free}{article}{
   author={Biane, P.},
   title={On the free convolution with a semi--circular distribution},
   journal={Indiana University Mathematics Journal},
   volume={46},
   number={3},
   pages={705--718},
   date={1997}
}

\bib{Bor2014}{article}{
   author={Borodin, A.},
   title={CLT for spectra of submatrices of Wigner random matrices, II:
   Stochastic evolution},
   conference={
      title={Random matrix theory, interacting particle systems, and
      integrable systems},
   },
   book={
      series={Math. Sci. Res. Inst. Publ.},
      volume={65},
      publisher={Cambridge Univ. Press, New York},
   },
   date={2014},
   pages={57--69}
}

\bib{BorBuf2014}{article}{
   author={Borodin, A.},
   author={Bufetov, A.},
   title={Plancherel representations of $U(\infty)$ and correlated Gaussian
   free fields},
   journal={Duke Math. J.},
   volume={163},
   date={2014},
   number={11},
   pages={2109--2158}
}

\bib{BorCorFer2018}{article}{
   author={Borodin, A.},
   author={Corwin, I.},
   author={Ferrari, P.  L.},
   title={Anisotropic $(2+1)$d growth and Gaussian limits of $q$-Whittaker
   processes},
   journal={Probab. Theory Related Fields},
   volume={172},
   date={2018},
   number={1-2},
   pages={245--321}
}

\bib{BorCorTon2017}{article}{
   author={Borodin, A.},
   author={Corwin, I.},
   author={Toninelli, F.  L.},
   title={Stochastic heat equation limit of a $(2+1)$d growth model},
   journal={Comm. Math. Phys.},
   volume={350},
   date={2017},
   number={3},
   pages={957--984}
}

\bib{BorFer2014}{article}{
   author={Borodin, A.},
   author={Ferrari, P. L.},
   title={Anisotropic growth of random surfaces in $2+1$ dimensions},
   journal={Comm. Math. Phys.},
   volume={325},
   date={2014},
   number={2},
   pages={603--684}
}

\bib{BorGorGui2017}{article}{
   author={Borodin, A.},
   author={Gorin, V.},
   author={Guionnet, A.},
   title={Gaussian asymptotics of discrete $\beta$-ensembles},
   journal={Publ. Math. Inst. Hautes \'{E}tudes Sci.},
   volume={125},
   date={2017},
   pages={1--78}
}

\bib{Bou2010}{article}{
   author={Bourgade, P.},
   title={Mesoscopic fluctuations of the zeta zeros},
   journal={Probab. Theory Related Fields},
   volume={148},
   date={2010},
   number={3-4},
   pages={479--500}
}

\bib{Bou2022}{article}{
   author={Bourgade, P.},
   title={Extreme gaps between eigenvalues of Wigner matrices},
   journal={J. Eur. Math. Soc. (JEMS)},
   volume={24},
   date={2022},
   number={8},
   pages={2823--2873}
}

\bib{BouDub2020}{article}{
   author={Bourgade, P.},
   author={Dubach, G.},
   title={The distribution of overlaps between eigenvectors of Ginibre
   matrices},
   journal={Probab. Theory Related Fields},
   volume={177},
   date={2020},
   number={1-2},
   pages={397--464}
}

\bib{BouErdYauYin2016}{article}{
   author={Bourgade, P.},
   author={Erd\H os, L.},
   author={Yau, H.-T.},
   author={Yin, J.},
   title={Fixed energy universality for generalized Wigner matrices},
   journal={Comm. Pure Appl. Math.},
   volume={69},
   date={2016},
   number={10},
   pages={1815--1881}
}

\bib{BouFal2022}{article}{
	author = {Bourgade, P.},
	author ={Falconet, H.},
	journal = {prepublication arXiv:2206.03029},
	title = {Liouville quantum gravity from random matrix dynamics},
	year = {2022}}

\bib{BouKua2014}{article}{
   author={Bourgade, P.},
   author={Kuan, J.},
   title={Strong Szeg\H{o} asymptotics and zeros of the zeta-function},
   journal={Comm. Pure Appl. Math.},
   volume={67},
   date={2014},
   number={6},
   pages={1028--1044}
}

\bib{BouModPai2022}{article}{
   author={Bourgade, P.},
   author={Mody, K.},
   author={Pain, M.},
   title={Optimal local law and central limit theorem for $\beta$-ensembles},
   journal={Comm. Math. Phys.},
   volume={390},
   date={2022},
   number={3},
   pages={1017--1079}
}

\bib{BouYau2017}{article}{
   author={Bourgade, P.},
   author={Yau, H.-T.},
   title={The eigenvector moment flow and local quantum unique ergodicity},
   journal={Comm. Math. Phys.},
   volume={350},
   date={2017},
   number={1},
   pages={231--278}
}

\bib{BouYauYin2014}{article}{
   author={Bourgade, P.},
   author={Yau, H.-T.},
   author={Yin, J.},
   title={The local circular law II: the edge case},
   journal={Probab. Theory Related Fields},
   volume={159},
   date={2014},
   number={3-4},
   pages={619--660}
}

\bib{BreDui2017}{article}{
   author={Breuer, J.},
   author={Duits, M.},
   title={Central limit theorems for biorthogonal ensembles and asymptotics
   of recurrence coefficients},
   journal={J. Amer. Math. Soc.},
   volume={30},
   date={2017},
   number={1},
   pages={27--66}
}

\bib{CanErhTon2023}{article}{
   author={Cannizzaro, G.},
   author={Erhard, D.},
   author={Toninelli, F.},
   title={Weak coupling limit of the anisotropic KPZ equation},
   journal={Duke Math. J.},
   volume={172},
   date={2023},
   number={16},
   pages={3013--3104}
}

\bib{CarSunZyg2017}{article}{
   author={Caravenna, F.},
   author={Sun, R.},
   author={Zygouras, N.},
   title={Universality in marginally relevant disordered systems},
   journal={Ann. Appl. Probab.},
   volume={27},
   date={2017},
   number={5},
   pages={3050--3112}
}

\bib{CarSunZyg2020}{article}{
   author={Caravenna, F.},
   author={Sun, R.},
   author={Zygouras, N.},
   title={The two-dimensional KPZ equation in the entire subcritical regime},
   journal={Ann. Probab.},
   volume={48},
   date={2020},
   number={3},
   pages={1086--1127}
}

\bib{ChaMeh1998}{article}{
   author={Chalker, J. T.},
   author={Mehlig, B.},
      title={Eigenvector statistics in non-Hermitian random matrix ensembles},
   journal={Phys. Rev. Lett.},
   volume={81},
   date={1998},
      number={16},
      pages={3367--3370}
}

\bib{ChaMeh2000}{article}{
   author={Chalker, J. T.},
   author={Mehlig, B.}, 
   title={Statistical properties of eigenvectors in non-Hermitian Gaussian
   random matrix ensembles},
   journal={J. Math. Phys.},
   volume={41},
   date={2000},
   number={5}
   pages={3233--3256}
}

\bib{ChaFahWebWon}{article}{
   author={Charlier, C.},
   author={Fahs, B.},
   author={Webb, C.},
   author={Wong, M.-D.},
   title={Asymptotics of Hankel determinants with a multi-cut regular potential and Fisher-Hartwig singularities},
   journal={prepublication,  arXiv:2111.08395},
   date={2021}
}

\bib{chelopatto2019}{article}{
author={Che, Z.},
   author={Lopatto,  P.},
   title={Universality of the least singular value for sparse random matrices},
   journal={Electron. J. Probab.},
   volume={24},
   date={2019},
   pages={1--53}
}

\bib{CipErdSch2021bis}{article}{
author={Cipolloni, G.},
author={Erd\H{o}s, L.},
author={Schr\"{o}der, D.},
title={Edge universality for non-Hermitian random matrices},
journal={Probability Theory and Related Fields},
volume={179},
date={2021},
number={1--2},
pages={1--28}
}

\bib{CipErdSch2020}{article}{
author={Cipolloni, G.},
author={Erd\H{o}s, L.},
author={Schr\"{o}der, D.},
title={Optimal lower bound on the least singular value of the shifted Ginibre ensemble},
journal={Prob. Math. Phys.},
volume={1},
date={2020},
number={1},
pages={101--146}
}

\bib{CipErdSch2021}{article}{
   author={Cipolloni, G.},
   author={Erd\H{o}s, L.},
   author={Schr\"{o}der, D.},
   title={Fluctuation around the circular law for random matrices with real entries},
   journal={Electron. J. Probab.},
   volume={26},
   date={2021},
   pages={Paper No. 24, 61}
}

\bib{CipErdSch2023}{article}{
   author={Cipolloni, G.},
   author={Erd\H{o}s, L.},
   author={Schr\"{o}der, D.},
   title={Central limit theorem for linear eigenvalue statistics of
   non-Hermitian random matrices},
   journal={Comm. Pure Appl. Math.},
   volume={76},
   date={2023},
   number={5}
}

\bib{CipErdSch2022bis}{article}{
   author={Cipolloni, G.},
   author={Erd\H{o}s, L.},
   author={Schr\"{o}der, D.},
   title={Mesoscopic Central Limit Theorem for non-Hermitian Random Matrices},
   journal={to appear in Probability Theory and Related Fields},
   date={2022}
}

\bib{CipErdXu2023bis}{article}{
author={Cipolloni, G.},
author={Erd\H{o}s, L.},
author={Xu, Y.},
title={Universality of extremal eigenvalues of large random matrices},
journal = {prepublication arXiv:2312.08325},
year={2023}
}

\bib{CipErdXu2025}{article}{
author={Cipolloni, G.},
author={Erd\H{o}s, L.},
author={Xu, Y.},
title={Optimal decay of eigenvector overlap for non-Hermitian random matrices},
journal = {J. Func. Anal.},
volume={290},
number={1},
year={2026}
}

\bib{DaPGat}{article}{
   author={Da Prato, G.},
   author={Gatarek, D.},
   title={Stochastic Burgers equation with correlated noise},
   journal={Stochastics Stochastics Rep.},
   volume={52},
   date={1995},
   number={1-2},
   pages={29--41}
}

\bib{DaPZab}{book}{
   author={Da Prato, G.},
   author={Zabczyk, J.},
   title={Stochastic equations in infinite dimensions},
   series={Encyclopedia of Mathematics and its Applications},
   volume={44},
   publisher={Cambridge University Press, Cambridge},
   date={1992},
   pages={xviii+454}
}

%\bib{CogFla2016}{article}{
%   author={Coghi, M.},
%   author={Flandoli, F.},
%   title={Propagation of chaos for interacting particles subject to
%   environmental noise},
%   journal={Ann. Appl. Probab.},
%   volume={26},
%   date={2016},
%   number={3},
%   pages={1407--1442}
%}

\bib{DiaSha}{article}{
   author={Diaconis, P.},
   author={Shahshahani, M.},
title={On the eigenvalues of random matrices},
   note={Studies in applied probability},
   journal={J. Appl. Probab.},
   volume={31A},
   date={1994},
   pages={49--62}
}

\bib{DinRoyZei}{article}{
   author={Ding, J.},
   author={Roy, R.},
   author={Zeitouni, O.},
   title={Convergence of the centered maximum of log-correlated Gaussian
   fields},
   journal={Ann. Probab.},
   volume={45},
   date={2017},
   number={6A},
   pages={3886--3928}
}

\bib{Dub2015}{article}{
   author={Dub\'{e}dat, J.},
   title={Dimers and families of Cauchy-Riemann operators I},
   journal={J. Amer. Math. Soc.},
   volume={28},
   date={2015},
   number={4},
   pages={1063--1167}
}

\bib{DuiJoh2018}{article}{
   author={Duits, M.},
   author={Johansson, K.},
   title={On mesoscopic equilibrium for linear statistics in Dyson's
   Brownian motion},
   journal={Mem. Amer. Math. Soc.},
   volume={255},
   date={2018},
   number={1222},
   pages={v+118}
}

\bib{DumJohPalPaq2013}{article}{
   author={Dumitriu, I.},
   author={Johnson, T.},
   author={Pal, S.},
   author={Paquette, E.},
   title={Functional limit theorems for random regular graphs},
   journal={Probab. Theory Related Fields},
   volume={156},
   date={2013},
   number={3-4},
   pages={921--975}
}

\bib{DunGra2024}{article}{
   author={Dunlap, A.},
   author={Graham, C.},
   title={Edwards--Wilkinson fluctuations in subcritical 2D stochastic heat equations},
   journal={arXiv:2405.09520},
   date={2024}
}

\bib{DysMeh1963}{article}{
   author={Dyson, F.},
   author={Mehta,  M.},
   title={Statistical Theory of the Energy Levels of Complex Systems. IV},
   journal={ J. Math. Phys.},
   volume={4},
   date={1963},
   pages={701--712},
}

\bib{erdHos2009local}{article}{
  title={Local semicircle law and complete delocalization for Wigner random matrices},
   author={Erd{\H{o}}s, L.},
   author={Schlein, B.},
   author={Yau, H.-T.},
  journal={Communications in Mathematical Physics},
  volume={287},
  number={2},
  pages={641--655},
  year={2009},
  publisher={Springer}
}

\bib{Lazlo2009}{article}{
  title={Semicircle law on short scales and delocalization of eigenvectors for Wigner random matrices},
   author={Erd{\H{o}}s, L.},
   author={Schlein, B.},
   author={Yau, H.-T.},volume = {37},
journal = {The Annals of Probability},
number = {3},
publisher = {Institute of Mathematical Statistics},
pages = {815 -- 852},

  year={2009}
}

\bib{ErdYau2015}{article}{
   author={Erd\H{o}s, L.},
   author={Yau, H.-T.},
   title={Gap universality of generalized Wigner and $\beta$-ensembles},
   journal={J. Eur. Math. Soc. (JEMS)},
   volume={17},
   date={2015},
   number={8},
   pages={1927--2036}
}

\bib{For1999}{article}{
   author={Forrester, P. J.},
   title={Fluctuation formula for complex random matrices},
   journal={J. Phys. A},
   volume={32},
   date={1999},
   number={13},
   pages={L159--L163}
}

\bib{For2010}{book}{
   author={Forrester, P. J.},
   title={Log-gases and random matrices},
   series={London Mathematical Society Monographs Series},
   volume={34},
   publisher={Princeton University Press, Princeton, NJ},
   date={2010},
   pages={xiv+791}
}

\bib{For2023}{article}{
   author={Forrester, P. J.},
   title={A review of exact results for fluctuation formulas in random
   matrix theory},
   journal={Probab. Surv.},
   volume={20},
   date={2023},
   pages={170--225}
}

\bib{FyoHiaKea12}{article}{
    author={Fyodorov, Y. V.},
    author={Hiary, G.},
    author={Keating, J.},
    title={Freezing Transition, Characteristic Polynomials of Random Matrices, and the Riemann Zeta-Function},
    journal={Phys. Rev. Lett.}
    number={108}
    date={2012}
}

\bib{FyoKea2014}{article}{
   author={Fyodorov, Y. V.},
   author={Keating, J. P.},
   title={Freezing transitions and extreme values: random matrix theory, and
   disordered landscapes},
   journal={Philos. Trans. R. Soc. Lond. Ser. A Math. Phys. Eng. Sci.},
   volume={372},
   date={2014},
   number={2007},
   pages={20120503, 32}
}

\bib{Gar2020}{article}{
   author={Garban, C.},
   title={Dynamical Liouville},
   journal={J. Funct. Anal.},
   volume={278},
   date={2020},
   number={6},
   pages={108351, 54}
}

\bib{GiaOllSpo2001}{article}{
   author={Giacomin, G.},
   author={Olla, S.},
   author={Spohn, H.},
   title={Equilibrium fluctuations for $\nabla\phi$ interface model},
   journal={Ann. Probab.},
   volume={29},
   date={2001},
   number={3},
   pages={1138--1172}
}

\bib{Gir1985}{article}{
author={Girko, V. L.},
title={Circular law},
journal={Theory of Probability and Its Applications},
volume={29},
date={1985},
number={4},
pages={694--706}
}

\bib{GorHua2022}{article}{
	author = {Gorin, V.},
	author ={Huang,  J.},
	title = {Dynamical Loop Equation},
	journal = {to appear in Annals of Probability},
	year = {2022}}

\bib{GorSun2022}{article}{
   author={Gorin, V.},
   author={Sun, Y.},
   title={Gaussian fluctuations for products of random matrices},
   journal={Amer. J. Math.},
   volume={144},
   date={2022},
   number={2},
   pages={287--393}
}

\bib{Hai2009}{article}{
   author={Hairer, M.},
   title={An Introduction to Stochastic PDEs},
   journal={arXiv:0907.4178v2},
   date={2009}
}

\bib{HuaLan2019}{article}{
   author={Huang, J.},
   author={Landon, B.},
   title={Rigidity and a mesoscopic central limit theorem for Dyson Brownian motion for general $\beta$ and potentials},
   journal={Probab. Theory Relat. Fields},
   volume={175},
   date={2019}
}

\bib{HugKeaOCo}{article}{
   author={Hughes, C. P.},
   author={Keating, J. P.},
   author={O'Connell, N.},
   title={On the characteristic polynomial of a random unitary matrix},
   journal={Comm. Math. Phys.},
   volume={220},
   date={2001},
   number={2},
   pages={429--451}
}

\bib{Joh1988}{article}{
   author={Johansson, K.},
   title={On Szeg\H o's asymptotic formula for Toeplitz determinants and
   generalizations},
   journal={Bull. Sci. Math. (2)},
   volume={112},
   date={1988},
   number={3},
   pages={257--304}}

\bib{Joh1998}{article}{
   author={Johansson, K.},
   title={On fluctuations of eigenvalues of random Hermitian matrices},
   journal={Duke Math. J.},
   volume={91},
   date={1998},
   number={1},
   pages={151--204}
}

\bib{Kal02}{book}{
   author={Kallenberg, O.},
   edition = {Second},
   publisher = {Springer-Verlag, New York},
   series = {Probability and its Applications (New York)},
   title={Foundations of modern probability},
   pages={xx+638},
   year={2002},
   }

\bib{Ken2001}{article}{
   author={Kenyon, R.},
   title={Dominos and the Gaussian free field},
   journal={Ann. Probab.},
   volume={29},
   date={2001},
   number={3},
   pages={1128--1137}
}

\bib{Ken2008}{article}{
   author={Kenyon, R.},
   title={Height fluctuations in the honeycomb dimer model},
   journal={Comm. Math. Phys.},
   volume={281},
   date={2008},
   number={3},
   pages={675--709}
}

\bib{LanSos2022}{article}{
   author={Landon, B.},
   author={Sosoe, P.},
   title={Almost-optimal bulk regularity conditions in the CLT for Wigner matrices},
   journal={prepublication, arXiv:2204.03419},
   date={2022}
}

\bib{LanSosYau2019}{article}{
   author={Landon, B.},
   author={Sosoe, P.},
   author={Yau, H.-T.},
   title={Fixed energy universality of Dyson Brownian motion},
   journal={Adv. Math.},
   volume={346},
   date={2019},
   pages={1137--1332}
}

\bib{LebSer2018}{article}{
   author={Lebl\'{e}, T.},
   author={Serfaty, S.},
   title={Fluctuations of two dimensional Coulomb gases},
   journal={Geom. Funct. Anal.},
   volume={28},
   date={2018},
   number={2},
   pages={443--508}
}

\bib{LiSchXu2021}{article}{
   author={Li, Y.},
   author={Schnelli, K.},
   author={Xu, Y.},
   title={Central limit theorem for mesoscopic eigenvalue statistics of
   deformed Wigner matrices and sample covariance matrices},
   language={English, with English and French summaries},
   journal={Ann. Inst. Henri Poincar\'{e} Probab. Stat.},
   volume={57},
   date={2021},
   number={1},
   pages={506--546}
}

\bib{LytPas2009}{article}{
   author={Lytova, A.},
   author={Pastur, L.},
   title={Central limit theorem for linear eigenvalue statistics of random
   matrices with independent entries},
   journal={Ann. Probab.},
   volume={37},
   date={2009},
   number={5},
   pages={1778--1840}
}

\bib{MalOsm2023}{article}{
    author={Maltsev,  A.},
     author={Osman,  M.},
      title={Bulk Universality for Complex Non-Hermitian Matrices with Independent and Identically Distributed Entries},
   journal={prepublication arXiv:2310.11429}
date={2023}
}

\bib{MehCha1998}{article}{
    author={Mehlig, B.},
     author={Chalker, J. T.},
      title={Eigenvector correlations in non-Hermitian random matrix ensembles},
   journal={Ann. Phys.},
   volume={7},
   date={1998},
      number={5-6},
      pages={427--436}
}

\bib{Mehta}{book}{
	author = {Mehta, M. L.},
	edition = {Third},
	publisher = {Elsevier/Academic Press, Amsterdam},
	series = {Pure and Applied Mathematics (Amsterdam)},
	title = {Random matrices},
	volume = {142},
	year = {2004}}

\bib{NikSakWeb20}{article}{
	author = {Nikula, M.},
	author ={Saksman, E.},
	author ={Webb, C.},
	journal = {Trans. Amer. Math. Soc.},
	number = {6},
	pages = {3905--3965},
	title = {Multiplicative chaos and the characteristic polynomial of the {CUE}: the {$L^1$}-phase},
	volume = {373},
	year = {2020}}
	
	\bib{Pos70}{article}{
	author = {Powers, R. T.},
	author ={Størmer, E.},
	journal = {Comm. Math. Phys.},
	number = {1},
	pages = {1--33},
	title = {Free states of the canonical anticommutation relations},
	volume = {16},
	year = {1970}}

\bib{Rid2004}{article}{
   author={Rider, B.},
   title={Deviations from the circular law},
   journal={Probab. Theory Related Fields},
   volume={130},
   date={2004},
   number={3},
   pages={337--367}
}

\bib{Rid2004bis}{article}{
   author={Rider, B.},
   title={Order statistics and Ginibre's ensembles},
   journal={J. Statist. Phys.},
   volume={114},
   date={2004},
   number={3-4},
   pages={1139--1148}
}

\bib{RidVir2007}{article}{
   author={Rider, B.},
   author={Vir\'{a}g, B.},
   title={The noise in the circular law and the Gaussian free field},
   journal={Int. Math. Res. Not. IMRN},
   date={2007},
   number={2}
}

\bib{Rod2014}{article}{
   author={Rodgers, B.},
   title={A central limit theorem for the zeroes of the zeta function},
   journal={Int. J. Number Theory},
   volume={10},
   date={2014},
   number={2},
   pages={483--511},
   issn={1793-0421}
}

\bib{RogShi1993}{article}{
   author={Rogers, L. C. G.},
   author={Shi, Z.},
   title={Interacting Brownian particles and the Wigner law},
   journal={Probab. Theory Related Fields},
   volume={95},
   date={1993},
   number={4},
   pages={555--570}
}

\bib{Rud2023}{article}{
   author={Rudnick, Z.},
   title={GOE statistics on the moduli space of surfaces of large genus},
   journal={Geom. Funct. Anal.},
   volume={33},
   date={2023},
   number={6},
   pages={1581--1607}
}

\bib{Shc2014}{article}{
   author={Shcherbina, M.},
   title={Change of variables as a method to study general $\beta$-models:
   bulk universality},
   journal={J. Math. Phys.},
   volume={55},
   date={2014},
   number={4}
}

\bib{ShoWel2009}{book}{
   author={Shorack, G. R.},
   author={Wellner, J. A.},
   title={Empirical processes with applications to statistics},
   series={Classics in Applied Mathematics},
   volume={59},
   publisher={Society for Industrial and Applied Mathematics (SIAM),
   Philadelphia, PA},
   date={2009}
}

\bib{Spo1986}{article}{
   author={Spohn, H.},
   title={Interacting Brownian particles: a study of Dyson's model},
   conference={
      title={Hydrodynamic behavior and interacting particle systems},
      address={Minneapolis, Minn.},
      date={1986},
   },
   book={
      series={IMA Vol. Math. Appl.},
      volume={9},
      publisher={Springer, New York},
   },
   date={1987},
   pages={151--179}
}

\bib{Spo1998}{article}{
	author = {Spohn, H.},
	journal = {Markov Process. Related Fields},
	number = {4},
	pages = {649--661},
	title = {Dyson's model of interacting {B}rownian motions at arbitrary coupling strength},
	volume = {4},
	year = {1998}}

	\bib{TaoVu2010}{article}{
author={Tao, T.},
author={Vu, V.},
title={Smooth analysis of the condition number and the least singular value},
journal={Math. Comp.},
volume={79},
year={2010},
pages={2333--2352}
}

\bib{TaoVu2015}{article}{
author={Tao, T.},
author={Vu, V.},
title={Random matrices: universality of local spectral statistics of non-Hermitian matrices},
journal={Ann. Probab.},
volume={43},
number={2},
year={2015},
pages={782--874}
}

\bib{Wang2022}{article}{
   author={Wang,  H.},
   title={Optimal Smoothed Analysis and Quantitative Universality for the Smallest Singular Value of Random Matrices},
   journal={prepublication,  arXiv:2211.03975},
   date={2022}
}

\bib{Web2015}{article}{
   author={Webb, C.},
   title={The characteristic polynomial of a random unitary matrix and
   Gaussian multiplicative chaos---the $L^2$-phase},
   journal={Electron. J. Probab.},
   volume={20},
   date={2015},
   pages={no. 104, 21}
}

\end{biblist}
\end{bibdiv}

\end{document}